\newcommand{\simulationpath}[1]{#1}
\newcommand{\figurepath}[1]{#1}
\newcommand{\SUTVA}{\textsc{sutva}}
\newcommand{\RMSE}{\textsc{rmse}}
\newcommand{\Reals}{\mathbb{R}}
\newcommand{\bfU}{\mathbf{U}}
\newcommand{\z}{\mathbf{z}}
\newcommand{\Z}{\mathbf{Z}}
\newcommand{\suppZ}{\mathcal{Z}}
\newcommand{\Zint}{\widetilde{\Z}}
\newcommand{\suppZint}{\widetilde{\suppZ}}
\newcommand{\icout}{c}
\newcommand{\icoutmom}[1]{C_{#1}}
\newcommand{\icoutavg}{\icoutmom{1}}
\newcommand{\icoutrms}{\icoutmom{2}}
\newcommand{\icoutmsq}{\icoutrms^2}
\newcommand{\icoutmax}{\icoutmom{\infty}}
\newcommand{\rotic}[2]{\rotatebox[origin=c]{180}{$#1\icout$}}
\newcommand{\rotatedic}{{\mathpalette\rotic\relax}}
\newcommand{\icin}{\rotatedic}
\newcommand{\di}{d}
\newcommand{\davg}{\di_{\normalfont\textsc{avg}}}
\newcommand{\drms}{\di_{\normalfont\textsc{rms}}}
\newcommand{\dmax}{\di_{\normalfont\textsc{max}}}
\newcommand{\eigmax}{\lambda_{1}}
\newcommand{\teconst}{k_\tau}
\newcommand{\ei}{e}
\newcommand{\eavg}{\ei_{\normalfont\textsc{avg}}}
\newcommand{\Rsum}{R_{\normalfont\textsc{sum}}}
\newcommand{\mxint}{\alpha_{\normalfont\textsc{int}}}
\newcommand{\mxext}{\alpha_{\normalfont\textsc{ext}}}
\DeclareMathOperator*{\balop}{bal}
\DeclarePairedDelimiterXPP\bal[1]{\balop}{\lparen}{\rparen}{}{#1}
\newcommand{\tmu}{\mu_z}
\newcommand{\cmu}{\breve{\mu}_z}
\newcommand{\ATE}{\textsc{ate}}
\newcommand{\eATE}{\tau_{\normalfont\ATE}}
\newcommand{\EATE}{\textsc{eate}}
\newcommand{\eEATE}{\tau_{\normalfont\EATE}}
\newcommand{\eMSTE}{\tau_{\normalfont\textsc{msq}}}
\newcommand{\eEATEp}{\tau_{\normalfont \textsc{p}\text{-}\EATE}}
\newcommand{\eEATEq}{\tau_{\normalfont \textsc{q}\text{-}\EATE}}
\newcommand{\ADSE}{\textsc{adse}}
\newcommand{\eADSE}{\tau_{\normalfont\ADSE}}
\newcommand{\LATE}{\textsc{late}}
\newcommand{\sote}{\xi}
\newcommand{\hmu}{\hat{\mu}_z}
\newcommand{\hn}{\hat{n}_z}
\newcommand{\HT}{\textsc{ht}}
\newcommand{\eHT}{\hat{\tau}_{\normalfont\HT}}
\newcommand{\HA}{\textsc{h\'a}}
\newcommand{\eHA}{\hat{\tau}_{\normalfont\HA}}
\newcommand{\eVarBer}{\VarEst{ber}{\eHT}}
\newcommand{\eVarAvg}{\VarEst{avg}{\eHT}}
\newcommand{\eVarMax}{\VarEst{max}{\eHT}}
\newcommand{\eVarSR}{\VarEst{sr}{\eHT}}
\theoremstyle{plain}
\newtheorem{corollary}{Corollary}
\newtheorem{proposition}{Proposition}
\newtheorem{lemma}{Lemma}
\newtheorem*{nonum-proposition}{Proposition}
\newtheorem{appcorollary}{Corollary}
\newtheorem{applemma}{Lemma}
\newenvironment{refprop}[1]
{\innerrefprop}
{\endinnerrefprop}
\newenvironment{reflem}[1]
{\innerreflem}
{\endinnerreflem}
\newenvironment{refcoro}[1]
{\innerrefcoro}
{\endinnerrefcoro}
\theoremstyle{definition}
\newtheorem{assumption}{Assumption}
\newtheorem{definition}{Definition}
\newtheorem{appdefinition}{Definition}
\theoremstyle{remark}
\newtheorem{remark}{Remark}
\newtheorem*{appremark}{Remark}
\newcommand{\bbone}{\mathbbm{1}}
\DeclareMathOperator{\Eop}{E}
\DeclareMathOperator{\Varop}{Var}
\DeclareMathOperator{\Covop}{Cov}
\DeclareMathOperator{\bigOop}{\mathcal{O}}
\DeclareMathOperator{\littleOop}{o}
\DeclareMathOperator{\bigOpop}{\mathcal{O}_p}
\DeclareMathOperator{\littleOpop}{o_p}
\DeclareMathOperator{\bigOmegaop}{\Omega}
\DeclareMathOperator{\littleOmegaop}{\omega}
\DeclareMathOperator*{\argmin}{arg\,min}
\newcommand{\darrow}{\overset{d}{\longrightarrow}}
\newcommand{\parrow}{\overset{p}{\longrightarrow}}
\newcommand*{\@indep}[2]{\mathrel{\rlap{$#1#2$}\mkern2mu{#1#2}}}
\newcommand{\indep}{\protect\mathpalette{\protect\@indep}{\perp}}
\newcommand*{\transpose}{{\mathpalette\@transpose{}}}
\newcommand*{\@transpose}[2]{\raisebox{\depth}{$\m@th#1\intercal$}}
\renewcommand{\complement}{\mathsf{c}}
\providecommand\given{}
\newcommand\givensymbol[1]{\nonscript\:#1\vert\allowbreak\nonscript\:\mathopen{}}
\DeclarePairedDelimiter\paren{\lparen}{\rparen}
\DeclarePairedDelimiter\bracket{\lbrack}{\rbrack}
\DeclarePairedDelimiter\braces{\{}{\}}
\DeclarePairedDelimiter\abs{\lvert}{\rvert}
\DeclarePairedDelimiter\norm{\lVert}{\rVert}
\DeclarePairedDelimiter\floor{\lfloor}{\rfloor}
\DeclarePairedDelimiter\ceil{\lceil}{\rceil}
\DeclarePairedDelimiterXPP\indicator[1]{\bbone}{\lbrack}{\rbrack}{}{#1}
\DeclarePairedDelimiterXPP\prob[1]{\Pr}{\lparen}{\rparen}{}{%
	\renewcommand\given{\givensymbol{\delimsize}}%
	#1}
	\DeclarePairedDelimiterXPP\probp[1]{{\Pr}_{P}}{\lparen}{\rparen}{}{%
	\renewcommand\given{\givensymbol{\delimsize}}%
	#1}
\DeclarePairedDelimiterXPP\probq[1]{{\Pr}_{Q}}{\lparen}{\rparen}{}{%
	\renewcommand\given{\givensymbol{\delimsize}}%
	#1}
\DeclarePairedDelimiterXPP\E[1]{\Eop}{\lbrack}{\rbrack}{}{%
	\renewcommand\given{\givensymbol{\delimsize}}%
	#1}
\DeclarePairedDelimiterXPP\Ep[1]{\Eop_{P}}{\lbrack}{\rbrack}{}{%
	\renewcommand\given{\givensymbol{\delimsize}}%
	#1}
\DeclarePairedDelimiterXPP\Eq[1]{\Eop_{Q}}{\lbrack}{\rbrack}{}{%
	\renewcommand\given{\givensymbol{\delimsize}}%
	#1}
\DeclarePairedDelimiterXPP\Ej[1]{\Eop_{J}}{\lbrack}{\rbrack}{}{%
	\renewcommand\given{\givensymbol{\delimsize}}%
	#1}
\DeclarePairedDelimiterXPP\Var[1]{\Varop}{\lparen}{\rparen}{}{%
	\renewcommand\given{\givensymbol{\delimsize}}%
	#1}
\DeclarePairedDelimiterXPP\VarEst[2]{\widehat{\Varop}_{\normalfont\textsc{#1}}}{\lparen}{\rparen}{}{%
	\renewcommand\given{\givensymbol{\delimsize}}%
	#2}
\DeclarePairedDelimiterXPP\Cov[1]{\Covop}{\lparen}{\rparen}{}{%
	\renewcommand\given{\givensymbol{\delimsize}}%
	#1}
\DeclarePairedDelimiterXPP\bigO[1]{\bigOop}{\lparen}{\rparen}{}{#1}
\DeclarePairedDelimiterXPP\littleO[1]{\littleOop}{\lparen}{\rparen}{}{#1}
\DeclarePairedDelimiterXPP\bigOp[1]{\bigOpop}{\lparen}{\rparen}{}{#1}
\DeclarePairedDelimiterXPP\littleOp[1]{\littleOpop}{\lparen}{\rparen}{}{#1}
\DeclarePairedDelimiterXPP\bigOmega[1]{\bigOmegaop}{\lparen}{\rparen}{}{#1}
\DeclarePairedDelimiterXPP\littleOmega[1]{\littleOmegaop}{\lparen}{\rparen}{}{#1}
\DeclarePairedDelimiterXPP\bigTheta[1]{\Theta}{\lparen}{\rparen}{}{#1}
\title{\textbf{Average treatment effects in the presence of unknown interference}}
\author{%
	Fredrik Sävje\thanks{Departments of Political Science and Statistics \& Data Science, Yale University.}
	\and
	Peter M.\ Aronow\thanks{Departments of Political Science, Biostatistics and Statistics \& Data Science, Yale University.}
	\and
	Michael G.\ Hudgens\thanks{Department of Biostatistics, University of North Carolina, Chapel Hill.}
}
\date{\today}
\newcommand{\acknowledgements}{We thank Alexander D'Amour, Matthew Blackwell, David Choi, Forrest Crawford, Peng Ding, Naoki Egami, Avi Feller, Owen Francis, Elizabeth Halloran, Lihua Lei, Cyrus Samii, Jasjeet Sekhon and Daniel Spielman for helpful suggestions and discussions. A previous version of this article was circulated under the title ``A folk theorem on interference in experiments.''}
\begin{document}

\ifdefined\acknowledgements%
\makeatletter%
\begin{NoHyper}\gdef\@thefnmark{}\@footnotetext{\hspace{-1em}\acknowledgements}\end{NoHyper}%
\makeatother%
\fi%

\maketitle

\bigskip
\begin{abstract}
\begin{singlespace}
\noindentWe investigate large-sample properties of treatment effect estimators under unknown interference in randomized experiments.
The inferential target is a generalization of the average treatment effect estimand that marginalizes over potential spillover effects.
We show that estimators commonly used to estimate treatment effects under no interference are consistent for the generalized estimand for several common experimental designs under limited but otherwise arbitrary and unknown interference.
The rates of convergence depend on the rate at which the amount of interference grows and the degree to which it aligns with dependencies in treatment assignment.
Importantly for practitioners, the results imply that if one erroneously assumes that units do not interfere in a setting with limited, or even moderate, interference, standard estimators are nevertheless likely to be close to an average treatment effect if the sample is sufficiently large.
Conventional confidence statements may, however, not be accurate.

\end{singlespace}
\end{abstract}

\clearpage

\section{Introduction}

Investigators of causality routinely assume that subjects under study do not interfere with each other.
The no-interference assumption is so ingrained in the practice of causal inference that its application is often left implicit.
Yet, interference appears to be at the heart of the social and medical sciences.
Humans interact, and that is precisely the motivation for much of the research in these fields.
The assumption is common because investigators believe that their methods require it.
For example, in a recent textbook, \citet{Imbens2015} write: ``causal inference is generally impossible without no-interference assumptions.''
This sentiment provides the motivation for the current study.

	We investigate to what degree one can weaken the assumption of no interference and still draw credible inferences about causal relationships. We find that, indeed, causal inferences are impossible without assumptions about the interference, but the prevailing view severely exaggerates the issue. One can allow for moderate amounts of interference, and one can allow the subjects to interfere in unknown and largely arbitrary ways.

	Our focus is estimation of average treatment effects in randomized experiments. A random subset of a sample of units is assigned to some treatment, and the quantity of interest is the average effect of being assigned these treatments. The no-interference assumption in this context is the restriction that no unit's assignment affects other units' outcomes. We consider the setting where such \emph{spillover effects} exist, and in particular, when the form they may take is left unspecified.

The paper makes four main contributions.
We first introduce an estimand---the \emph{expected average treatment effect} or \EATE---that generalizes the conventional \emph{average treatment effect} (\ATE) to settings with interference.
The conventional estimand is not well-defined when units interfere because a unit's outcome may be affected by more than one treatment.
We resolve the issue by marginalizing the effects of interest over the assignment distribution of the incidental treatments.
That is, for a given reference assignment, one may ask how a particular unit's outcome is affected when only its own treatment is changed.
An unambiguous average treatment effect is defined by asking the same for each unit in the experiment and averaging the resulting unit-level effects.
While unambiguous, this average effect depends on which assignment was used as reference, and the result may be different if the exercise is repeated for another assignment.
To capture the typical treatment effect in the experiment, \EATE\ marginalizes the effects over all possible reference assignments.
The estimand is a generalization of \ATE\ in the sense that they coincide whenever the latter is well-defined.

The second contribution is to demonstrate that \EATE\ can be estimated consistently under weak restrictions on the interference and without structural knowledge thereof.
Focus is on the standard Horvitz-Thompson and H\'ajek estimators.
The results also apply to the widely used difference-in-means and ordinary least squares estimators, as they are special cases of the H\'ajek estimator.
The investigation starts with the Bernoulli and complete randomization experimental designs.
We show that the estimators are consistent for \EATE\ under the designs as long as the average amount of interference grows at a sufficiently slow rate (according to measures we define shortly).
Root-$n$ consistency is achieved whenever the average amount of interference is bounded.
The investigation then turns to the paired randomization design.
The design introduces perfectly correlated treatment assignments, and we show that this can make the estimators unstable even when the interference is limited.
The degree to which the dependencies introduced by the experimental design align with the interference structure must be restricted to achieve consistency.
Information about the interference structure beyond the aggregated restrictions is, however, still not needed.
The insights from the paired design extend to a more general setting, and similar restrictions result in consistency under arbitrary experimental designs.

The third contribution is to investigate the prospects of variance estimation.
We show that conventional variance estimators generally do not capture the loss of precision that may result from interference.
Confidence statements based on these estimators may therefore be misleading.
To address the concern, we construct three alternative estimators by inflating a conventional estimator with measures of the amount of interference similar to those used to show consistency.
The alternative estimators are shown to be asymptotically conservative under suitable regularity conditions.

The final contribution is to investigate to what degree \EATE\ generalizes to other designs.
The estimand marginalizes over the design actually used in the experiment, and a consequence is that the estimand may have taken a different value if another design were used.
We show that the estimand is informative of the effect for designs that are close to the one that was implemented under suitable regularity conditions.
When the amount of interference is limited, the estimands may converge.

	The findings are of theoretical interest as they shine light on the limits of causal inference under interference. They are also of practical interest. The results pertain to standard estimators under standard experimental designs. As such, they apply to many previous studies where interference might have been present, but where it was assumed not to be. Studies that mistakenly assume that units do not interfere might, therefore, not necessarily be invalidated. No-interference assumptions are, for example, common in experimental studies of voter mobilization \citep[see][and the references therein]{Green2004}. However, a growing body of evidence suggests that potential voters interact within households, neighborhoods and other social structures \citep{Nickerson2008,Aronow2012,Sinclair2012Detecting}. Interference is, in other words, likely present in this setting, and researchers have been left uncertain about the interpretation of existing findings. Our results provide a lens through which the evidence can be interpreted; the reported estimates capture expected average treatment effects.

\section{Related work}

Our investigation builds on a recent literature on causal inference under interference \citep[see][for a review]{Halloran2016}.
The no-interference assumption itself is due to \citet{Cox1958}.
The iteration that is most commonly used today was, however, formulated by \citet{Rubin1980} as a part of the \emph{stable unit treatment variation assumption}, or \SUTVA.
Early departures from this assumption were modes of analysis inspired by Fisher's exact randomization test \citep{Fisher1935}.
The approach employs sharp null hypotheses that stipulates the outcomes of all units under all assignments.
The most common such hypothesis is simply that treatment is inconsequential so the observed outcomes are constant over all assignments.
As this subsumes that both primary and spillover effects do not exist, the approach tests for the existence of both types of effects simultaneously.
The test has recently been adapted and extended to study interference specifically \citep[see, e.g.,][]{Rosenbaum2007,Aronow2012,Luo2012,Bowers2013,Choi2017,Athey2018Exact,Basse2019Randomization}.

Early methods for point estimation restricted the interference process through structural models and thereby presumed that interactions took a particular form \citep{Manski1993Identification}.
The structural approach has been extended to capture effects under weaker assumptions in a larger class of interference processes \citep{Lee2007,Graham2008,Bramoulle2009}.
Still, the approach has been criticized for being too restrictive \citep{Goldsmith2013,Angrist2014}.

A strand of the literature closer to the current study relaxes the structural assumptions.
Interference is allowed to take arbitrary forms as long as it is contained within known and disjoint groups of units.
The assumption is known as \emph{partial interference} \citep[see, e.g.,][]{Hudgens2008,Tchetgen2012,Liu2014,Rigdon2015,Kang2016,Liu2016,Basse2018Analyzing}.
While partial interference allows for some progress on its own, it is often coupled with \emph{stratified interference}.
The additional assumption stipulates that the only relevant aspect of the interference is the proportion of treated units in the groups.
The identities of the units are, in other words, inconsequential for the spillover effects.
Much like the structural approach, stratified interference restricts the form the interference can take.

More recent contributions have focused on relaxing the partial interference assumption.
Interference is not restricted to disjoint groups, and units are allowed to interfere along general structures such as social networks \citep[see, e.g.,][]{Manski2013,Toulis2013,Ugander2013,Eckles2016,Aronow2017,Forastiere2017,Jagadeesan2017Designs,Ogburn2017,Sussman2017,Basse2018Model}.
This allows for quite general forms of interactions, but the suggested estimation methods require detailed knowledge of the interference structure.

Previous investigations under unknown interference have primarily focused on the mean the sampling distribution of various estimators.
\citet{Sobel2006} derives the expectation of an instrumental variables estimator used in housing mobility experiments under unknown interference, showing that it is a mixture of primary and spillover effects for compilers and non-compilers.
\citet{Hudgens2008} derive similar results for the average distributional shift effect discussed below.
\citet{Egami2017} investigates a setting where the interference can be described by a set of networks.
This framework includes a stratified interference assumption, it admits quite general forms of interference since the networks are allowed to be overlapping and partially unobserved.
However, unlike the focus in this paper, these contributions either do not discuss the precision and consistency of the investigated estimators, or they only do so after assuming that the interference structure is known.

One exception is a study by \citet{Basse2018Limitations}.
The authors consider average treatment effects under arbitrary and unknown interference just as we do.
They, however, focus on inference about the contrast between the average outcome when all units are treated and the average outcome when no unit is treated.
As we discuss below, this estimand provides a different description of the causal setting than \EATE.
The authors show that no consistent estimator exists for their estimand under conditions similar to those used in this paper even when the interference structure is known.

	\section{Treatment effects under interference}

	Consider a sample of $n$ units indexed by the set $\bfU=\braces{1,2,\dotsc,n}$. An experimenter intervenes on the world in ways that potentially affect the units. The intervention is described by a $n$-dimensional binary vector $\z = \paren{z_1, z_2, \dotsc, z_n}\in\braces{0,1}^n$. A particular value of $\z$ could, for example, denote that some drug is given to a certain subset of the units in $\bfU$. We are particularly interested in how unit $i$ is affected by the $i$th dimension of $\z$. For short, we say that $z_i$ is unit $i$'s treatment.

	The effects of different interventions are defined as comparisons between the outcomes they produce. Each unit has a function $y_i \colon \braces{0,1}^n \to \Reals$ denoting the observed outcome for the unit under a specific (potentially counterfactual) intervention \citep{Neyman1923,Holland1986}. In particular, $y_i(\z)$ is the response of $i$ when the intervention is $\z$. We refer to the elements of the image of this function as \emph{potential outcomes}. It will prove convenient to write the potential outcomes in a slightly different form. Let $\z_{-i} = \paren{z_1, \dotsc, z_{i-1}, z_{i+1}, \dotsc, z_n}$ denote the $(n-1)$-element vector constructed by deleting the $i$th element from $\z$. The potential outcome $y_i(\z)$ can then be written as $y_i(z_i; \z_{-i})$.

	We assume that the potential outcomes are well-defined throughout the paper. The assumption implies that the manner in which the experimenter manipulates $\z$ is inconsequential; no matter how $\z$ came to take a particular value, the outcome is the same.
	Well-defined potential outcomes also imply that no physical law or other circumstances prohibit $\z$ to take any value in $\braces{0,1}^n$. This ensures that the potential outcomes are, indeed, potential. However, the assumption does not restrict the way the experimenter chooses to intervene on the world, and some interventions may have no probability of being realized.

	The experimenter sets $\z$ according to a random vector $\Z = \paren{Z_1, \dotsc, Z_n}$. The probability distribution of $\Z$ is the design of the experiment. The design is the sole source of randomness we will consider. Let $Y_i$ denote the observed outcome of unit $i$. The observed outcome is a random variable connected to the experimental design through the potential outcomes: $Y_i = y_i(\Z)$. As above, $\Z_{-i}$ denotes $\Z$ net of its $i$th element, so $Y_i = y_i(Z_i; \Z_{-i})$.

	\subsection{Expected average treatment effects}

	It is conventional to assume that the potential outcomes are restricted so a unit's outcome is only affected by its own treatment. That is, for any two assignments $\z$ and $\z'$, if a unit's treatment is the same for both assignments, then its outcome is the same. This no-interference assumption admits a definition of the treatment effect for unit $i$ as the contrast between its potential outcomes when we change its treatment:
	\begin{equation}
	\tau_i = y_i(1; \z_{-i}) - y_i(0; \z_{-i}),
	\end{equation}
	where $\z_{-i}$ is any value in $\braces{0,1}^{n-1}$. No interference implies that the choice of $\z_{-i}$ is inconsequential for the values of $y_i(1; \z_{-i})$ and $y_i(0; \z_{-i})$. The variable can therefore be left free without ambiguity, and it is common to use $y_i(z)$ as a shorthand for $y_i(z; \z_{-i})$. The average of the unit-level effects is the quantity experimenters often use to summarize treatment effects.

	\begin{definition} \label{def:ate}
		Under no interference, the \emph{average treatment effect} (\ATE) is the average unit-level treatment effect:
		\begin{equation}
		\eATE = \frac{1}{n} \sum_{i=1}^{n} \tau_i.
		\end{equation}
	\end{definition}

	The definition requires no interference. References to \emph{the} effect of a unit's treatment become ambiguous when units interfere since $\tau_i$ may vary under permutations of $\z_{-i}$. The ambiguity is contagious; the average treatment effect similarly becomes ill-defined.

To resolve the issue, we redefine the unit-level treatment effect for unit $i$ as the contrast between its potential outcomes when we change its treatment while holding all other treatments fixed at a given assignment $\z_{-i}$.
We call this quantity the \emph{assignment-conditional unit-level treatment effect}:
\begin{equation}
	\tau_i(\z_{-i}) = y_i(1; \z_{-i}) - y_i(0; \z_{-i}).
\end{equation}
To the best of our knowledge, this type of unit-level effect was first discussed by \citet{Halloran1995}.
The assignment-conditional effect differs from $\tau_i$ only in that the connection to other units' treatments is made explicit.
The redefined effect acknowledges that a unit's treatment may affect the unit differently depending on the treatments assigned to other units.
The change makes the unit-level effects unambiguous, and their average produces a version of the average treatment effect that remains well-defined under interference.

	\begin{definition} \label{def:acate}
		An \emph{assignment-conditional average treatment effect} is the average of the assignment-conditional unit-level treatment effect under a given assignment:
		\begin{equation}
		\eATE(\z) = \frac{1}{n} \sum_{i=1}^{n} \tau_i(\z_{-i}).
		\end{equation}
	\end{definition}

	The redefined effects are unambiguous under interference, but they are unwieldy. An average effect exists for each assignment, so their numbers grow exponentially in the sample size. Experimenters may for this reason not find it useful to study them individually. Similar to how unit-level effects are aggregated to an average effect, we focus on a summary of the assignment-conditional effects.

	\begin{definition} \label{def:eate}
		The \emph{expected average treatment effect} (\EATE) is the expected assignment-conditional average treatment effect:
		\begin{equation}
		\eEATE = \E{\eATE(\Z)},
		\end{equation}
		where the expectation is taken over the distribution of $\Z$ given by the experimental design.
	\end{definition}

The expected average treatment effect is a generalization of \ATE\ in the sense that the two estimands coincide whenever the no-interference assumption holds.
Under no interference, $\eATE(\z)$ does not depend on $\z$, so the marginalization is inconsequential.
When units interfere, $\eATE(\z)$ does depend on $\z$.
The random variable $\eATE(\Z)$ describes the distribution of average treatment effects under the implemented experimental design.
\EATE\ provides the best description of this distribution in mean square sense.

	\subsection{Related definitions}\label{sec:related-definitions}

	The \EATE\ estimand builds on previously proposed ideas. An estimand introduced by \citet{Hudgens2008} resolves the ambiguity of treatment effects under interference in a similar fashion. The authors refer to the quantity as the \emph{average direct causal effect}, but we opt for another name to highlight how it differs from \ATE\ and \EATE.

	\begin{definition} \label{def:adse}
		The \emph{average distributional shift effect} (\ADSE) is the average difference between the conditional expected outcomes for the two treatment conditions:
		\begin{equation}
		\eADSE = \frac{1}{n} \sum_{i=1}^{n} \paren[\Big]{\E{Y_i \given Z_i = 1} - \E{Y_i \given Z_i = 0}}.
		\end{equation}
	\end{definition}

	Similar to \EATE, the average distributional shift effect marginalizes the potential outcomes over the experimental design. The estimands differ in which distributions they use for the marginalization. The expectation in \EATE\ is over the unconditional assignment distribution, while \ADSE\ marginalizes each potential outcome separately over different conditional distributions.
	The difference becomes clear when the estimands are written in similar forms:
	\begin{align}
	\eEATE &=  \frac{1}{n} \sum_{i=1}^{n} \paren[\Big]{\E{y_i(1; \Z_{-i})} - \E{y_i(0; \Z_{-i})}},
	\\
	\eADSE &= \frac{1}{n} \sum_{i=1}^{n} \paren[\Big]{\E{y_i(1; \Z_{-i}) \given Z_i = 1} - \E{y_i(0; \Z_{-i}) \given Z_i = 0}}.
	\end{align}

	The two estimands provide different causal information. \EATE\ captures the expected effect of changing a random unit's treatment in the current experiment. It is the expected average unit-level treatment effect. \ADSE\ is the expected average effect of changing from an experimental design where we hold a unit's treatment fixed at $Z_i = 1$ to another design where its treatment is fixed at $Z_i = 0$. That is, the estimand captures the compound effect of changing a unit's treatment and simultaneously changing the experimental design. As a result, \ADSE\ may be non-zero even if all unit-level effects are exactly zero. That is, we may have $\eADSE\neq 0$ when $\tau_i(\z_{-i})=0$ for all $i$ and $\z_{-i}$. \citet{Eck2018Randomization} use a similar argument to show that \ADSE\ may not correspond to causal parameters capturing treatment effects in structural models.

	\citet{VanderWeele2011} introduced a version of \ADSE\ that resolves the issue by conditioning both terms with the same value on $i$'s treatment. Their estimand is a conditional average of unit-level effects and, thus, mixes aspects of \EATE\ and \ADSE.

An alternative definition of average treatment effects under interference is the contrast between the average outcome when all units are treated and the average outcome when no unit is treated: $n^{-1}\sum_{i=1}^{n} [y_i(\mathbf{1}) - y_i(\mathbf{0})]$ where $\mathbf{1}$ and $\mathbf{0}$ are the unit and zero vectors.
This all-or-nothing effect coincides with the conventional \ATE\ in Definition \ref{def:ate} (and thus \EATE) whenever the no-interference assumption holds.
However, it does not coincide with \EATE\ under interference, and the estimands provide different descriptions of the causal setting.
\EATE\ captures the typical treatment effect in the experiment actually implemented, while the alternative estimand captures the effect of completely scaling up or down treatment.
As we noted in the previous section, no consistent estimator exists for the all-or-nothing effect in the context considered in this paper \citep{Basse2018Limitations}.

\section{Quantifying interference}\label{sec:quantifying-interference}

Our results do not require detailed structural information about the interference.
No progress can, however, be made if it is left completely unrestricted.
Section~\ref{sec:restricting-interference} discusses this in more detail.
The intuition is simply that the change of a single unit's treatment could lead to non-negligible changes in all units' outcomes when the interference is unrestricted.
The following definitions quantify the amount of interference and are the basis for our restrictions.

	We say that unit $i$ interferes with unit $j$ if changing $i$'s treatment changes $j$'s outcome under at least one treatment assignment. We also say that a unit interferes with itself even if its treatment does not affect its outcome. The indicator $I_{ij}$ denotes such interference:
	\begin{equation}
	I_{ij} = \left\{\begin{array}{ll}
	1 & \text{if } y_j(\z) \neq y_j(\z') \text{ for some } \z, \z' \in \braces{0,1}^n \text{ such that } \z_{-i} = \z'_{-i},
	\\[0.2em]
	1 & \text{if } i = j,
	\\[0.2em]
	0 & \text{otherwise}.
	\end{array}\right.
	\end{equation}
	The definition allows for asymmetric interference; unit $i$ may interfere with unit $j$ without the converse being true.

	The collection of interference indicators simply describes the interference structure in an experimental sample. The definition itself does not impose restrictions on how the units may interfere. In particular, the indicators do not necessarily align with social networks or other structures through which units are thought to interact. Experimenters do not generally have enough information about how the units interfere to deduce or estimate the indicators. Their role is to act as a basis for an aggregated summary of the interference.

	\begin{definition}[Interference dependence] \label{def:interference-dependence}
		\begin{equation}
		\davg = \frac{1}{n}\sum_{i=1}^{n}\sum_{j=1}^n\di_{ij},
		\qquad
		\text{where}
		\qquad
		\di_{ij} = \left\{\begin{array}{ll}
		1 & \text{if } I_{\ell i}I_{\ell j} = 1 \text{ for some } \ell \in \bfU,
		\\[0.2em]
		0 & \text{otherwise}.
		\end{array}\right.
		\end{equation}
	\end{definition}

The interference dependence indicator $\di_{ij}$ captures whether units $i$ and $j$ are affected by a common treatment.
That is, $i$ and $j$ are interference dependent if they interfere directly with each other or if some third unit interferes with both $i$ and $j$.
The sum $\di_{i} = \sum_{j=1}^n\di_{ij}$ gives the number of interference dependencies for unit $i$, so the unit-average number of interference dependencies is $\davg$.
The quantity acts as a measure of how close an experiment is to no interference.
Absence of interference is equivalent to $\davg=1$, which indicates that the units are only interfering with themselves.
At the other extreme, $\davg=n$ indicates that interference is complete in the sense that all pairs of units are affected by a common treatment.
If sufficiently many units are interference dependent (i.e., $\davg$ is large), small perturbations of the treatment assignments may be amplified by the interference and induce large changes in many units' outcomes.

Interference dependence can be related to simpler descriptions of the interference.
Consider the following definitions:
\begin{equation}
	\icout_i = \sum_{j=1}^n I_{ij} \qquad\quad \text{and}\qquad\quad \icoutmom{p} = \bracket[\bigg]{\frac{1}{n}\sum_{i = 1}^n \icout_i^{p}}^{1/p}.
\end{equation}
The first quantity captures how many units $i$ interferes with.
That is, if changing unit $i$'s assignment would change the outcome of five other units, $i$ is interfering with the five units and itself, so $\icout_i=6$.
Information about these quantities would be useful, but such insights are generally beyond our grasp.
The subsequent quantity is the $p$th moment of the unit-level interference count, providing more aggregated descriptions.
For example, $\icoutavg$ and $\icoutmom{2}$ are the average and root mean square of the unit-level quantities.
We write $\icoutmax$ for the limit of $\icoutmom{p}$ as $p \to \infty$, which corresponds to the maximum $\icout_i$ over $\bfU$.
These moments bound $\davg$ from below and above.

\begin{lemma}\label{lem:interference-measure-inequalities}
	$\max\paren[\big]{\icoutavg, n^{-1}\icoutmax^2} \leq \davg \leq \icoutmsq \leq \icoutmax^2$.
\end{lemma}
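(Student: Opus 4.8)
The plan is to verify the four inequalities $\icoutavg \le \davg$, $\; n^{-1}\icoutmax^2 \le \davg$, $\; \davg \le \icoutmsq$, and $\icoutmsq \le \icoutmax^2$ one at a time; the first two combine to give $\max\paren[\big]{\icoutavg, n^{-1}\icoutmax^2} \le \davg$, so the whole chain follows. Everything rests on two elementary facts about the indicators. First, $I_{ij} \le \di_{ij}$ for all $i, j$: if $I_{ij} = 1$, then taking $\ell = i$ in Definition~\ref{def:interference-dependence} gives $I_{\ell i}I_{\ell j} = I_{ii}I_{ij} = 1$ by the self-interference convention $I_{ii}=1$, so $\di_{ij} = 1$; and if $I_{ij} = 0$ the inequality is trivial. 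Second, $\di_{ij} \le \sum_{\ell=1}^n I_{\ell i}I_{\ell j}$, because $\di_{ij}\in\braces{0,1}$ equals one precisely when the nonnegative integer on the right-hand side is at least one.

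For the two lower bounds on $\davg$, sum $I_{ij}\le\di_{ij}$ over all $i,j$ and divide by $n$:
\[
	\icoutavg = \frac{1}{n}\sum_{i=1}^n \icout_i = \frac{1}{n}\sum_{i=1}^n\sum_{j=1}^n I_{ij} \le \frac{1}{n}\sum_{i=1}^n\sum_{j=1}^n \di_{ij} = \davg .
\]
For $n^{-1}\icoutmax^2 \le \davg$, pick a unit $m$ with $\icout_m = \icoutmax$ (possible since $\icoutmax = \max_i \icout_i$) and let $S = \braces{j : I_{mj} = 1}$, so that $\abs{S} = \icout_m = \icoutmax$. Every pair $i, j \in S$ satisfies $I_{mi} = I_{mj} = 1$, hence $\di_{ij} = 1$ (take $\ell = m$), and restricting the defining double sum of $\davg$ to $S \times S$ gives $\davg \ge n^{-1}\sum_{i\in S}\sum_{j\in S}\di_{ij} = n^{-1}\abs{S}^2 = n^{-1}\icoutmax^2$.

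For the upper bounds, apply the second fact, swap the order of summation, and factor:
\[
	\davg = \frac{1}{n}\sum_{i=1}^n\sum_{j=1}^n \di_{ij} \le \frac{1}{n}\sum_{\ell=1}^n \paren[\bigg]{\sum_{i=1}^n I_{\ell i}}\paren[\bigg]{\sum_{j=1}^n I_{\ell j}} = \frac{1}{n}\sum_{\ell=1}^n \icout_\ell^2 = \icoutmsq ,
\]
using $\icout_\ell = \sum_j I_{\ell j}$. The last inequality is immediate: $\icout_i \le \icoutmax$ for every $i$, so $\icoutmsq = n^{-1}\sum_i \icout_i^2 \le n^{-1}\sum_i \icoutmax^2 = \icoutmax^2$.

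No step presents a genuine obstacle; the proof is summation bookkeeping. The one place that needs care is the role of the self-interference convention $I_{ii} = 1$: it is exactly what delivers $I_{ij} \le \di_{ij}$ (and hence the lower bound $\icoutavg \le \davg$), and it is also why the index $\ell$ in the definition of $\di_{ij}$ must be permitted to coincide with $i$ or $j$ — without it, asymmetric interference could leave a pair $i,j$ with $I_{ij}=1$ but no common $\ell$, and the bound would fail.
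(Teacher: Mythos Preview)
Your proof is correct. Three of the four inequalities ($\icoutavg \le \davg$, $\davg \le \icoutmsq$, $\icoutmsq \le \icoutmax^2$) use exactly the same argument as the paper, resting on $I_{ij}\le \di_{ij}\le \sum_\ell I_{\ell i}I_{\ell j}$ and a swap of summation.

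For the remaining inequality $n^{-1}\icoutmax^2 \le \davg$, your route is genuinely different and in fact more direct. The paper argues by first identifying the interference configuration that minimizes $\davg$ subject to a fixed $\icoutmax$ (one unit interfering with $\icoutmax$ others, everyone else isolated), computes $\davg = n^{-1}\bigl(\icoutmax^2 + n - \icoutmax\bigr)$ for that configuration, and then drops the nonnegative term $n-\icoutmax$. Your argument bypasses the extremal step entirely: you simply restrict the defining double sum for $\davg$ to the block $S\times S$ where $S=\braces{j:I_{mj}=1}$ for a maximizing unit $m$, note that every $\di_{ij}$ in that block equals one, and read off the bound. The paper's approach yields a marginally sharper intermediate inequality $\icoutmax^2 + (n-\icoutmax) \le n\davg$, but that refinement is never used; your version is shorter and avoids the informal ``any other configuration can only increase $\davg$'' claim that the paper leaves as a one-line sketch.
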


All proofs, including the one for Lemma~\ref{lem:interference-measure-inequalities}, are given in Supplement~A.
The lemma implies that we can use $\icoutmom{2}$ or $\icoutmax$, rather than $\davg$, to restrict the interference.
While such restrictions are stronger than necessary, the connection is useful as it may be more intuitive to reason about these simpler descriptions than about interference dependence.

\section{Large sample properties}\label{sec:large-sample-prop}

We consider an asymptotic regime inspired by \citet{Isaki1982}.
An arbitrary sequence of samples indexed by their sample size is investigated.
It is not assumed that the samples are drawn from some larger population or otherwise randomly generated.
Neither are they assumed to be nested.
That is, the samples are not necessarily related other than through the conditions stated below.
All quantities related to the samples, such as the potential outcomes and experimental designs, have their own sequences also indexed by $n$.
The indexing is, however, left implicit.
We investigate how two common estimators of average treatment effects behave as the sample size grows subject to conditions on these sequences.

	\begin{definition}[Horvitz-Thompson, \HT, and H\'ajek, \HA, estimators] \label{def:estimators}
		\begin{align}
		\eHT &= \frac{1}{n} \sum_{i=1}^{n}\frac{Z_i Y_i}{p_i} - \frac{1}{n} \sum_{i=1}^{n} \frac{(1-Z_i) Y_i}{1-p_i},
		\\[0.6em]
		\eHA &= \paren[\Bigg]{\sum_{i=1}^{n}\frac{Z_i Y_i}{p_i} \Bigg/ \sum_{i=1}^{n}\frac{Z_i}{p_i}} - \paren[\Bigg]{\sum_{i=1}^{n}\frac{(1-Z_i) Y_i}{1-p_i} \Bigg/ \sum_{i=1}^{n}\frac{1-Z_i}{1-p_i}},
		\end{align}
		where $p_i = \prob{Z_i = 1}$ is the marginal treatment probability for unit $i$.
	\end{definition}

Estimators of this form were first introduced in the sampling literature to estimate population means under unequal inclusion probabilities \citep{Horvitz1952,Hajek1971}.
They have since received much attention from the causal inference and policy evaluation literatures where they are often referred to as \emph{inverse probability weighted estimators} \citep[see, e.g.,][]{Hahn1998,Hirano2003,Hernan2006}.
Other estimators commonly used to analyze experiments, such as the difference-in-means and ordinary least squares estimators, are special cases of the H\'ajek estimator.
As a consequence, the results apply to these estimators as well.

	We assume throughout the paper that the experimental design and potential outcomes are sufficiently well-behaved as formalized in the following assumption.

\begin{assumption}[Regularity conditions] \label{ass:regularity-conditions}
	There exist constants $k <\infty$, $q \geq 2$ and $s \geq 1$ such that for all $i\in\bfU$ in the sequence of samples:
	\begin{enumerate}[label=\ref*{ass:regularity-conditions}\Alph*]
		\setlength\itemsep{0.5em}
		\item (Probabilistic assignment). $\; k^{-1} \leq \prob{Z_i = 1} \leq 1 - k^{-1}$, \label{ass:probabilistic-assignment}

		\item (Outcome moments). $\E[\big]{\abs{Y_i}^{q}} \leq k^q$, \label{ass:bounded-moments}

		\item (Potential outcome moments). $\E[\big]{\abs{y_i(z; \Z_{-i})}^{s}} \leq k^s$ for $z\in\braces{0,1}$. \label{ass:bounded-expected-pos}
	\end{enumerate}
\end{assumption}

	The first regularity condition restricts the experimental design so that each treatment is realized with a positive probability. The condition does not restrict combinations of treatments, and assignments may exist such that $\prob{\Z = \z}=0$. The second condition restricts the distributions of the observed outcomes so they remain sufficiently well-behaved asymptotically. The last condition restricts the potential outcomes slightly off the support of the experimental design and ensures that \EATE\ is well-defined asymptotically.

The exact values of $q$ and $s$ are inconsequential for the results in Section \ref{sec:common-designs}.
The assumption can, in that case, be written simply with $q=2$ and $s=1$.
However, the rate of convergence for an arbitrary experimental design depends on which moments exist, and variance estimation generally require $q \geq 4$.
The ideal case is when the potential outcomes themselves are bounded, in which case Assumption \ref{ass:regularity-conditions} holds as $q\to\infty$ and $s\to\infty$.

The two moment conditions are similar in structure, but neither is implied by the other.
Assumption \ref{ass:bounded-moments} does not imply \ref{ass:bounded-expected-pos} because the former is only concerned with the potential outcomes on the support of the experimental design.
The opposite implication does not hold because $s$ may be smaller than $q$.

\subsection{Restricting interference}\label{sec:restricting-interference}

	The sequence of $\davg$ describes the amount of interference in the sequence of samples. Our notion of limited interference is formalized as a restriction on this sequence.

	\begin{assumption}[Restricted interference] \label{ass:restricted-interference}
		$\davg=\littleO{n}$.
	\end{assumption}

	The assumption stipulates that units, on average, are interference dependent with an asymptotically diminishing fraction of the sample. It still allows for substantial amounts of interference. The unit-average number of interference dependencies may grow with the sample size. The total amount of interference dependencies may, thus, grow at a faster rate than $n$. It is only assumed that the unit-average does not grow proportionally to the sample size.

In addition to restricting the amount of interference, Assumption \ref{ass:restricted-interference} imposes weak restrictions on the structure of the interference.
It rules out that the interference is so unevenly distributed that a few units are interfering with most other units.
If the interference is concentrated in such a way, small perturbations of the assignments could be amplified through the treatments of those units.
At the extreme, a single unit interferes with all other units, and all units' outcomes would change if we were to change its treatment.
The estimators would not stabilize if the interference is structured in this way even if it otherwise was sparse.

Restricted interference is not sufficient for consistency.
Sequences of experiments exist for which the assumption holds but the estimators do not converge to \EATE.
Assumption~\ref{ass:restricted-interference} is, however, necessary for consistency of the \HT\ and \HA\ estimators in the following sense.

\begin{proposition}\label{prop:restricted-interference-necessary}
	For every sequence of experimental designs, if Assumption~\ref{ass:restricted-interference} does not hold, there exists a sequence of potential outcomes satisfying Assumption~\ref{ass:regularity-conditions} such that the \HT\ and \HA\ estimators do not converge in probability to \EATE.
\end{proposition}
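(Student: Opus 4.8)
The plan is to settle the proposition with one explicit counterexample that works for every design at once, so no case analysis on the design is needed. Fix an arbitrary sequence of designs satisfying Assumption~\ref{ass:probabilistic-assignment} with constant $k$, single out unit~$1$ as a ``hub,'' and take the potential outcomes
\[
y_i(\z) = z_1 z_i \qquad\text{for every } i \in \bfU .
\]
These are bounded (they lie in $\braces{0,1}$), so Assumptions~\ref{ass:bounded-moments} and~\ref{ass:bounded-expected-pos} hold for every $q$ and $s$; in particular Assumption~\ref{ass:regularity-conditions} holds. The first factor makes unit~$1$ interfere with every unit, so $\icoutmax = n$, and Lemma~\ref{lem:interference-measure-inequalities} then gives $\davg \geq n^{-1}\icoutmax^2 = n$; thus Assumption~\ref{ass:restricted-interference} fails, as it must for any counterexample. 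The second factor is what the construction turns on: by routing the hub's treatment through each unit's \emph{own} treatment, it makes $Z_i y_i(\Z)/p_i = Z_1 Z_i/p_i$ a non-centered quantity, so the inverse-probability weighting in the estimators cannot wash out the hub's influence. (With the simpler choice $y_i(\z)=z_1$ the corresponding summands would instead be $Z_1$ times a mean-zero Horvitz--Thompson weight, and both estimators would stay consistent under designs such as Bernoulli randomization, so the second factor is essential for a uniform counterexample.)

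Next I would compute the three relevant objects in closed form. Because $y_i(1;\z_{-i}) - y_i(0;\z_{-i})$ equals $1$ for $i=1$ and equals the $z_1$-coordinate of $\z_{-i}$ for $i\neq 1$, one gets $\eATE(\z) = \bigl(1 + (n-1)z_1\bigr)/n$, hence
\[
\eEATE = \frac{1 + (n-1)p_1}{n}, \qquad p_1 := \prob{Z_1 = 1} \in \bigl[k^{-1},\, 1-k^{-1}\bigr],
\]
so that $1-\eEATE = (n-1)(1-p_1)/n$ and both $\eEATE$ and $1-\eEATE$ are at least $\tfrac{1}{2k}$ for $n\geq 2$ (and $\eEATE\to p_1$). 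For the estimators, using $Z_i^2 = Z_i$ and $(1-Z_i)Z_i = 0$ one finds $Z_i Y_i/p_i = Z_1 Z_i/p_i$ and $(1-Z_i)Y_i = 0$, so
\[
\eHT = Z_1 \cdot \frac{1}{n}\sum_{i=1}^{n}\frac{Z_i}{p_i}
\qquad\text{and}\qquad
\eHA = Z_1 ,
\]
the latter on the event that both treatment groups are nonempty, and indeed identically under the usual convention that an empty treatment group contributes a zero average. The decisive structural fact is that both estimators track the \emph{indicator} $Z_1 \in \braces{0,1}$ rather than its mean $p_1$.

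I would then read off non-convergence directly, with no variance calculation. On the event $\braces{Z_1 = 0}$, which has probability $1 - p_1 \geq k^{-1}$ for every design, the estimator $\eHT$ equals $0$ exactly, so there $\abs{\eHT - \eEATE} = \eEATE \geq \tfrac{1}{2k}$ whenever $n\geq 2$; hence $\prob{\abs{\eHT - \eEATE} \geq \tfrac{1}{2k}} \geq k^{-1}$ for all such $n$, and $\eHT$ does not converge in probability to $\eEATE$. Likewise, whenever it is defined $\eHA = Z_1$ is $\braces{0,1}$-valued, so $\abs{\eHA - \eEATE} \geq \min(\eEATE,\, 1-\eEATE) \geq \tfrac{1}{2k}$ for $n\geq 2$ on that event (and if some assignment leaves a treatment group empty, the convention above only strengthens the bound); hence $\eHA$ does not converge in probability to $\eEATE$ either. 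Since the constructed sequence satisfies Assumption~\ref{ass:regularity-conditions} but violates Assumption~\ref{ass:restricted-interference}, this proves the proposition.

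The only genuine difficulty is the choice of counterexample: the inverse-probability weighting annihilates the contribution of any factor that is a centered function of a treatment external to a unit, so the obvious candidates (constant-outcome designs such as $y_i(\z) = z_1$, or product/block constructions) either leave the estimators consistent under standard designs or can only be pushed toward inconsistency by normalizations that then run afoul of the moment bounds; the counterexample must instead feed the hub's treatment back through each unit's own treatment, as $z_1 z_i$ does. Once that design decision is made, everything is elementary — the three closed forms above and the single event $\braces{Z_1 = 0}$ do all the work, with the only bookkeeping being the standard convention for empty treatment groups in the Hájek estimator.
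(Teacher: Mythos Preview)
Your construction $y_i(\z)=z_1z_i$ is clean, the closed forms are correct, and the non-convergence argument via the single event $\{Z_1=0\}$ works. But you have read the proposition more weakly than the paper intends. You take ``if Assumption~\ref{ass:restricted-interference} does not hold'' as licensing \emph{any} counterexample for which $\davg$ is not $\littleO{n}$, and so you produce one with $\davg=n$. The paper instead treats the interference level as \emph{given}: for every $\lambda=\limsup n^{-1}\davg>0$ one must exhibit potential outcomes whose $\davg$ realizes that $\lambda$. This is what makes $\davg=\littleO{n}$ the tightest possible restriction on $\davg$---so that, as the paper spells out immediately after the proposition, even $\davg\leq\varepsilon n$ for small $\varepsilon$ does not guarantee consistency. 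Your counterexample, having $\davg=n$, would not fit under a relaxed restriction such as $\davg\leq n/100$ and therefore does not rule that restriction out.

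The missing step is to localize the hub. Set $h_n=\lceil\lambda^{1/2}n\rceil$, take $y_i(\z)=z_1z_i$ for $i\leq h_n$ and $y_i(\z)=0$ otherwise; then $\davg\sim\lambda n$, and on $\{Z_1=0\}$ both estimators vanish while $\eEATE=(1+(h_n-1)p_1)/n$ is eventually at least $\lambda^{1/2}/(2k)$, so non-convergence follows by the same one-event argument. The paper carries out exactly this localization but with the variant $y_i\propto z_1\bigl[z_ip_i-(1-z_i)(1-p_i)\bigr]$, chosen so that $\eHT=Z_1$ \emph{exactly} for every design; your simpler $z_1z_i$ would serve equally well once the localization is added.
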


The proposition implies that the weakest possible restriction on $\davg$ is Assumption~\ref{ass:restricted-interference}.
If a weaker restriction is imposed, for example, that $\davg$ is on the order of $\varepsilon n$ for some small $\varepsilon>0$, potential outcomes exist for any experimental design so that the relaxed interference restriction is satisfied but the estimators do not converge.
A consequence is that experimental designs themselves cannot ensure consistency.
We must somehow restrict the interference to make progress.
It might, however, be possible to achieve consistency without Assumption~\ref{ass:restricted-interference} if one imposes stronger regularity conditions or restricts the interference in some other way.
For example, the estimators may be consistent if the magnitude of the interference, according to some suitable measure, approaches zero.

	\subsection{Common experimental designs} \label{sec:common-designs}

	The investigation starts with three specific experimental designs. The designs are commonly used by experimenters and, thus, of interest in their own right. They also provide a good illustration of the issues that arise under unknown interference and set the scene for the investigation of arbitrary designs in the subsequent section.

\subsubsection{Bernoulli and complete randomization}

The simplest experimental design assigns treatment independently.
The experimenter flips a coin for each unit and administers treatment accordingly.
We call this a \emph{Bernoulli randomization design}, and it satisfies
\begin{equation}
	\prob{\Z = \z} = \prod_{i=1}^{n} p_i^{z_i}\paren{1-p_i}^{1-z_i}
\end{equation}
for some set of assignment probabilities $p_1, p_2, \dotsc, p_n$ bounded away from zero and one.

	The outcomes of any two units are independent under a Bernoulli design when the no-interference assumption holds. This is not the case when units interfere. A single treatment may then affect two or more units, and the corresponding outcomes are dependent. That is, two units' outcomes are dependent when they are interference dependent according to Definition \ref{def:interference-dependence}. Restricting this dependence ensures that the effective sample size grows with the nominal size and grants consistency.

	\begin{proposition}\label{prop:bernoulli-rates}
		With a Bernoulli randomization design under restricted interference (Assumption \ref{ass:restricted-interference}), the \HT\ and \HA\ estimators are consistent for \EATE\ and converge at the following rates:
		\begin{equation}
		\eHT - \eEATE = \bigOp[\big]{n^{-0.5}\davg^{0.5}},
		\qquad\text{and}\qquad
		\eHA - \eEATE = \bigOp[\big]{n^{-0.5}\davg^{0.5}}.
		\end{equation}
	\end{proposition}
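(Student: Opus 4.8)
The plan is to treat $\eHT$ as a simple average whose mean and variance can be computed in closed form, read off the rate from Chebyshev's inequality, and then deduce the result for $\eHA$ by linearizing the ratio around those moments. \emph{Mean of $\eHT$.} Because the Bernoulli design assigns the coordinates of $\Z$ independently, $Z_i$ is independent of $\Z_{-i}$; since $Z_i Y_i = Z_i\, y_i(1;\Z_{-i})$, this gives $\E{Z_i Y_i / p_i} = p_i^{-1}\E{Z_i}\,\E{y_i(1;\Z_{-i})} = \E{y_i(1;\Z_{-i})}$, and likewise $\E{(1-Z_i)Y_i/(1-p_i)} = \E{y_i(0;\Z_{-i})}$. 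Summing over $i$ and recalling that $\eEATE = n^{-1}\sum_i [\E{y_i(1;\Z_{-i})} - \E{y_i(0;\Z_{-i})}]$ yields $\E{\eHT} = \eEATE$, which is finite because Assumption~\ref{ass:bounded-expected-pos} gives $\abs{\E{y_i(z;\Z_{-i})}}\leq k$. It therefore suffices to bound $\Var{\eHT}$.

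\emph{Variance of $\eHT$.} Write $W_i = Z_i Y_i/p_i - (1-Z_i)Y_i/(1-p_i)$, so that $\eHT = n^{-1}\sum_i W_i$ and $\Var{\eHT} = n^{-2}\sum_{i=1}^{n}\sum_{j=1}^{n}\Cov{W_i,W_j}$. The crux is a locality observation: if $I_{\ell i}=0$ then, by the definition of $I_{\ell i}$, the function $y_i$ does not depend on its $\ell$th coordinate, so $Y_i$ — and hence $W_i$ — is a deterministic function of $\braces{Z_\ell : I_{\ell i}=1}$. Under a Bernoulli design these coordinates are mutually independent, so $W_i$ and $W_j$ are independent, and in particular $\Cov{W_i,W_j}=0$, whenever $\braces{\ell : I_{\ell i}=1}$ and $\braces{\ell : I_{\ell j}=1}$ are disjoint — which by Definition~\ref{def:interference-dependence} is exactly the case $\di_{ij}=0$. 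For the surviving pairs, $Z_i(1-Z_i)=0$ gives $\E{W_i^2} = \E{Z_i Y_i^2}/p_i^2 + \E{(1-Z_i)Y_i^2}/(1-p_i)^2 \leq 2k^2\,\E{Y_i^2} \leq 2k^4$ by Assumptions~\ref{ass:probabilistic-assignment}--\ref{ass:bounded-moments} and Lyapunov's inequality (using $q\geq2$), whence $\abs{\Cov{W_i,W_j}}\leq 2k^4$ by Cauchy--Schwarz. Combining,
\[
\Var{\eHT} = \frac{1}{n^2}\sum_{i=1}^{n}\sum_{j:\,\di_{ij}=1}\Cov{W_i,W_j} \;\leq\; \frac{2k^4}{n^2}\sum_{i=1}^{n}\di_i \;=\; \frac{2k^4\,\davg}{n},
\]
so Chebyshev's inequality yields $\eHT - \eEATE = \bigOp{n^{-0.5}\davg^{0.5}}$, and consistency follows since $\davg = \littleO{n}$ under Assumption~\ref{ass:restricted-interference}.

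\emph{The H\'ajek estimator.} Let $\hat{\mu}_1,\hat{n}_1,\hat{\mu}_0,\hat{n}_0$ denote the four sums appearing in $\eHA$, each divided by $n$, so that $\eHA = \hat{\mu}_1/\hat{n}_1 - \hat{\mu}_0/\hat{n}_0$. Each of these is an average of $n$ terms of the type already analyzed — an $L^2$-bounded function (by Assumptions~\ref{ass:probabilistic-assignment}--\ref{ass:bounded-moments}) of $\braces{Z_\ell : I_{\ell i}=1}$ — so the covariance argument above shows each has variance $\bigO{\davg/n}$. Their expectations are $\E{\hat{n}_z}=1$ and $\E{\hat{\mu}_z}=\mu_z^\ast := n^{-1}\sum_{i}\E{y_i(z;\Z_{-i})}$, with $\abs{\mu_z^\ast}\leq k$ and $\mu_1^\ast - \mu_0^\ast = \eEATE$. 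Hence $\hat{n}_z\parrow 1$, so $1/\hat{n}_z = \bigOp{1}$, and $\hat{\mu}_z - \mu_z^\ast = \bigOp{n^{-0.5}\davg^{0.5}}$. The identity $\hat{\mu}_z/\hat{n}_z - \mu_z^\ast = \hat{n}_z^{-1}\bracket{(\hat{\mu}_z - \mu_z^\ast) - \mu_z^\ast(\hat{n}_z - 1)}$, together with $\davg\geq1$, then gives $\hat{\mu}_z/\hat{n}_z - \mu_z^\ast = \bigOp{n^{-0.5}\davg^{0.5}}$ for $z\in\braces{0,1}$; subtracting the two parts yields $\eHA - \eEATE = \bigOp{n^{-0.5}\davg^{0.5}}$, with consistency again from Assumption~\ref{ass:restricted-interference}.

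\emph{Main obstacle.} The heart of the argument is the variance bound, and within it the locality bookkeeping: one must confirm that $W_i$ depends only on the treatments of the units that interfere with $i$, and that disjointness of these index sets is precisely $\di_{ij}=0$, so that independence under the Bernoulli design annihilates the corresponding covariance term. The remaining pieces — the moment bounds supplied by Assumption~\ref{ass:regularity-conditions} and the linearization of the ratio estimator — are routine.
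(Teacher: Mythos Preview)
Your proof is correct and follows essentially the same route as the paper: unbiasedness of $\eHT$ from the independence of $Z_i$ and $\Z_{-i}$ under Bernoulli, the variance bound via the key observation that $\Cov{W_i,W_j}=0$ whenever $\di_{ij}=0$, and linearization for the H\'ajek estimator. The only cosmetic differences are that the paper treats the two treatment arms $\hat\mu_1,\hat\mu_0$ separately (routing through $\eADSE=\eEATE$) rather than working with the combined summand $W_i$, and it uses a first-order Taylor expansion for the ratio where you use the exact algebraic identity---your version is, if anything, slightly cleaner.
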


The Bernoulli design tends to be inefficient in small samples because the size of the treatment groups vary over assignments.
Experimenters often use designs that reduce the variability in the group sizes.
A common such design randomly selects an assignment with equal probability from all assignments with a certain proportion of treated units:
\begin{equation}
	\Pr(\Z = \z) = \left\{ \begin{array}{ll}
	\binom{n}{m}^{-1} & \text{if } \sum_{i=1}^{n} z_i = m,
	\\[0.2em]
	0 & \text{otherwise},
	\end{array}\right.
\end{equation}
where $m=\lfloor p n \rfloor$ for some fixed $p$ strictly between zero and one.
The parameter $p$ controls the desired proportion of treated units.
We call the design \emph{complete randomization}.

	Complete randomization introduces dependencies between assignments. These are not of concern under no interference. The outcomes are only affected by a single treatment, and the dependence between any two treatments is asymptotically negligible. This need not be the case when units interfere. There are two issues to consider.

The first issue is that the interference could interact with the experimental design so that two units' outcomes are strongly dependent asymptotically even when they are not affected by a common treatment (i.e., when $\di_{ij}=0$).
Consider, as an example, when one unit is affected by the first half of the sample and another unit is affected by the second half.
Complete randomization introduces a strong dependence between the two halves.
The number of treated units in the first half is perfectly correlated with the number of treated in the second half.
The outcomes of the two units may therefore be (perfectly) correlated even when no treatment affects them both.
We cannot rule out that such dependencies exist, but we can show that they are sufficiently rare under a slightly stronger version of Assumption \ref{ass:restricted-interference}.

The second issue is that the dependencies introduced by the design distort our view of the potential outcomes.
Whenever a unit is assigned to a certain treatment condition, units that interfere with the unit tend to be assigned to the other condition.
One of the potential outcomes in each assignment-conditional unit-level effect is therefore observed more frequently than the other.
The estimators implicitly weight the two potential outcomes proportionally to their frequency, but the \EATE\ estimand weights them equally.
The discrepancy introduces bias.
Seen from another perspective, the estimators do not separate the effect of a unit's own treatment from spillover effects of other units' treatments.

	As an illustration, consider when the potential outcomes are equal to the number of treated units: $y_i(\z) = \sum_{j=1}^{n}z_j$. \EATE\ equals one in this case, but the estimators are constant at zero since the number of treated units (and, thus, all revealed potential outcomes) are fixed at $m$. The design exactly masks the effect of a unit's own treatment with a spillover effect with the same magnitude but of the opposite sign.

	In general under complete randomization, if the number of units interfering with a unit is of the same order as the sample size, our view of the unit's potential outcomes will be distorted also asymptotically. Similar to the first issue, we cannot rule out that such distortions exist, but restricted interference implies that they are sufficiently rare. Taken together, this establishes consistency under complete randomization.

	\begin{proposition} \label{prop:complete-rates}
		With a complete randomization design under restricted interference (Assumption \ref{ass:restricted-interference}) and $\icoutavg=\littleO[\big]{n^{0.5}}$, the \HT\ and \HA\ estimators are consistent for \EATE\ and converge at the following rates:
		\begin{equation}
		\eHT - \eEATE = \bigOp[\big]{n^{-0.5}\davg^{0.5} + n^{-0.5}\icoutavg},
		\qquad\text{and}\qquad
		\eHA - \eEATE = \bigOp[\big]{n^{-0.5}\davg^{0.5} + n^{-0.5}\icoutavg}.
		\end{equation}
	\end{proposition}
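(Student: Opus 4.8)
The plan is to first reduce the H\'ajek estimator to the Horvitz--Thompson estimator, and then control the mean-square error of $\eHT$ by separating a small bias from a variance term. Since a complete randomization design has $p_i = m/n$ for every unit, the weights satisfy $\sum_{i} Z_i/p_i = (n/m)\sum_i Z_i = n$ and $\sum_i (1-Z_i)/(1-p_i) = n$ with probability one; hence $\eHA = \eHT$ under this design, and it suffices to analyze $\eHT = n^{-1}\sum_i W_i$ with $W_i = Z_iY_i/p_i - (1-Z_i)Y_i/(1-p_i)$. The key structural fact is that $W_i$ is a function of the treatments of exactly the units that interfere with $i$, a set $A_i$ of cardinality $\icin_i = \sum_{\ell} I_{\ell i}$; moreover $\sum_i \icin_i = \sum_{\ell}\icout_{\ell} = n\icoutavg$, and $A_i\cap A_j\neq\emptyset$ holds if and only if $\di_{ij}=1$. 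Writing $\E{(\eHT-\eEATE)^2} = \Var{\eHT} + (\E{\eHT}-\eEATE)^2$, it will be enough to show $\Var{\eHT} = \bigO{\davg/n + \icoutavg^2/n}$ and $\abs{\E{\eHT}-\eEATE} = \bigO{\icoutavg/n}$, since then Markov's inequality gives the stated rate and consistency follows from $\davg=\littleO{n}$ (Assumption~\ref{ass:restricted-interference}) together with $\icoutavg=\littleO{n^{0.5}}$.

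For the bias, decompose $\E{\eHT}-\eEATE$ into per-unit contributions; using that $Z_i$ is binary, the contribution of unit $i$ is proportional to $p_i^{-1}\Cov{Z_i, y_i(1;\Z_{-i})} - (1-p_i)^{-1}\Cov{1-Z_i, y_i(0;\Z_{-i})}$, which vanishes under any design making $Z_i$ independent of $\Z_{-i}$ (e.g.\ Bernoulli) but not under complete randomization; this is the ``distortion'' of the potential outcomes described above. To bound the covariances, condition on the number $N_i$ of treated units among those interfering with $i$: under complete randomization the treatments of those units are, given $N_i$, exchangeable and independent of $Z_i$, and $\E{Z_i\given N_i}$ is affine in $N_i$ with slope of order $n^{-1}$. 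Hence $\Cov{Z_i, y_i(z;\Z_{-i})}$ equals a constant of order $n^{-1}$ times $\Cov{N_i, \E{y_i(z;\Z_{-i})\given N_i}}$, and bounding this last covariance by the range of $N_i$ (at most $\icin_i$) times the first absolute moment of the potential outcome (finite by Assumption~\ref{ass:bounded-expected-pos}) gives $\abs{\Cov{Z_i, y_i(z;\Z_{-i})}} = \bigO{\icin_i/n}$; a small number of units with $\icin_i$ proportional to $n$ are absorbed by the trivial bound on the covariance. Averaging over $i$ yields $\abs{\E{\eHT}-\eEATE} = \bigO{\icoutavg/n}$, which is dominated by the target rate.

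For the variance, expand $\Var{\eHT} = n^{-2}\sum_{i,j}\Cov{W_i,W_j}$ and split the pairs according to whether $\di_{ij}=1$. When $\di_{ij}=1$, i.e.\ $A_i\cap A_j\neq\emptyset$, bound $\abs{\Cov{W_i,W_j}}$ by a constant using Cauchy--Schwarz and the outcome moment bound of Assumption~\ref{ass:bounded-moments}; there are $n\davg$ such ordered pairs, so their total contribution is $\bigO{\davg/n}$, the same term that appears in the Bernoulli case (Proposition~\ref{prop:bernoulli-rates}). When $\di_{ij}=0$, the blocks $A_i$ and $A_j$ are disjoint, so $\Cov{W_i,W_j}$ would be zero under a Bernoulli design; under complete randomization it is not, and this is the main obstacle. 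The plan is to condition on the pair of block totals $(N_{A_i}, N_{A_j})$: given these totals the sub-vectors $\Z_{A_i}$ and $\Z_{A_j}$ are conditionally independent, so $\Cov{W_i,W_j} = \Cov{\E{W_i\given N_{A_i}}, \E{W_j\given N_{A_j}}}$, a covariance between two functions of the treated counts in disjoint groups. Because those counts are negatively associated, multivariate-hypergeometric variables with pairwise covariance of order $n^{-1}$, this covariance is $\bigO{\icin_i\icin_j/n}$ up to the $L^2$ norms of $W_i$ and $W_j$, and summing gives $n^{-2}\sum_{i,j}\bigO{\icin_i\icin_j/n} = n^{-3}\bigl(\sum_i\icin_i\bigr)^2\cdot\bigO{1} = \bigO{\icoutavg^2/n}$. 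Adding the two parts yields $\Var{\eHT} = \bigO{\davg/n + \icoutavg^2/n}$, which with the bias bound completes the proof of the rate; the stated convergence in probability then follows.

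The crux is the last step: quantifying $\Cov{W_i,W_j}$ for pairs whose treatment blocks are disjoint, where the dependence is created entirely by the constraint $\sum_i Z_i = m$. Reducing it, via conditional independence given the block totals, to a covariance between functions of those totals is the decisive device; the remaining work is to convert the $\bigO{n^{-1}}$-scale dependence among the totals into the $\bigO{\icin_i\icin_j/n}$ bound uniformly over bounded (or merely $L^2$) functions --- most cleanly by comparing the joint law of $(N_{A_i}, N_{A_j})$ to the product of its marginals, or by a further conditioning on $N_{A_i}+N_{A_j}$.
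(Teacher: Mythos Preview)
Your plan is correct and follows essentially the same route as the paper. The paper also reduces $\eHA$ to $\eHT$ under complete randomization, then splits the error into the bias $\E{\eHT}-\eEATE$ (which it writes as $\eADSE-\eEATE$, since $\E{\eHT}=\eADSE$) and the variance of $\eHT$; for both pieces it exploits exactly the conditional independence you identify, namely that under complete randomization $Z_i$ and $\Zint_{-i}$ are independent given the block total $T_i=\sum_{j\neq i} I_{ji}Z_j$, and that $\Zint_i$ and $\Zint_j$ are independent given the pair of block totals when $\di_{ij}=0$. The one presentational difference is that the paper packages the dependence bounds as pointwise density-ratio inequalities (its Lemmas bounding $\prob{\Zint_{-i}=\z\given Z_i=z}/\prob{\Zint_{-i}=\z}-1$ by $O(\icin_i/n)$ and $\prob{\Zint_i=\z_i,\Zint_j=\z_j}/[\prob{\Zint_i=\z_i}\prob{\Zint_j=\z_j}]-1$ by $O(\icin_i\icin_j/n)$), whereas for the bias you instead use the affine form of $\E{Z_i\given N_i}$ directly---a slightly cleaner device that yields the same $O(\icin_i/n)$ per-unit bound under the first-moment condition of Assumption~\ref{ass:bounded-expected-pos}. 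Your closing remark about comparing the joint law of $(N_{A_i},N_{A_j})$ to the product of its marginals is precisely the content of the paper's external-dependence lemma, including the truncation device (your ``small number of units with large $\icin_i$'') that handles pairs with $\icin_i\icin_j$ comparable to $n$ via the trivial covariance bound.
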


The proposition requires $\icoutavg^2=\littleO{n}$ in addition to Assumption \ref{ass:restricted-interference}.
Both $\icoutavg^2$ and $\davg$ are bounded from below by $\icoutavg$ and from above by $\icoutmsq$, so they tend to be close.
It is when $\di_{ij}$ aligns with $I_{ij}$ to a large extent that $\icoutavg^2$ dominates $\davg$.
For example, if all interference dependent units are interfering with each other directly, so that $\di_{ij}=I_{ij}$, then $\icoutavg=\davg$.
%We conjecture that $n^{-0.5}\davg^{0.5}$ will dictate the rate of convergence in most cases.

	The \HT\ and \HA\ estimators are known to be root-$n$ consistent for \ATE\ under no interference. Reassuringly, the no-interference assumption is equivalent to $\davg=\icoutavg=1$, and Propositions \ref{prop:bernoulli-rates} and \ref{prop:complete-rates} reproduce the existing result. The propositions, however, make clear that absence of interference is not necessary for such rates, and we may still allow for non-trivial amounts of interference. In particular, root-$n$ rates follow whenever the interference dependence does not grow indefinitely with the sample size. That is, when $\davg$ is bounded.

	\begin{corollary} \label{coro:bernoulli-complete-root-n}
		With a Bernoulli or complete randomization design under bounded interference, $\davg=\bigO{1}$, the \HT\ and \HA\ estimators are root-$n$ consistent for \EATE.
	\end{corollary}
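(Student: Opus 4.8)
The plan is to derive the corollary directly from Propositions~\ref{prop:bernoulli-rates} and \ref{prop:complete-rates}, simply by substituting the bounded-interference hypothesis $\davg=\bigO{1}$ into the stated convergence rates. For the Bernoulli design, Proposition~\ref{prop:bernoulli-rates} gives $\eHT-\eEATE=\bigOp{n^{-0.5}\davg^{0.5}}$ and the analogous bound for $\eHA$; under $\davg=\bigO{1}$ the factor $\davg^{0.5}$ is $\bigO{1}$, so the rate collapses to $\bigOp{n^{-0.5}}$, which is exactly root-$n$ consistency. Nothing further is needed for this half.

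For the complete randomization design, Proposition~\ref{prop:complete-rates} carries the side condition $\icoutavg=\littleO{n^{0.5}}$, so the first step is to verify that this is implied by $\davg=\bigO{1}$. This follows immediately from Lemma~\ref{lem:interference-measure-inequalities}, whose lower bound gives $\icoutavg\leq\davg$; hence $\davg=\bigO{1}$ forces $\icoutavg=\bigO{1}$, which in particular is $\littleO{n^{0.5}}$. With the side condition in hand, Proposition~\ref{prop:complete-rates} applies and yields $\eHT-\eEATE=\bigOp{n^{-0.5}\davg^{0.5}+n^{-0.5}\icoutavg}$, and both terms are $\bigOp{n^{-0.5}}$ since $\davg^{0.5}=\bigO{1}$ and $\icoutavg=\bigO{1}$; the same holds for $\eHA$.

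I do not anticipate a genuine obstacle here: the result is a specialization of the two rate propositions, and the only point requiring attention is confirming that bounded interference dependence activates the hypothesis of Proposition~\ref{prop:complete-rates}, which Lemma~\ref{lem:interference-measure-inequalities} handles in one line. It may be worth remarking, for the reader, that this recovers the classical root-$n$ consistency of the Horvitz--Thompson and H\'ajek estimators for \ATE, since no interference corresponds to $\davg=1$, a special case of $\davg=\bigO{1}$.
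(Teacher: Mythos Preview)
Your proposal is correct and matches the paper's own proof essentially line for line: the paper also invokes Lemma~\ref{lem:interference-measure-inequalities} to deduce $\icoutavg=\bigO{1}$ from $\davg=\bigO{1}$, and then appeals directly to Propositions~\ref{prop:bernoulli-rates} and~\ref{prop:complete-rates}.
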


\subsubsection{Paired randomization}

Complete randomization restricts treatment assignment to ensure treatment groups with a fixed size.
The paired randomization design imposes even greater restrictions.
The sample is divided into pairs, and the units in each pair are assigned to different treatments.
It is implicit that the sample size is even so that all units are paired.
Paired randomization could be forced on the experimenter by external constraints or used to improve precision \citep[see, e.g.,][and the references therein]{Fogarty2018Mitigating}.

	Let $\rho : \bfU \to \bfU$ describe a pairing so that $\rho(i)=j$ indicates that units $i$ and $j$ are paired. The pairing is symmetric, so the self-composition of $\rho$ is the identity function. The \emph{paired randomization design} then satisfies
	\begin{equation}
	\Pr(\Z = \z) = \left\{ \begin{array}{ll}
	2^{-n/2} & \text{if } z_i \neq z_{\rho(i)} \text{ for all } i\in\bfU,
	\\[0.2em]
	0 & \text{otherwise}.
	\end{array}\right.
	\end{equation}

	The design accentuates the two issues we faced under complete randomization. Under paired randomization, $Z_i$ and $Z_j$ are perfectly correlated also asymptotically whenever $\rho(i)=j$. We must consider to what degree the dependencies between assignments introduced by the design align with the structure of the interference. The following two definitions quantify the alignment.

	\begin{definition}[Pair-induced interference dependence] \label{def:paired-interference-dependence}
		\begin{equation}
		\eavg = \frac{1}{n}\sum_{i=1}^{n}\sum_{j=1}^n\ei_{ij},
		\qquad
		\text{where}
		\qquad
		\ei_{ij} = \left\{\begin{array}{ll}
		1 & \text{if }(1-\di_{ij})I_{\ell i}I_{\rho(\ell) j} = 1 \text{ for some } \ell \in \bfU,
		\\[0.2em]
		0 & \text{otherwise}.
		\end{array}\right.
		\end{equation}
	\end{definition}

	\begin{definition}[Within-pair interference] \label{def:within-pair-interference}
		$\Rsum = \sum_{i=1}^n I_{\rho(i)i}$.
	\end{definition}

	The dependence within any set of finite number of treatments is asymptotically negligible under complete randomization, and issues only arose when the number of treatments affecting a unit was of the same order as the sample size. Under paired randomization, the dependence between the outcomes of two units not affected by a common treatment can be asymptotically non-negligible even when each unit is affected by an asymptotically negligible fraction of the sample. In particular, the outcomes of units $i$ and $j$ such that $\di_{ij}=0$ could be (perfectly) correlated if two other units $k$ and $\ell$ exist such that $k$ interferes with $i$ and $\ell$ interferes with $j$, and $k$ and $\ell$ are paired. The purpose of Definition \ref{def:paired-interference-dependence} is to capture such dependencies. The definition is similar in structure to Definition \ref{def:interference-dependence}. Indeed, the upper bound from Lemma \ref{lem:interference-measure-inequalities} applies so that $\eavg \leq \icoutmsq$.

	The second issue we faced under complete randomization is affected in a similar fashion. No matter the number of units that are interfering with unit $i$, if one of those units is the unit paired with $i$, we cannot separate the effects of $Z_i$ and $Z_{\rho(i)}$. The design imposes $Z_i=1 - Z_{\rho(i)}$, so any effect of $Z_i$ on $i$'s outcome could just as well be attributed to $Z_{\rho(i)}$. Such dependencies will introduce bias, just as they did under complete randomization. However, unlike the previous design, restricted interference does not imply that the bias will vanish as the sample grows. We must separately ensure that this type of alignment between the design and the interference is sufficiently rare. Definition \ref{def:within-pair-interference} captures how common interference is between paired units. The two definitions allow us to restrict the degree to which the interference aligns with the pairing in the design.

	\begin{assumption}[Restricted pair-induced interference] \label{ass:restricted-pair-interference}
		$\eavg = \littleO{n}$.
	\end{assumption}

	\begin{assumption}[Pair separation] \label{ass:pair-separation}
		$\Rsum = \littleO{n}$.
	\end{assumption}

	Experimenters may find that Assumption \ref{ass:restricted-pair-interference} is quite tenable under restricted interference. As both $\eavg$ and $\davg$ are bounded by $\icoutmsq$, restricted pair-induced interference tends to hold in cases where restricted interference can be assumed. It is, however, possible that the latter assumption holds even when the former does not if paired units are interfering with sufficiently disjoint sets of units.

Whether pair separation holds largely depends on how the pairs were formed.
It is, for example, common that the pairs reflect some social structure (e.g., paired units may live in the same household).
The interference tends to align with the pairing in such cases, and Assumption \ref{ass:pair-separation} is unlikely to hold.
Pair separation is more reasonable when pairs are formed based on generic background characteristics.
This is often the case when the experimenter uses the design to increase precision.
The assumption could, however, still be violated if the background characteristics include detailed geographic data or other information likely to be associated with the interference.

	\begin{proposition} \label{prop:paired-rates}
		With a paired randomization design under restricted interference, restricted pair-induced interference and pair separation (Assumptions \ref{ass:restricted-interference}, \ref{ass:restricted-pair-interference} and \ref{ass:pair-separation}), the \HT\ and \HA\ estimators are consistent for \EATE\ and converge at the following rates:
		\begin{align}
		\eHT - \eEATE &= \bigOp[\big]{n^{-0.5}\davg^{0.5} + n^{-0.5}\eavg^{0.5} + n^{-1}\Rsum},
		\\
		\eHA - \eEATE &= \bigOp[\big]{n^{-0.5}\davg^{0.5} + n^{-0.5}\eavg^{0.5} + n^{-1}\Rsum}.
		\end{align}
	\end{proposition}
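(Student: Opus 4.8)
The plan is to obtain the rate for $\eHT$ from a bias--variance decomposition and then transfer it to $\eHA$, mirroring the treatment of the Bernoulli and complete randomization designs. Write $\eHT = n^{-1}\sum_{i=1}^n \hat\tau_i$ with $\hat\tau_i = Z_iY_i/p_i - (1-Z_i)Y_i/(1-p_i)$, and split $\eHT - \eEATE = \paren{\eHT - \E{\eHT}} + \paren{\E{\eHT} - \eEATE}$. Assumption~\ref{ass:regularity-conditions} (with $q=2$ and $s=1$ sufficing here) gives the uniform bounds $p_i^{-1}, (1-p_i)^{-1} \le k$, $\E{Y_i^2}\le k^2$ and $\E{\abs{y_i(z;\Z_{-i})}}\le k$, so each variance and covariance of the $\hat\tau_i$, and each per-unit discrepancy between $\E{\hat\tau_i}$ and the $i$th summand of $\eEATE$, is bounded by an absolute constant. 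What remains is to count how many of these can be nonzero.

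For the stochastic term I would first note that $\hat\tau_i$ is a measurable function of the coordinates of $\Z$ in $A_i := \braces{k : I_{ki}=1}$ (which contains $i$), since $y_i$ is constant in every coordinate $k$ with $I_{ki}=0$ and $Z_i$ merely selects which branch of $\hat\tau_i$ is active. Under the paired design the pair-level assignments $\paren{Z_k,Z_{\rho(k)}}$ are mutually independent across pairs, so $\hat\tau_i$ and $\hat\tau_j$ are independent---hence uncorrelated---whenever no pair meets both $A_i$ and $A_j$, that is, whenever $\bar A_i\cap\bar A_j=\emptyset$ with $\bar A_i := A_i\cup\rho(A_i)$. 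A short case analysis on an overlap index $m\in\bar A_i\cap\bar A_j$ then shows that non-disjointness forces $\di_{ij}=1$ (when $m$ or $\rho(m)$ lies in $A_i\cap A_j$; take $\ell=m$ in Definition~\ref{def:interference-dependence}) or $\ei_{ij}=1$ (when the overlap links $A_i$ to $\rho(A_j)$; take $\ell=m$ or $\ell=\rho(m)$ in Definition~\ref{def:paired-interference-dependence}), where the convention $I_{ii}=1$ absorbs the boundary cases $m\in\braces{i,j}$. This yields, for an absolute constant $K$,
\begin{equation}
\Var{\eHT} = \frac{1}{n^2}\sum_{i=1}^n\sum_{j=1}^n\Cov{\hat\tau_i,\hat\tau_j} \le \frac{K}{n^2}\sum_{i=1}^n\sum_{j=1}^n\paren{\di_{ij}+\ei_{ij}} = \frac{K}{n}\paren{\davg+\eavg}.
\end{equation}

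For the bias term I would use that, under the paired design, $Z_i$ and $\Z_{-i}$ are dependent only through the paired coordinate $Z_{\rho(i)}=1-Z_i$, since distinct pairs are independent. Expanding $\E{\hat\tau_i}$ then leaves, besides the target $\E{y_i(1;\Z_{-i})}-\E{y_i(0;\Z_{-i})}$, only terms proportional to $\Cov{Z_i, y_i(z;\Z_{-i})}$, and these vanish unless $y_i(z;\cdot)$ actually depends on coordinate $\rho(i)$, i.e.\ unless $I_{\rho(i)i}=1$; only the $\Rsum$ units with within-pair interference therefore contribute, each by at most a constant, giving $\abs{\E{\eHT}-\eEATE}\le K'n^{-1}\Rsum$. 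Combining the two bounds via Markov's inequality gives $\eHT-\eEATE = \bigOp{n^{-0.5}\paren{\davg+\eavg}^{0.5} + n^{-1}\Rsum}$, which is the stated rate because $\sqrt{a+b}\le\sqrt a+\sqrt b$. For $\eHA$, writing it as a difference of ratios and subtracting $\eHT$ gives $\eHA-\eHT = \paren{\hat\mu_1/n}\paren{1-\hat n_1}/\hat n_1 - \paren{\hat\mu_0/n}\paren{1-\hat n_0}/\hat n_0$; here $\hat\mu_z/n=\bigOp{1}$ by Markov, and the same independence-across-pairs structure gives $\E{\hat n_z}=1$ and $\Var{\hat n_z}=\bigO{n^{-1}}$, so $\hat n_z\parrow 1$ and $1-\hat n_z=\bigOp{n^{-0.5}}$, whence $\eHA-\eHT=\bigOp{n^{-0.5}}$, which is dominated by $n^{-0.5}\davg^{0.5}$ since $\davg\ge1$. (In fact $p_i=1/2$ and the number of treated units is exactly $n/2$ under this design, so $\hat n_1=\hat n_0=1$ and $\eHA=\eHT$ identically; the general argument is the one that extends to later sections.) Consistency of both estimators is then immediate, as $n^{-1}\davg$, $n^{-1}\eavg$ and $n^{-1}\Rsum$ all vanish under Assumptions~\ref{ass:restricted-interference}, \ref{ass:restricted-pair-interference} and \ref{ass:pair-separation}.

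The main obstacle is the covariance case analysis in the second step: one has to match every way the pair-closures $\bar A_i$ and $\bar A_j$ can intersect to a legitimate witness index $\ell$ in Definitions~\ref{def:interference-dependence} and~\ref{def:paired-interference-dependence}, carefully handling the degenerate situations $i=j$, $\rho(\ell)=j$ and within-pair overlap, and the convention $I_{ii}=1$. Everything else---the per-unit constant bounds, the bias computation, and the ratio expansion for $\eHA$---is routine bookkeeping resting on the fact that the paired coordinate is the sole channel of dependence in the design.
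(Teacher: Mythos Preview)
Your proposal is correct and follows essentially the same approach as the paper: the bias--variance split $\paren{\eHT-\E{\eHT}}+\paren{\E{\eHT}-\eEATE}$ is exactly the paper's decomposition through $\eADSE$ (since $\E{\eHT}=\eADSE$ always), your covariance case analysis on $\bar A_i\cap\bar A_j$ is the content of the paper's Lemma~A7, your bias argument via $I_{\rho(i)i}$ is Lemma~A6, and your parenthetical observation that $\hat n_z\equiv 1$ so $\eHA=\eHT$ is Lemma~A8. The only cosmetic difference is that the paper treats the two treatment arms $\hmu$ separately rather than bundling them into $\hat\tau_i$, which changes nothing substantive.
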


\subsection{Arbitrary experimental designs}\label{sec:arbitrary-designs}

We conclude this section by considering sequences of experiments with unspecified designs.
Arbitrary experimental designs may align with the interference just like the paired design.
We start the investigation by introducing a set of definitions that allow us to characterize such alignment in a general setting.

It will prove useful to collect all treatments affecting a particular unit $i$ into a vector:
\begin{equation}
	\Zint_i = (I_{1i}Z_1, I_{2i}Z_2, \dotsc, I_{ni}Z_n).
\end{equation}
The vector is defined so that its $j$th element is $Z_j$ if unit $j$ is interfering with $i$, and zero otherwise.
Similar to above, let $\Zint_{-i}$ be the $(n-1)$-dimensional vector constructed by deleting the $i$th element from $\Zint_i$.
The definition has the following convenient property:
\begin{equation}
	Y_i = y_i(\Z) = y_i\paren[\big]{\Zint_i}, \quad\qquad\text{and}\quad\qquad y_i(z; \Z_{-i}) = y_i\paren[\big]{z; \Zint_{-i}}.
\end{equation}
This allows us to capture the alignment between the design and the interference using $\Zint_i$.
For example, because $Y_i = y_i\paren[\big]{\Zint_i}$, the outcomes of two units $i$ and $j$ are independent whenever $\Zint_i$ and $\Zint_j$ are independent.
The insight allows us to characterize the outcome dependence introduced by the experimental design by the dependence between $\Zint_i$ and $\Zint_j$.
Similarly, the dependence between $Z_i$ and $\Zint_{-i}$ governs how distorted our view of the potential outcomes is.

We use the alpha-mixing coefficient introduced by \citet{Rosenblatt1956} to measure the dependence between the assignment vectors.
Specifically, for two random variables $X$ and $Y$ defined on the same probability space, let
\begin{equation}
	\alpha\paren[\big]{X, Y} = \sup_{\substack{x\in\sigma\paren{X} \\ y \in\sigma\paren{Y}}} \abs[\big]{\,\prob{x \cap y} - \prob{x}\prob{y}\,},
\end{equation}
where $\sigma(X)$ and $\sigma(Y)$ denote the sub-sigma-algebras generated by the random variables.
The coefficient $\alpha\paren{X, Y}$ is zero if and only if $X$ and $Y$ are independent, and increasing values indicate increasing dependence.
The maximum is $\alpha\paren{X, Y} = 1/4$.
Unlike the Pearson correlation coefficient, the alpha-mixing coefficient is not restricted to linear associations between two scalar random variables, and it can capture any type of dependence between any two sets of random variables.
The coefficient allows us to define measures of the average amount of dependence between $\Zint_i$ and $\Zint_j$ and between $Z_i$ and $\Zint_{-i}$.

	\begin{definition}[External and internal average mixing coefficients] \label{def:mixing-coefficients}
		For the maximum values $q$ and $s$ such that Assumptions \ref{ass:bounded-moments} and \ref{ass:bounded-expected-pos} hold, let
		\begin{equation}
		\mxext = \frac{1}{n} \sum_{i=1}^{n} \sum_{j=1}^{n} (1-\di_{ij})\bracket[\big]{\alpha\paren[\big]{\Zint_i, \Zint_j}}^{\frac{q-2}{q}},
		\qquad
		\text{and}\qquad\mxint = \sum_{i=1}^{n} \bracket[\big]{\alpha\paren[\big]{Z_i, \Zint_{-i}}}^{\frac{s-1}{s}},
		\end{equation}
		where $0^0$ is defined as zero to accommodate the cases $q=2$ and $s=1$.
	\end{definition}

Each term in the external mixing coefficient captures the dependence between the treatments affecting unit $i$ and the treatments affecting unit $j$.
If the dependence between $\Zint_i$ and $\Zint_j$ tends to be weak or rare, $\mxext$ will be small compared to $n$.
Similarly, if dependence between $Z_i$ and $\Zint_{-i}$ tends to be weak or rare, $\mxint$ will be small relative to $n$.
In this sense, the external and internal mixing coefficients are generalizations of Definitions \ref{def:paired-interference-dependence} and \ref{def:within-pair-interference}.
Indeed, one can show that $\mxext \propto\eavg$ and $\mxint \propto \Rsum$ under paired randomization where the proportionality constants are given by $q$ and $s$.
The generalized definitions allow for generalized assumptions.

	\begin{assumption}[Design mixing] \label{ass:design-mixing}
		$\mxext = \littleO{n}$.
	\end{assumption}

	\begin{assumption}[Design separation] \label{ass:design-separation}
		$\mxint = \littleO{n}$.
	\end{assumption}

Design mixing and separation stipulate that dependence between treatments are sufficiently rare or sufficiently weak (or some combination thereof).
This encapsulates and extends the conditions in the previous sections.
In particular, complete randomization under bounded interference constitutes a setting where dependence is weak: $\alpha\paren[\big]{\Zint_i, \Zint_j}$ approaches zero for all pairs of units with $\di_{ij}=0$.
Paired randomization under Assumption \ref{ass:restricted-pair-interference} constitutes a setting where dependence is rare: $\alpha\paren[\big]{\Zint_i, \Zint_j}$ may be $1/4$ for some pairs of units with $\di_{ij}=0$, but these are an asymptotically diminishing fraction of the total number of pairs.
Complete randomization under the conditions of Proposition \ref{prop:complete-rates} combines the two settings: $\alpha\paren[\big]{\Zint_i, \Zint_j}$ might be non-negligible asymptotically for some pairs with $\di_{ij}=0$, but such pairs are rare.
For all other pairs with $\di_{ij}=0$, the pair-level mixing coefficient approaches zero quickly.
A similar comparison can be made for the design separation assumption.

	\begin{proposition} \label{prop:arbitrary-rates-eate}
		Under restricted interference, design mixing and design separation (Assumptions \ref{ass:restricted-interference}, \ref{ass:design-mixing} and \ref{ass:design-separation}), the \HT\ and \HA\ estimators are consistent for \EATE\ and converge at the following rates:
		\begin{align}
		\eHT - \eEATE &= \bigOp[\big]{n^{-0.5}\davg^{0.5} + n^{-0.5}\mxext^{0.5} + n^{-1}\mxint},
		\\
		\eHA - \eEATE &= \bigOp[\big]{n^{-0.5}\davg^{0.5} + n^{-0.5}\mxext^{0.5} + n^{-1}\mxint}.
		\end{align}
	\end{proposition}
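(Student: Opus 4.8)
The plan is to analyze the Horvitz--Thompson estimator directly and then recover the H\'ajek estimator from it by a ratio-estimator perturbation. Write $W_i = Z_i/p_i - (1-Z_i)/(1-p_i)$, so that $\eHT = n^{-1}\sum_{i=1}^{n} W_i Y_i$, and decompose $\eHT - \eEATE = (\E{\eHT} - \eEATE) + (\eHT - \E{\eHT})$ into a deterministic bias and a mean-zero fluctuation; I would bound the bias by the $n^{-1}\mxint$ term and the fluctuation, through Chebyshev's inequality, by the $n^{-0.5}\davg^{0.5} + n^{-0.5}\mxext^{0.5}$ terms. The estimand $\eEATE$ is finite here because $\E{y_i(z; \Z_{-i})}$ is bounded by $k$ under Assumption~\ref{ass:bounded-expected-pos}.

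For the bias, note that $Z_i Y_i = Z_i\, y_i(1; \Z_{-i})$ and $y_i(z; \Z_{-i}) = y_i(z; \Zint_{-i})$, so $\E{Z_i Y_i / p_i} = \E{y_i(1; \Zint_{-i})} + \Cov{Z_i/p_i, y_i(1; \Zint_{-i})}$ and symmetrically for the control term; hence $\E{\eHT} - \eEATE = n^{-1}\sum_{i=1}^{n} ( \Cov{Z_i/p_i, y_i(1; \Zint_{-i})} - \Cov{(1-Z_i)/(1-p_i), y_i(0; \Zint_{-i})} )$. In each covariance, one factor is bounded by $k$ under Assumption~\ref{ass:probabilistic-assignment} and is measurable with respect to $\sigma(Z_i)$, while the other has bounded $s$th moment under Assumption~\ref{ass:bounded-expected-pos} and is measurable with respect to $\sigma(\Zint_{-i})$. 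A covariance inequality for $\alpha$-mixing random variables (Davydov's inequality) with H\"older exponents $(\infty, s, s/(s-1))$ then bounds each covariance by a constant multiple of $\alpha(Z_i, \Zint_{-i})^{(s-1)/s}$, and summing over $i$ and dividing by $n$ yields $\E{\eHT} - \eEATE = \bigO{n^{-1}\mxint}$ by Definition~\ref{def:mixing-coefficients}.

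For the fluctuation, expand $\Var{\eHT} = n^{-2}\sum_{i,j} \Cov{W_i Y_i, W_j Y_j}$ and split the double sum according to whether $\di_{ij} = 1$ or $\di_{ij} = 0$. There are exactly $n\davg$ ordered pairs with $\di_{ij} = 1$, and for each of these Cauchy--Schwarz with the bounded weights and Assumption~\ref{ass:bounded-moments} (which needs only $q \ge 2$) gives $\abs{\Cov{W_i Y_i, W_j Y_j}} \le \norm{W_i Y_i}_2 \norm{W_j Y_j}_2 = \bigO{1}$, so this part contributes $\bigO{n^{-1}\davg}$. When $\di_{ij} = 0$ the index sets $\braces{\ell : I_{\ell i} = 1}$ and $\braces{\ell : I_{\ell j} = 1}$ are disjoint; since $W_i Y_i = W_i\, y_i(\Zint_i)$ is a function of $\Zint_i$ (using $I_{ii} = 1$) and likewise $W_j Y_j$ of $\Zint_j$, monotonicity of the $\alpha$-mixing coefficient gives $\alpha(W_i Y_i, W_j Y_j) \le \alpha(\Zint_i, \Zint_j)$, and Davydov's inequality with exponents $(q, q, q/(q-2))$ together with Assumption~\ref{ass:bounded-moments} bound $\abs{\Cov{W_i Y_i, W_j Y_j}}$ by a constant multiple of $\alpha(\Zint_i, \Zint_j)^{(q-2)/q}$, so this part contributes $\bigO{n^{-1}\mxext}$ by Definition~\ref{def:mixing-coefficients}. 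Thus $\Var{\eHT} = \bigO{n^{-1}\davg + n^{-1}\mxext}$, and combining with the bias gives $\eHT - \eEATE = \bigOp{n^{-0.5}\davg^{0.5} + n^{-0.5}\mxext^{0.5} + n^{-1}\mxint}$; consistency follows since each of the three terms vanishes under Assumptions~\ref{ass:restricted-interference}, \ref{ass:design-mixing} and \ref{ass:design-separation}.

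For the H\'ajek estimator, set $\hat\mu^{\HT}_z = n^{-1}\sum_{i=1}^{n} \indicator{Z_i = z} Y_i / \prob{Z_i = z}$ and $\hat n_z = n^{-1}\sum_{i=1}^{n} \indicator{Z_i = z} / \prob{Z_i = z}$, so that $\eHT = \hat\mu^{\HT}_1 - \hat\mu^{\HT}_0$ and $\eHA = \hat\mu^{\HT}_1 / \hat n_1 - \hat\mu^{\HT}_0 / \hat n_0$, whence $\eHA - \eHT = \hat\mu^{\HT}_1 (\hat n_1^{-1} - 1) - \hat\mu^{\HT}_0 (\hat n_0^{-1} - 1)$. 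Each $\hat n_z$ is a Horvitz--Thompson average with all potential outcomes equal to one, so its bias vanishes and the variance bound above gives $\hat n_z = 1 + \bigOp{n^{-0.5}\davg^{0.5} + n^{-0.5}\mxext^{0.5}}$; hence $\hat n_z \parrow 1$ and $\hat n_z^{-1} - 1 = (1 - \hat n_z)/\hat n_z = \bigOp{n^{-0.5}\davg^{0.5} + n^{-0.5}\mxext^{0.5}}$, while $\hat\mu^{\HT}_z = \bigOp{1}$ because its mean is bounded (by Assumption~\ref{ass:bounded-expected-pos} and the bias bound) and its variance vanishes. Therefore $\eHA - \eHT = \bigOp{n^{-0.5}\davg^{0.5} + n^{-0.5}\mxext^{0.5}}$, so $\eHA$ attains the same rate as $\eHT$. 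The step I expect to be the main obstacle is the accounting in the $\alpha$-mixing covariance bounds: verifying the measurability reductions and the monotonicity of $\alpha$ under measurable maps, choosing the H\"older exponents so that exactly the powers $(q-2)/q$ and $(s-1)/s$ of Definition~\ref{def:mixing-coefficients} appear, and controlling the constants and moment requirements of Assumption~\ref{ass:regularity-conditions} uniformly along the sequence of samples; the ratio perturbation for the H\'ajek estimator is then routine.
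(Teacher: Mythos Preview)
Your proposal is correct and follows essentially the same architecture as the paper: decompose into a deterministic bias term (which the paper identifies as $\eADSE - \eEATE$, since $\E{\eHT} = \eADSE$) plus a mean-zero fluctuation, control the fluctuation's variance via the $\di_{ij}$ split and Davydov's covariance inequality exactly as in Lemma~\ref{lem:arbitrary-hmu-rates}, and recover the H\'ajek estimator by a ratio perturbation as in Lemma~\ref{lem:hmu-hn-rates}. The one substantive difference is the bias bound: you apply Davydov's inequality directly with exponents $(\infty, s)$ to $\Cov{Z_i/p_i,\, y_i(z;\Zint_{-i})}$, whereas the paper's Lemma~\ref{lem:arbitrary-adse-eate-diff} reproves this by an explicit truncation-and-splitting argument on $|y_i(z;\Zint_{-i})|$; your route is shorter and yields the same $[\alpha(Z_i,\Zint_{-i})]^{(s-1)/s}$ factor, at the cost of needing the $L_\infty$ endpoint of Davydov (which is standard).

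One small point to tighten: when you say ``$\hat n_z$ is a Horvitz--Thompson average with all potential outcomes equal to one, so the variance bound above applies,'' be careful that the $\di_{ij}$ you split on are those of the \emph{actual} experiment, not of the hypothetical constant-outcome experiment (where there is no interference). The argument still goes through because $\Cov{\indicator{Z_i=z},\indicator{Z_j=z}} \le \alpha(Z_i,Z_j) \le \alpha(\Zint_i,\Zint_j)$ directly, after which the same $\di_{ij}$ split gives $n^{-1}\davg + n^{-1}\mxext$; this is exactly the paper's Lemma~\ref{lem:arbitrary-hmu-hn-rates}.
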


\begin{remark}
The convergence results for Bernoulli and paired randomization presented in the previous sections can be proven as consequences of Proposition \ref{prop:arbitrary-rates-eate}.
This is not the case for complete randomization.
The current proposition applied to that design would suggest slower rates of convergence than given by Proposition \ref{prop:complete-rates}.
This highlights that Proposition \ref{prop:arbitrary-rates-eate} provides worst-case rates for all designs that satisfy the stated conditions.
Particular designs might be better behaved and, thus, ensure that the estimators converge at faster rates.
For complete randomization, one can prove that restricted interference implies a stronger mixing condition than the conditions defined above.
In particular, Lemmas~\ref{lem:complete-internal-bound} and~\ref{lem:complete-external-bound} in Supplement~A provide bounds on the external and internal mixing coefficients when redefined using the mixing concept introduced by \citet{Blum1963}.
Proposition \ref{prop:complete-rates} follows from this stronger mixing property.
\end{remark}

	\begin{remark}
	If no units interfere, $\Zint_{-i}$ is constant at zero, and Assumption \ref{ass:design-separation} is trivially satisfied. No interference does, however, not imply that Assumption \ref{ass:design-mixing} holds. Consider a design that restricts all treatments to be equal: $Z_1 = Z_2 = \dotsb = Z_n$. The external mixing coefficient would not be zero in this case; in fact, $\mxext \to n/4$. This shows that one must limit the dependencies between treatment assignments even when no units interfere. Proposition \ref{prop:arbitrary-rates-eate} can, in this sense, be seen as an extension of the restrictions imposed in Theorem 1 in \citet{Robinson1982}.
	\end{remark}

\subsection{When design separation fails}

Experimental designs tend to induce dependence between treatments of units that interfere, and experimenters might find it hard to satisfy design separation.
We saw one example of such a design with paired randomization.
It might for this reason be advisable to choose uniform designs such as the Bernoulli or complete randomization if one wants to investigate treatment effects under unknown interference.
These designs cannot align with the interference structure, and one need only consider whether the simpler interference conditions hold.
Another approach is to design the experiment so to ensure design separation.
For example, one should avoid pairing units that are suspected to interfere in the paired randomization design.

	It will, however, not always be possible to ensure that design separation holds. We may ask what the consequences of such departures are. Without Assumption \ref{ass:design-separation}, the effect of units' own treatments cannot be separated from potential spillover effects, and the estimators need not be consistent for \EATE. They may, however, converge to some other quantity, and indeed, they do. The average distributional shift effect from Definition \ref{def:adse} is defined using the conditional distributions of the outcomes. Thus, the estimand does not attempt to completely separate the effect of a unit's own treatment from spillover effects, and design separation is not needed.

	\begin{proposition} \label{prop:arbitrary-rates-adse}
		Under restricted interference and design mixing (Assumptions \ref{ass:restricted-interference} and \ref{ass:design-mixing}), the \HT\ and \HA\ estimators are consistent for \ADSE\ and converge at the following rates:
		\begin{align}
		\eHT - \eADSE = \bigOp[\big]{n^{-0.5}\davg^{0.5} + n^{-0.5}\mxext^{0.5}},
		\\
		\eHA - \eADSE = \bigOp[\big]{n^{-0.5}\davg^{0.5} + n^{-0.5}\mxext^{0.5}}.
		\end{align}
	\end{proposition}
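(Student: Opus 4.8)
The plan is to follow the proof of Proposition~\ref{prop:arbitrary-rates-eate}, with one decisive simplification: \ADSE\ is the \emph{exact} mean of the Horvitz--Thompson estimator, so the bias term that forced the design separation assumption in that proof vanishes entirely, and only a variance bound remains. Write $W_i = Z_i Y_i / p_i - (1-Z_i) Y_i / (1-p_i)$, so that $\eHT = n^{-1}\sum_{i=1}^{n} W_i$. Using only probabilistic assignment (Assumption~\ref{ass:probabilistic-assignment}) and the identity $Z_i Y_i = Z_i\, y_i(1; \Z_{-i})$, a one-line computation gives $\E{W_i} = \E{Y_i \given Z_i = 1} - \E{Y_i \given Z_i = 0}$, hence $\E{\eHT} = \eADSE$ unconditionally. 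Therefore $\eHT - \eADSE = n^{-1}\sum_{i=1}^{n}\paren{W_i - \E{W_i}}$ is a centered average, and it suffices to bound $\Var{\eHT} = n^{-2}\sum_{i=1}^{n}\sum_{j=1}^{n}\Cov{W_i, W_j}$.

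I would split the double sum according to whether $\di_{ij}=1$. The bound $\E[\big]{\abs{Y_i}^q}\leq k^q$ together with $p_i\in[k^{-1},1-k^{-1}]$ gives $\norm{W_i}_q\leq 2k^2$; since $q\geq 2$, Cauchy--Schwarz yields $\abs{\Cov{W_i,W_j}}\leq\norm{W_i}_2\norm{W_j}_2\leq 4k^4$, and as $\sum_i\sum_j\di_{ij}=n\davg$ these pairs contribute $\bigO{n^{-1}\davg}$. For pairs with $\di_{ij}=0$ I would invoke the structural identity $Y_i=y_i(\Zint_i)$ and $I_{ii}=1$, which make $W_i$ a measurable function of $\Zint_i$; hence $\alpha\paren{\sigma(W_i),\sigma(W_j)}\leq\alpha\paren{\Zint_i,\Zint_j}$, and Davydov's covariance inequality for $\alpha$-mixing variables gives $\abs{\Cov{W_i,W_j}}\leq C_q\bracket[\big]{\alpha\paren{\Zint_i,\Zint_j}}^{(q-2)/q}\norm{W_i}_q\norm{W_j}_q$ for a constant $C_q$ depending only on $q$ (when $q=2$ the exponent degenerates and one instead bounds by $4k^4$ the finitely many pairs with positive dependence, in accordance with the $0^0=0$ convention in Definition~\ref{def:mixing-coefficients}). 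Summing over $\braces{(i,j):\di_{ij}=0}$ reproduces $\mxext$ up to a constant, so these pairs contribute $\bigO{n^{-1}\mxext}$. Hence $\Var{\eHT}=\bigO{n^{-1}(\davg+\mxext)}$, and Chebyshev's inequality with $\sqrt{a+b}\leq\sqrt{a}+\sqrt{b}$ gives $\eHT-\eADSE=\bigOp[\big]{n^{-0.5}\davg^{0.5}+n^{-0.5}\mxext^{0.5}}$, which is $\littleOp{1}$ under Assumptions~\ref{ass:restricted-interference} and~\ref{ass:design-mixing}.

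For the H\'ajek estimator I would use the standard ratio argument. Let $\delta_z := n^{-1}\sum_{i=1}^{n}\E{Y_i\given Z_i=z}$, which is bounded by Jensen's inequality and Assumption~\ref{ass:bounded-moments}, and let $\hn$ denote $n^{-1}\sum_{i=1}^{n}\indicator{Z_i=z}/\prob{Z_i=z}$. The treatment-$z$ arm of $\eHA$ is $\hmu/\hn$, where $\hmu = n^{-1}\sum_i\indicator{Z_i=z}Y_i/\prob{Z_i=z}$ is the treatment-$z$ arm of $\eHT$, and $\hmu/\hn - \delta_z = \hn^{-1}\bracket[\big]{(\hmu-\delta_z) - (\hn-1)\delta_z}$. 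The first difference is the one-armed analogue of the \HT\ analysis and obeys the same rate; and $\hn-1$ is itself a centered average of $\sigma(\Zint_i)$-measurable variables bounded by $k$, so the identical two-group argument gives $\hn-1=\bigOp[\big]{n^{-0.5}\davg^{0.5}+n^{-0.5}\mxext^{0.5}}$, which in particular pins $\hn$ away from zero with probability tending to one. Chaining these estimates yields the stated rate for $\eHA-\eADSE$.

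The main obstacle is the $\di_{ij}=0$ case: one must confirm that $W_i$ (and, for the H\'ajek part, $\indicator{Z_i=z}/\prob{Z_i=z}$) really is measurable with respect to $\sigma(\Zint_i)$ --- this is exactly what converts the probabilistic dependence between outcomes into the alpha-mixing coefficients that define $\mxext$ --- and that the moment exponent supplied by Davydov's inequality lines up with the $(q-2)/q$ hard-wired into Definition~\ref{def:mixing-coefficients}, including the boundary cases $q=2$ and $q\to\infty$. Everything else is bookkeeping; the point worth stressing is that, unlike in Proposition~\ref{prop:arbitrary-rates-eate}, no $\mxint$ term arises because \ADSE\ leaves nothing to debias, which is precisely why Assumption~\ref{ass:design-separation} can be omitted here.
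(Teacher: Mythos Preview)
Your proposal is correct and follows essentially the same route as the paper: unbiasedness of $\eHT$ for $\eADSE$ (the paper's Lemma~\ref{lem:hmu-cmu-bias}), a variance bound that splits pairwise covariances on $\di_{ij}$ and applies Davydov's inequality to the $\di_{ij}=0$ pairs (the paper's Lemma~\ref{lem:arbitrary-hmu-rates}), and a ratio/linearization step for $\eHA$ combined with the analogous variance bound on $\hn$ (the paper's Lemmas~\ref{lem:hmu-hn-rates} and~\ref{lem:arbitrary-hmu-hn-rates}). The only packaging difference is that the paper works arm by arm with $\hmu-\cmu$ while you combine both arms into a single $W_i$; this is cosmetic and does not affect the argument.
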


\section{Confidence statements}

The previous section demonstrated that the estimators concentrate around an interpretable version of the average treatment effect in large samples.
Experimenters generally present such estimates together with various statements about the precision of the estimation method.
These statements should be interpreted with caution.
This is because the precision of the estimator may be worse when units interfere, and conventional methods used to estimate the precision may not be accurate.

The fact that the precision may be worse is clear from the rates of convergence.
Given that the design is prudently chosen, the estimators converge at a root-$n$ rate under no interference, but the propositions in the previous sections show that the estimators may converge at a slower rate when units interfere.
The precision will, however, generally improve as the size of the sample grows.
For example, as shown in the proof of Proposition~\ref{prop:bernoulli-rates}, the variance under a Bernoulli design can be bounded as $\Var{\eHT} \leq k^4 n^{-1} \davg$ where $k$ is the constant in Assumption~\ref{ass:regularity-conditions}.
Under Assumption~\ref{ass:restricted-interference}, the variance converges to zero, implying convergence in mean square.
This is in contrast to similar estimators for other estimands under arbitrary interference.
For example, \citet{Basse2018Limitations} show that the variance of the Horvitz-Thompson estimator does not generally decrease as the sample grows when used to estimate the all-or-nothing effect discussed in Section~\ref{sec:related-definitions}.

\subsection{The conventional variance estimator}\label{sec:conventional-var-est}

Realizing that the precision may be worse under interference, one may ask whether common methods used to gauge the precision accurately reflect this.
In this section, we elaborate on this question by investigating the validity of a commonly-used variance estimator.
To avoid some technical difficulties of little interest for the current discussion, we limit our focus to the conventional Horvitz-Thompson estimator of the variance of the Horvitz-Thompson point estimator under a Bernoulli design:
\begin{equation}
	\eVarBer = \frac{1}{n^2} \sum_{i=1}^n \frac{Z_i Y_i^2}{p_i^2} + \frac{1}{n^2} \sum_{i=1}^n \frac{(1 - Z_i) Y_i^2}{(1 - p_i)^2}.
\end{equation}

The estimator is conservative under no interference, meaning that its expectation is greater than the true variance.
On a normalized scale, the bias does not diminish asymptotically, so inferences based on the estimator will be conservative also in large samples.
In particular, with a Bernoulli design under no interference
\begin{equation}
	n \bracket[\Big]{\eVarBer - \Var{\eHT}} \parrow \eMSTE \geq 0,
\end{equation}
where $\eMSTE$ is the mean square treatment effect:
\begin{equation}
	\eMSTE = \frac{1}{n} \sum_{i=1}^{n} \paren[\big]{\E{\tau_i(\Z_{-i})}}^2.
\end{equation}
We define $\eMSTE$ using $\tau_i(\Z_{-i})$ because this will allow us to use it in the following discussion.
In the current setting, we could just as well have used $\tau_i$ because $\tau_i(\z_{-i})$ does not depend on $\z_{-i}$ when units do not interfere.

To characterize the behavior of the estimator under interference, it is helpful to introduce additional notation.
Let $\sote_{ij}(z)$ be the expected treatment effect on unit $i$'s outcome of changing unit $j$'s treatment given that $i$ is assigned to treatment $z$.
In other words, it is the spillover effect from $j$ to $i$ holding $i$'s treatment fixed at $z$.
Formally, we can express this as
\begin{equation}
	\sote_{ij}(z) = \E{y_{ij}(z, 1; \Z_{-ij}) - y_{ij}(z, 0; \Z_{-ij})},
\end{equation}
where, similar to above,
\begin{equation}
	\Z_{-ij} = \paren{Z_1, \dotsc, Z_{i-1}, Z_{i+1}, \dotsc, Z_{j-1}, Z_{j+1}, \dotsc, Z_n}
\end{equation}
is the treatment vector with the $i$th and $j$th elements deleted, and $y_{ij}(a, b; \z_{-ij})$ is unit $i$'s potential outcome when units $i$ and $j$ are assigned to treatment $a$ and $b$, respectively, and the remaining units' assignments are $\z_{-ij}$.

To describe the overall spillover effect between two units, consider
\begin{equation}
	\breve{\sote}_{ij} = \E[\big]{\sote_{ij}(1 - Z_i)} = (1 - p_i) \sote_{ij}(1) + p_i \sote_{ij}(0).
\end{equation}
This is the expected spillover effect using the opposite probabilities for $i$'s treatment.
That is, if $i$ has a low probability of be assigned $Z_i = 1$, so that $p_i$ is close to zero, then $\breve{\sote}_{ij}$ gives more weight to $\sote_{ij}(1)$, which is the spillover effect when $Z_i = 1$.
Let
\begin{equation}
	\breve{Y}_i = (1 - p_i) \E{Y_i \given Z_i = 1} + p_i \E{Y_i \given Z_i = 0},
\end{equation}
the same type of average for the outcome of unit $i$.
These type of quantities occasionally appear in variances of design-based estimators.
The ``tyranny of the minority'' estimator introduced by \citet{Lin2013Agnostic} is one such example.
%The quantities are easier to interpret when the units are approximately equally likely to be assigned to either treatment.
%Here, $p_i$ is close to one half, so $\breve{\sote}_{ij} \approx \E[\big]{\sote_{ij}(Z_i)}$.

\begin{proposition}\label{prop:var-est-bernoulli-limit}
	Under a Bernoulli design and Assumption~\ref{ass:regularity-conditions} with $q \geq 4$,
	\begin{equation}
		n\davg^{-1} \bracket[\Big]{\eVarBer - \Var{\eHT}}
		\parrow
		\frac{\eMSTE}{\davg} - B_1 - B_2,
	\end{equation}
	where
	\begin{align}
		B_1 &= \frac{1}{n\davg} \sum_{i=1}^{n} \sum_{j\neq i} \paren[\Big]{\breve{\sote}_{ij}\breve{\sote}_{ji} + 2\breve{Y}_j \bracket[\big]{\sote_{ij}(1) - \sote_{ij}(0)}},
		\\
		B_2 &= \frac{1}{n\davg} \sum_{i=1}^{n} \sum_{j\neq i} \sum_{a = 0}^1 \sum_{b = 0}^1 \paren{-1}^{a + b} \Cov[\big]{Y_i , Y_j \given Z_i = a, Z_j = b}.
	\end{align}
\end{proposition}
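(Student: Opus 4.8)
The strategy is to separate an exact, non-asymptotic identity for the bias $\E{\eVarBer}-\Var{\eHT}$ from a concentration bound for $\eVarBer$. Because $\Var{\eHT}$, $\eMSTE$, $B_1$, and $B_2$ are deterministic, the proposition follows once I establish
\begin{equation}
n\davg^{-1}\bracket[\big]{\E{\eVarBer}-\Var{\eHT}}=\frac{\eMSTE}{\davg}-B_1-B_2
\qquad\text{and}\qquad
n\davg^{-1}\bracket[\big]{\eVarBer-\E{\eVarBer}}\parrow 0 .
\end{equation}

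For the identity, write $\hat\tau_i=Z_iY_i/p_i-(1-Z_i)Y_i/(1-p_i)$ and $\bar\tau_i=\E{\tau_i(\Z_{-i})}$, so $\eHT=n^{-1}\sum_i\hat\tau_i$. Since $Z_i\in\braces{0,1}$ the cross term in $\hat\tau_i^2$ vanishes and $\eVarBer=n^{-2}\sum_i\hat\tau_i^2$, and under the Bernoulli design $Z_i$ is independent of $\Z_{-i}$, which gives $\E{\hat\tau_i}=\bar\tau_i$. Hence $\E{\eVarBer}=n^{-2}\sum_i\E{\hat\tau_i^2}=n^{-2}\sum_i\Var{\hat\tau_i}+\eMSTE/n$, and subtracting $\Var{\eHT}=n^{-2}\sum_i\Var{\hat\tau_i}+n^{-2}\sum_{i\neq j}\Cov{\hat\tau_i,\hat\tau_j}$ yields
\begin{equation}
\E{\eVarBer}-\Var{\eHT}=\frac{\eMSTE}{n}-\frac{1}{n^2}\sum_{i\neq j}\Cov{\hat\tau_i,\hat\tau_j} .
\end{equation}
Each $\hat\tau_i$ depends on the treatments only through $\braces{Z_\ell:I_{\ell i}=1}$, so independence of the coordinates of $\Z$ forces $\Cov{\hat\tau_i,\hat\tau_j}=0$ whenever $\di_{ij}=0$; it remains to show $\sum_{i\neq j}\Cov{\hat\tau_i,\hat\tau_j}=n\davg(B_1+B_2)$, from which the first display follows by multiplying the preceding equation by $n\davg^{-1}$.

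To evaluate a generic covariance, condition on $(Z_i,Z_j)$ and use the product form of the design to get $\E{\hat\tau_i\hat\tau_j}=\sum_{a,b\in\braces{0,1}}(-1)^{a+b}\E{Y_iY_j\given Z_i=a,Z_j=b}$. Writing each conditional second moment as a conditional covariance plus a product of conditional means, the covariance part summed over ordered pairs is $n\davg B_2$ by the definition of $B_2$. For the product-of-means part I use the representation $\E{Y_i\given Z_i=a,Z_j=b}=m_i(a)+(b-p_j)\,\sote_{ij}(a)$, where $m_i(a)=\E{y_i(a;\Z_{-i})}=\E{Y_i\given Z_i=a}$; this holds because $Z_j$ is independent of the remaining treatments under the Bernoulli design, and $\breve{Y}_i=(1-p_i)m_i(1)+p_im_i(0)$. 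Substituting, expanding the product, and carrying the signs $(-1)^{a+b}$ through the four resulting alternating sums over $(a,b)$ produces $\bar\tau_i\bar\tau_j$ (which cancels the $-\bar\tau_i\bar\tau_j$ in $\Cov{\hat\tau_i,\hat\tau_j}$) plus $\breve{\sote}_{ij}\breve{\sote}_{ji}+\breve{Y}_i[\sote_{ji}(1)-\sote_{ji}(0)]+\breve{Y}_j[\sote_{ij}(1)-\sote_{ij}(0)]$; relabeling $i\leftrightarrow j$ in one of the last two terms and summing over ordered pairs gives $n\davg B_1$. This bookkeeping — pinning down the parametrization and tracking the $(-1)^{a+b}$ signs through every product term — is the main obstacle: it is purely mechanical, but it is also the step that genuinely uses Bernoulli independence, and the analogous identity fails under complete randomization.

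For the concentration statement, put $W_i=\hat\tau_i^2$, so $\eVarBer-\E{\eVarBer}=n^{-2}\sum_i(W_i-\E{W_i})$. Assumption~\ref{ass:probabilistic-assignment} gives $W_i\leq k^2Y_i^2$, so Assumption~\ref{ass:bounded-moments} with $q\geq 4$ gives $\Var{W_i}\leq\E{W_i^2}\leq k^4\E{Y_i^4}\leq k^8$; and, as for $\hat\tau_i$, $\Cov{W_i,W_j}=0$ when $\di_{ij}=0$. By Cauchy--Schwarz, $\Var{\eVarBer}=n^{-4}\sum_{i,j}\di_{ij}\Cov{W_i,W_j}\leq k^8\davg/n^3$, hence $\Var{n\davg^{-1}\eVarBer}\leq k^8/(n\davg)\leq k^8/n\to 0$ since $\davg\geq 1$. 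Chebyshev's inequality then gives $n\davg^{-1}[\eVarBer-\E{\eVarBer}]\parrow 0$, which with the identity above completes the proof.
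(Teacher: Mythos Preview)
Your proposal is correct and follows essentially the same approach as the paper: both split into a deterministic identity and a concentration step, decompose each off-diagonal covariance $\Cov{\hat\tau_i,\hat\tau_j}$ via $\sum_{a,b}(-1)^{a+b}\E{Y_iY_j\mid Z_i=a,Z_j=b}$ into a conditional-covariance part ($B_2$) and a product-of-means part handled by the representation $\E{Y_i\mid Z_i=a,Z_j=b}=m_i(a)+(b-p_j)\sote_{ij}(a)$, and both bound $\Var{\eVarBer}$ using $q\geq 4$ and the fact that $\Cov{\hat\tau_i^2,\hat\tau_j^2}=0$ when $\di_{ij}=0$. The only cosmetic difference is that the paper introduces the intermediate constant $a_n=n\E{\eVarBer}$ and proves the concentration bound as a separate lemma with the explicit rate $n\eVarBer-a_n=\bigOp{n^{-0.5}\davg^{0.5}}$, whereas you work directly with $\E{\eVarBer}$ and fold the variance bound into the proof.
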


The proposition extends the previous limit result to settings with interference.
Indeed, as shown by Corollary~\ref{lem:var-est-no-interference-limit} in Supplement~A, the limit under no interference is a special case of Proposition~\ref{prop:var-est-bernoulli-limit}.
The relevant scaling under interference is $n\davg^{-1}$ rather than $n$, accounting for the fact that the variance may diminish at a slower rate.
In particular, the scaling ensures that $n\davg^{-1}\Var{\eHT}$ is on a constant scale.
The constant $q$ in Assumption~\ref{ass:regularity-conditions} is now required to be at least four, as is typically the case to ensure the convergence of variance estimators.

Compared to the setting without interference, the limit contains two additional terms.
The first additional term, $B_1$, captures the consequence of direct interference between the units.
That is, it captures whether unit $i$ interferes with $j$ directly, so $\breve{\sote}_{ji} \neq 0$.
If there is no interference, there are no spillover effects, so $\sote_{ij}(1) = \sote_{ij}(0) = 0$, and $B_1$ is zero.
The second term captures the consequence of indirect interference.
That is, when a third unit interferes with both units $i$ and $j$.
There are no such units when there is no interference, and $B_2$ is zero.

While all three terms can dominate the others asymptotically, the third will generally be the relevant one.
In particular, if the average interference dependence grows, so $\davg \to \infty$, then $\eMSTE$ is negligible on a normalize scale.
The amount of direct interference dependence is given by $\icoutavg$, so $B_1 = \bigO{\davg^{-1} \icoutavg}$.
The amount of indirect dependence is given by $\davg$, so $B_2 = \bigO{1}$.
Hence, $B_2$ is the dominating term whenever $\davg$ dominates $\icoutavg$, which, as we noted above, is the case whenever the interference is not too tightly clustered.

The key insight here is that when there is interference, these two additional terms are generally non-zero, and they may be both positive and negative.
As a consequence, the variance estimator may be asymptotically anti-conservative, painting an overly optimistic picture about the precision of the point estimator.
Using the conventional variance estimator under interference will, in other words, be misleading.

\subsection{Alternative estimators}

Having established that the variance estimator may be overly optimistic, the next question is if we can account for the potential anti-conservativeness.
The route we will explore here is to inflate the conventional estimator with various measures of the amount of interference.
Besides providing a simple way to construct a reasonable variance estimator when these measures are known, or presumed to be known, this route facilitates constructive discussions about the consequences of interference even when the measures are not known.
In particular, a sensitivity analysis becomes straightforward as the conventional variance estimates are simply multiplied by the sensitivity parameter.

We will not derive the limits of these modified variance estimators.
Indeed, they do not always have limits.
We will instead focus on the main property we seek, namely conservativeness in large samples.
This is captured by a one-sided consistency property, as described in the following definition.

\begin{definition}\label{def:asymptotically-conservative}
	A variance estimator $\hat{V}$ is said to be \emph{asymptotically conservative} with respect to the variance of $\eHT$ if
	\begin{equation}
	\lim_{\varepsilon \to 0^+} \lim_{n \to \infty} \prob[\Big]{n\davg^{-1} \bracket[\big]{\hat{V} - \Var{\eHT}} \leq - \varepsilon} = 0.
	\end{equation}
\end{definition}

The interference measure that first might come to mind is the one from above, namely the average interference dependence, $\davg$.
Using this quantity for the inflation, we would get the following estimator:
\begin{equation}
	\eVarAvg = \davg \eVarBer.
\end{equation}
This will, however, generally not be enough to ensure conservativeness.
The concern is that the interference structure could couple with the potential outcomes in such a way that the interference introduces dependence exactly between units with large outcomes.
Using $\davg$ for the inflation requires that no such coupling takes place, or that it is asymptotically negligible.
The following proposition formalizes the idea.

\begin{proposition}\label{prop:var-est-avg-conservative}
	The variance estimator $\eVarAvg$ is asymptotically conservative under a Bernoulli design if Assumptions~\ref{ass:regularity-conditions} and~\ref{ass:restricted-interference} hold with $q \geq 4$ and $\textsc{sd}_{\sigma^2} = \littleO{\drms^{-1} \davg}$ where
	\begin{align}
	&\textsc{sd}_{\sigma^2} = \sqrt{\frac{1}{n}\sum_{i=1}^{n}\bracket[\bigg]{\sigma_i^2 - \frac{1}{n} \sum_{j=1}^{n} \sigma_j^2}^2},
	\qquad
	\sigma_i^2 = \Var[\bigg]{\frac{Z_iY_i}{p_i} - \frac{(1- Z_i)Y_i}{1 - p_i}},
	\\
	&\qquad\qquad\qquad\qquad\qquad\text{and}\qquad
	\drms = \sqrt{\frac{1}{n} \sum_{i=1}^n \di_i^2}.
	\end{align}
\end{proposition}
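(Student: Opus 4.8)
The plan is to split the claim into a mean computation and a concentration bound. Write $\hat\tau_i = Z_iY_i/p_i - (1-Z_i)Y_i/(1-p_i)$, so that $\eHT = n^{-1}\sum_i\hat\tau_i$ and, since $Z_i(1-Z_i)=0$, also $n\eVarBer = n^{-1}\sum_i\hat\tau_i^2$; hence $n\davg^{-1}\bigl[\eVarAvg - \Var{\eHT}\bigr] = n\eVarBer - n\davg^{-1}\Var{\eHT}$. Under a Bernoulli design $Z_i$ is independent of $\Z_{-i}$, which gives $\E{\hat\tau_i}=\E{\tau_i(\Z_{-i})}$ and hence $\E{\hat\tau_i^2} = \sigma_i^2 + \bigl(\E{\tau_i(\Z_{-i})}\bigr)^2$; summing and writing $\bar\sigma^2 = n^{-1}\sum_i\sigma_i^2$ yields $\E{n\eVarBer} = \bar\sigma^2 + \eMSTE$. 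On the other side $n^2\Var{\eHT} = \sum_i\sigma_i^2 + \sum_{i\neq j}\Cov{\hat\tau_i,\hat\tau_j}$, and because $\hat\tau_i$ is a function of $\Z$ only through the treatments $\{Z_\ell : I_{\ell i}=1\}$ of units interfering with $i$ (using $I_{ii}=1$ and $Y_i = y_i(\Zint_i)$), independence of the coordinates of $\Z$ forces $\Cov{\hat\tau_i,\hat\tau_j}=0$ whenever $\di_{ij}=0$. Consequently
\begin{equation}
\E{n\eVarBer} - n\davg^{-1}\Var{\eHT} = (1-\davg^{-1})\bar\sigma^2 + \eMSTE - \frac{1}{n\davg}\sum_{i\neq j}\Cov{\hat\tau_i,\hat\tau_j},
\end{equation}
the last sum being supported on pairs with $\di_{ij}=1$.

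The core of the argument is to show this mean is at least $-\littleO{1}$. Cauchy--Schwarz gives $\abs{\Cov{\hat\tau_i,\hat\tau_j}} \le \sigma_i\sigma_j \le \tfrac{1}{2}(\sigma_i^2+\sigma_j^2)$, so by symmetry of $\di_{ij}$,
\begin{equation}
\Bigl|\sum_{i\neq j}\Cov{\hat\tau_i,\hat\tau_j}\Bigr| \le \sum_{i\neq j:\,\di_{ij}=1}\tfrac{1}{2}(\sigma_i^2+\sigma_j^2) = \sum_{i=1}^{n}\sigma_i^2(\di_i-1).
\end{equation}
Splitting $\sigma_i^2 = \bar\sigma^2 + (\sigma_i^2-\bar\sigma^2)$ and using $\sum_i(\di_i-1) = n(\davg-1)$, the $\bar\sigma^2$ part of $(n\davg)^{-1}\sum_i\sigma_i^2(\di_i-1)$ equals exactly $(1-\davg^{-1})\bar\sigma^2$, which cancels the leading term above; the centered remainder is bounded by Cauchy--Schwarz once more,
\begin{equation}
\frac{1}{n\davg}\Bigl|\sum_i(\sigma_i^2-\bar\sigma^2)(\di_i-1)\Bigr| \le \frac{1}{n\davg}\,\sqrt{n}\,\textsc{sd}_{\sigma^2}\cdot\sqrt{n}\,\drms = \davg^{-1}\,\textsc{sd}_{\sigma^2}\,\drms,
\end{equation}
where $\sum_i(\di_i-1)^2 \le \sum_i\di_i^2 = n\drms^2$ since $\di_i\ge 1$. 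As $\eMSTE\ge 0$ and the hypothesis $\textsc{sd}_{\sigma^2} = \littleO{\drms^{-1}\davg}$ forces $\davg^{-1}\textsc{sd}_{\sigma^2}\drms \to 0$, we obtain $\E{n\eVarBer} - n\davg^{-1}\Var{\eHT} \ge \eMSTE - \davg^{-1}\textsc{sd}_{\sigma^2}\drms \ge -\littleO{1}$.

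To upgrade this to the required probability statement, note that $n\eVarBer = n^{-1}\sum_i W_i$ with $W_i = Z_iY_i^2/p_i^2 + (1-Z_i)Y_i^2/(1-p_i)^2$, and $W_i$, like $\hat\tau_i$, depends on $\Z$ only through the treatments of units interfering with $i$, so $\Cov{W_i,W_j}=0$ when $\di_{ij}=0$. The same argument that bounds $\Var{\eHT}$ in the proof of Proposition~\ref{prop:bernoulli-rates} — with $q\ge 4$ ensuring the relevant fourth moments are finite — gives $\Var{n\eVarBer} = \bigO{n^{-1}\davg}$, which is $\littleO{1}$ under Assumption~\ref{ass:restricted-interference}; hence $n\eVarBer - \E{n\eVarBer}\parrow 0$ by Chebyshev. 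Combining the two pieces, $n\davg^{-1}\bigl[\eVarAvg - \Var{\eHT}\bigr] \ge -\littleOp{1}$, so $\prob{n\davg^{-1}[\eVarAvg - \Var{\eHT}]\le -\varepsilon}\to 0$ for every fixed $\varepsilon>0$, which is more than Definition~\ref{def:asymptotically-conservative} requires.

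The step I expect to be the main obstacle is the cancellation in the second paragraph: bounding the cross-covariances crudely (say by $\dmax\bar\sigma^2$, or by $\drms^2$) is too lossy to survive the division by $\davg$, and one has to isolate precisely the $(1-\davg^{-1})\bar\sigma^2$ term — the companion of the conservative overshoot $\eMSTE$ — so that only the centered fluctuations of the $\sigma_i^2$ survive, which is exactly the quantity $\textsc{sd}_{\sigma^2}$ measures. The remaining ingredients (the moment bounds from $q\ge 4$ and the $\bigO{n^{-1}\davg}$ variance bound) are routine and parallel arguments already used for Proposition~\ref{prop:bernoulli-rates}.
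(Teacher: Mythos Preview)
Your proof is correct and follows essentially the same approach as the paper: both bound the off-diagonal covariances by $\tfrac{1}{2}(\sigma_i^2+\sigma_j^2)$ to reduce $n\Var{\eHT}$ to (at most) $n^{-1}\sum_i \di_i\sigma_i^2$, center $\sigma_i^2$ around $\bar\sigma^2$ so that the $\bar\sigma^2$-contribution cancels against the expectation of $n\eVarBer$, and then control the centered remainder by Cauchy--Schwarz as $\drms\,\textsc{sd}_{\sigma^2}/\davg$. The paper packages the concentration step through an abstract lemma (its Lemma~A20 with $a_n=\bar\sigma^2+\eMSTE$ and $b_n=\davg$) and drops $\eMSTE$ via $\bar\sigma^2\le a_n$, whereas you work directly and keep $\eMSTE\ge 0$ explicit, but these are cosmetic differences.
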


The condition $\textsc{sd}_{\sigma^2} = \littleO{\drms^{-1} \davg}$ is the design-based equivalent of a homoscedasticity assumption.
In particular, $\sigma_i^2$ is the unit-level contribution to the variance of the point estimator, so $\textsc{sd}_{\sigma^2}$ is the standard deviation of the unit-level variances.
The quantity $\drms$ is the root mean square of $\di_i$, and $\davg$ is the average, so $\drms^{-1} \davg \leq 1$.
The condition thus states that the $\textsc{sd}_{\sigma^2}$ diminishes quickly, requiring that the unit-level variances are approximately the same.
When this is the case, no coupling of consequence can happen, so the inflated estimator is conservative.

The homoscedasticity condition in Proposition~\ref{prop:var-est-avg-conservative} is strong, and it will generally not hold.
The estimator must in these cases be further inflated.
In particular, to capture possible coupling, the inflation factor must take into account the skewness of the unit-level interference dependencies.
A straightforward way is to substitute the maximum for the mean, producing
\begin{equation}
	\eVarMax = \dmax \eVarBer,
\end{equation}
where $\dmax$ is the maximum of $\di_i$ over $i \in \bfU$.

\begin{proposition}\label{prop:var-est-max-conservative}
	The variance estimator $\eVarMax$ is asymptotically conservative under a Bernoulli design if either
	\begin{enumerate}
		\item Assumption~\ref{ass:regularity-conditions} holds with $q \geq 4$ and $\dmax = \littleO{n^{0.5}\davg^{0.5}}$, or
		\item Assumptions~\ref{ass:regularity-conditions} and~\ref{ass:restricted-interference} hold with $q \geq 4$ and $\eMSTE = \bigOmega{1}$.
	\end{enumerate}
\end{proposition}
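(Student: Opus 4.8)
The plan is to decompose $n\davg^{-1}\bigl[\eVarMax - \Var{\eHT}\bigr]$ as a deterministic ``bias'' term plus a mean-zero fluctuation, argue the deterministic term is nonnegative, and control the fluctuation --- crudely in case~1 and with one extra idea in case~2. Throughout, $\dmax$, $\davg$ and $\eMSTE$ are deterministic functions of the interference structure and the potential outcomes, so $\E{\eVarMax} = \dmax\,\E{\eVarBer}$ and $\Var{\eVarMax} = \dmax^2\,\Var{\eVarBer}$.

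First I would record the moment structure under a Bernoulli design. Writing $g_i = Z_iY_i/p_i - (1-Z_i)Y_i/(1-p_i)$ and $W_i = Z_iY_i^2/p_i^2 + (1-Z_i)Y_i^2/(1-p_i)^2$, so that $\eHT = n^{-1}\sum_i g_i$ and $\eVarBer = n^{-2}\sum_i W_i$, independence of the coordinates of $\Z$ gives $\E{g_i} = \E{\tau_i(\Z_{-i})}$ and $\E{W_i} = \E{g_i^2}$. Hence $\E{\eVarBer} = n^{-2}\sum_i \sigma_i^2 + n^{-1}\eMSTE$ with $\sigma_i^2 = \Var{g_i}$, while $\Var{\eHT} = n^{-2}\sum_i \sigma_i^2 + n^{-2}\sum_{i\neq j}\Cov{g_i,g_j}$. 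Since $g_i$ and $W_i$ are functions of $\{Z_\ell : I_{\ell i}=1\}$ and $\Z$ has independent coordinates, $\Cov{g_i,g_j} = \Cov{W_i,W_j} = 0$ whenever $\di_{ij} = 0$. Bounding the surviving covariances by Cauchy--Schwarz, then applying the AM--GM inequality and symmetry of $\di_{ij}$, yields $\bigl|n^{-2}\sum_{i\neq j}\Cov{g_i,g_j}\bigr| \leq (\dmax - 1)\,n^{-2}\sum_i \sigma_i^2$, and combining this with the two moment identities above gives the one-sided inequality I will use repeatedly,
\begin{equation*}
\dmax\,\E{\eVarBer} - \Var{\eHT} \;\geq\; \frac{\dmax}{n}\,\eMSTE \;\geq\; 0 ,
\end{equation*}
i.e.\ $n\davg^{-1}\bigl(\dmax\,\E{\eVarBer} - \Var{\eHT}\bigr) \geq (\dmax/\davg)\,\eMSTE \geq 0$; this mirrors Proposition~\ref{prop:var-est-bernoulli-limit} but retains only the direction needed.

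Next I would bound the fluctuation of $\eVarBer$. With $q \geq 4$, probabilistic assignment and Assumption~\ref{ass:regularity-conditions} give $\E{W_i^2} \leq 2k^8$, and since $\Cov{W_i,W_j} = 0$ unless $\di_{ij} = 1$ and there are at most $n\davg$ such pairs, $\Var{\eVarBer} \leq 2k^8\davg\, n^{-3}$. In case~1, where $\dmax = \littleO{n^{0.5}\davg^{0.5}}$, this gives $\Var{n\davg^{-1}\eVarMax} = n^2\davg^{-2}\dmax^2\,\Var{\eVarBer} \leq 2k^8\dmax^2/(n\davg) \to 0$, so $n\davg^{-1}(\eVarMax - \E{\eVarMax}) \parrow 0$ by Chebyshev's inequality. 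Since the deterministic term is nonnegative, $\prob{n\davg^{-1}(\eVarMax - \Var{\eHT}) \leq -\varepsilon} \leq \prob{n\davg^{-1}(\eVarMax - \E{\eVarMax}) \leq -\varepsilon} \to 0$ for every $\varepsilon > 0$, which is the required conservativeness.

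Case~2 is the main obstacle: now $\dmax$ may be far larger than $\davg$, so the crude Chebyshev bound on $n\davg^{-1}\eVarMax$ need not vanish. The fix is to peel off the factor $\dmax/\davg \geq 1$ instead of controlling $\dmax\eVarBer$ directly. Using the one-sided inequality above,
\begin{equation*}
n\davg^{-1}\bigl(\eVarMax - \Var{\eHT}\bigr) \;\geq\; \frac{\dmax}{\davg}\Bigl[\,\eMSTE + n\bigl(\eVarBer - \E{\eVarBer}\bigr)\Bigr] ,
\end{equation*}
so the event $\{n\davg^{-1}(\eVarMax - \Var{\eHT}) \leq -\varepsilon\}$ forces the bracket to be negative, i.e.\ $n(\eVarBer - \E{\eVarBer}) < -\eMSTE$. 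Since $\eMSTE = \bigOmega{1}$ there is $c > 0$ with $\eMSTE \geq c$ for all large $n$, and Chebyshev together with $\Var{n\eVarBer} = n^2\Var{\eVarBer} \leq 2k^8\davg\,n^{-1}$ gives $\prob{n(\eVarBer - \E{\eVarBer}) < -\eMSTE} \leq 2k^8\davg/(c^2 n) \to 0$ by Assumption~\ref{ass:restricted-interference}. Letting $n \to \infty$ and then $\varepsilon \to 0^+$ completes both cases. The delicate points are getting the $(\dmax - 1)$ factor and the sign right in the covariance bound of the first step, and recognizing that in case~2 one should apply Chebyshev to $n\eVarBer$ --- whose variance is $\bigO{\davg/n}$ --- after factoring out $\dmax/\davg$, using $\eMSTE = \bigOmega{1}$ as the fixed budget absorbing the fluctuation.
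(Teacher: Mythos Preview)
The proposal is correct and follows essentially the same approach as the paper: both establish the key one-sided inequality $\dmax\,\E{\eVarBer} - \Var{\eHT} \geq (\dmax/n)\,\eMSTE$ via the Cauchy--Schwarz/AM--GM bound on the off-diagonal covariances (the paper's Lemma~\ref{lem:bound-normalized-variance} followed by $\di_i \leq \dmax$), and then control the fluctuation of $n\eVarBer$ by Chebyshev using the $\davg$-based variance bound. Your case~2 ``factor out $\dmax/\davg$'' trick is exactly what the paper's Lemma~\ref{lem:positive-variance-conservative-route} does once one unpacks $n b_n^{-1}\hat V = n\eVarBer$ and $a_n = n\E{\eVarBer}$; you have simply inlined what the paper packages into auxiliary lemmas.
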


The proposition demonstrates that the variance estimator inflated with $\dmax$ is conservative without homoscedasticity assumptions.
The first case states that $\dmax$ is dominated by the geometric mean of $n$ and $\davg$, saying that the maximum $\di_i$ does not grow too quickly compared to the average.
The condition ensures that the inflated estimator concentrates.
As noted above, however, an estimator can be asymptotically conservative even when it is not convergent.
The concern in that case is that part of the sampling distribution may approach zero at a faster rate than the inflation factor, leading to anti-conservativeness.
A sufficient condition to avoid such behavior is the second case, namely that $\eMSTE$ is asymptotically bounded from below.
In other words, that the unit-level treatment effects do not concentrate around zero, implying either that the average treatment effect is not zero or that there is some effect heterogeneity.

Using $\dmax$ for the inflation will generally be too conservative.
An intermediate option between the average and the maximum would be ideal.
Let $\mathbf{D}$ be a matrix whose typical argument is $\di_{ij}$, and let $\eigmax$ be the largest eigenvalue of this matrix.
Put differently, if $\di_{ij}$ denotes edges in a graph in which the units are vertices, then $\eigmax$ is the spectral radius of that graph.
The quantity acts as a measure of the amount of interference in the sense that $\eigmax = 1$ when there is no interference, and $\eigmax = n$ when all units are interference dependent.
Furthermore, $\eigmax$ weakly increases with the interference.
Using this quantity as the inflation factor, the variance estimator becomes
\begin{equation}
	\eVarSR = \eigmax \eVarBer.
\end{equation}
The spectral radius is such that $\davg \leq \eigmax \leq \dmax$, indicating that the estimator inflated by $\eigmax$ is more conservative than when inflated by $\davg$ but less conservative than when inflated by $\dmax$.
The inflation is sufficient for conservativeness under similar conditions as before.

\begin{proposition}\label{prop:var-est-eig-conservative}
	The variance estimator $\eVarSR$ is asymptotically conservative under a Bernoulli design if either
	\begin{enumerate}
		\item Assumption~\ref{ass:regularity-conditions} holds with $q \geq 4$ and $\eigmax = \littleO{n^{0.5}\davg^{0.5}}$, or
		\item Assumptions~\ref{ass:regularity-conditions} and~\ref{ass:restricted-interference} hold with $q \geq 4$ and $\eMSTE = \bigOmega{1}$.
	\end{enumerate}
\end{proposition}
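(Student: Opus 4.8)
The argument mirrors the proof of Proposition~\ref{prop:var-est-max-conservative} with the spectral radius $\eigmax$ in place of the maximum degree $\dmax$; the only genuinely new ingredient is a Rayleigh-quotient bound. Write $W_i = Z_iY_i/p_i - (1-Z_i)Y_i/(1-p_i)$ and $V_i = Z_iY_i^2/p_i^2 + (1-Z_i)Y_i^2/(1-p_i)^2$, so that $\eHT = n^{-1}\sum_i W_i$ and $\eVarBer = n^{-2}\sum_i V_i$. Since $Z_i(1-Z_i)=0$ the cross terms vanish, giving $\E{V_i}=\E{W_i^2}$, and because $\E{W_i}=\E{\tau_i(\Z_{-i})}$ we obtain the identity $n\E{\eVarBer}=n^{-1}\sum_i\Var{W_i}+\eMSTE$. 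Under a Bernoulli design $Z_i$ is independent of $\Z_{-i}$, and when $\di_{ij}=0$ the vectors $\Zint_i$ and $\Zint_j$ depend on disjoint coordinates of $\Z$, whence $(Z_i,Y_i)\indep(Z_j,Y_j)$ and $\Cov{W_i,W_j}=\Cov{V_i,V_j}=0$. Transferring Assumption~\ref{ass:regularity-conditions} (with $q\geq4$) off the support via $\min\{p_i,1-p_i\}\geq k^{-1}$ bounds $\E{W_i^2}$ and $\E{V_i^2}$ by $k$-dependent constants uniformly in $i$; combining this with the vanishing covariances and Cauchy--Schwarz gives $\Var{\eVarBer}=\bigO{n^{-3}\davg}$, hence $\Var{n\eVarBer}=\bigO{n^{-1}\davg}$ and $\Var{n\davg^{-1}\eVarSR}=\eigmax^2\davg^{-2}\Var{n\eVarBer}=\bigO{\eigmax^2/(n\davg)}$.

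The core step is that $\eVarSR$ is conservative in expectation. As $\Cov{W_i,W_j}=0$ when $\di_{ij}=0$, we have $n\Var{\eHT}=n^{-1}\sum_{i,j}\di_{ij}\Cov{W_i,W_j}\leq n^{-1}\sum_{i,j}\di_{ij}\sqrt{\Var{W_i}\Var{W_j}}=n^{-1}v^{\transpose}\mathbf{D}v$ with $v_i=\sqrt{\Var{W_i}}$ and $\mathbf{D}=(\di_{ij})$. Since $\mathbf{D}$ is symmetric, every Rayleigh quotient is bounded by its largest eigenvalue, so $v^{\transpose}\mathbf{D}v\leq\eigmax\norm{v}_2^2=\eigmax\sum_i\Var{W_i}\leq\eigmax\sum_i\E{W_i^2}=n^2\E{\eVarSR}$, and therefore $\Var{\eHT}\leq\E{\eVarSR}$ for every $n$. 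In particular $n\davg^{-1}(\eVarSR-\Var{\eHT})\geq n\davg^{-1}(\eVarSR-\E{\eVarSR})$.

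For the first case, $\eigmax=\littleO{n^{0.5}\davg^{0.5}}$ makes $\Var{n\davg^{-1}\eVarSR}\to0$, so $n\davg^{-1}(\eVarSR-\E{\eVarSR})\parrow0$ by Chebyshev's inequality, and the preceding bound forces $\prob{n\davg^{-1}(\eVarSR-\Var{\eHT})\leq-\varepsilon}\to0$ for each fixed $\varepsilon>0$. (The hypothesis also entails $\davg=\littleO{n}$ through $\davg\leq\eigmax$, though that is not needed here.) In the second case $\eigmax/\davg$ may diverge, so $\eVarSR$ need not concentrate and I argue one-sidedly instead. Bounding $n\davg^{-1}\Var{\eHT}\leq(\eigmax/\davg)n^{-1}\sum_i\Var{W_i}$ by the same Rayleigh estimate and setting $T_n=n\eVarBer-n^{-1}\sum_i\Var{W_i}$, the inequality above sharpens to $n\davg^{-1}(\eVarSR-\Var{\eHT})\geq(\eigmax/\davg)T_n$. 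Because $\eigmax/\davg\geq1$, the event $\{n\davg^{-1}(\eVarSR-\Var{\eHT})\leq-\varepsilon\}$ is contained in $\{T_n<0\}$; but $\E{T_n}=\eMSTE=\bigOmega{1}$ while $\Var{T_n}=\Var{n\eVarBer}=\bigO{n^{-1}\davg}\to0$ under Assumption~\ref{ass:restricted-interference}, so Chebyshev's inequality yields $\prob{T_n<0}\to0$. In either case the inner limit in Definition~\ref{def:asymptotically-conservative} vanishes for every $\varepsilon>0$, hence so does the outer $\varepsilon\to0^+$ limit.

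The main obstacle is the spectral step: one must recognise that the interference-induced covariances of $\eHT$ aggregate into the quadratic form $v^{\transpose}\mathbf{D}v$, which the Rayleigh characterisation bounds by $\eigmax\norm{v}_2^2$. This is exactly why $\eigmax$ is the natural inflation factor---strictly sharper than the AM--GM bound $v^{\transpose}\mathbf{D}v\leq\dmax\norm{v}_2^2$ underlying Proposition~\ref{prop:var-est-max-conservative}, yet, unlike the bound by $\davg$ in Proposition~\ref{prop:var-est-avg-conservative}, demanding no homoscedasticity-type control on the $\Var{W_i}$. The remaining pieces are the standard second-moment bookkeeping already used for Propositions~\ref{prop:bernoulli-rates} and~\ref{prop:var-est-bernoulli-limit}; the only parts requiring care are the off-support transfer of Assumption~\ref{ass:regularity-conditions} and checking that the two hypotheses deliver, respectively, the concentration of $\eVarSR$ and the asymptotically positive mean of $T_n$.
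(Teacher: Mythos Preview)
Your proof is correct and follows essentially the same route as the paper's: both arguments establish $n\Var{\eHT}\leq n^{-1}v^{\transpose}\mathbf{D}v\leq\eigmax n^{-1}\sum_i\Var{W_i}$ via Cauchy--Schwarz and the Rayleigh quotient, then combine this with the identity $n\E{\eVarBer}=n^{-1}\sum_i\Var{W_i}+\eMSTE$ and the concentration bound $\Var{n\eVarBer}=\bigO{n^{-1}\davg}$. The paper packages the two cases through abstract Lemmas~\ref{lem:fast-convergence-conservative-route} and~\ref{lem:positive-variance-conservative-route}, whereas you run the Chebyshev arguments directly, but the mathematical content is the same. (One minor remark: the phrase ``transferring Assumption~\ref{ass:regularity-conditions} off the support'' is slightly misleading---the moment bounds you need for $\E{V_i^2}$ follow directly from Assumption~\ref{ass:bounded-moments} on the support together with $\abs{W_i}\leq k\abs{Y_i}$, with no off-support step required.)
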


The adjustments needed for conservativeness highlights that the interference may introduce considerable imprecision.
These inflation factors are, however, constructed to accommodate the worse case.
As noted above, interference may in some cases improve precision, and no inflation is required in such cases.
More information about the interference structure is, however, needed to take advantage of such precision improvements when estimating the variance.
For example, \citet{Aronow2018Confidence} construct a variance estimator when $\di_i$ is known for all units.
Improvements may also be possible if larger departures from the conventional variance estimator are acceptable.
For example, it is sufficient to use $\drms$ as the inflation factor if higher moments of the unit-level variances is substituted for the conventional estimator.
This will generally be less conservative than the estimator inflated by the spectral radius because $\davg \leq \drms \leq \eigmax$.

\subsection{Tail bounds}

Experimenters often use variance estimates to construct bounds on the tails of the sampling distribution.
A common approach is to combine a conservative variance estimator with a normal approximation of the sampling distribution of a point estimator, motivated by a central limit theorem.
Such approximations may be reasonable when the interference is very sparse.
For example, Theorem~2.7 in \citet{Chen2004Normal} applies to the \HT\ point estimator under the Bernoulli design if $\dmax = \bigO{1}$, showing that the sampling distribution is approximately normal in large samples.
\citet{Chin2019Central} improves this to $\dmax = \littleO{n^{0.25}}$.
Both conditions are, however, considerably stronger than Assumption~\ref{ass:restricted-interference}.
The following proposition shows that normal approximations will generally not be appropriate under the conditions considered in this paper.

\begin{proposition}\label{prop:Chebyshev-sharp}
	Chebyshev's inequality is asymptotically sharp with respect to the sampling distribution of the \HT\ estimator for every sequence of Bernoulli designs under Assumptions~\ref{ass:regularity-conditions} and~\ref{ass:restricted-interference}.
	The inequality remains sharp when Assumption~\ref{ass:restricted-interference} is strengthened to $\davg = \bigO{1}$ and $\dmax = \bigO{n^{0.5}}$.
\end{proposition}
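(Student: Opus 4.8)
The plan is to establish, for an arbitrary sequence of Bernoulli designs and each $t \geq 1$ and $\varepsilon > 0$, a sequence of potential outcomes satisfying Assumption~\ref{ass:regularity-conditions} for which $\davg \to 1$ (so that Assumption~\ref{ass:restricted-interference}, and a fortiori the strengthening $\davg = \bigO{1}$, $\dmax = \bigO{n^{0.5}}$, all hold) and for which the studentized statistic $(\eHT - \eEATE)/\sqrt{\Var{\eHT}}$ has, for every sufficiently large $n$, a fixed three-point distribution whose tails satisfy $\prob{|\eHT - \eEATE| \geq t\sqrt{\Var{\eHT}}} \geq t^{-2} - \varepsilon$. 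Since Chebyshev's bound $t^{-2}$ is then approached arbitrarily closely by legitimate members of the class, the inequality is asymptotically sharp; the single family of examples serves both the hypothesis $\davg = \littleO{n}$ and its strengthening.

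The engine of the construction is that interference lets us turn $\eHT$ into an essentially arbitrary function of a bounded number of treatment indicators. Fix $M = M(t,\varepsilon,k)$, designate units $1,\dots,M$ as ``drivers'' (their treatments $Z_1,\dots,Z_M$ are independent under any Bernoulli design), give every remaining non-driver unit the zero potential outcome, and note the one-line identity: a unit $j>M$ with $y_j(\z) = c\,(\prod_{\ell\in S} z_\ell)(z_j - (1-p_j))$ for some $S\subseteq\{1,\dots,M\}$ contributes exactly $c\prod_{\ell\in S}Z_\ell$ to $n\,\eHT$ (using $Z_j^2 = Z_j$), while a driver $\ell$ with $y_\ell(\z) = c_\ell z_\ell$ contributes $(c_\ell/p_\ell)Z_\ell$. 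Assigning one such unit per monomial in the multilinear expansion of a target function realizes any $V\colon\{0,1\}^M\to\Reals$ with $V(\mathbf 0)=0$; since $\eHT$ is exactly unbiased for $\eEATE$ under a Bernoulli design, $\eEATE = n^{-1}\E{V}$, so the centered statistic is $n^{-1}$ times the centered $V$ and the $1/n$ factor drops out of the studentized quantity. The active units number at most $M + 2^M$, each has interference dependence $\bigO{2^M}$ and every other unit has degree $1$, so $\davg \to 1$ and $\dmax = \bigO{1}$; the multilinear coefficients of a $\{-1,0,1\}$-valued function on $\{0,1\}^M$ are bounded by $2^M$, so the outcomes are uniformly bounded and Assumption~\ref{ass:regularity-conditions} holds with $q,s$ arbitrarily large.

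It remains to pick $V$ so that the centered $V$ approximates the Chebyshev extremizer for level $t$, namely mass $t^{-2}$ placed symmetrically at $\pm t\sqrt{\Var{\cdot}}$ with the rest at the mean. Because the drivers are independent with probabilities in $[k^{-1}, 1-k^{-1}]$, every atom of their joint law has mass at most $(1-k^{-1})^M$, which is below any prescribed $\varepsilon'$ once $M$ is large; a greedy argument then partitions $\{0,1\}^M$ into $S_-,S_0,S_+$ with $\mathbf 0\in S_0$ and $\prob{S_+},\prob{S_-}$ both within $\varepsilon'$ of $q/2$, for a value $q$ chosen just below $t^{-2}$. Taking $V = -1,0,+1$ on $S_-,S_0,S_+$ makes the centered statistic a three-point variable whose two extreme atoms lie beyond $t\sqrt{\Var{\eHT}}$ while the near-mean atom does not; a direct computation of mean, variance, and atom distances shows the relevant tail probability equals the side mass $\prob{S_+}+\prob{S_-} \geq t^{-2}-\varepsilon$, uniformly in $n$ once $n$ exceeds the fixed number of active units. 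Letting $n\to\infty$ and then $\varepsilon\to0$ yields the conclusion.

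The main obstacle is that the driver probabilities are dictated by the given design, may be unequal, and may depend on $n$, whereas the Chebyshev extremizer is exactly symmetric and a fixed number of biased independent coins cannot produce a perfectly symmetric atom structure. This is handled by taking enough drivers that their joint law is fine-grained, assembling an approximately symmetric three-point law through the greedy partition (re-chosen for each $n$ to track the current $p_1,\dots,p_M$), and checking that the residual asymmetry only perturbs the studentized tail probability by $\bigO{\varepsilon'}$. A secondary but necessary point is the bookkeeping confirming that the construction keeps $\davg$ bounded — indeed $\davg\to1$ — and $\dmax = \bigO{1} = \bigO{n^{0.5}}$, so that the same examples witness sharpness under both forms of the interference restriction stated in the proposition.
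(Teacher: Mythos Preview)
Your proposal is correct and takes a genuinely different route from the paper. Both arguments exploit the same algebraic identity---a unit with outcome proportional to $z_ip_i-(1-z_i)(1-p_i)$ contributes a quantity independent of its own $Z_i$ to $n\eHT$---to force the estimator to equal a prescribed function of a small set of driver treatments. The paper uses a \emph{growing} driver block of size $b_n\to\infty$ and spreads that function over $h_n\asymp n^{(x+1)/2}$ identical active units, which gives exact convergence in distribution to the three-point extremizer $X(\alpha)$ but necessarily produces $\dmax = h_n$; this is why the proposition is stated only down to $\dmax=\bigO{n^{0.5}}$. You instead fix $M$, realize an arbitrary $\{-1,0,1\}$-valued function via its multilinear expansion with one unit per monomial, and accept an $\varepsilon$-approximation to the extremal tail rather than exact convergence. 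This keeps the number of active units and $\dmax$ bounded, so $\davg\to1$ and $\dmax=\bigO{1}$, which formally proves more than the proposition claims.

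The trade-off is informative: your construction lives in a degenerate regime where only finitely many units contribute and $n\Var{\eHT}\to0$, which is precisely why it does not collide with the Chen--Shao or Chin central limit theorems the paper invokes (those carry implicit variance-growth or Lindeberg conditions your example violates). The paper's construction, by contrast, keeps the variance at its natural order $n^{-1}\davg$ and thereby identifies $\dmax=\bigO{n^{0.5}}$ as a genuine boundary at which non-normality persists even without degeneracy. Both prove the proposition as stated; yours is shorter and avoids the scaling parameter $x$, while the paper's makes the interplay with the CLT threshold explicit.
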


Tail bounds based on Chebyshev's inequality are more conservative than those based on a normal distribution, so the proposition implies that a normal approximation is appropriate only under stronger conditions than those used for consistency in Section~\ref{sec:large-sample-prop}.
Indeed, $\davg = \bigO{1}$ was the strongest interference condition under consideration in that section, providing root-$n$ consistency.
Proposition~\ref{prop:Chebyshev-sharp} can be extended to other designs, including complete and paired randomization, as discussed in its proof in Supplement~A.

The conclusion is that Chebyshev's inequality is an appropriate way to construct confidence intervals and conduct testing when it is not reasonable to assume that $\dmax = \littleO{n^{0.5}}$.
The inequality will be overly conservative when $\dmax = \littleO{n^{0.25}}$, and it remains an open question whether it is sharp when $\dmax = \littleO{n^{x}}$ for $0.25 < x \leq 0.5$.
It may be possible to prove a central limit theorem, or otherwise derive less conservative tail bounds, even when $\dmax$ is large if one imposes other types of assumptions, for example on the magnitude of the interference.

\section{Other designs and external validity}

By marginalizing over the experimental design, \EATE\ captures an average treatment effect in the experiment that actually was implemented.
A consequence is that the estimand may have taken a different value if another design were used.
Experimenters know that the results from a single experiment may not extend beyond the present sample.
When units interfere, concerns about external validity should include experimental designs as well.

In this section, we elaborate on this concern by asking to what degree the effect for one design generalizes to other designs.
Because the experiment only provides information about the potential outcomes on its support, the prospects of extrapolation are limited.
The hope is that an experiment may be informative of the treatment effects under designs that are close to the one that was implemented.

It will be helpful to introduce notation that allows us to differentiate between the actual design and an alternative design that could have been implemented but was not.
Let $P$ denote the probability measure of the design that was implemented, and let $Q$ be the probability measure of the alternative design.
A subscript indicates which measure various operators refer to.
For example, $\Ep{Y_i}$ is the expected outcome for unit $i$ under the $P$ design, and $\Eq{Y_i}$ is the same under the alternative design.
The question we ask here is how informative
\begin{equation}
	\eEATEp = \Ep{\eATE(\Z)}
\end{equation}
is about
\begin{equation}
	\eEATEq = \Eq{\eATE(\Z)}.
\end{equation}

To answer this question, we will use measures of closeness of designs.
The measure is given meaning by coupling it with some type of structural assumption on the potential outcomes.
The stronger these structural assumptions, the more we can hope to extrapolate.
In line with the rest of the paper, we focus a relatively weak assumption here, effectively limiting ourselves to local extrapolation.

\begin{assumption}[Bounded unit-level effects]\label{ass:bounded-unit-effects}
	There exists a constant $\teconst$ such that $\abs{\tau_i(\z)} \leq \teconst$ for all $i \in \bfU$ and $\z \in \braces{0,1}^{n - 1}$.
\end{assumption}

The assumption is not implied by Assumption~\ref{ass:regularity-conditions}.
The regularity conditions used above ensure that the potential outcomes are well-behaved with respect to the design that was actually implemented.
The conditions do not ensure that they are well-behaved under other designs.
Assumption~\ref{ass:bounded-unit-effects} can from this perspective be seen as an extension of Assumption~\ref{ass:bounded-expected-pos}.

It remains to define a measure of closeness of designs.
A straightforward choice is the total variation distance between the distributions of the designs:
\begin{equation}
	\delta(P, Q) = \sup_{x \in \mathcal{F}} \, \abs{P(x) - Q(x)},
\end{equation}
where $\mathcal{F}$ is the event space of the designs, typically the power set of $\braces{0, 1}^n$.
The distance is the largest difference in probability between the designs for any event in the event space.
It is an unforgiving measure; it defines closeness purely as overlap between the two distributions.
Its advantage is that it requires little structure on the potential outcomes to be informative.
In particular, Assumption~\ref{ass:bounded-unit-effects} implies that the assignment-conditional average treatment effect function, $\eATE(\z)$, is bounded, which gives the following result.

\begin{proposition}\label{prop:total-variation-distance-bound}
	Given Assumption~\ref{ass:bounded-unit-effects},
	\begin{equation}
		\abs{\eEATEp - \eEATEq} \leq 2 \teconst \delta(P, Q).
	\end{equation}
\end{proposition}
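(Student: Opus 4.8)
The plan is to reduce the statement to the elementary fact that differences of expectations of a uniformly bounded function are controlled by the total variation distance of the measures.

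First I would record that Assumption~\ref{ass:bounded-unit-effects} makes the assignment-conditional average treatment effect function uniformly bounded: for every $\z \in \braces{0,1}^{n}$, applying the triangle inequality to $\eATE(\z) = \tfrac{1}{n}\sum_{i=1}^{n}\tau_i(\z_{-i})$ together with $\abs{\tau_i(\z_{-i})} \leq \teconst$ (each deletion vector $\z_{-i}$ lying in $\braces{0,1}^{n-1}$) yields $\abs{\eATE(\z)} \leq \teconst$.

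Next I would expand the two estimands over the finite support $\braces{0,1}^{n}$ of the designs and bound $\eATE(\z)$ by $\teconst$ in absolute value term by term:
\begin{equation}
	\abs{\eEATEp - \eEATEq} = \abs[\bigg]{\sum_{\z \in \braces{0,1}^{n}} \eATE(\z) \bracket[\big]{P(\z) - Q(\z)}} \leq \teconst \sum_{\z \in \braces{0,1}^{n}} \abs{P(\z) - Q(\z)}.
\end{equation}
The remaining task is to identify $\sum_{\z}\abs{P(\z) - Q(\z)}$ with $2\delta(P,Q)$, which is the standard discrete characterization of total variation: taking the event $A = \braces{\z : P(\z) \geq Q(\z)}$ one gets $\sum_{\z}\abs{P(\z) - Q(\z)} = 2\bracket{P(A) - Q(A)} \leq 2\delta(P,Q)$ (and conversely $\delta(P,Q) \leq \tfrac{1}{2}\sum_{\z}\abs{P(\z) - Q(\z)}$, though only the first inequality is needed here). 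Substituting gives $\abs{\eEATEp - \eEATEq} \leq 2\teconst\delta(P,Q)$, as claimed.

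There is no substantial obstacle; the only points needing care are pinning down the constant — the factor $2$ is precisely the $\ell_1$-versus-$\sup$ gap for total variation — and, if one prefers not to assume $\mathcal{F}$ is the full power set, checking that the relevant level sets are events. For the latter situation I would instead use the layer-cake identity: set $g(\z) = \eATE(\z) + \teconst$ so that $0 \leq g \leq 2\teconst$ and the added constant cancels in $\eEATEp - \eEATEq$, write $\Ep{g(\Z)} = \int_{0}^{2\teconst}\probp{g(\Z) \geq t}\,dt$ and the same under $Q$, bound $\abs[\big]{\probp{g(\Z) \geq t} - \probq{g(\Z) \geq t}} \leq \delta(P,Q)$ for each $t$ since $\braces{\z : g(\z) \geq t} \in \mathcal{F}$, and integrate over an interval of length $2\teconst$.
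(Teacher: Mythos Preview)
Your proof is correct and follows essentially the same route as the paper's: bound $\abs{\eATE(\z)}\leq\teconst$ via Assumption~\ref{ass:bounded-unit-effects}, expand the expectations over the finite support, and use the Scheff\'e-type set $A=\{\z:P(\z)\geq Q(\z)\}$ to turn $\sum_{\z}\abs{P(\z)-Q(\z)}$ into $2[P(A)-Q(A)]\leq 2\delta(P,Q)$. Your additional layer-cake argument for the case where $\mathcal{F}$ need not be the full power set is a nice refinement the paper does not include (it simply assumes the power-set case).
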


The intuition behind the proposition is straightforward.
If the two designs overlap to a large degree, then the marginalization over $\eATE(\z)$ will overlap to a similar large degree.
While straightforward, it is a good illustration that an experiment is relevant beyond the current design under relatively weak assumptions.
However, given the unforgiving nature of the total variation distance, the bound says little more than that the designs must be close to identical to be informative of each other.
A simple example illustrates the concern.

The example compares the estimand under a Bernoulli design with $p_i = 1 / 2$ for all units with the estimand under complete randomization with $p = 1 / 2$.
Consider the event $\sum_{i=1}^{n} Z_i = \lfloor n / 2 \rfloor$.
By construction of the complete randomization design, the probability of this event is one.
Under Bernoulli randomization, the probability is bounded as
\begin{equation}
	\frac{1}{2^n} \binom{n}{ \lfloor n / 2 \rfloor } = \bigO[\big]{n ^{-0.5}}.
\end{equation}
Hence, the total variation distance between the designs approaches one.
When taken at face value, this means that \EATE\ under one of the designs provides no more information about the estimand under the other design than what is already provided by Assumption~\ref{ass:bounded-unit-effects}.

We can sharpen this bound if we know that the interference is limited.
There are several ways to take advantage of a sparse interference structure when extrapolating between designs.
The route we explore here is to consider how sparseness affects the average treatment effect function, as captured in the following lemma.

\begin{lemma}\label{lem:ate-function-Lipschitz}
	Given Assumption~\ref{ass:bounded-unit-effects}, $\eATE(\z)$ is $2 \teconst n^{-1/r} \icoutmom{r / (r - 1)}$-Lipschitz continuous with respect to the $L_r$ distance over $\braces{0, 1}^n$ for any $r \geq 1$.
\end{lemma}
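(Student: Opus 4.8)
The plan is to convert the Lipschitz bound into a counting problem---how many of the unit-level effects $\tau_i(\z_{-i})$ can change when we perturb the treatments---and then close it with H\"older's inequality. Fix $\z,\z'\in\braces{0,1}^n$ and let $S=\braces{j\in\bfU:z_j\neq z_j'}$ be the set of coordinates on which they disagree; since the coordinates are binary, $\norm{\z-\z'}_r=\abs{S}^{1/r}$ for every $r\geq 1$, so it suffices to prove $\abs{\eATE(\z)-\eATE(\z')}\leq 2\teconst n^{-1/r}\icoutmom{r/(r-1)}\abs{S}^{1/r}$. By the triangle inequality $\abs{\eATE(\z)-\eATE(\z')}\leq n^{-1}\sum_{i=1}^n\abs{\tau_i(\z_{-i})-\tau_i(\z'_{-i})}$, and Assumption~\ref{ass:bounded-unit-effects} bounds every summand by $2\teconst$.

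The first step is to note that the $i$th summand vanishes unless one of the flipped treatments reaches unit $i$. By construction of the interference indicators, flipping coordinate $j$ leaves $y_i$ unchanged whenever $I_{ji}=0$; hence the map $\z_{-i}\mapsto\tau_i(\z_{-i})$ depends only on the coordinates $j\neq i$ with $I_{ji}=1$, so $\tau_i(\z_{-i})\neq\tau_i(\z'_{-i})$ forces $I_{ji}=1$ for some $j\in S$. Writing $T$ for the number of units $i$ for which this occurs, $T=\abs[\big]{\bigcup_{j\in S}\braces{i:I_{ji}=1}}\leq\sum_{j\in S}\abs[\big]{\braces{i:I_{ji}=1}}=\sum_{j\in S}\icout_j$, where the last equality is the definition $\icout_j=\sum_i I_{ji}$. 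Combining, $\abs{\eATE(\z)-\eATE(\z')}\leq(2\teconst/n)\sum_{j\in S}\icout_j$.

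The final step is bookkeeping with H\"older's inequality for the conjugate exponents $p=r/(r-1)$ and $r$: $\sum_{j\in S}\icout_j=\sum_{j=1}^n\icout_j\,\indicator{j\in S}\leq\paren[\big]{\sum_{j=1}^n\icout_j^p}^{1/p}\abs{S}^{1/r}$, using $\indicator{j\in S}^r=\indicator{j\in S}$. Since $\icoutmom{p}=n^{-1/p}\paren[\big]{\sum_j\icout_j^p}^{1/p}$ and $1/p+1/r=1$, the prefactor $n^{-1}\paren[\big]{\sum_j\icout_j^p}^{1/p}$ equals $n^{-1/r}\icoutmom{p}$, and $\abs{S}^{1/r}=\norm{\z-\z'}_r$, giving exactly $\abs{\eATE(\z)-\eATE(\z')}\leq 2\teconst n^{-1/r}\icoutmom{r/(r-1)}\norm{\z-\z'}_r$. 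The endpoint $r=1$ is handled identically with the conventions $p=\infty$, $\icoutmom{\infty}=\icoutmax$, and the $\ell^{\infty}$-$\ell^{1}$ form of H\"older's inequality.

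I do not expect a real obstacle; the one point requiring care is the direction of the interference. The effect $\tau_i(\z_{-i})$ is sensitive to the treatments of the units that interfere \emph{with} $i$---those $j$ with $I_{ji}=1$---whereas the union bound in the second step repackages these in-neighborhoods as a sum of the out-degrees $\icout_j$; that repackaging is exactly what makes the moments $\icoutmom{\cdot}$, built from $\icout_i=\sum_j I_{ij}$, the quantities that appear in the Lipschitz constant.
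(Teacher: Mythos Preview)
Your proof is correct and follows essentially the same approach as the paper: both arguments reduce the difference $\abs{\eATE(\z)-\eATE(\z')}$ to the bound $(2\teconst/n)\sum_{j\in S}\icout_j$ via the interference structure and then close with H\"older's inequality using conjugates $r$ and $r/(r-1)$. The only cosmetic difference is that the paper reaches that intermediate bound by telescoping one coordinate at a time, whereas you reach it directly by a union bound over the affected units---both routes yield the identical inequality and the same final constant.
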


The lemma shows that $\eATE(\z)$ does not change too quickly in $\z$ under sparse interference.
The intuition is that changing a unit's treatment can affect a limited number of other units when the interference is sparse, and the bound on the unit-level treatment effects limits how consequential the change can be on the affected units.
The lemma uses $\icoutmom{r / (r - 1)}$ to measure the amount of interference rather than $\davg$.
This is the moment of the unit-level interference count defined in Section~\ref{sec:quantifying-interference}, where these moments were used to bound $\davg$.
The lemma can be sharpened if more information about the potential outcomes is available.
For example, the factor $2 \teconst$ can be reduced if Assumption~\ref{ass:bounded-unit-effects} is substituted with a Lipschitz continuity assumption directly on unit-level effects.

Despite being intuitively straightforward, Lemma~\ref{lem:ate-function-Lipschitz} is powerful because the $L_r$ distances are fairly forgiving.
However, to take advantage of this forgivingness, we must modify the measure of design closeness so that it incorporates the geometry given by the distance.
The Wasserstein metric accomplishes this, and it couples well with Lipschitz continuity.

Let $\mathcal{J}(P, Q)$ collect all distributions $(\Z', \Z'')$ over $\braces{0, 1}^n \times \braces{0, 1}^n$ such that the marginal distributions of $\Z'$ and $\Z''$ are $P$ and $Q$, respectively.
Each distribution in $\mathcal{J}(P, Q)$ can be seen as a way to transform $P$ to $Q$ by moving mass between the $\braces{0, 1}^n$ points in the marginal distributions.
The Wasserstein metric is the least costly way to make this transformation with respect to the $L_r$ distance:
\begin{equation}
	W_r(P, Q) = \inf_{J \in \mathcal{J}(P, Q)} \Ej[\big]{\norm{\Z' - \Z''}_r},
\end{equation}
where the subscript denotes the underlying $L_r$ distance rather than the order of the Wasserstein metric, which is always taken to be one here.
This metric is more forgiving than the total variation distance because it goes beyond direct overlap and also considers how close the non-overlapping parts of the distributions are.
An application of the Kantorovich--Rubinstein duality theorem \citep{Edwards2011} provides the following result.

\begin{proposition}\label{prop:Wasserstein-bound}
	Given Assumption~\ref{ass:bounded-unit-effects},
	\begin{equation}
		\abs{\eEATEp - \eEATEq} \leq 2 \teconst n^{-1/r} \icoutmom{r / (r - 1)} W_r(P, Q).
	\end{equation}
\end{proposition}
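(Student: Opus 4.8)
The plan is to combine Lemma~\ref{lem:ate-function-Lipschitz} with the Kantorovich--Rubinstein duality. By Lemma~\ref{lem:ate-function-Lipschitz}, the function $\z \mapsto \eATE(\z)$ is $L$-Lipschitz with respect to the $L_r$ distance on $\braces{0,1}^n$, where $L = 2\teconst n^{-1/r} \icoutmom{r/(r-1)}$. The quantities $\eEATEp$ and $\eEATEq$ are precisely the integrals of this fixed function against the marginals $P$ and $Q$, so the difference $\eEATEp - \eEATEq$ is of exactly the form controlled by the dual characterization of the Wasserstein metric.

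First I would record that $\abs{\eEATEp - \eEATEq} = \abs[\big]{\Ep{\eATE(\Z)} - \Eq{\eATE(\Z)}}$, and that for \emph{any} coupling $J \in \mathcal{J}(P,Q)$ with $(\Z', \Z'')$ distributed according to $J$, we have $\Ep{\eATE(\Z)} = \Ej{\eATE(\Z')}$ and $\Eq{\eATE(\Z)} = \Ej{\eATE(\Z'')}$, since $\Z'$ and $\Z''$ have marginals $P$ and $Q$. Hence
\begin{equation}
	\abs{\eEATEp - \eEATEq} = \abs[\big]{\Ej{\eATE(\Z') - \eATE(\Z'')}} \leq \Ej[\big]{\abs{\eATE(\Z') - \eATE(\Z'')}} \leq L \, \Ej[\big]{\norm{\Z' - \Z''}_r},
\end{equation}
where the last step uses the Lipschitz bound from Lemma~\ref{lem:ate-function-Lipschitz} pointwise. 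Taking the infimum over all couplings $J \in \mathcal{J}(P,Q)$ gives $\abs{\eEATEp - \eEATEq} \leq L \, W_r(P,Q)$, which is the claimed inequality. One can either invoke the Kantorovich--Rubinstein duality theorem \citep{Edwards2011} directly to express $W_r(P,Q)$ as a supremum over $L$-Lipschitz test functions and apply it to $\eATE(\cdot)/L$, or equivalently use the primal (coupling) side as sketched above; the latter is more elementary and self-contained here, so that is the route I would present.

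I do not expect a serious obstacle in this argument, since Lemma~\ref{lem:ate-function-Lipschitz} does the substantive work and everything else is a standard manipulation of couplings. The one point requiring a little care is the measure-theoretic bookkeeping: ensuring that $\eATE(\cdot)$ is measurable (it is, being a function on the finite set $\braces{0,1}^n$) and that the expectations under the coupling are finite, which follows from Assumption~\ref{ass:bounded-unit-effects} since $\eATE(\z)$ is then bounded by $\teconst$ in absolute value. I would also note explicitly that the infimum defining $W_r(P,Q)$ is attained (or approached) over a nonempty set $\mathcal{J}(P,Q)$, which is immediate since the product coupling always belongs to it. With these remarks in place the proof is complete.
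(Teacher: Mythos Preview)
Your proposal is correct and rests on the same two ingredients as the paper's proof: Lemma~\ref{lem:ate-function-Lipschitz} for the Lipschitz bound on $\eATE(\cdot)$, and the link between Lipschitz test functions and the Wasserstein metric. The only difference is which side of the Kantorovich--Rubinstein duality is used. The paper normalizes $\eATE(\cdot)$ by its Lipschitz constant to obtain a $1$-Lipschitz function, bounds the difference of expectations by the supremum over all $1$-Lipschitz functions, and then invokes the duality theorem to identify that supremum as $W_r(P,Q)$. You instead work directly on the primal side: fix an arbitrary coupling $J$, apply the Lipschitz bound pointwise inside $\Ej{\cdot}$, and take the infimum over $J$. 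Your route is slightly more self-contained since it avoids citing the duality theorem, and you correctly note that the infimum is over a nonempty set and that boundedness of $\eATE$ handles any integrability concerns; the paper's route is the more conventional presentation in optimal transport. Either way the substantive content is identical.
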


The proposition highlights the key insight of this section.
If the amount of interference does not grow too quickly relative to the difference between the designs as captured by the Wasserstein metric,
\begin{equation}
	\icoutmom{r / (r - 1)} W_r(P, Q) = \littleO[\big]{n^{1/r}},
\end{equation}
then the expected average treatment effects under the two designs will converge.
The optimal choice of $r$ depends on how unevenly the interference is distributed among the units.
For example, if the interference is skewed, then $r = 2$ may be reasonable to avoid being sensitive to the outliers.
Here, $\icoutmom{r / (r - 1)} = \icoutmom{2}$ is the root mean square of the interference count.
If all units interfere with approximately the same number of other units, then $r = 1$ is a better choice, in which case $\icoutmom{r / (r - 1)}$ is taken to be $\icoutmax$.

Our example with the Bernoulli and complete randomization designs provides a good illustration.
When $r = 2$, the corresponding Wasserstein distance between the designs is $W_2(P, Q) = \bigO{n^{0.25}}$.
It follows that the two estimands converge as long as $\icoutmom{2} = \littleO{n^{0.25}}$.
When $r = 1$, the corresponding Wasserstein distance is $W_1(P, Q) = \bigO{n^{0.5}}$, so the estimands converge whenever $\icoutmax = \littleO{n^{0.5}}$.

The proposition may prove useful also when the estimands do not converge.
For example, we have $n^{-1/r} W_r(P, Q) \sim \abs{p - q}^{1/r}$ when $P$ and $Q$ are two complete randomization designs with $p$ and $q$ as their respective assignment probabilities.
The corresponding estimands will thus generally not converge, but Proposition~\ref{prop:Wasserstein-bound} may provide a useful bound if $\icoutmom{r / (r - 1)}$ and $\abs{p - q}$ are reasonably small.
The bound is especially informative when more is known about the potential outcomes so that the factor $2 \teconst$ can be reduced.

\section{Simulation study}

Supplement~B presents a simulation study to illustrate and complement the results presented here.
We focus on three types of data generating processes in this study, differing in the structure of the interference.
In particular, we investigate when the interference is contained within groups of units, when the interference structure is randomly generated and when only one unit is interfering with other units.
For each type of interference structure, we alter the amount of interference ranging from $\davg = 1$ to $\davg = n$.

The findings from the simulation study corroborate the theoretical results.
The estimators are shown to approach \EATE\ at the rates predicted by the propositions above, and they generally do not converge when Assumption~\ref{ass:restricted-interference} does not hold.
There are, however, situations where they converge at faster rates than those given by the propositions above.
This captures the fact that the theoretical results focus on the worst case given the stated conditions.
For example, in one of the settings we investigate in the simulations, the experimental design aligns with the interference structure in such a way that the design almost perfectly counteracts the interference.
The precision of the estimator does not significantly depend on the amount of interference in this case.
The setting is, however, rather artificial, and it was picked to illustrate exactly this point.
A slight modification of the data generating process breaks the behavior.
We direct readers to the supplement for further insights from the simulation study.

	\section{Concluding remarks}

	Experimenters worry about interference. The first line of attack tends to be to design experiments so to minimize the risk that units interfere. One could, for example, physically isolate the units throughout the study. The designs needed to rule out interference may, however, make the experiments so alien to the topics under study that the findings are no longer relevant; the results would not generalize to the real world where units do interfere. When design-based fixes are undesirable or infeasible, one could try to account for any lingering interference in the analysis. This, however, requires detailed knowledge about the structure of the interference. The typical experimenter neither averts all interference by design nor accounts for it in the analysis. They conduct and analyze the experiment as if no units interfere, even when the no-interference assumption, at best, holds true only approximately. The disconnect between assumptions and reality is reconciled by what appears to be a common intuition among experimenters that goes against the conventional view: unmodeled interference is not a fatal flaw so long as it is limited. Our results provide rigorous justification for this intuition.

The \EATE\ estimand generalizes the average treatment effect to experiments with interference.
All interpretations of \ATE\ do, however, not apply.
In particular, \EATE\ cannot be interpreted as the difference between the average outcome when no unit is treated and the average outcome when all units are treated.
The estimand is the expected, marginal effect of changing a single treatment in the current experiment.
From a practical perspective, these marginal effects are relevant to policy makers considering decisions along an intensive margin.
From a theoretical perspective, \EATE\ could act as a sufficient statistic for a structural model, thereby allowing researchers to pin down theoretically important parameters \citep{Chetty2009Sufficient}.

The main contribution of the paper is, however, to describe what can be learned from an experiment under unknown and arbitrary interference.
As shown by \citet{Basse2018Limitations} and others, causal inference under interference tends to require strong assumptions.
The current results nevertheless show that experiments are informative of \EATE\ under weak assumptions.
The insight is valuable even when \EATE\ is not the parameter of primary interest because it shows what can be learned from an experiment without imposing a model or making strong structural assumptions.
A comparison can be made with the \textit{local average treatment effect} (\LATE) estimand for the instrumental variable estimator \citep{Imbens1994Identification}.
The local effect may not be the parameter of primary interest, but it is relevant because it describes what can be learned in experiments with noncompliance without strong assumptions about, for example, constant treatment effects.

We conjecture that the results in this paper extend also to observational studies.
Several issues must, however, be addressed before such results can be shown formally.
These issues are mainly conceptual in nature.
We do not know of a stochastic framework that can accommodate unknown and arbitrary interference in an observational setting.
This is because the design (or the assignment mechanism as it is often called in an observational setting) is unknown here.
A common way to approach this problem is to approximate the assignment mechanism with a design that is easy to analyze, for example a Bernoulli design, and assume that the units' marginal treatment probabilities are given by a function of only the units' own characteristics.
The concern with this approach is that the behavior of the estimators is sensitive to details of the design, as shown in Proposition~\ref{prop:arbitrary-rates-eate}, so the design used for the approximation may not be appropriate.
Furthermore, the treatment probabilities may be functions of other units' characteristics, effectively capturing interference in treatment assignment.
\citet{Forastiere2017} address these concerns by assuming that a unit's treatment probability is a function of both its own characteristics and the characteristics of its neighbors in an interference graph.
This route can, however, not be taken here because the interference structure is not known.

Another potential solution is to extend the stochastic framework to include sampling variability, assuming that the current sample is drawn from some larger, possibly infinite, population.
When interference is investigated in this type of regime, the units are generally assumed to be sampled in such a way to maintain the interference structure.
The units can, for example, be assumed to be sampled in groups.
However, the only way to ensure that the interference structure is maintained under arbitrary interference is to consider the whole sample being sampled jointly.
\citet{TchetgenTchetgen2019Auto} address this concern by assuming that the outcomes are independently and identically generated from some common distribution conditional on all relevant aspects of the interference, which, for example, could include the number of treated units in a neighborhood in some interference graph.
However, this approach cannot be used here because the relevant aspects of the interference are not known.
%The construction of an appropriate stochastic framework will require close attention to these issues, but we see no reason why a marginalization argument similar to the one in this paper would not apply to observational studies once an appropriate framework has been constructed.

We have focused on the effect of a unit's own treatment.
The results are, however, not necessarily restricted to ``direct'' treatment effects as typically defined.
In particular, the pairing between units and treatments is arbitrary in the causal model, and an experiment could have several reasonable pairings.
Consider, for example, when the intervention is to give some drug to the units in the sample.
The most natural pairing might be to let a unit's treatment indicator denote whether we give the drug to the unit itself.
We may, however, just as well let it denote whether we give the drug to, say, the unit's spouse (who may be in the sample as well).
\EATE\ would then be the expected spillover effect between spouses.
In this sense, the current investigation applies both to usual treatment effects and rudimentary spillover effects.
We conjecture that the results can be extended to other definitions of treatment, and they would in that case afford robustness to unknown and arbitrary interference for more intricate spillover effects.

\bibliographystyle{modapa}
\bibliography{eate-refs}

\clearpage

\setcounter{section}{0}%
\setcounter{table}{0}%
\setcounter{figure}{0}%
\setcounter{equation}{0}%

\renewcommand\thesection{A\arabic{section}}%
\renewcommand\thetable{A\arabic{table}}%
\renewcommand\thefigure{A\arabic{figure}}%
\renewcommand\theequation{A\arabic{equation}}%

\begin{center}
	\Huge \textbf{Supplement A: Proofs}
\end{center}
\bigskip

	\section{Overview}

	We start by proving consistency under the Bernoulli design for the \HT\ estimator. A bound on the variance for each of the two terms in the estimator follows from the definition of interference dependence (Lemma \ref{lem:bernoulli-hmu-rates}). (We consider the two terms of the estimators separately throughout the proof as this simplifies the derivations considerably.) The estimator is unbiased for \EATE\ (Lemma \ref{lem:bernoulli-adse-eate-diff}), so the variance bound provides convergence in probability (and in mean square) using the restricted interference assumption. Consistency for the \HA\ estimator can then be proven by linearization (Lemma \ref{lem:bernoulli-hmu-hn-rates}).

	This approach must be modified to accommodate other designs. In particular, the \HT\ estimator may not be unbiased for \EATE\ when treatment assignment is not independent. The route we take is to show that the \HT\ estimator is unbiased for \ADSE\ under any design that satisfy the regularity conditions. Once the variance of the \HT\ estimator is derived, this provides the rate of convergence with respect to \ADSE\ using the same method as above. Consistency of the estimators with respect to \EATE\ is proven by showing that \ADSE\ converges to \EATE\ under the stipulated conditions.

	Deriving the variance of the \HT\ estimator and the rate of convergence of \ADSE\ to \EATE\ under paired randomization follow essentially the same structure as under Bernoulli design (Lemma \ref{lem:paired-hmu-rates}). The only difference is that we also need to account for pair-induced interference dependence. The tasks are less straightforward for complete randomization and arbitrary designs.

	For complete randomization, our approach is to show that the dependence between two disjoint sets of treatment assignments can be bounded by the product of the sizes of the sets if the sets are sufficiently small (Lemma \ref{lem:complete-external-bound}). The dependence bound can then be used to bound the covariance between the outcomes of units that are not interference dependent if the number of units that interfere with them are sufficiently few (Lemma \ref{lem:complete-hmu-rates}). We show that our interference conditions imply that most units are affected by a sufficiently small number of other units, so the covariance bound can be applied for most pairs of units. The covariance for the remaining pairs is simply bounded by the regularity conditions. Convergence of \ADSE\ to \EATE\ follows a similar logic but applied to the first moments (Lemmas \ref{lem:complete-internal-bound} and \ref{lem:complete-adse-eate-diff}).

	For arbitrary designs, a bound on the variance of the \HT\ estimator follows fairly straightforwardly from a covariance bound based on the alpha-mixing coefficient known in the literature (Lemma \ref{lem:arbitrary-hmu-rates}). The proof of the convergence of \ADSE\ to \EATE\ builds on the same logic, but is less straightforward since, to the best of our knowledge, no bound currently exist in the literature and we must derive it ourselves (Lemma \ref{lem:arbitrary-adse-eate-diff}).

	\section{Miscellaneous definitions and lemmas}

	\begin{appdefinition} \label{def:all-mu}
		\begin{align}
		\tmu &= \frac{1}{n} \sum_{i=1}^{n} \E{y_i(z; \Z_{-i})}, & \cmu &= \frac{1}{n} \sum_{i=1}^{n} \E{y_i(z; \Z_{-i}) \given Z_i = z},
		\\
		\hmu &= \frac{1}{n} \sum_{i=1}^{n} \frac{\indicator{Z_i = z}Y_i}{\prob{Z_i = z}},& \hn &= \sum_{i=1}^{n} \frac{\indicator{Z_i = z}}{\prob{Z_i = z}}.
		\end{align}
	\end{appdefinition}

	\begin{appcorollary} \label{coro:est-mu}
		\begin{equation}
		\eEATE = \mu_1 - \mu_0, \qquad\quad \eADSE = \breve{\mu}_1 - \breve{\mu}_0, \qquad\quad \eHT = \hat{\mu}_1 - \hat{\mu}_0,\quad\qquad \eHA = \frac{n}{\hat{n}_1}\hat{\mu}_1 - \frac{n}{\hat{n}_0}\hat{\mu}_0,
		\end{equation}
		\begin{equation}
		\eHT - \eADSE = (\hat{\mu}_1 - \breve{\mu}_1) - (\hat{\mu}_0 - \breve{\mu}_0), \qquad\qquad \eHA - \eADSE = \paren[\bigg]{\frac{n}{\hat{n}_1}\hat{\mu}_1 - \breve{\mu}_1} - \paren[\bigg]{\frac{n}{\hat{n}_0}\hat{\mu}_0 - \breve{\mu}_0}.
		\end{equation}
	\end{appcorollary}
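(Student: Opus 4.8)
The plan is to establish every identity by direct substitution of the definitions in Definition~\ref{def:estimators} and Definition~\ref{def:all-mu}; nothing beyond linearity of expectation and the tower property is needed, so this is a bookkeeping corollary that records convenient reparametrizations for the later proofs.

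First I would note the elementary facts that, since each $Z_i$ takes values in $\braces{0,1}$, we have $\indicator{Z_i = 1} = Z_i$ and $\indicator{Z_i = 0} = 1 - Z_i$, while $\prob{Z_i = 1} = p_i$ and $\prob{Z_i = 0} = 1 - p_i$. Substituting these into $\hat{\mu}_1$ and $\hat{\mu}_0$ turns them into $n^{-1}\sum_i Z_i Y_i / p_i$ and $n^{-1}\sum_i (1 - Z_i) Y_i / (1 - p_i)$, so $\eHT = \hat{\mu}_1 - \hat{\mu}_0$ by Definition~\ref{def:estimators}. The same substitution in $\hat{n}_z$ gives $\hat{n}_1 = \sum_i Z_i / p_i$ and $\hat{n}_0 = \sum_i (1 - Z_i) / (1 - p_i)$; forming $n \hat{\mu}_z / \hat{n}_z$ then reproduces each of the two ratio terms in $\eHA$, which yields $\eHA = (n / \hat{n}_1)\hat{\mu}_1 - (n / \hat{n}_0)\hat{\mu}_0$ on the event $\braces{\hat{n}_1 > 0, \hat{n}_0 > 0}$ on which the H\'ajek estimator is defined.

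For the population-side identities I would use the expanded forms of \EATE\ and \ADSE\ already displayed in Section~\ref{sec:related-definitions}. Passing the expectation through the finite sum (integrability from Assumption~\ref{ass:bounded-expected-pos}) writes $\eEATE = n^{-1}\sum_i \E{y_i(1; \Z_{-i})} - n^{-1}\sum_i \E{y_i(0; \Z_{-i})} = \mu_1 - \mu_0$. For \ADSE\ the one substantive step is that conditioning on $Z_i = z$ pins the observed outcome to a single potential outcome, $Y_i = y_i(Z_i; \Z_{-i}) = y_i(z; \Z_{-i})$, so $\E{Y_i \given Z_i = z} = \E{y_i(z; \Z_{-i}) \given Z_i = z}$; summing and differencing over $z \in \braces{0,1}$ gives $\eADSE = \breve{\mu}_1 - \breve{\mu}_0$. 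Finally, subtracting $\eADSE = \breve{\mu}_1 - \breve{\mu}_0$ from the representations $\eHT = \hat{\mu}_1 - \hat{\mu}_0$ and $\eHA = (n / \hat{n}_1)\hat{\mu}_1 - (n / \hat{n}_0)\hat{\mu}_0$ and regrouping the $z = 1$ and $z = 0$ terms produces the two difference identities.

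There is no genuine obstacle here: each claim reduces to reading off a definition. The only points deserving a sentence of care are the integrability needed to exchange expectation and summation for \EATE\ (supplied by Assumption~\ref{ass:bounded-expected-pos} with $s \geq 1$) and the implicit restriction to the event on which $\hat{n}_1$ and $\hat{n}_0$ are positive, where $\eHA$ is well-defined.
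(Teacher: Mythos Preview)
Your proposal is correct and follows exactly the same approach as the paper, which simply states that the corollary follows directly from Definitions~\ref{def:all-mu}, \ref{def:eate}, \ref{def:adse} and~\ref{def:estimators}. You have merely spelled out the substitutions in more detail than the paper does.
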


	\begin{proof}
		Follows directly from Definitions \ref{def:all-mu}, \ref{def:eate}, \ref{def:adse} and \ref{def:estimators}.
	\end{proof}

	\begin{applemma} \label{lem:hmu-cmu-bias}
		$\E{\hmu} = \cmu$.
	\end{applemma}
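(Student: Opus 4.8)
The plan is to compute $\E{\hmu}$ term by term using linearity of expectation and then collapse each term by conditioning on $Z_i$. First I would write $\E{\hmu} = \tfrac{1}{n}\sum_{i=1}^{n} \prob{Z_i = z}^{-1}\,\E{\indicator{Z_i = z}Y_i}$, which is legitimate because Assumption~\ref{ass:probabilistic-assignment} guarantees $\prob{Z_i = z}$ is bounded away from zero, so each conditional expectation appearing below is well defined and the division causes no trouble.

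The key observation is that the indicator $\indicator{Z_i = z}$ pins down the first argument of the potential outcome function. Since $Y_i = y_i(Z_i; \Z_{-i})$ is a deterministic function of $\Z$, on the event $\braces{Z_i = z}$ we have $Y_i = y_i(z; \Z_{-i})$, so $\indicator{Z_i = z}Y_i = \indicator{Z_i = z}\,y_i(z; \Z_{-i})$ as random variables. Taking expectations of this identity and factoring through the conditional expectation gives $\E{\indicator{Z_i = z}Y_i} = \E{\indicator{Z_i = z}\,y_i(z; \Z_{-i})} = \prob{Z_i = z}\,\E{y_i(z; \Z_{-i}) \given Z_i = z}$. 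Dividing by $\prob{Z_i = z}$ leaves exactly $\E{y_i(z; \Z_{-i}) \given Z_i = z}$, which is the $i$th summand in the definition of $\cmu$ from Definition~\ref{def:all-mu}. Summing over $i$ and dividing by $n$ then yields $\E{\hmu} = \cmu$.

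The argument is essentially bookkeeping, so there is no substantive obstacle; the only step that warrants care is the substitution $\indicator{Z_i = z}Y_i = \indicator{Z_i = z}\,y_i(z; \Z_{-i})$, which hinges on $Y_i$ being a genuine function of the full assignment $\Z$ so that restricting to $\braces{Z_i = z}$ indeed fixes the first coordinate at $z$. Everything else follows from linearity and the definition of conditional expectation.
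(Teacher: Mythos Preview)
Your proof is correct and follows essentially the same approach as the paper's. The only cosmetic difference is the order of two steps: you substitute $\indicator{Z_i = z}Y_i = \indicator{Z_i = z}\,y_i(z;\Z_{-i})$ first and then condition, whereas the paper conditions first to obtain $\E{Y_i \given Z_i = z}$ and then substitutes $Y_i = y_i(z;\Z_{-i})$ on that event; the content is identical.
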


	\begin{proof}
		Assumption \ref{ass:probabilistic-assignment} ensures that $\prob{Z_i = z}>0$. Recall that $Y_i = y_i(Z_i; \Z_{-i})$, so:
		\begin{align}
		\E{\hmu} &= \frac{1}{n} \sum_{i=1}^{n} \frac{\E[\big]{\indicator{Z_i = z}Y_i}}{\prob{Z_i = z}} = \frac{1}{n} \sum_{i=1}^{n} \frac{\prob{Z_i = z}\E{Y_i \given Z_i = z}}{\prob{Z_i = z}}
		\\
		&= \frac{1}{n} \sum_{i=1}^{n} \E{y_i(z; \Z_{-i}) \given Z_i = z}. \qedhere
		\end{align}
	\end{proof}

	\begin{applemma} \label{lem:hn-bias}
		$\E{\hn} = n$.
	\end{applemma}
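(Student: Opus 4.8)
The plan is to compute the expectation term by term using linearity, exactly as in the proof of Lemma~\ref{lem:hmu-cmu-bias}. By Assumption~\ref{ass:probabilistic-assignment}, we have $\prob{Z_i = z} \geq k^{-1} > 0$ for every $i \in \bfU$ and $z \in \braces{0,1}$, so each summand in the definition of $\hn$ is well-defined. First I would apply linearity of expectation to write $\E{\hn} = \sum_{i=1}^{n} \E{\indicator{Z_i = z}} / \prob{Z_i = z}$. Next I would use the elementary fact that the expectation of an indicator variable equals the probability of the event it indicates, i.e.\ $\E{\indicator{Z_i = z}} = \prob{Z_i = z}$. Each ratio in the sum is therefore exactly $1$, and summing over the $n$ units yields $\E{\hn} = n$.

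There is essentially no obstacle here. The only point requiring care is that the denominators are strictly positive, which is precisely what Assumption~\ref{ass:probabilistic-assignment} guarantees. Notably, the argument uses only the marginal treatment probabilities and no further structure of the experimental design, so the identity holds for every design satisfying the regularity conditions; this is what makes $\hn$ a useful normalization in the analysis of the H\'ajek estimator that follows.
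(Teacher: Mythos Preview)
Your proof is correct and follows essentially the same approach as the paper: invoke Assumption~\ref{ass:probabilistic-assignment} to ensure the denominators are positive, apply linearity of expectation, use $\E{\indicator{Z_i = z}} = \prob{Z_i = z}$, and sum the resulting ones. There is nothing to add.
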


	\begin{proof}
		Assumption \ref{ass:probabilistic-assignment} ensures that $\prob{Z_i = z}>0$, so:
		\begin{equation}
		\E{\hn} = \sum_{i=1}^{n} \frac{\E[\big]{\indicator{Z_i = z}}}{\prob{Z_i = z}} = \sum_{i=1}^{n} \frac{\prob{Z_i = z}}{\prob{Z_i = z}} = n. \tag*{\qedhere}
		\end{equation}
	\end{proof}

\begin{applemma}\label{lem:bounded-covariance}
	If Assumption~\ref{ass:bounded-moments} holds with $q \geq 2a$, then:
	\begin{equation}
		\Cov[\big]{\indicator{Z_i = z}Y_i^a, \indicator{Z_j = z}Y_j^a} \leq k^{2a},
	\end{equation}
	for all $i,j\in\bfU$ and $z\in\braces{0,1}$.
\end{applemma}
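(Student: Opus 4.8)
The plan is to route the bound through the Cauchy--Schwarz inequality for covariances and then control each marginal variance by an absolute moment of the outcome. Write $X_i = \indicator{Z_i = z}Y_i^a$ and $X_j = \indicator{Z_j = z}Y_j^a$. Then $\Cov{X_i, X_j} \leq \abs{\Cov{X_i, X_j}} \leq \sqrt{\Var{X_i}\Var{X_j}}$, so it suffices to establish $\Var{X_i} \leq k^{2a}$ for each $i \in \bfU$ and likewise for $X_j$.

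First I would bound the variance by the second moment, $\Var{X_i} \leq \E{X_i^2}$. Since an indicator is idempotent, $X_i^2 = \indicator{Z_i = z}Y_i^{2a}$, and because $\indicator{Z_i = z} \in \braces{0, 1}$ and $Y_i^{2a} = \abs{Y_i}^{2a} \geq 0$ we get $\E{X_i^2} = \E{\indicator{Z_i = z}Y_i^{2a}} \leq \E{\abs{Y_i}^{2a}}$. The hypothesis $q \geq 2a$ then lets me apply Lyapunov's (equivalently Jensen's) inequality, $\E{\abs{Y_i}^{2a}} \leq \paren[\big]{\E{\abs{Y_i}^{q}}}^{2a/q}$, and Assumption~\ref{ass:bounded-moments} bounds the right-hand side by $(k^q)^{2a/q} = k^{2a}$. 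Combining these displays gives $\Var{X_i} \leq k^{2a}$, the same bound holds for $X_j$, and substituting into the Cauchy--Schwarz inequality yields $\Cov{X_i, X_j} \leq \sqrt{k^{2a} \cdot k^{2a}} = k^{2a}$, as claimed.

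Every step is a standard inequality, so there is no substantive obstacle here. The only subtlety worth flagging is that $Y_i^a$ need not be nonnegative when $a$ is odd, which is why the argument passes through $\abs{\Cov{X_i, X_j}}$ and Cauchy--Schwarz rather than attempting to discard the $\E{X_i}\E{X_j}$ term by a sign argument; routing through $\abs{Y_i}$ also makes the moment bookkeeping with Assumption~\ref{ass:bounded-moments} immediate.
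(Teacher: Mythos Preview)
The proposal is correct and follows essentially the same route as the paper's proof: Cauchy--Schwarz on the covariance, bound each variance by $\E{\abs{Y_i}^{2a}}$, lift to the $q$th moment via Jensen's inequality, and apply Assumption~\ref{ass:bounded-moments}. Your write-up is in fact slightly more careful in making explicit the pass through $\abs{\Cov{\cdot,\cdot}}$ and the idempotence of the indicator, but there is no substantive difference.
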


\begin{proof}
The Cauchy--Schwarz inequality gives:
\begin{align}
	\Cov[\big]{\indicator{Z_i = z}Y_i^a, \indicator{Z_j = z}Y_i^a}
	&\leq \bracket[\Big]{\Var[\big]{\indicator{Z_i = z}Y_i^a}\Var[\big]{\indicator{Z_j = z}Y_i^a}}^{0.5}
	\\
	&\leq \bracket[\Big]{\E[\big]{\abs{Y_i}^{2a}}\E[\big]{\abs{Y_j}^{2a}}}^{0.5}
	\\
	&\leq \bracket[\bigg]{\E[\big]{\abs{Y_i}^q}^{2a/q}\E[\big]{\abs{Y_j}^q}^{2a/q}}^{0.5}
	\tag*{Jensen's inequality}
	\\
	&\leq \bracket[\Big]{\bracket{k^q}^{2a/q}\bracket{k^q}^{2a/q}}^{0.5}
	\tag*{Assumption \ref{ass:bounded-moments}}
	\\
	& = \bracket[\big]{k^{4a}}^{0.5} = k^{2a}. \qedhere
\end{align}
\end{proof}

	\begin{applemma} \label{lem:bounded-uncond-cond-po-diff}
		$\abs[\big]{\E{y_i(z; \Z_{-i}) \given Z_i = z} - \E{y_i(z; \Z_{-i})}} \leq 2k^2$ for all $i\in\bfU$ and $z\in\braces{0,1}$.
	\end{applemma}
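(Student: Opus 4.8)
The plan is to split the quantity of interest using the triangle inequality, $\abs{\E{W \given Z_i = z} - \E{W}} \leq \abs{\E{W \given Z_i = z}} + \abs{\E{W}}$ where $W = y_i(z; \Z_{-i})$, and bound each term by $k^2$ using the two relevant parts of Assumption~\ref{ass:regularity-conditions}. First I would note that the conditional expectation is well-defined because Assumption~\ref{ass:probabilistic-assignment} gives $\prob{Z_i = z} > 0$.

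For the unconditional term, Jensen's inequality applied first to $\abs{\cdot}$ and then to the convex power $s \geq 1$, together with Assumption~\ref{ass:bounded-expected-pos}, gives $\abs{\E{W}} \leq \E{\abs{W}} \leq \paren{\E{\abs{W}^s}}^{1/s} \leq k$. For the conditional term, I would again use Jensen to move the absolute value inside, $\abs{\E{W \given Z_i = z}} \leq \E{\abs{W} \given Z_i = z}$, and then compare the conditional and unconditional means of $\abs{W}$: by the law of total expectation and nonnegativity of $\abs{W}$ we have $\E{\abs{W}} \geq \E{\abs{W} \given Z_i = z}\,\prob{Z_i = z}$, so $\E{\abs{W} \given Z_i = z} \leq \E{\abs{W}}/\prob{Z_i = z} \leq k / k^{-1} = k^2$, where the last inequality uses that Assumption~\ref{ass:probabilistic-assignment} forces $\prob{Z_i = z} \geq k^{-1}$ for both $z \in \braces{0,1}$ (the bound $k^{-1} \leq \prob{Z_i=1} \leq 1-k^{-1}$ applies symmetrically to $\prob{Z_i = 0} = 1 - \prob{Z_i = 1}$). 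Combining the two bounds, and using $k \geq 2 \geq 1$ (which is implied by $k^{-1} \leq 1 - k^{-1}$) so that $k \leq k^2$, yields the claimed bound $2k^2$.

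Every step is an elementary application of Jensen's inequality, the law of total expectation, and the regularity conditions, so there is no genuine obstacle. The only subtlety worth flagging is that Assumption~\ref{ass:bounded-expected-pos} controls only the \emph{unconditional} $s$th absolute moment of $y_i(z; \Z_{-i})$ slightly off the support; passing to the \emph{conditional} moment costs a factor of $1/\prob{Z_i = z}$, which is where the extra power of $k$ (and hence the constant $2k^2$ rather than $2k$) comes from.
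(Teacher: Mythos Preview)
Your argument is correct and follows the same outline as the paper's proof: split via the triangle inequality into the conditional and unconditional pieces, push the absolute value inside with Jensen, and use the law of total expectation together with nonnegativity to control the conditional piece at the cost of a factor $1/\prob{Z_i = z} \leq k$. The one minor difference is in which moment assumption carries the conditional term. The paper adds the complementary summand so that the law of total expectation reconstructs $\E{\abs{Y_i}}/\prob{Z_i = z}$ and then invokes Assumption~\ref{ass:bounded-moments}; you instead apply total expectation directly to $\abs{y_i(z;\Z_{-i})}$ to obtain $\E{\abs{y_i(z;\Z_{-i})} \given Z_i = z} \leq \E{\abs{y_i(z;\Z_{-i})}}/\prob{Z_i = z}$ and bound the numerator with Assumption~\ref{ass:bounded-expected-pos}. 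Your route is slightly more economical in that it uses only Assumption~\ref{ass:bounded-expected-pos} for both pieces, but the two arguments are otherwise identical in structure and yield the same constant.
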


	\begin{proof}
		Assumption \ref{ass:probabilistic-assignment} and the law of total expectation give:
		\begin{align}
		\E[\big]{\abs{y_i(z; \Z_{-i})} \given Z_i = z} &\leq \E[\big]{\abs{y_i(z; \Z_{-i})} \given Z_i = z}
		\\
		&\qquad\qquad+ \frac{\prob{Z_i = 1-z}\E[\big]{\abs{y_i(1-z; \Z_{-i})} \given Z_i = 1-z} }{\prob{Z_i = z}}
		\\
		&= \frac{\E[\big]{\abs{y_i(Z_i; \Z_{-i})}}}{\prob{Z_i = z}} = \frac{\E[\big]{\abs{Y_i}}}{\prob{Z_i = z}} \leq k \E[\big]{\abs{Y_i}}.
		\end{align}
		Consider the expression to be bounded:
		\begin{align}
		\abs[\big]{\E{y_i(z; \Z_{-i}) \given Z_i = z} - \E{y_i(z; \Z_{-i})}} &\leq \E[\big]{\abs{y_i(z; \Z_{-i})} \given Z_i = z} + \E[\big]{\abs{y_i(z; \Z_{-i})}}
		\\
		&\leq k \E[\big]{\abs{Y_i}} + \E[\big]{\abs{y_i(z; \Z_{-i})}} \leq 2k^2,
		\end{align}
		where the last inequality follows from Assumptions \ref{ass:bounded-moments} and \ref{ass:bounded-expected-pos}.
	\end{proof}

	\begin{applemma} \label{lem:l2-to-probability}
		If the rate of convergence in the $L_2$-norm for a sequence of random variables is govern by a sequence $(a_n)$, then the rate of convergence in probability is govern by $(a_n)$.
	\end{applemma}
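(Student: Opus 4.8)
The plan is to read the statement in the standard way: saying that the $L_2$ rate of a sequence $(X_n)$ is governed by $(a_n)$ means there is a constant $C<\infty$ with $\E{X_n^2}^{1/2}\le C a_n$ for every $n$, and saying that the rate in probability is governed by $(a_n)$ means $X_n=\bigOp{a_n}$, i.e.\ for each $\varepsilon>0$ there is an $M_\varepsilon<\infty$ with $\sup_n\prob{\abs{X_n}>M_\varepsilon a_n}\le\varepsilon$. With this reading the claim reduces to deducing the second bound from the first, and the only tool needed is Markov's inequality.

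Concretely, I would fix $\varepsilon>0$ and any $M>0$ and write
\begin{equation}
	\prob{\abs{X_n}>M a_n}=\prob{X_n^2>M^2 a_n^2}\le\frac{\E{X_n^2}}{M^2 a_n^2}\le\frac{C^2}{M^2},
\end{equation}
where the middle step is Markov's inequality applied to the nonnegative random variable $X_n^2$ and the final step uses the assumed $L_2$ bound. The right-hand side does not depend on $n$, so taking $M_\varepsilon=C\varepsilon^{-1/2}$ yields $\sup_n\prob{\abs{X_n}>M_\varepsilon a_n}\le\varepsilon$, which is exactly $X_n=\bigOp{a_n}$.

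There is no genuine obstacle here; the lemma is the rate-tracking version of the textbook fact that $L_2$ convergence implies convergence in probability. The one point that must be handled with care is uniformity: the $L_2$ bound has to hold with a single constant $C$ valid for all $n$, since otherwise the conclusion would not be a bona fide $\bigOp{\cdot}$ statement. This is what ``governed by a sequence'' is taken to mean throughout the paper, and it is how the lemma is used downstream---e.g.\ to convert the variance bounds behind Proposition~\ref{prop:bernoulli-rates} and the later propositions into the stated $\bigOp{\cdot}$ rates of convergence.
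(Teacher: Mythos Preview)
Your proposal is correct and follows essentially the same approach as the paper: both apply Markov's inequality to $X_n^2$ (the paper writes $(X_n-x_n)^2$, having set up a non-random centering sequence $(x_n)$), obtain the bound $C^2/M^2$ (the paper calls it $\lambda\kappa^{-2}$), and then choose $M=C\varepsilon^{-1/2}$ to achieve the $\bigOp{a_n}$ conclusion. The only cosmetic difference is that the paper phrases the $L_2$ bound as $\bigO{a_n}$, so its constant is valid for sufficiently large $n$ rather than all $n$, but this does not affect the $\bigOp{\cdot}$ conclusion.
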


	\begin{proof}
		It is well-known that convergence in the $L_2$-norm implies convergence in probability. It may be less known that the same hold for the rates of convergence. For completeness, we provide a proof.

		Consider a sequence of random variables $(X_n)$ and a sequence of non-random variables $(x_n)$. Let the sequence $(a_n)$ denote the  rate of convergence in the $L_2$-norm of $X_n$ to $x_n$:
		\begin{equation}
			\sqrt{\E[\Big]{\paren[\big]{X_n - x_n}^2}} = \bigO{a_n}.
		\end{equation}
		Let $\kappa$ be an arbitrary positive constant. Markov's inequality gives:
		\begin{equation}
		\prob[\Big]{\abs[\big]{X_n-x_n} \geq \kappa a_n} = \prob[\big]{\paren[\big]{X_n-x_n}^2 \geq \kappa^2a_n^2} \leq \frac{\E[\Big]{\paren[\big]{X_n-x_n}^2}}{\kappa^2a_n^2},
		\end{equation}
		which, by the rate of convergence in the $L_2$-norm, is bounded by $\lambda\kappa^{-2}$ for some constant $\lambda$ when $n$ is sufficiently large. Thus, the probability that $X_n - x_n$ deviate from zero with more than a fixed multiple of $a_n$ can be bounded by any $\varepsilon > 0$ for sufficiently large $n$ by setting $\kappa=\lambda^{0.5}\varepsilon^{-0.5}$. In other words, $X_n - x_n = \bigOp[\big]{a_n}$.
	\end{proof}

	\begin{applemma} \label{lem:hmu-hn-rates}
		If $\hmu - \cmu=\littleOp{1}$ and $\hn/n - 1=\littleOp{1}$, then:
		\begin{equation}
		\frac{n}{\hn}\hmu - \cmu = \bigOp[\big]{\paren{\hmu - \cmu} + \paren{\hn/n - 1}}.
		\end{equation}
	\end{applemma}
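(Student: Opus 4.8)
The plan is to turn the ratio into an explicit error expression and then bound each factor in probability. First I would write $\hat{a} = \hmu - \cmu$ and $\hat{b} = \hn/n - 1$, so that $\hn/n = 1 + \hat{b}$ and $\hmu = \cmu + \hat{a}$, and observe that the hypothesis $\hat{b} = \littleOp{1}$ gives $(1 + \hat{b})^{-1} \parrow 1$. A one-line computation then yields the identity
\[
	\frac{n}{\hn}\hmu - \cmu = \frac{\hmu}{1 + \hat{b}} - \cmu = \frac{\hat{a} - \cmu\,\hat{b}}{1 + \hat{b}},
\]
which reduces the problem to controlling this quotient.

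Next I would show that $\cmu$ is bounded by a constant independent of $n$. By the triangle inequality $\abs{\cmu} \leq n^{-1}\sum_{i=1}^{n}\abs[\big]{\E{y_i(z; \Z_{-i}) \given Z_i = z}}$, and each summand is at most $\abs[\big]{\E{y_i(z; \Z_{-i})}} + 2k^2$ by Lemma~\ref{lem:bounded-uncond-cond-po-diff}, while $\abs[\big]{\E{y_i(z; \Z_{-i})}} \leq \E[\big]{\abs{y_i(z; \Z_{-i})}} \leq k$ by Jensen's inequality and Assumption~\ref{ass:bounded-expected-pos}. Hence $\abs{\cmu} \leq k + 2k^2$, so $\cmu = \bigO{1}$ and in particular $\cmu\,\hat{b} = \bigOp{\hat{b}}$.

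Finally I would assemble the pieces. Since $\hat{b} = \littleOp{1}$, the event $\braces{\abs{1 + \hat{b}} \geq 1/2}$ has probability tending to one, and on it $\abs{(1 + \hat{b})^{-1}} \leq 2$, so $(1 + \hat{b})^{-1} = \bigOp{1}$. Combining this with the triangle-inequality bound $\abs{\hat{a} - \cmu\,\hat{b}} \leq \abs{\hat{a}} + (k + 2k^2)\abs{\hat{b}}$ in the identity above gives
\[
	\abs[\bigg]{\frac{n}{\hn}\hmu - \cmu} = \bigOp[\big]{\abs{\hmu - \cmu} + \abs{\hn/n - 1}} = \bigOp[\big]{\paren{\hmu - \cmu} + \paren{\hn/n - 1}},
\]
as claimed. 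The argument is essentially routine linearization; the only two points that require attention are the uniform boundedness of $\cmu$ (the sole place where the regularity conditions enter, via Lemma~\ref{lem:bounded-uncond-cond-po-diff} and Assumption~\ref{ass:bounded-expected-pos}) and the fact that $(1 + \hat{b})^{-1}$ remains bounded in probability despite $\hn$ possibly being small on a vanishing event, which is handled by the high-probability-event argument rather than any deterministic bound.
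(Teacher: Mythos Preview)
Your argument is correct and follows the same linearization strategy as the paper, though you carry it out via an explicit algebraic identity and a direct $\bigOp{1}$ bound on $(1+\hat{b})^{-1}$ rather than the paper's first-order Taylor expansion of $f(a,b)=a/b$. The one substantive variation is in how you bound $\cmu$: you go through Lemma~\ref{lem:bounded-uncond-cond-po-diff} and Assumption~\ref{ass:bounded-expected-pos}, whereas the paper obtains $\abs{\cmu}\leq k^2$ directly from Assumptions~\ref{ass:probabilistic-assignment} and~\ref{ass:bounded-moments} alone; both routes are valid under the paper's standing regularity conditions.
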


	\begin{proof}
		Recall that $Y_i = y_i(Z_i; \Z_{-i})$ and consider the absolute value of $\cmu$:
		\begin{align}
		\abs{\cmu} &= \abs[\bigg]{\frac{1}{n} \sum_{i=1}^{n} \E{Y_i \given Z_i = z}} \leq \frac{1}{n} \sum_{i=1}^{n} \E[\big]{\abs{Y_i} \given Z_i = z}
		\\
		&\leq \frac{k}{n} \sum_{i=1}^{n} \prob{Z_i = z}\E[\big]{\abs{Y_i} \given Z_i = z} \tag*{Assumption \ref{ass:probabilistic-assignment}}
		\\
		&\leq \frac{k}{n} \sum_{i=1}^{n} \paren[\Big]{\prob{Z_i = 1}\E[\big]{\abs{Y_i} \given Z_i = 1} + \prob{Z_i = 0}\E[\big]{\abs{Y_i} \given Z_i = 0}}
		\\
		&= \frac{k}{n} \sum_{i=1}^{n} \E[\big]{\abs{Y_i}} \leq k^{2},
		\end{align}
		where the last inequality follows from Assumption \ref{ass:bounded-moments}. Together with the premise that $\hn/n - 1=\littleOp{1}$, this ensures that $\paren{\hn/n - 1}\cmu=\littleOp{1}$.

		The second premise ensures that we can disregard the event $\hn = 0$, which gives:
		\begin{equation}
		\frac{n}{\hn}\hmu - \cmu = \frac{\hmu - \cmu}{\hn/n} - \frac{\paren{\hn/n - 1}\cmu}{\hn/n}.
		\end{equation}
		Let $f(a,b) = a / b$ and consider the first order Taylor expansion of the two terms around $(0, 1)$:
		\begin{align}
		\frac{\hmu - \cmu}{\hn/n} &= f\paren[\big]{\hmu - \cmu,\hn/n} = (\hmu - \cmu) + r_1,
		\\
		\frac{\paren{\hn/n - 1}\cmu}{\hn/n} &= f\paren[\big]{\paren{\hn/n - 1}\cmu,\hn/n} = \paren{\hn/n - 1}\cmu + r_2,
		\end{align}
		so:
		\begin{equation}
		\frac{n}{\hn}\hmu - \cmu = (\hmu - \cmu) - \paren{\hn/n - 1}\cmu + r_1 - r_2,
		\end{equation}
		where $r_1 = \littleOp[\big]{(\hmu - \cmu) + \paren{\hn/n - 1}}$ and $r_2 = \littleOp[\big]{\paren{\hn/n - 1}\paren{\cmu + 1}}$ since $\hmu-\cmu$ converges in probability to zero and $\hn/n$ converges to one. The bound $\abs{\cmu} \leq k^{2}$ completes the proof. Note that we cannot expand $n\hmu/ \hn$ directly since $\hmu$ and $\cmu$ may not be convergent separately; our assumptions only ensure that $\hmu - \cmu$ converges.
	\end{proof}

\section{Proof of Lemma \ref{lem:interference-measure-inequalities}}

\begin{reflem}{\ref{lem:interference-measure-inequalities}}
	$\max\paren[\big]{\icoutavg, n^{-1}\icoutmax^2} \leq \davg \leq \icoutmsq \leq \icoutmax^2$.
\end{reflem}

\begin{proof}
Start by showing $\icoutavg \leq \davg \leq \icoutmsq \leq \icoutmax^2$.
Note that $I_{ij} \leq \di_{ij} \leq \sum_{\ell=1}^{n}I_{\ell i} I_{\ell j}$ by construction of Definition \ref{def:interference-dependence}, and consider:
\begin{align}
	\icoutavg &= \frac{1}{n}\sum_{i=1}^{n}\sum_{j=1}^n I_{ij} \leq \frac{1}{n}\sum_{i=1}^{n} \sum_{j=1}^n \di_{ij} = \davg \leq \frac{1}{n}\sum_{i=1}^{n}\sum_{j=1}^{n}\sum_{\ell=1}^{n}I_{\ell i} I_{\ell j}
	\\
	&= \frac{1}{n}\sum_{\ell=1}^{n} \paren[\bigg]{\sum_{i=1}^{n}I_{\ell i}} \paren[\bigg]{\sum_{j=1}^{n} I_{\ell j}} = \frac{1}{n}\sum_{\ell=1}^{n} \icout_\ell^2 = \icoutmsq \leq \frac{1}{n}\sum_{i=1}^{n}\icoutmax^2 = \icoutmax^2.
\end{align}

Next, show $n^{-1}\icoutmax^2 \leq \davg$.
Consider the smallest $\davg$ for a given $\icoutmax$.
This is when one unit $\ell$ interferes with $\icoutmax$ units (including itself) and all other units interfere only with themselves.
That is, when $\icout_\ell = \icoutmax$ and $\icout_i = 1$ for $i \neq \ell$.
For any other configuration, we can remove interference relations for some pairs of units without affecting $\icoutmax$, but the removal would weakly decrease $\davg$.
Let $A = \braces{i : I_{\ell i} = 1}$ be the set of units which unit $\ell$ interferes with.
The interference dependencies will be such that $\di_{ij} = 1$ if and only if $i, j \in A$ or $i = j$.
There are $\abs{A} = \icoutmax$ units in $A$ by construction, so the number of pairs of units with $i, j \in A$ is $\icoutmax^2$.
The number of pairs with $i = j$ is $n$, but $\icoutmax$ of them also satisfy $i, j \in A$.
Hence, the number of pairs $i, j \in \bfU$ such that $\di_{ij} = 1$ is $\icoutmax^2 + (n - \icoutmax)$, so for this configuration:
\begin{equation}
	\davg = \frac{1}{n}\sum_{i=1}^{n}\sum_{j=1}^n\di_{ij} = \frac{\icoutmax^2 + (n - \icoutmax)}{n}.
\end{equation}
Because this was the configuration with the smallest $\davg$ for a given $\icoutmax$, it is a lower bound of $\davg$ for any configuration.
That is, the following inequality holds for any interference structure:
\begin{equation}
	\frac{\icoutmax^2 + (n - \icoutmax)}{n} \leq \davg.
\end{equation}
After rearranging terms and noting that $(n - \icoutmax) \geq 0$, we get:
\begin{equation}
	\icoutmax^2 \leq n\davg - (n - \icoutmax) \leq n\davg. \tag*{\qedhere}
\end{equation}
\end{proof}

	\begin{applemma} \label{lem:paired-intmeasure-inequality}
		$\eavg \leq \icoutmsq$.
	\end{applemma}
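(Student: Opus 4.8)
The plan is to reuse the factoring trick that gave the bound $\davg \le \icoutmsq$ in Lemma~\ref{lem:interference-measure-inequalities}. The starting observation is that the defining condition for $\ei_{ij}=1$, namely $(1-\di_{ij})I_{\ell i}I_{\rho(\ell) j}=1$ for some $\ell\in\bfU$, implies in particular that $I_{\ell i}I_{\rho(\ell) j}=1$ for some $\ell$; dropping the factor $(1-\di_{ij})$ can only weaken the requirement. Hence we have the pointwise bound $\ei_{ij}\le\sum_{\ell=1}^{n}I_{\ell i}I_{\rho(\ell) j}$, since the right-hand side is at least one whenever such an $\ell$ exists.

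Summing this inequality over $i$ and $j$ and then interchanging the order of summation to factor over $\ell$ gives
\begin{equation}
	\eavg \le \frac{1}{n}\sum_{i=1}^{n}\sum_{j=1}^{n}\sum_{\ell=1}^{n} I_{\ell i} I_{\rho(\ell) j}
	= \frac{1}{n}\sum_{\ell=1}^{n}\paren[\bigg]{\sum_{i=1}^{n} I_{\ell i}}\paren[\bigg]{\sum_{j=1}^{n} I_{\rho(\ell) j}}
	= \frac{1}{n}\sum_{\ell=1}^{n} \icout_\ell\, \icout_{\rho(\ell)}.
\end{equation}
The final step is to dominate the cross term $\sum_{\ell} \icout_\ell\, \icout_{\rho(\ell)}$ by $\sum_{\ell}\icout_\ell^2$. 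Because the pairing $\rho$ is a bijection on $\bfU$ (indeed its self-composition is the identity), we have $\sum_{\ell}\icout_{\rho(\ell)}^2=\sum_{\ell}\icout_\ell^2$, so the Cauchy--Schwarz inequality, or equivalently $2\icout_\ell\,\icout_{\rho(\ell)}\le\icout_\ell^2+\icout_{\rho(\ell)}^2$ summed termwise, yields $\sum_{\ell}\icout_\ell\,\icout_{\rho(\ell)}\le\sum_{\ell}\icout_\ell^2$. Combining, $\eavg \le n^{-1}\sum_{\ell}\icout_\ell^2 = \icoutmsq$.

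I do not anticipate a genuine obstacle; the one point that requires a moment's care is the reindexing $\ell\mapsto\rho(\ell)$, which leaves $\sum_{\ell}\icout_\ell^2$ invariant and is precisely what lets the $\rho$-paired cross term be absorbed into the plain sum of squares. Everything else is the same rearrangement used in Lemma~\ref{lem:interference-measure-inequalities}.
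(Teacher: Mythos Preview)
The proposal is correct and essentially identical to the paper's proof: both bound $\ei_{ij}\le\sum_\ell I_{\ell i}I_{\rho(\ell)j}$, factor the triple sum into $n^{-1}\sum_\ell \icout_\ell\icout_{\rho(\ell)}$, and then apply $2xy\le x^2+y^2$ together with the bijectivity of $\rho$ to obtain $\icoutmsq$. The only cosmetic difference is that the paper names the last step Young's inequality whereas you also mention Cauchy--Schwarz.
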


	\begin{proof}
		The proof follows a similar structure as the proof of Lemma \ref{lem:interference-measure-inequalities}. Definition \ref{def:paired-interference-dependence} gives $\ei_{ij} = \sum_{\ell=1}^{n}(1-\di_{ij})I_{\ell i} I_{\rho(\ell) j}\leq \sum_{\ell=1}^{n}I_{\ell i} I_{\rho(\ell) j}$. Note that $2xy \leq x^2 + y^2$ for any $x$ and $y$ by Young's inequality for products. Now consider:
		\begin{align}
		\eavg &= \frac{1}{n}\sum_{i=1}^{n} \sum_{j=1}^n \ei_{ij} \leq \frac{1}{n}\sum_{i=1}^{n}\sum_{j=1}^{n}\sum_{\ell=1}^{n}I_{\ell i} I_{\rho(\ell) j} =\frac{1}{n}\sum_{\ell=1}^{n} \icout_\ell\icout_{\rho(\ell)}
		\\
		&\leq \frac{1}{2n}\sum_{\ell=1}^{n} \bracket[\big]{\icout_\ell^2 + \icout_{\rho(\ell)}^2} = \frac{1}{2n}\sum_{\ell=1}^{n} \icout_\ell^2 + \frac{1}{2n}\sum_{\ell=1}^{n} \icout_{\rho(\ell)}^2 = \icoutmsq. \qedhere
		\end{align}
	\end{proof}

	\section{Proof of Proposition \ref{prop:restricted-interference-necessary}}

	\begin{refprop}{\ref{prop:restricted-interference-necessary}}
		For every sequence of experimental designs, if Assumption~\ref{ass:restricted-interference} does not hold, there exists a sequence of potential outcomes satisfying Assumption~\ref{ass:regularity-conditions} such that the \HT\ and \HA\ estimators do not converge in probability to \EATE.
	\end{refprop}

	\begin{proof}
		Let $\lambda = \limsup_{n\to\infty}n^{-1}\davg$. Assumption \ref{ass:restricted-interference} is equivalent to $\lambda = 0$, so if it does not hold, we know that $\lambda$ is strictly greater than zero. The definition of $\lambda$ implies that $\davg \geq \lambda n$ happens infinitely often. In these cases, set the potential outcomes to:
		\begin{equation}
		y_i(\z) = \left\{ \begin{array}{ll}
		n\ceil{\lambda^{0.5}n}^{-1}z_1\bracket[\big]{z_i\prob{Z_i = 1} - (1-z_i)\prob{Z_i = 0}} & \text{if } i \leq \ceil{\lambda^{0.5}n},
		\\[0.5em]
		0 & \text{otherwise},
		\end{array}\right.
		\end{equation}
		where $\ceil{\cdot}$ is the ceiling function. This implies:
		\begin{equation}
		\di_{ij} = \left\{ \begin{array}{ll}
		1 & \text{if } i = j \text{ or } i,j\leq \ceil{\lambda^{0.5}n},
		\\[0.5em]
		0 & \text{otherwise},
		\end{array}\right.
		\end{equation}
		so we have:
		\begin{equation}
		\davg = \frac{1}{n}\sum_{i=1}^{n}\sum_{j=1}^n\di_{ij} = \frac{1}{n}\sum_{i=1}^{\ceil{\lambda^{0.5}n}}\sum_{j=1}^{\ceil{\lambda^{0.5}n}}\di_{ij} + \frac{1}{n}\sum_{i=\ceil{\lambda^{0.5}n}+1}^{n}\di_{ii} = \frac{\ceil{\lambda^{0.5}n}^2}{n} + \frac{n - \ceil{\lambda^{0.5}n}}{n},
		\end{equation}
		which gives $\davg \geq \lambda n$ for any finite $n$ and $\davg \sim \lambda n$ asymptotically, as desired. (These potential outcomes may not replicate a specified sequence of $\davg$ exactly. However, the limit superior of $n^{-1}\davg$ for the constructed sequence is $\lambda$, and it thus replicates the relevant asymptotic behavior of the specified sequence. One can modify the proof to replicate the specified sequence exactly, but this would complicate the proof without providing additional insights.)

		We must now ensure that the stipulated potential outcomes satisfy our regularity conditions. For $i > \ceil{\lambda^{0.5}n}$, we have $y_i(\z)=0$ for all $\z$, so the moment conditions in Assumption \ref{ass:regularity-conditions} trivially hold. For $i \leq \ceil{\lambda^{0.5}n}$, we can bound the potential outcomes for all $\z$ by:
		\begin{align}
		\abs{y_i(\z)} &\leq n\ceil{\lambda^{0.5}n}^{-1}\bracket[\big]{\prob{Z_i = 1} + \prob{Z_i = 0}}  \leq \lambda^{-0.5},
		\end{align}
		so Assumptions \ref{ass:bounded-moments} and \ref{ass:bounded-expected-pos} hold even when $q\to\infty$ and $s\to\infty$ since $k=\lambda^{-0.5}$ is a valid constant for the moment conditions.

		Consider the \EATE\ estimand:
		\begin{align}
		\eEATE &= \frac{1}{n}\sum_{i=1}^{n} \paren[\Big]{\E{y_i(1; \Z_{-i})} - \E{y_i(0; \Z_{-i})}}
		\\
		&= \frac{\E{y_1(1; \Z_{-1})}}{n} + \sum_{i=2}^{n} \paren[\bigg]{\frac{\E{y_i(1; \Z_{-i})}}{n} - \frac{\E{y_i(0; \Z_{-i})}}{n}}
		\\
		&= \frac{\prob{Z_1 = 1}}{\ceil{\lambda^{0.5}n}} + \sum_{i=2}^{\ceil{\lambda^{0.5}n}} \paren[\Big]{\E[\big]{\ceil{\lambda^{0.5}n}^{-1}Z_1\prob{Z_i = 1}} + \E[\big]{\ceil{\lambda^{0.5}n}^{-1}Z_1\prob{Z_i = 0}}}
		\\
		&= \frac{\prob{Z_1 = 1}}{\ceil{\lambda^{0.5}n}} + \frac{\prob{Z_1 = 1}}{\ceil{\lambda^{0.5}n}}\sum_{i=2}^{\ceil{\lambda^{0.5}n}} \paren[\Big]{\prob{Z_i = 1} + \prob{Z_i = 0}} = \prob{Z_1 = 1}.
		\end{align}
		Then consider the \HT\ estimator:
		\begin{align}
		\eHT &= \frac{1}{n} \sum_{i=1}^{n}\frac{Z_i Y_i}{\prob{Z_i = 1}} - \frac{1}{n} \sum_{i=1}^{n} \frac{(1-Z_i) Y_i}{\prob{Z_i = 0}}
		\\
		&= \frac{Z_1 y_1(\Z)}{n\prob{Z_1 = 1}} + \frac{1}{n} \sum_{i=2}^{n}\frac{Z_i y_i(\Z)}{\prob{Z_i = 1}} - \frac{1}{n} \sum_{i=2}^{n} \frac{(1-Z_i) y_i(\Z)}{\prob{Z_i = 0}}
		\\
		&= \frac{Z_1}{\ceil{\lambda^{0.5}n}} + \frac{1}{n} \sum_{i=2}^{\ceil{\lambda^{0.5}n}}\frac{n\ceil{\lambda^{0.5}n}^{-1}Z_1Z_i\prob{Z_i = 1}}{\prob{Z_i = 1}}
		\\
		&\qquad\qquad + \frac{1}{n} \sum_{i=2}^{\ceil{\lambda^{0.5}n}} \frac{ n\ceil{\lambda^{0.5}n}^{-1}Z_1(1-Z_i)\prob{Z_i = 0}}{\prob{Z_i = 0}}
		\\
		&= \frac{Z_1}{\ceil{\lambda^{0.5}n}} + \frac{Z_1}{\ceil{\lambda^{0.5}n}} \sum_{i=2}^{\ceil{\lambda^{0.5}n}}Z_i + \frac{Z_1}{\ceil{\lambda^{0.5}n}} \sum_{i=2}^{\ceil{\lambda^{0.5}n}}(1-Z_i) = Z_1.
		\end{align}
		By Assumption \ref{ass:probabilistic-assignment}, there exists a constant $k>0$ such that $k^{-1} \leq \prob{Z_1 = 1} \leq 1 - k^{-1}$. Subsequently, $\abs{\eHT - \eEATE} = \abs{Z_1 - \prob{Z_1 = 1}} \geq k^{-1}$ with probability one whenever $\davg \geq \lambda n$, which, as we already noted, happens infinitely often. The proof for $\eHA$ follows the same structure.
	\end{proof}

	\section{Proof of Proposition \ref{prop:bernoulli-rates}}

	\begin{applemma} \label{lem:bernoulli-adse-eate-diff}
		Under Bernoulli randomization, $\eEATE=\eADSE$.
	\end{applemma}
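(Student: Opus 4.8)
The plan is to exploit the defining feature of the Bernoulli design, namely that the coordinates $Z_1, \dotsc, Z_n$ are mutually independent, and in particular that $Z_i$ is independent of $\Z_{-i}$ for each $i$. Using the expressions for the two estimands recorded just after Definition~\ref{def:adse},
\begin{align}
	\eEATE &= \frac{1}{n} \sum_{i=1}^{n} \paren[\big]{\E{y_i(1; \Z_{-i})} - \E{y_i(0; \Z_{-i})}},
	\\
	\eADSE &= \frac{1}{n} \sum_{i=1}^{n} \paren[\big]{\E{y_i(1; \Z_{-i}) \given Z_i = 1} - \E{y_i(0; \Z_{-i}) \given Z_i = 0}},
\end{align}
it suffices to show that each conditional expectation in $\eADSE$ equals the corresponding unconditional expectation in $\eEATE$, term by term.

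First I would fix a unit $i$ and a treatment value $z\in\braces{0,1}$ and observe that $y_i(z; \Z_{-i})$ is a measurable function of $\Z_{-i}$ alone, with $z$ held fixed as a constant. Under the Bernoulli design, $\Z_{-i}$ is independent of $Z_i$, hence so is $y_i(z; \Z_{-i})$. Independence then gives $\E{y_i(z; \Z_{-i}) \given Z_i = z} = \E{y_i(z; \Z_{-i})}$; this is exactly the statement, in the notation of Supplement~A, that $\cmu = \tmu$ for $z\in\braces{0,1}$ under a Bernoulli design. Summing over $i$ and subtracting the $z=0$ expression from the $z=1$ expression yields $\eADSE = \eEATE$.

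There is essentially no obstacle here; the result is a direct consequence of independence of treatments. The only point requiring a word of care is the measurability observation in step two, namely that for fixed $z$ the map $\Z_{-i}\mapsto y_i(z;\Z_{-i})$ carries no dependence on $Z_i$, so that the conditioning event $\braces{Z_i = z}$ is independent of the random variable being averaged. Assumption~\ref{ass:probabilistic-assignment} guarantees $\prob{Z_i = z}>0$ so the conditional expectations are well defined, and Assumption~\ref{ass:bounded-expected-pos} ensures they are finite.
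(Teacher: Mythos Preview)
Your proposal is correct and follows essentially the same route as the paper: both arguments use the fact that under the Bernoulli design $Z_i$ is independent of $\Z_{-i}$, so the conditioning in the \ADSE\ expression is inconsequential and each term $\E{y_i(z;\Z_{-i})\given Z_i=z}$ collapses to $\E{y_i(z;\Z_{-i})}$. The paper phrases it slightly differently, starting from $\eADSE$ and then pushing the sum inside the expectation to recover $\E{\eATE(\Z)}$, but the substance is identical.
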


	\begin{proof}
		By Definition \ref{def:adse} and $Y_i = y_i(Z_i; \Z_{-i})$, we have:
		\begin{equation}
		\eADSE = \frac{1}{n} \sum_{i=1}^{n} \paren[\Big]{\E{y_i(1; \Z_{-i}) \given Z_i = 1} - \E{y_i(0; \Z_{-i}) \given Z_i = 0}}.
		\end{equation}
		The Bernoulli design makes the conditioning inconsequential since $\Z_{-i}$ and $Z_i$ are independent, and we get:
		\begin{equation}
		\eADSE = \frac{1}{n} \sum_{i=1}^{n} \paren[\Big]{\E{y_i(1; \Z_{-i})} - \E{y_i(0; \Z_{-i})}}.
		\end{equation}
		Finally, we move the summation inside the expectation and apply Definitions \ref{def:acate} and \ref{def:eate}:
		\begin{equation}
		\eADSE = \E[\bigg]{\frac{1}{n} \sum_{i=1}^{n} \bracket{y_i(1; \Z_{-i}) - y_i(0; \Z_{-i})}} = \E[\bigg]{\frac{1}{n} \sum_{i=1}^{n} \tau_i(\Z_{-i})} = \E[\big]{\eATE(\Z)} = \eEATE. \tag*{\qedhere}
		\end{equation}
	\end{proof}

	\begin{applemma} \label{lem:bernoulli-hmu-rates}
		Under Bernoulli randomization, $\hmu - \cmu = \bigOp[\big]{n^{-0.5}\davg^{0.5}}$.
	\end{applemma}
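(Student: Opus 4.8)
The plan is to control the second moment of $\hmu - \cmu$ and then transfer the resulting rate to convergence in probability via Lemma~\ref{lem:l2-to-probability}. Lemma~\ref{lem:hmu-cmu-bias} gives $\E{\hmu} = \cmu$, and $\cmu$ is nonrandom, so $\E[\big]{(\hmu - \cmu)^2} = \Var{\hmu}$. It therefore suffices to show $\Var{\hmu} \leq k^4 n^{-1} \davg$, since then the $L_2$ rate of $\hmu$ toward $\cmu$ is $\bigO{n^{-0.5} \davg^{0.5}}$ and Lemma~\ref{lem:l2-to-probability} delivers the claimed $\bigOp[\big]{n^{-0.5}\davg^{0.5}}$.

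Write $T_i = \indicator{Z_i = z} Y_i / \prob{Z_i = z}$, so that $\hmu = n^{-1}\sum_{i=1}^n T_i$ and $\Var{\hmu} = n^{-2} \sum_{i=1}^n \sum_{j=1}^n \Cov{T_i, T_j}$. The key structural step is that each $T_i$ is a deterministic function of the treatments of the units interfering with $i$: by the definition of the interference indicators, $y_i(\z)$ does not change when only a coordinate $\ell$ with $I_{\ell i} = 0$ is perturbed, and since $I_{ii} = 1$ always holds (equivalently, $Y_i = y_i(\Zint_i)$), the variable $T_i$ is measurable with respect to $\{Z_\ell : I_{\ell i} = 1\}$. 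Under a Bernoulli design the coordinates of $\Z$ are mutually independent, so whenever $\di_{ij} = 0$ — which, by Definition~\ref{def:interference-dependence}, is exactly the statement that the index sets $\{\ell : I_{\ell i} = 1\}$ and $\{\ell : I_{\ell j} = 1\}$ are disjoint — the variables $T_i$ and $T_j$ are independent and $\Cov{T_i, T_j} = 0$. Hence only the pairs with $\di_{ij} = 1$ contribute to the variance, and there are $\sum_{i,j} \di_{ij} = n \davg$ of them.

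It remains to bound each surviving covariance by a constant. By Assumption~\ref{ass:probabilistic-assignment} the denominators satisfy $\prob{Z_i = z} \geq k^{-1}$, and Lemma~\ref{lem:bounded-covariance} with $a = 1$ (which needs only $q \geq 2$) gives $\Cov{\indicator{Z_i = z} Y_i, \indicator{Z_j = z} Y_j} \leq k^2$; dividing by the two denominators yields $\Cov{T_i, T_j} \leq k^4$ for every pair. Combining the two steps,
\begin{equation*}
\Var{\hmu} = \frac{1}{n^2} \sum_{i,j \,:\, \di_{ij} = 1} \Cov{T_i, T_j} \leq \frac{k^4}{n^2} \sum_{i=1}^n \sum_{j=1}^n \di_{ij} = \frac{k^4 \davg}{n},
\end{equation*}
so $\sqrt{\E[\big]{(\hmu - \cmu)^2}} \leq k^2 n^{-0.5} \davg^{0.5}$, and the conclusion follows from Lemma~\ref{lem:l2-to-probability}.

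The only genuine obstacle is the independence argument in the second paragraph: one must be careful that $T_i$ depends on no treatment outside $\{\ell : I_{\ell i} = 1\}$ (this is where the convention $I_{ii} = 1$ and the representation $Y_i = y_i(\Zint_i)$ are used), and that it is specifically the Bernoulli structure that turns disjointness of coordinate sets into independence. This step is exactly what fails for the correlated designs considered later, which is why those require the additional mixing-type conditions; everything past it here is routine bookkeeping.
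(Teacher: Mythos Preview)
Your proof is correct and follows essentially the same approach as the paper: you use Lemma~\ref{lem:hmu-cmu-bias} for unbiasedness, split the variance into pairs with $\di_{ij}=1$ and $\di_{ij}=0$, argue via $Y_i=y_i(\Zint_i)$ and coordinate independence under Bernoulli that the latter contribute zero covariance, bound the former using Assumption~\ref{ass:probabilistic-assignment} and Lemma~\ref{lem:bounded-covariance}, and finish with Lemma~\ref{lem:l2-to-probability}. The only differences are cosmetic (you introduce the shorthand $T_i$ and add some explanatory commentary about why the independence step is specific to Bernoulli), but the argument is the paper's.
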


	\begin{proof}
		Consider the variance of $\hmu$:
		\begin{align}
		\Var{\hmu} &= \Var[\bigg]{\frac{1}{n} \sum_{i=1}^{n} \frac{\indicator{Z_i = z}Y_i}{\prob{Z_i = z}}} = \frac{1}{n^2}\sum_{i=1}^{n} \sum_{j=1}^{n} \frac{\Cov[\big]{\indicator{Z_i = z}Y_i, \indicator{Z_j = z}Y_j}}{\prob{Z_i = z}\prob{Z_j = z}}
		\\
		&\leq \frac{k^2}{n^2}\sum_{i=1}^{n} \sum_{j=1}^{n} \Cov[\big]{\indicator{Z_i = z}Y_i, \indicator{Z_j = z}Y_j}.
		\tag*{Assumption \ref{ass:probabilistic-assignment}}
		\end{align}
		Recall $\di_{ij}$ and $\davg$ from Definition \ref{def:interference-dependence}:
		\begin{align}
		\Var{\hmu} &\leq \frac{k^2}{n^2}\sum_{i=1}^{n} \sum_{j=1}^{n} \bracket[\big]{\di_{ij} + (1-\di_{ij})}\Cov[\big]{\indicator{Z_i = z}Y_i, \indicator{Z_j = z}Y_j}
		\\
		&\leq  \frac{k^4}{n^2}\sum_{i=1}^{n} \sum_{j=1}^{n} \di_{ij} + \frac{k^2}{n^2}\sum_{i=1}^{n} \sum_{j=1}^{n} (1-\di_{ij}) \Cov[\big]{\indicator{Z_i = z}Y_i, \indicator{Z_j = z}Y_j},
		\\
		&\leq k^4n^{-1}\davg + \frac{k^2}{n^2}\sum_{i=1}^{n} \sum_{j=1}^{n} (1-\di_{ij}) \Cov[\big]{\indicator{Z_i = z}Y_i, \indicator{Z_j = z}Y_j},
		\end{align}
		where the second inequality follows from Lemma \ref{lem:bounded-covariance} and the last from Definition \ref{def:interference-dependence}.

		Recall that we can write $Y_i = y_i(\Z) = y_i\paren[\big]{\Zint_i}$, and thus:
		\begin{equation}
		\Cov[\big]{\indicator{Z_i = z}Y_i, \indicator{Z_j = z}Y_j} = \Cov[\Big]{\indicator{Z_i = z}y_i\paren[\big]{\Zint_i}, \indicator{Z_j = z}y_j\paren[\big]{\Zint_j}}.
		\end{equation}
		Under the Bernoulli design, treatment assignments are independent. When $\di_{ij}=0$, no treatment affects both units $i$ and $j$. All treatments affecting a unit are collected in its $\Zint_i$, so $\di_{ij}=0$ implies that $\Zint_i$ and $\Zint_j$ are disjoint and, thus, independent. It follows:
		\begin{equation}
		(1-\di_{ij}) \Cov[\big]{\indicator{Z_i = z}Y_i, \indicator{Z_j = z}Y_j} = 0.
		\end{equation}
		We conclude that $\Var{\hmu} \leq k^4n^{-1}\davg$.

		Lemma \ref{lem:hmu-cmu-bias} provides the rate of convergence in the $L_2$-norm:
		\begin{equation}
			\sqrt{\E[\Big]{\paren[\big]{\hmu - \cmu}^2}} = \sqrt{\E[\Big]{\paren[\big]{\hmu - \E{\hmu}}^2}} = \sqrt{\Var{\hmu}} = \bigO{n^{-0.5}\davg^{0.5}},
		\end{equation}
		which gives the rate of convergence in probability using Lemma \ref{lem:l2-to-probability}.
	\end{proof}

	\begin{applemma} \label{lem:bernoulli-hmu-hn-rates}
		Under Bernoulli randomization and Assumption \ref{ass:restricted-interference}:
		\begin{equation}
		\frac{n}{\hn}\hmu - \cmu = \bigOp[\big]{n^{-0.5}\davg^{0.5}}.
		\end{equation}
	\end{applemma}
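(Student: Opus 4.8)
The plan is to derive this as a short corollary of the two preceding lemmas. Lemma~\ref{lem:hmu-hn-rates} already reduces the rate of $\frac{n}{\hn}\hmu - \cmu$ to the rates of $\hmu - \cmu$ and $\hn/n - 1$, provided both of those are $\littleOp{1}$; Lemma~\ref{lem:bernoulli-hmu-rates} supplies $\hmu - \cmu = \bigOp{n^{-0.5}\davg^{0.5}}$ under the Bernoulli design. So the only genuinely new task is to bound $\hn/n - 1$ and then verify that the premises of Lemma~\ref{lem:hmu-hn-rates} hold under Assumption~\ref{ass:restricted-interference}.

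First I would bound $\Var{\hn}$. Under the Bernoulli design the summands $\indicator{Z_i = z}/\prob{Z_i = z}$ are independent across $i$, so $\Var{\hn} = \sum_{i=1}^{n}\Var{\indicator{Z_i = z}/\prob{Z_i = z}} = \sum_{i=1}^{n}\frac{1 - \prob{Z_i = z}}{\prob{Z_i = z}} \leq (k-1)n$, where the last inequality uses Assumption~\ref{ass:probabilistic-assignment}. Combined with $\E{\hn} = n$ from Lemma~\ref{lem:hn-bias}, this gives $\sqrt{\E{(\hn/n - 1)^2}} = \sqrt{\Var{\hn}}\,/\,n = \bigO{n^{-0.5}}$, hence $\hn/n - 1 = \bigOp{n^{-0.5}}$ by Lemma~\ref{lem:l2-to-probability}.

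Next I would check the premises of Lemma~\ref{lem:hmu-hn-rates}. By Lemma~\ref{lem:interference-measure-inequalities} we always have $\davg \geq \icoutavg \geq 1$, so $n^{-0.5} \leq n^{-0.5}\davg^{0.5}$ and therefore both $\hmu - \cmu$ and $\hn/n - 1$ are $\bigOp{n^{-0.5}\davg^{0.5}}$. Assumption~\ref{ass:restricted-interference} states $\davg = \littleO{n}$, i.e.\ $n^{-0.5}\davg^{0.5} = \littleO{1}$, so both quantities are $\littleOp{1}$ and Lemma~\ref{lem:hmu-hn-rates} applies. It yields $\frac{n}{\hn}\hmu - \cmu = \bigOp{(\hmu - \cmu) + (\hn/n - 1)} = \bigOp{n^{-0.5}\davg^{0.5} + n^{-0.5}} = \bigOp{n^{-0.5}\davg^{0.5}}$, which is the claim.

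There is essentially no obstacle here; the only point needing a moment's care is noticing that the $n^{-0.5}$ rate for $\hn/n - 1$ is absorbed into $n^{-0.5}\davg^{0.5}$ because $\davg \geq 1$, and that Assumption~\ref{ass:restricted-interference} is precisely what makes the linearization premises of Lemma~\ref{lem:hmu-hn-rates} hold.
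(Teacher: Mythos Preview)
Your proposal is correct and follows essentially the same route as the paper: bound $\Var{\hn/n}$ using independence of assignments under the Bernoulli design and Assumption~\ref{ass:probabilistic-assignment}, invoke Lemma~\ref{lem:hn-bias} and Lemma~\ref{lem:l2-to-probability} to get $\hn/n - 1 = \bigOp{n^{-0.5}}$, then apply Lemma~\ref{lem:hmu-hn-rates} together with Lemma~\ref{lem:bernoulli-hmu-rates} and absorb the $n^{-0.5}$ term using $\davg \geq 1$. The only cosmetic difference is that the paper bounds $\Var{\hn/n}$ directly by $kn^{-1}$ rather than bounding $\Var{\hn}$ by $(k-1)n$, which is the same computation.
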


	\begin{proof}
		Consider the variance of $\hn / n$:
		\begin{align}
		\Var{\hn / n} &= \Var[\bigg]{\frac{1}{n} \sum_{i=1}^{n} \frac{\indicator{Z_i = z}}{\prob{Z_i = z}}} = \frac{1}{n^2} \sum_{i=1}^{n} \frac{\Var[\big]{\indicator{Z_i = z}}}{\bracket[\big]{\prob{Z_i = z}}^2}
		\\
		&= \frac{1}{n^2} \sum_{i=1}^{n} \frac{1 - \prob{Z_i = z}}{\prob{Z_i = z}} \leq kn^{-1},
		\end{align}
		where the last inequality follows from Assumption \ref{ass:probabilistic-assignment}. Lemma \ref{lem:hn-bias} gives the rate of convergence in the $L_2$-norm of $\hn / n$ with respect to a sequence of ones:
		\begin{equation}
			\sqrt{\E[\Big]{\paren[\big]{\hn / n - 1}^2}} = \sqrt{\E[\Big]{\paren[\big]{\hn / n - \E{\hn / n}}^2}} = \sqrt{\Var{\hn / n}} = \bigO{n^{-0.5}},
		\end{equation}
		which gives $\hn / n - 1 = \bigOp[\big]{n^{-0.5}} = \littleOp{1}$ using Lemma \ref{lem:l2-to-probability}.

		Assumption \ref{ass:restricted-interference} and Lemma \ref{lem:bernoulli-hmu-rates} imply $\hmu - \cmu = \littleOp{1}$. This allows for the application of Lemma \ref{lem:hmu-hn-rates}, which together with $\davg^{0.5} \geq 1$ give:
		\begin{equation}
		\frac{n}{\hn}\hmu - \cmu = \bigOp[\big]{n^{-0.5}\davg^{0.5} + n^{-0.5}} = \bigOp[\big]{n^{-0.5}\davg^{0.5}}. \tag*{\qedhere}
		\end{equation}
	\end{proof}

	\begin{refprop}{\ref{prop:bernoulli-rates}}
		With a Bernoulli randomization design under restricted interference (Assumption \ref{ass:restricted-interference}), the \HT\ and \HA\ estimators are consistent for \EATE\ and converge at the following rates:
		\begin{equation}
		\eHT - \eEATE = \bigOp[\big]{n^{-0.5}\davg^{0.5}},
		\qquad\text{and}\qquad
		\eHA - \eEATE = \bigOp[\big]{n^{-0.5}\davg^{0.5}}.
		\end{equation}
	\end{refprop}

	\begin{proof}
		Using Corollary \ref{coro:est-mu} and Lemma \ref{lem:bernoulli-adse-eate-diff}, decompose the errors of the estimators:
		\begin{align}
		\eHT - \eEATE &=  \eHT - \eADSE = (\hat{\mu}_1 - \breve{\mu}_1) - (\hat{\mu}_0 - \breve{\mu}_0),
		\\
		\eHA - \eEATE &= \eHA - \eADSE = \paren[\bigg]{\frac{n}{\hat{n}_1}\hat{\mu}_1 - \breve{\mu}_1} - \paren[\bigg]{\frac{n}{\hat{n}_0}\hat{\mu}_0 - \breve{\mu}_0}.
		\end{align}
		Lemmas \ref{lem:bernoulli-hmu-rates} and \ref{lem:bernoulli-hmu-hn-rates} give the rates. Assumption \ref{ass:restricted-interference} gives consistency.
	\end{proof}

	\section{Proof of Proposition \ref{prop:complete-rates}}

	\begin{appdefinition}
		Let $m = \sum_{i=1}^{n} Z_i $ be the element sum of $\Z$ (i.e., the number of treated units).
	\end{appdefinition}

	\begin{appremark}
		Complete randomization implies that $m=\lfloor p n \rfloor$ with probability one for some fixed $p$ strictly between zero and one.
	\end{appremark}

	\begin{appdefinition} \label{def:inwards-interference-measure}
		Let $\icin_i = \sum_{j=1}^n I_{ji}$ be the number of units interfering with unit $i$.
	\end{appdefinition}

	\begin{applemma} \label{lem:icoutavg-handshake}
		$n^{-1}\sum_{i=1}^n \icin_i = \icoutavg$.
	\end{applemma}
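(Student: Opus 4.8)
The identity is a double-counting (handshaking) statement, so the plan is to expand both sides in terms of the interference indicators $I_{ij}$ and swap the order of a finite double sum. First I would write the left-hand side using Definition~\ref{def:inwards-interference-measure},
\[
\sum_{i=1}^{n} \icin_i = \sum_{i=1}^{n} \sum_{j=1}^{n} I_{ji},
\]
and then interchange the two summations, which is unconditionally valid for a finite sum of nonnegative terms, to obtain $\sum_{j=1}^{n}\sum_{i=1}^{n} I_{ji}$. For each fixed $j$ the inner sum is, by the definition $\icout_j = \sum_{i=1}^{n} I_{ji}$ from Section~\ref{sec:quantifying-interference}, exactly $\icout_j$. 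Hence $\sum_{i=1}^{n}\icin_i = \sum_{j=1}^{n}\icout_j = n\,\icoutavg$, and dividing through by $n$ yields the claim.

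The one point that deserves care is keeping the index conventions straight: $\icout_i$ sums $I_{ij}$ over the second argument (the units that $i$ interferes with), whereas $\icin_i$ sums $I_{ji}$ over the first argument (the units that interfere with $i$). The lemma is then just the observation that the total number of ordered interfering pairs, $\sum_{i}\sum_{j} I_{ij}$, can be counted either by summing the row sums $\icout_i$ of the matrix $(I_{ij})$ or by summing its column sums $\icin_i$. I do not anticipate any real obstacle; this is the interference analogue of the handshaking lemma and requires nothing beyond rearranging a finite sum.
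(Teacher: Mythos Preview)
Your proposal is correct and matches the paper's proof essentially line for line: both expand $\icin_i$ via Definition~\ref{def:inwards-interference-measure}, swap the order of the finite double sum, and identify the inner sum as $\icout_j$ to arrive at $\icoutavg$. The paper likewise labels this a handshaking argument, so there is nothing to add.
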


	\begin{proof}
		Follows from a handshaking argument. Recall that $\icoutavg = n^{-1}\sum_{i=1}^n \icout_i$, and reorder the summation as such:
		\begin{equation}
		\frac{1}{n}\sum_{i=1}^n \icin_i = \frac{1}{n}\sum_{i=1}^n \sum_{j=1}^n I_{ji} = \frac{1}{n}\sum_{j=1}^n \sum_{i=1}^n I_{ji} = \frac{1}{n}\sum_{j=1}^n \icout_j = \icoutavg. \tag*{\qedhere}
		\end{equation}
	\end{proof}

	\begin{applemma} \label{lem:complete-internal-bound}
		Under complete randomization, for any $i\in\bfU$ such that $\icin_i \leq \min(m, n-m)$ and any $z\in \braces{0,1}$:
		\begin{equation}
		\max_{\z\in \suppZint_{-i}} \;\abs[\bigg]{\frac{\prob{\Zint_{-i} = \z \given Z_i = z}}{\prob{\Zint_{-i} = \z}} - 1} \leq \frac{\icin_i}{\prob{Z_i = z}\min(m, n-m)},
		\end{equation}
		where $\suppZint_{-i} = \braces[\big]{\z\in\braces{0,1}^{n-1} : \prob{\Zint_{-i} = \z} > 0}$ is the support of $\Zint_{-i}$.
	\end{applemma}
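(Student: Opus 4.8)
The plan is to strip the vector statement down to a statement about the single coordinate $Z_i$ via Bayes' rule, and then exploit the exchangeability that complete randomization imposes on the units not conditioned upon.

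\emph{Reduction.} First I would set $S = \braces{j \in \bfU \setminus \braces{i} : I_{ji} = 1}$, the set of units other than $i$ that interfere with $i$, so $\abs{S} = \icin_i - 1$. Because $\Zint_{-i}$ is a deterministic function of $(Z_j)_{j \in S}$ (its other coordinates being identically zero), for $\z \in \suppZint_{-i}$ the event $\braces{\Zint_{-i} = \z}$ coincides with the event that $(Z_j)_{j \in S}$ realizes a fixed pattern with, say, $t = t(\z)$ treated units, where $0 \le t \le \abs{S}$. An application of Bayes' rule then gives
\[
\frac{\prob{\Zint_{-i} = \z \given Z_i = z}}{\prob{\Zint_{-i} = \z}} = \frac{\prob{Z_i = z \given \Zint_{-i} = \z}}{\prob{Z_i = z}},
\]
so it will be enough to bound $\abs{\prob{Z_i = z \given \Zint_{-i} = \z} - \prob{Z_i = z}}$ by $\icin_i / \min(m, n-m)$ and then divide by $\prob{Z_i = z}$, which is positive since $\min(m, n-m) \ge \icin_i \ge 1$.

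\emph{Conditional law of $Z_i$.} Next I would compute the conditional probability directly. Conditioning on a pattern of $(Z_j)_{j \in S}$ with $t$ treated units, complete randomization leaves the treated set among the remaining $n - \abs{S}$ units---which includes $i$, as $i \notin S$---uniform over the size-$(m-t)$ subsets, and $\z \in \suppZint_{-i}$ ensures $0 \le m - t \le n - \abs{S}$. Hence $\prob{Z_i = 1 \given \Zint_{-i} = \z} = (m-t)/(n-\abs{S})$ and $\prob{Z_i = 0 \given \Zint_{-i} = \z}$ is its complement, while $\prob{Z_i = 1} = m/n$, so a one-line calculation yields
\[
\prob{Z_i = 1 \given \Zint_{-i} = \z} - \prob{Z_i = 1} = \frac{m\abs{S} - nt}{n(n - \abs{S})},
\]
with the $z = 0$ difference equal to the negative of this.

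\emph{Bounding and closing.} Since $0 \le t \le \abs{S}$, we have $-(n-m)\abs{S} \le m\abs{S} - nt \le m\abs{S}$, hence $\abs{m\abs{S} - nt} \le \abs{S} \max(m, n-m)$. What then remains is to verify
\[
\frac{\abs{S} \max(m, n-m)}{n(n - \abs{S})} \le \frac{\icin_i}{\min(m, n-m)},
\]
and using $\abs{S} \le \icin_i$ this reduces to the arithmetic fact $m(n-m) \le n(n - \abs{S})$, which is exactly where the hypothesis $\icin_i \le \min(m, n-m)$ is consumed: it forces $\abs{S} = \icin_i - 1 < \min(m, n-m) \le n/2$, hence $n - \abs{S} > \max(m, n-m)$, so $m(n-m) = \min(m, n-m)\,\max(m, n-m) \le n\,\min(m, n-m) \le n(n - \abs{S})$. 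Dividing through by $\prob{Z_i = z}$ and taking the maximum over $\z \in \suppZint_{-i}$ then finishes the argument. I expect the main obstacle to be bookkeeping rather than any real difficulty: correctly pinning down which coordinates $\Zint_{-i}$ depends on, and invoking the support restriction $\z \in \suppZint_{-i}$ in precisely the two places it matters---to make the hypergeometric conditional law of $Z_i$ well-defined (so $0 \le m - t \le n - \abs{S}$) and to guarantee $\prob{Z_i = z} > 0$---together with the elementary inequality $m(n-m) \le n(n - \abs{S})$.
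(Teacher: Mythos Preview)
Your proof is correct and takes a genuinely different route from the paper's. The paper reduces to the sufficient statistic $T_i = \sum_{j\neq i} I_{ji}Z_j$, notes that $Z_i$ and $\Zint_{-i}$ are conditionally independent given $T_i$, and then computes the ratio $\prob{T_i = t \given Z_i = z}/\prob{T_i = t}$ via explicit hypergeometric probabilities and binomial-coefficient algebra, arriving at
\[
\frac{\prob{T_i = t \given Z_i = z}}{\prob{T_i = t}} - 1 = \frac{\icin_i}{\prob{Z_i = z}(n-\icin_i)}\Bigl[\prob{Z_i = z} - z\tfrac{t}{\icin_i} - (1-z)\tfrac{\icin_i - t}{\icin_i}\Bigr],
\]
which is then bounded. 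You instead flip the conditioning with Bayes' rule, reducing to the single scalar probability $\prob{Z_i = z \given \Zint_{-i} = \z}$, which under complete randomization is just $(m-t)/(n-\abs{S})$ or its complement---no binomial coefficients required. Your path is shorter and the arithmetic is more transparent; the paper's path has the minor advantage of yielding an exact closed form for the ratio before bounding, which could in principle be reused for sharper constants, but that is not exploited here. Both routes ultimately hinge on the same exchangeability fact and the same support constraint $0 \le m - t \le n - \abs{S}$, so the difference is one of packaging rather than substance.
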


	\begin{proof}
		Let $t_i(\z) = \sum_{j\neq i} I_{ji} z_j$ be the element sum of $\z\in \suppZint_{-i}$, and let $T_i = \sum_{j\neq i} I_{ji} Z_j$ be the element sum of $\Zint_{-i}$  (i.e., the number of treated units interfering with $i$, excluding $i$ itself). Complete randomization implies that $Z_i$ and $\Zint_{-i}$ are independent conditional on $T_i$, and as a consequence:
		\begin{align}
		\frac{\prob[\big]{\Zint_{-i} = \z \given Z_i = z}}{\prob[\big]{\Zint_{-i} = \z}} &= \frac{\prob[\big]{T_i = t_i(\z) \given Z_i = z}\prob[\big]{\Zint_{-i} = \z \given T_i = t_i(\z), Z_i = z}}{\prob[\big]{T_i = t_i(\z)}\prob[\big]{\Zint_{-i} = \z \given T_i = t_i(\z)}}
		\\
		&= \frac{\prob[\big]{T_i = t_i(\z) \given Z_i = z}\prob[\big]{\Zint_{-i} = \z \given T_i = t_i(\z)}}{\prob[\big]{T_i = t_i(\z)}\prob[\big]{\Zint_{-i} = \z \given T_i = t_i(\z)}}
		\\
		&= \frac{\prob[\big]{T_i = t_i(\z) \given Z_i = z}}{\prob[\big]{T_i = t_i(\z)}}.
		\end{align}
		We have $T_i \leq \icin_i \leq \min(m, n-m)$, which, together with complete randomization, implies that $T_i$ is hyper-geometric according to:
		\begin{align}
		\prob[\big]{T_i = t} &= \binom{m}{t}\binom{n-m}{\icin_i - t}\binom{n}{\icin_i}^{-1},
		\\
		\prob[\big]{T_i = t \given Z_i = z} &= z\binom{m-1}{t}\binom{n-m}{\icin_i - t}\binom{n-1}{\icin_i}^{-1}
		\\
		&\qquad\qquad + (1-z) \binom{m}{t}\binom{n-m-1}{\icin_i - t}\binom{n-1}{\icin_i}^{-1}.
		\end{align}
		It follows that:
		\begin{equation}
		\frac{\prob[\big]{T = t \given Z_i = z}}{\prob[\big]{T = t}} =  \frac{\binom{n}{\icin_i}}{\binom{n-1}{\icin_i}} \bracket[\Bigg]{z\frac{\binom{m-1}{t}}{\binom{m}{t}} + (1-z)\frac{ \binom{n-m-1}{\icin_i - t}}{\binom{n-m}{\icin_i - t}}}.
		\end{equation}

		Recall the definition of the binomial coefficient:
		\begin{equation}
		\frac{\binom{x}{y}}{\binom{x-1}{y}} = \paren[\bigg]{\frac{x!}{y!(x-y)!}} \paren[\bigg]{\frac{(x-1)!}{y!(x-1-y)!}}^{-1} = \frac{x}{x-y},
		\end{equation}
		so we have:
		\begin{align}
		\frac{\prob[\big]{T_i = t \given Z_i = z}}{\prob[\big]{T_i = t}} - 1&=  \frac{\binom{n}{\icin_i}}{\binom{n-1}{\icin_i}} \bracket[\Bigg]{z\frac{\binom{m-1}{t}}{\binom{m}{t}} + (1-z)\frac{\binom{n-m-1}{\icin_i - t}}{\binom{n-m}{\icin_i - t}}} - 1
		\\
		&=  \frac{n}{n - \icin_i} \bracket[\Bigg]{z\frac{m - t}{m} + (1-z)\frac{(n-m) - (\icin_i - t)}{n-m}} - 1
		\\
		&=  \frac{n}{n - \icin_i} \bracket[\Bigg]{1 - z\frac{t}{m} - (1-z)\frac{\icin_i - t}{n-m} - \frac{n - \icin_i}{n}}
		\\
		&=  \frac{n}{n - \icin_i} \bracket[\Bigg]{\frac{\icin_i}{n} - z\frac{t}{m} - (1-z)\frac{\icin_i - t}{n-m}}
		\\
		&=  \frac{\icin_i}{\prob{Z_i = z}(n - \icin_i)} \bracket[\Bigg]{\prob{Z_i = z} - z\frac{t}{\icin_i} - (1-z)\frac{\icin_i - t}{\icin_i}},
		\end{align}
		where the last equality exploits that $n\prob{Z_i = z} = z m + (1-z) (n-m)$ under complete randomization. The expression within brackets is strictly between $-1$ and $1$, so it follows:
		\begin{equation}
		\abs[\bigg]{\frac{\prob[\big]{T = t \given Z_i = z}}{\prob[\big]{T = t}} - 1} \leq  \frac{\icin_i}{\prob{Z_i = z}(n - \icin_i)} \leq \frac{\icin_i}{\prob{Z_i = z}\min(m, n-m)},
		\end{equation}
		where $\icin_i \leq \min(m, n-m) \leq \max(m, n-m)$ gives the last inequality.
	\end{proof}

	\begin{applemma} \label{lem:complete-external-bound}
		Under complete randomization, for any $i,j\in\bfU$ such that $\di_{ij}=0$ and $4\icin_i\icin_j \leq \min(m, n-m)$:
		\begin{equation}
		\max_{\substack{\z_{i}\in \suppZint_i \\ \z_{j}\in \suppZint_j}} \;\abs[\bigg]{\frac{\prob{\Zint_{i} = \z_{i}, \Zint_{j} = \z_{j}}}{\prob{\Zint_{i} = \z_{i}}\prob{\Zint_j = \z_j}} - 1} \leq \frac{2\icin_i\icin_j}{\min(m, n-m)},
		\end{equation}
		where $\suppZint_i = \braces[\big]{\z\in\braces{0,1}^{n} : \prob{\Zint_{i} = \z} > 0}$ is the support of $\Zint_{i}$.
	\end{applemma}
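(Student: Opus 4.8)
The plan is to reduce the bound to a purely combinatorial estimate for a multivariate hypergeometric distribution, in the spirit of the proof of Lemma~\ref{lem:complete-internal-bound}, and then to control the resulting probability ratio by a direct product expansion. First I would use $\di_{ij}=0$: by Definition~\ref{def:interference-dependence} no unit interferes with both $i$ and $j$, so the index sets $S_i=\braces{\ell\in\bfU:I_{\ell i}=1}$ and $S_j=\braces{\ell\in\bfU:I_{\ell j}=1}$ are disjoint, with $\abs{S_i}=\icin_i$ and $\abs{S_j}=\icin_j$ by Definition~\ref{def:inwards-interference-measure}. Since the coordinates of $\Zint_i$ that are not identically zero are exactly those in $S_i$, an event $\braces{\Zint_i=\z_i}$ with $\z_i\in\suppZint_i$ is just the event that $\Z$ matches a fixed $0/1$ pattern on $S_i$, and likewise $\braces{\Zint_j=\z_j}$ is an event on the disjoint set $S_j$. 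Writing $s$ and $t$ for the numbers of ones in $\z_i$ and $\z_j$, complete randomization gives $\prob{\Zint_i=\z_i}=\binom{n-\icin_i}{m-s}\binom{n}{m}^{-1}$ and $\prob{\Zint_j=\z_j}=\binom{n-\icin_j}{m-t}\binom{n}{m}^{-1}$, and, because $S_i\cap S_j=\emptyset$, also $\prob{\Zint_i=\z_i,\,\Zint_j=\z_j}=\binom{n-\icin_i-\icin_j}{m-s-t}\binom{n}{m}^{-1}$. Hence the ratio to be bounded equals
\begin{equation}
	r \;=\; \frac{\prob{\Zint_i=\z_i,\,\Zint_j=\z_j}}{\prob{\Zint_i=\z_i}\,\prob{\Zint_j=\z_j}} \;=\; \frac{\binom{n-\icin_i-\icin_j}{m-s-t}\binom{n}{m}}{\binom{n-\icin_i}{m-s}\binom{n-\icin_j}{m-t}},
\end{equation}
the pointwise dependence ratio between the two marginal counts in a draw of $m$ out of $n$ units. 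A short preliminary step checks feasibility: since $\icin_i,\icin_j\geq1$, the hypothesis $4\icin_i\icin_j\leq\min(m,n-m)$ forces $\icin_i+\icin_j\leq\tfrac12\min(m,n-m)$, which yields $0\leq m-s-t\leq n-\icin_i-\icin_j$, so the joint event has positive probability and $r$ is well defined and strictly positive; it also shows the trivial bound $\abs{r-1}\leq1$ is useless here, since $2\icin_i\icin_j/\min(m,n-m)\leq\tfrac12$.

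Next I would expand $r$ into elementary factors. Cancelling the common $\binom{n}{m}$ structure and writing the binomial coefficients as falling factorials gives
\begin{equation}
	r \;=\; \prod_{k=0}^{\icin_j-1}\frac{n-k}{n-\icin_i-k}\;\cdot\;\prod_{k=0}^{t-1}\frac{m-s-k}{m-k}\;\cdot\;\prod_{k=0}^{\icin_j-t-1}\frac{(n-m)-(\icin_i-s)-k}{(n-m)-k},
\end{equation}
where every factor in the first product is at least $1$, every factor in the other two is at most $1$, and all denominators are positive. Rewriting each factor as $1+x_k$ (first product) or $1/(1+y_k)$ (other two) with $x_k,y_k\geq0$, one checks, using $\icin_i+\icin_j\leq\tfrac12\min(m,n-m)$ and $n\geq2\min(m,n-m)$, that every linear denominator that appears exceeds $\tfrac12\min(m,n-m)$ and those in the first product exceed $\tfrac32\min(m,n-m)$; consequently $\sum_k x_k\leq 2\icin_i\icin_j/\bigl(3\min(m,n-m)\bigr)$ and $\sum_k y_k\leq 2\bigl[st+(\icin_i-s)(\icin_j-t)\bigr]/\min(m,n-m)$. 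The one inequality doing real work is the elementary identity $\icin_i t+\icin_j s-2st=t(\icin_i-s)+s(\icin_j-t)\geq0$, which gives $st+(\icin_i-s)(\icin_j-t)\leq\icin_i\icin_j$ and hence $\sum_k y_k\leq 2\icin_i\icin_j/\min(m,n-m)$. Since $\prod_k(1+x_k)\leq e^{\sum_k x_k}$ and $\prod_k(1+y_k)\geq1$, and symmetrically $\prod_k(1+x_k)\geq1$ and $\prod_k(1+y_k)\leq e^{\sum_k y_k}$, we obtain $e^{-\sum_k y_k}\leq r\leq e^{\sum_k x_k}$. Both exponents lie in $[0,\tfrac12]$, so $e^u-1\leq 3u$ on $[0,\tfrac16]$ and $1-e^{-u}\leq u$ give $\abs{r-1}\leq\max\bigl(e^{\sum_k x_k}-1,\;1-e^{-\sum_k y_k}\bigr)\leq 2\icin_i\icin_j/\min(m,n-m)$. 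The bound is free of $s$ and $t$, so the maximum over $\z_i\in\suppZint_i$ and $\z_j\in\suppZint_j$ obeys it too, which is the claim.

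The main obstacle is the bookkeeping in the second paragraph: there are three families of linear factors of different lengths with different denominators, and one has to track them carefully and verify that the single structural hypothesis $4\icin_i\icin_j\leq\min(m,n-m)$ propagates into a lower bound of the form $c\cdot\min(m,n-m)$ on each denominator, with $c$ large enough ($c=\tfrac12$ in general, $c=\tfrac32$ for the increasing factors) to bring the final constant down to $2$. The probabilistic content is entirely in the first paragraph; everything after the display for $r$ is elementary but needs some care. As an alternative, one could peel the $\icin_i$ coordinates of $S_i$ off one at a time and apply the single-coordinate estimate behind Lemma~\ref{lem:complete-internal-bound} at each stage, multiplying the resulting relative errors; that route reuses earlier work but yields a somewhat worse constant and still requires the same denominator control, so the direct expansion seems cleanest.
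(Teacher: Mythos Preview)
Your proof is correct and follows essentially the same route as the paper: both reduce the ratio to a product of $O(\icin_j)$ (respectively $O(\icin_i)$) elementary factors coming from the hypergeometric structure and then bound each factor's deviation from one. The only differences are cosmetic---the paper first passes through the sufficient statistics $T_i=\sum_\ell I_{\ell i}Z_\ell$ and $T_j$ before writing the product, and finishes with a binomial expansion plus a geometric-series bound rather than your exponential inequalities together with $st+(\icin_i-s)(\icin_j-t)\leq\icin_i\icin_j$.
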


	\begin{proof}
		Let $T_\textsc{min} = \min(m, n-m)$ be the size of the smallest treatment group. Let $t_i(\z) = \sum_{j=1}^n I_{j i} z_j$ be the element sum of $\z\in \suppZint_{i}$, and let $T_{i} = \sum_{j=1}^n I_{j i} Z_j$ be the element sum of $\Zint_{i}$ (i.e., the number of treated units interfering with $i$ including $i$ itself). We have $0 \leq T_{i} + T_{j} \leq \icin_i + \icin_j \leq T_\textsc{min}$ since $4\icin_i\icin_j \leq T_\textsc{min}$ by assumption and $\icin_i\geq 1$ by definition. From $\di_{ij}=0$ we have that $\Zint_{i}$ and $\Zint_{j}$ are disjoint, thus complete randomization gives us $\Zint_{i} \indep \Zint_{j} \,|\, T_{i},T_{j}$ and $\Zint_{i} \indep T_{j} \,|\, T_{i}$:
		\begin{align}
		&\frac{\prob{\Zint_{i} = \z_{i}, \Zint_{j} = \z_{j}}}{\prob{\Zint_{i} = \z_{i}}\prob{\Zint_j = \z_j}}
		\\
		&\quad= \frac{\prob[\big]{T_{i} = t_{i}(\z_{i}), T_{j} = t_{j}(\z_{j})}\prob[\big]{\Zint_{i} = \z_{i}, \Zint_{j} = \z_{j} \given T_{i} = t_{i}(\z_{i}), T_{j} = t_{j}(\z_{j})}}{\prob[\big]{T_{i} = t_{i}(\z_{i})} \prob[\big]{\Zint_{i} = \z_{i} \given T_{i} = t_{i}(\z_{i})} \prob[\big]{T_{j} = t_{j}(\z_{j})} \prob[\big]{\Zint_j = \z_j\given T_{j} = t_{j}(\z_{j})}}
		\\
		&\quad= \frac{\prob[\big]{T_{i} = t_{i}(\z_{i}), T_{j} = t_{j}(\z_{j})}\prob[\big]{\Zint_{i} = \z_{i} \given T_{i} = t_{i}(\z_{i})}\prob[\big]{\Zint_{j} = \z_{j} \given T_{j} = t_{j}(\z_{j})}}{\prob[\big]{T_{i} = t_{i}(\z_{i})} \prob[\big]{\Zint_{i} = \z_{i} \given T_{i} = t_{i}(\z_{i})} \prob[\big]{T_{j} = t_{j}(\z_{j})} \prob[\big]{\Zint_j = \z_j\given T_{j} = t_{j}(\z_{j})}}
		\\
		&\quad= \frac{\prob[\big]{T_{i} = t_{i}(\z_{i}) \given T_{j} = t_{j}(\z_{j})}}{\prob{T_{i} = t_{i}(\z_{i})}}.
		\end{align}

		Complete randomization implies that $T_{i}$ is hypergeometric:
		\begin{align}
		\prob[\big]{T_i = t_i} &= \binom{m}{t_i}\binom{n-m}{\icin_i - t_i}\binom{n}{\icin_i}^{-1},
		\\
		\prob[\big]{T_i = t_i \given T_j = t_j} &= \binom{m - t_j}{t_i}\binom{n - m - \paren[\big]{\icin_j - t_j}}{\icin_i - t_i}\binom{n - \icin_j}{\icin_i}^{-1},
		\end{align}
		where $t_i$ and $t_j$ are free variables. It follows:
		\begin{equation}
		\frac{\prob[\big]{T_i = t_i \given T_j = t_j}}{\prob[\big]{T_i = t_i}} =
		\frac{\binom{m - t_j}{t_i}}{\binom{m}{t_i}}
		\frac{\binom{n - m - \paren{\icin_j - t_j}}{\icin_i - t_i}}{\binom{n-m}{\icin_i - t_i}}
		\frac{\binom{n}{\icin_i}}{\binom{n - \icin_j}{\icin_i}}.
		\end{equation}
		Recall the definition of the binomial coefficient:
		\begin{equation}
		\frac{\binom{x}{y}}{\binom{x-z}{y}} = \paren[\bigg]{\frac{x!}{y!(x-y)!}} \paren[\bigg]{\frac{(x-z)!}{y!(x-z-y)!}}^{-1} = \prod_{a=0}^{y - 1} \frac{x - a}{x - z - a},
		\end{equation}
		so we have:
		\begin{align}
		\frac{\binom{m - t_j}{t_i}}{\binom{m}{t_i}} &= \prod_{a=0}^{t_i - 1} \frac{m - t_j - a}{m - a} = \prod_{a=0}^{t_i - 1} \paren[\bigg]{1 - \frac{t_j}{m - a}},
		\\
		\frac{\binom{n-m - (\icin_j - t_j)}{\icin_i - t_i}}{\binom{n-m}{\icin_i - t_i}} &= \prod_{a=0}^{\icin_i - t_i - 1} \frac{n-m - (\icin_j - t_j) - a}{n-m - a} = \prod_{a=0}^{\icin_i - t_i - 1} \paren[\bigg]{1 - \frac{\icin_j - t_j}{n-m - a}},
		\\
		\frac{\binom{n}{\icin_i}}{\binom{n - \icin_j}{\icin_i}} &= \prod_{a=0}^{\icin_i-1} \frac{n - a}{n - \icin_j - a} = \prod_{a=0}^{\icin_i-1} \paren[\bigg]{1 + \frac{\icin_j}{n - \icin_j - a}},
		\\
		\frac{\prob{T_i = t_i \given T_j = t_j}}{\prob{T_i = t_i}} &=
		\bracket[\Bigg]{\prod_{a=0}^{t_i - 1} \paren[\bigg]{1 - \frac{t_j}{m - a}}}
		\bracket[\Bigg]{\prod_{a=0}^{\icin_i - t_i - 1} \paren[\bigg]{1 - \frac{\icin_j - t_j}{n-m - a}}}
		\\
		&\qquad\qquad\times \bracket[\Bigg]{\prod_{a=0}^{\icin_i-1} \paren[\bigg]{1 + \frac{\icin_j}{n - \icin_j - a}}}.
		\end{align}

		First, note that:
		\begin{align}
		\bracket[\Bigg]{\prod_{a=0}^{t_i - 1} \paren[\bigg]{1 - \frac{t_j}{m - a}}} &\geq \paren[\bigg]{1 - \frac{\icin_j}{T_\textsc{min} - \icin_i}}^{t_i},
		\\
		\bracket[\Bigg]{\prod_{a=0}^{\icin_i - t_i - 1} \paren[\bigg]{1 - \frac{\icin_j - t_j}{n-m - a}}} &\geq \paren[\bigg]{1 - \frac{\icin_j}{T_\textsc{min} - \icin_i}}^{\icin_i - t_i},
		\\
		\bracket[\Bigg]{\prod_{a=0}^{\icin_i-1} \paren[\bigg]{1 + \frac{\icin_j}{n - \icin_j - a}}} &\geq 1,
		\end{align}
		so we have:
		\begin{align}
		\frac{\prob{T_i = t_i \given T_j = t_j}}{\prob{T_i = t_i}} & \geq \paren[\bigg]{1 - \frac{\icin_j}{T_\textsc{min} - \icin_i}}^{t_i} \paren[\bigg]{1 - \frac{\icin_j}{T_\textsc{min} - \icin_i}}^{\icin_i - t_i} = \paren[\bigg]{1 - \frac{\icin_j}{T_\textsc{min} - \icin_i}}^{\icin_i}
		\\
		&= \sum_{a=0}^{\icin_i} \binom{\icin_i}{a} \paren[\bigg]{\frac{-\icin_j}{T_\textsc{min} - \icin_i}}^{a} \geq 1 - \sum_{a=1}^{\icin_i} \binom{\icin_i}{a} \paren[\bigg]{\frac{\icin_j}{T_\textsc{min} - \icin_i}}^{a},
		\end{align}
		where the third equality is a binomial expansion. It follows that:
		\begin{equation}
		\frac{\prob{T_i = t_i \given T_j = t_j}}{\prob{T_i = t_i}} - 1  \geq -\sum_{a=1}^{\icin_i} \binom{\icin_i}{a} \paren[\bigg]{\frac{\icin_j}{T_\textsc{min} - \icin_i}}^{a}.
		\end{equation}

		Next, bound the factors as:
		\begin{align}
		\bracket[\Bigg]{\prod_{a=0}^{t_i - 1} \paren[\bigg]{1 - \frac{t_j}{m - a}}} &\leq 1,
		\\
		\bracket[\Bigg]{\prod_{a=0}^{\icin_i - t_i - 1} \paren[\bigg]{1 - \frac{\icin_j - t_j}{n-m - a}}} &\leq 1,
		\\
		\bracket[\Bigg]{\prod_{a=0}^{\icin_i-1} \paren[\bigg]{1 + \frac{\icin_j}{n - \icin_j - a}}} &\leq \paren[\bigg]{1 + \frac{\icin_j}{T_\textsc{min} - \icin_i}}^{\icin_i},
		\end{align}
		so we get:
		\begin{align}
		\frac{\prob{T_i = t_i \given T_j = t_j}}{\prob{T_i = t_i}} & \leq \paren[\bigg]{1 + \frac{\icin_j}{T_\textsc{min} - \icin_i}}^{\icin_i} = \sum_{a=0}^{\icin_i} \binom{\icin_i}{a} \paren[\bigg]{\frac{\icin_j}{T_\textsc{min} - \icin_i}}^{a}
		\\
		&= 1 + \sum_{a=1}^{\icin_i} \binom{\icin_i}{a} \paren[\bigg]{\frac{\icin_j}{T_\textsc{min} - \icin_i}}^{a}.
		\end{align}
		Taken together, it follows that:
		\begin{equation}
		-\sum_{a=1}^{\icin_i} \binom{\icin_i}{a} \paren[\bigg]{\frac{\icin_j}{T_\textsc{min} - \icin_i}}^{a} \leq \frac{\prob{T_i = t_i \given T_j = t_j}}{\prob{T_i = t_i}} - 1 \leq \sum_{a=1}^{\icin_i} \binom{\icin_i}{a} \paren[\bigg]{\frac{\icin_j}{T_\textsc{min} - \icin_i}}^{a} .
		\end{equation}
		The absolute value can thus be bounded as:
		\begin{align}
		\abs[\bigg]{\frac{\prob{T_{i} = t_{i} \given T_{j} = t_{j}}}{\prob{T_{i} = t_{i}}} - 1} &\leq \sum_{a=1}^{\icin_i} \binom{\icin_i}{a} \paren[\bigg]{\frac{\icin_j}{T_\textsc{min} - \icin_i}}^{a} \leq \sum_{a=1}^{\icin_i} \icin_i^{a} \paren[\bigg]{\frac{\icin_j}{T_\textsc{min} - \icin_i}}^{a}
		\\
		&= \sum_{a=1}^{\icin_i} \paren[\bigg]{\frac{\icin_i\icin_j}{T_\textsc{min} - \icin_i}}^{a} = \paren[\bigg]{\frac{\icin_i\icin_j}{T_\textsc{min} - \icin_i}} \sum_{a=0}^{\icin_i-1} \paren[\bigg]{\frac{\icin_i\icin_j}{T_\textsc{min} - \icin_i}}^{a}
		\\
		& \leq \paren[\bigg]{\frac{\icin_i\icin_j}{T_\textsc{min} - \icin_i}} \sum_{a=0}^{\infty} \paren[\bigg]{\frac{\icin_i\icin_j}{T_\textsc{min} - \icin_i}}^{a}.
		\end{align}
		By assumption, $4\icin_i\icin_j \leq T_\textsc{min}$, so $\icin_i\icin_j \leq T_\textsc{min} - \icin_i$, and the geometric series converge:
		\begin{align}
		\paren[\bigg]{\frac{\icin_i\icin_j}{T_\textsc{min} - \icin_i}} \sum_{a=0}^{\infty} \paren[\bigg]{\frac{\icin_i\icin_j}{T_\textsc{min} - \icin_i}}^{a} &= \frac{\icin_i\icin_j}{T_\textsc{min} - \icin_i} \times \frac{T_\textsc{min} - \icin_i}{T_\textsc{min} - \icin_i - \icin_i\icin_j}
		\\
		&= \frac{\icin_i\icin_j}{T_\textsc{min} - \icin_i - \icin_i\icin_j} \leq \frac{\icin_i\icin_j}{T_\textsc{min} - 2\icin_i\icin_j} \leq \frac{2\icin_i\icin_j}{T_\textsc{min}},
		\end{align}
		where the last inequality follows from $4\icin_i\icin_j \leq T_\textsc{min}$.
	\end{proof}

	\begin{applemma} \label{lem:complete-adse-eate-diff}
		Under complete randomization, $\abs{\eADSE - \eEATE} \leq 4k^3n^{-1}\icoutavg$.
	\end{applemma}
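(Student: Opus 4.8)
The plan is to control $\abs{\eADSE - \eEATE}$ through the per-unit discrepancy between the conditional and unconditional expectations of the potential outcomes. By Corollary~\ref{coro:est-mu}, $\eADSE - \eEATE = (\breve{\mu}_1 - \mu_1) - (\breve{\mu}_0 - \mu_0)$, so the triangle inequality reduces the claim to showing $\abs{\breve{\mu}_z - \mu_z} \leq 2k^3 n^{-1}\icoutavg$ for each $z \in \braces{0,1}$. Using Definition~\ref{def:all-mu} and $y_i(z;\Z_{-i}) = y_i(z;\Zint_{-i})$, write $\breve{\mu}_z - \mu_z = n^{-1}\sum_{i=1}^n \delta_i$ with $\delta_i = \E{y_i(z;\Zint_{-i}) \given Z_i = z} - \E{y_i(z;\Zint_{-i})}$, so it suffices to prove the pointwise bound $\abs{\delta_i} \leq 2k^3 \icin_i / n$ for every $i$.

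Before turning to that bound I would record two consequences of Assumption~\ref{ass:probabilistic-assignment} under complete randomization. Since $\prob{Z_i = 1} = m/n$ and $\prob{Z_i = 0} = (n-m)/n$ by symmetry, the constraint $k^{-1} \leq \prob{Z_i = 1} \leq 1 - k^{-1}$ forces $k \geq 2$ and $\min(m, n-m) \geq n/k$, and hence $\prob{Z_i = z}\min(m, n-m) \geq n/k^2$ for either value of $z$.

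The pointwise bound splits into two cases. If $\icin_i \leq \min(m, n-m)$, expand $\delta_i = \sum_{\z \in \suppZint_{-i}} y_i(z;\z)\bracket[\big]{\prob{\Zint_{-i} = \z \given Z_i = z} - \prob{\Zint_{-i} = \z}}$, factor out $\prob{\Zint_{-i} = \z}$ from each term, and apply Lemma~\ref{lem:complete-internal-bound} to the resulting relative deviations; this gives $\abs{\delta_i} \leq \frac{\icin_i}{\prob{Z_i = z}\min(m,n-m)}\E{\abs{y_i(z;\Zint_{-i})}}$, and combining $\E{\abs{y_i(z;\Zint_{-i})}} \leq k$ (Jensen's inequality with Assumption~\ref{ass:bounded-expected-pos}) with the denominator bound $\prob{Z_i = z}\min(m,n-m) \geq n/k^2$ yields $\abs{\delta_i} \leq k^3\icin_i/n \leq 2k^3\icin_i/n$. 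If instead $\icin_i > \min(m, n-m)$, Lemma~\ref{lem:complete-internal-bound} is unavailable, so I would fall back on the crude estimate $\abs{\delta_i} \leq 2k^2$ from Lemma~\ref{lem:bounded-uncond-cond-po-diff}; but here $\icin_i > \min(m, n-m) \geq n/k$, so $2k^2 < 2k^3\icin_i/n$, and the pointwise bound holds in this case too.

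Summing the pointwise bound gives $\abs{\breve{\mu}_z - \mu_z} \leq n^{-1}\sum_i \abs{\delta_i} \leq 2k^3 n^{-2}\sum_i \icin_i = 2k^3 n^{-1}\icoutavg$, the last equality by Lemma~\ref{lem:icoutavg-handshake}, and the triangle inequality then delivers $\abs{\eADSE - \eEATE} \leq 4k^3 n^{-1}\icoutavg$. The only genuinely delicate step is the case $\icin_i > \min(m, n-m)$: Lemma~\ref{lem:complete-internal-bound} cannot be invoked there because the underlying hypergeometric support degenerates, and the argument survives only because the very inequality $\icin_i > n/k$ that obstructs it simultaneously makes the worst-case bound $2k^2$ small relative to the target $2k^3\icin_i/n$. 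Everything else is routine tracking of the regularity constants.
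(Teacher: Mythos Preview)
Your proof is correct and follows essentially the same approach as the paper: the same decomposition via Corollary~\ref{coro:est-mu}, the same case split on whether $\icin_i \leq \min(m,n-m)$, the same application of Lemma~\ref{lem:complete-internal-bound} in the small-$\icin_i$ case and Lemma~\ref{lem:bounded-uncond-cond-po-diff} in the large-$\icin_i$ case, and the same closing via Lemma~\ref{lem:icoutavg-handshake}. The only cosmetic differences are that you convert $\min(m,n-m)$ to $n/k$ at the outset rather than at the end, and you bound each $\abs{\breve{\mu}_z - \mu_z}$ separately rather than summing over $z$ inside the per-unit term; the constants work out identically.
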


	\begin{proof}
		Let $T_\textsc{min} = \min(m, n-m)$ and $h_i = \indicator{\icin_i \leq T_\textsc{min}}$. Corollary \ref{coro:est-mu} gives:
		\begin{align}
		\abs{\eADSE - \eEATE} &= \abs[\big]{(\breve{\mu}_1 - \mu_1) - (\breve{\mu}_0 - \mu_0)}
		\\
		&= \abs[\bigg]{\frac{1}{n} \sum_{i=1}^{n} \sum_{z=0}^1 (2z -1)\paren[\Big]{\E{y_i(z; \Zint_{-i}) \given Z_i = z} - \E{y_i(z; \Zint_{-i})}}}
		\\
		&\leq \frac{1}{n} \sum_{i=1}^{n} \sum_{z=0}^1\abs[\Big]{\E{y_i(z; \Zint_{-i}) \given Z_i = z} - \E{y_i(z; \Zint_{-i})}}
		\\
		&= \frac{1}{n} \sum_{i=1}^{n} h_i\sum_{z=0}^1\abs[\Big]{\E{y_i(z; \Zint_{-i}) \given Z_i = z} - \E{y_i(z; \Zint_{-i})}}
		\\
		&\qquad + \frac{1}{n} \sum_{i=1}^{n} (1 - h_i)\sum_{z=0}^1\abs[\Big]{\E{y_i(z; \Zint_{-i}) \given Z_i = z} - \E{y_i(z; \Zint_{-i})}}. \label{eq:complete-bias-diff}
		\end{align}

		Let $\suppZint_{-i} = \braces[\big]{\z\in\braces{0,1}^{n-1} : \prob{\Zint_{-i} = \z} > 0}$ be the support of $\Zint_{-i}$ and consider the terms in the first sum in \eqref{eq:complete-bias-diff}:
		\begin{align}
		&h_i\sum_{z=0}^1\abs[\Big]{\E{y_i(z; \Zint_{-i}) \given Z_i = z} - \E{y_i(z; \Zint_{-i})}}
		\\
		&\qquad\qquad= h_i\sum_{z=0}^1 \abs[\Big]{\sum_{\z\in\suppZint_{-i}} \bracket[\big]{\prob{\Zint_{-i} = \z \given Z_i = z} - \prob{\Zint_{-i} = \z}}y_i(z; \z)}
		\\
		&\qquad\qquad\leq h_i\sum_{z=0}^1 \sum_{\z\in\suppZint_{-i}} \abs[\big]{\prob{\Zint_{-i} = \z \given Z_i = z} - \prob{\Zint_{-i} = \z}}\abs{y_i(z; \z)}
		\\
		&\qquad\qquad= h_i\sum_{z=0}^1 \sum_{\z\in\suppZint_{-i}} \prob{\Zint_{-i} = \z}\abs[\bigg]{\frac{\prob{\Zint_{-i} = \z \given Z_i = z}}{\prob{\Zint_{-i} = \z}} - 1}\abs{y_i(z; \z)}
		\\
		&\qquad\qquad\leq h_i\sum_{z=0}^1 \frac{\icin_i}{\prob{Z_i = z}T_\textsc{min}}\sum_{\z\in\suppZint_{-i}} \prob{\Zint_{-i} = \z}\abs{y_i(z; \z)}
		\tag*{Lemma \ref{lem:complete-internal-bound}}
		\\
		&\qquad\qquad= h_i\sum_{z=0}^1 \frac{\icin_i}{\prob{Z_i = z}T_\textsc{min}}\E[\big]{\abs{y_i(z; \Z_{-i})}}
		\\
		&\qquad\qquad\leq \frac{2h_ik^2\icin_i}{T_\textsc{min}}.
		\tag*{As.\ \ref{ass:probabilistic-assignment} and \ref{ass:bounded-expected-pos}}
		\end{align}
		We can apply Lemma \ref{lem:complete-internal-bound} since $h_{i} =1$ implies $\icin_i \leq T_\textsc{min}$.

		Note that $(1-h_{i}) \leq (1-h_{i}) T_\textsc{min}^{-1}\icin_i$ since $h_{i} =0$ implies $\icin_i \geq T_\textsc{min}$, so with Lemma \ref{lem:bounded-uncond-cond-po-diff}, it follows that:
		\begin{equation}
		(1 - h_i)\sum_{z=0}^1\abs[\Big]{\E{y_i(z; \Zint_{-i}) \given Z_i = z} - \E{y_i(z; \Zint_{-i})}} \leq 4(1 - h_i)k^2 \leq \frac{4(1 - h_i)k^2\icin_i}{T_\textsc{min}},
		\end{equation}
		and we get:
		\begin{equation}
		\abs{\eADSE - \eEATE} \leq \frac{1}{n} \sum_{i=1}^{n} \frac{4k^2\icin_i}{T_\textsc{min}} = 4k^2T_\textsc{min}^{-1}\icoutavg \leq 4k^3n^{-1}\icoutavg ,
		\end{equation}
		where the second to last equality follows from Lemma \ref{lem:icoutavg-handshake}, and the last inequality follows from Assumption \ref{ass:probabilistic-assignment}.
	\end{proof}

	\begin{applemma} \label{lem:complete-hmu-rates}
		Under complete randomization, $\hmu - \cmu = \bigOp[\big]{n^{-0.5}\davg^{0.5} + n^{-0.5}\icoutavg}$.
	\end{applemma}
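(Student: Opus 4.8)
The plan is to bound $\Var{\hmu}$ and then conclude exactly as in the Bernoulli case (Lemma~\ref{lem:bernoulli-hmu-rates}): since $\E{\hmu}=\cmu$ by Lemma~\ref{lem:hmu-cmu-bias}, the $L_2$ rate of $\hmu-\cmu$ equals $\sqrt{\Var{\hmu}}$, and Lemma~\ref{lem:l2-to-probability} transfers it to convergence in probability; using $\sqrt{a+b}\le\sqrt a+\sqrt b$, it therefore suffices to show $\Var{\hmu}=\bigO{n^{-1}\davg + n^{-1}\icoutavg^2}$. Expanding $\Var{\hmu}$ into a double sum of covariances and using Assumption~\ref{ass:probabilistic-assignment} to absorb the inverse probabilities, the task reduces to bounding $\frac{k^2}{n^2}\sum_i\sum_j\abs[\big]{\Cov[\big]{\indicator{Z_i=z}Y_i,\indicator{Z_j=z}Y_j}}$. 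I split the pairs $(i,j)$ into three groups: those with $\di_{ij}=1$; those with $\di_{ij}=0$ and $4\icin_i\icin_j\le\min(m,n-m)$; and those with $\di_{ij}=0$ and $4\icin_i\icin_j>\min(m,n-m)$.

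For the first group, Lemma~\ref{lem:bounded-covariance} (with $a=1$, valid since $q\ge2$) bounds each covariance by $k^2$ in absolute value, contributing at most $k^4 n^{-1}\davg$. For the second group, $\di_{ij}=0$ makes $\Zint_i$ and $\Zint_j$ disjoint, and since $I_{ii}=I_{jj}=1$ the variables $\indicator{Z_i=z}Y_i$ and $\indicator{Z_j=z}Y_j$ are functions of $\Zint_i$ and $\Zint_j$ respectively (because $Y_i=y_i(\Zint_i)$). Writing the covariance as a sum over the joint support of $(\Zint_i,\Zint_j)$ and factoring out the density ratio, Lemma~\ref{lem:complete-external-bound} bounds it in absolute value by $\frac{2\icin_i\icin_j}{\min(m,n-m)}\E{\indicator{Z_i=z}\abs{Y_i}}\E{\indicator{Z_j=z}\abs{Y_j}}$, which is at most $\frac{2k^2\icin_i\icin_j}{\min(m,n-m)}$ by Assumption~\ref{ass:bounded-moments}. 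Under complete randomization $\prob{Z_i=1}=m/n$, so Assumption~\ref{ass:probabilistic-assignment} gives $\min(m,n-m)\ge k^{-1}n$; summing over all pairs and invoking the handshake identity $\sum_i\icin_i=n\icoutavg$ from Lemma~\ref{lem:icoutavg-handshake}, this group contributes at most a constant (depending only on $k$) times $n^{-3}\paren[\big]{\sum_i\icin_i}^2=n^{-1}\icoutavg^2$.

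The main obstacle is the third group, where Lemma~\ref{lem:complete-external-bound} does not apply and only the crude bound $k^2$ per covariance from Lemma~\ref{lem:bounded-covariance} is available, so I must argue that such pairs are rare. Each offending pair satisfies $\icin_i\icin_j>\frac14\min(m,n-m)\ge n/(4k)$, whereas $\sum_i\sum_j\icin_i\icin_j=\paren[\big]{\sum_i\icin_i}^2=n^2\icoutavg^2$, so a Markov-type count bounds the number of offending pairs by $4kn\icoutavg^2$. Hence this group also contributes at most a constant times $n^{-1}\icoutavg^2$. Adding the three contributions gives $\Var{\hmu}\le k^4 n^{-1}\davg + Cn^{-1}\icoutavg^2$ for a constant $C=C(k)$, which is the required bound. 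The two points requiring care are checking that $\min(m,n-m)$ is of order $n$ under complete randomization and the counting estimate on the offending pairs; the remainder mirrors the Bernoulli proof.
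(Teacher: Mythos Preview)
Your proposal is correct and follows essentially the same route as the paper: the same three-way split of pairs according to $\di_{ij}$ and the threshold $4\icin_i\icin_j\le\min(m,n-m)$, the same use of Lemma~\ref{lem:bounded-covariance} for the first group, Lemma~\ref{lem:complete-external-bound} for the second, and the same handshake identity to sum $\icin_i\icin_j$ into $n^2\icoutavg^2$. Your ``Markov-type count'' for the third group is exactly the paper's observation that $(1-h_{ij})\le 4T_{\textsc{min}}^{-1}\icin_i\icin_j$ whenever $h_{ij}=0$, just phrased as a cardinality bound rather than an indicator inequality; the arithmetic is identical.
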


	\begin{proof}
		First consider the variance of $\hmu$. Assumption \ref{ass:probabilistic-assignment} gives:
		\begin{equation}
		\Var{\hmu} = \Var[\bigg]{\frac{1}{n} \sum_{i=1}^{n} \frac{\indicator{Z_i = z}Y_i}{\prob{Z_i = z}}} \leq \frac{k^2}{n^2}\sum_{i=1}^{n} \sum_{j=1}^{n} \Cov[\big]{\indicator{Z_i = z}Y_i, \indicator{Z_j = z}Y_j}.
		\end{equation}
		Let $h_{ij} = \indicator{4\icin_i\icin_j \leq T_\textsc{min}}$ and recall $\di_{ij}$ and $\davg$ from Definition \ref{def:interference-dependence}:
		\begin{align}
		\Var{\hmu} &\leq \frac{k^2}{n^2}\sum_{i=1}^{n} \sum_{j=1}^{n} \bracket[\big]{\di_{ij} + (1-\di_{ij})h_{ij} + (1-\di_{ij})(1-h_{ij})}
		\\
		&\qquad\qquad\times\Cov[\big]{\indicator{Z_i = z}Y_i, \indicator{Z_j = z}Y_j}
		\\
		&\leq  k^4n^{-1}\davg + \frac{k^2}{n^2}\sum_{i=1}^{n} \sum_{j=1}^{n} \bracket[\big]{(1-\di_{ij})h_{ij} + (1-\di_{ij})(1-h_{ij})}
		\\
		&\qquad\qquad\qquad\qquad\qquad\qquad\qquad\times\Cov[\big]{\indicator{Z_i = z}Y_i, \indicator{Z_j = z}Y_j},
		\end{align}
		where the last inequality follows from Lemma \ref{lem:bounded-covariance}.

		Recall that $Y_i = y_i(\Z) = y_i\paren[\big]{\Zint_i}$ and let $\suppZint_i = \braces[\big]{\z\in\braces{0,1}^{n} : \prob{\Zint_{i} = \z} > 0}$ be the support of $\Zint_{i}$. Furthermore, let $T_\textsc{min} = \min(m, n-m)$, and consider the covariance of the terms with $(1-\di_{ij})h_{ij}=1$ in the double sum:
		\begin{align}
		&\Cov[\big]{\indicator{Z_i = z}y_i\paren[\big]{\Zint_i}, \indicator{Z_j = z}y_j\paren[\big]{\Zint_j}}
		\\
		&\qquad= \E[\big]{\indicator{Z_i = z}y_i\paren[\big]{\Zint_i}\indicator{Z_j = z}y_j\paren[\big]{\Zint_j}} - \E[\big]{\indicator{Z_i = z}y_i\paren[\big]{\Zint_i}}\E[\big]{\indicator{Z_j = z}y_j\paren[\big]{\Zint_j}}
		\\
		&\qquad= \bracket[\bigg]{\sum_{\z_{i}\in \suppZint_i} \sum_{\z_{j}\in \suppZint_j} \prob{\Zint_{i} = \z_{i}, \Zint_{j} = \z_{j}}\indicator{z_i = z}y_i\paren[\big]{\z_i}\indicator{z_j = z}y_j\paren[\big]{\z_j}}
		\\
		&\qquad\qquad\qquad - \bracket[\bigg]{\sum_{\z_{i}\in \suppZint_i} \prob{\Zint_{i} = \z_{i}} \indicator{z_i = z}y_i\paren[\big]{\z_i}} \bracket[\bigg]{\sum_{\z_{j}\in \suppZint_j}  \prob{\Zint_{j} = \z_{j}}\indicator{z_j = z}y_j\paren[\big]{\z_j}}
		\\
		&\qquad= \sum_{\z_{i}\in \suppZint_i} \sum_{\z_{j}\in \suppZint_j} \bracket[\bigg]{\prob{\Zint_{i} = \z_{i}, \Zint_{j} = \z_{j}} - \prob{\Zint_{i} = \z_{i}}\prob{\Zint_{j} = \z_{j}}}
		\\
		&\qquad\qquad\qquad\times\indicator{z_i = z}y_i\paren[\big]{\z_i}\indicator{z_j = z}y_j\paren[\big]{\z_j}
		\\
		&\qquad\leq \sum_{\z_{i}\in \suppZint_i} \sum_{\z_{j}\in \suppZint_j} \abs[\big]{\prob{\Zint_{i} = \z_{i}, \Zint_{j} = \z_{j}} - \prob{\Zint_{i} = \z_{i}}\prob{\Zint_{j} = \z_{j}}}\abs{y_i\paren{\z_{i}}}\abs{y_j\paren{\z_{j}}}
		\\
		&\qquad= \sum_{\z_{i}\in \suppZint_i} \sum_{\z_{j}\in \suppZint_j} \prob{\Zint_{i} = \z_{i}}\prob{\Zint_{j} = \z_{j}}\abs[\bigg]{\frac{\prob{\Zint_{i} = \z_{i}, \Zint_{j} = \z_{j}}}{\prob{\Zint_{i} = \z_{i}}\prob{\Zint_{j} = \z_{j}}} - 1}\abs{y_i\paren{\z_{i}}}\abs{y_j\paren{\z_{j}}}
		\\
		&\qquad\leq \frac{2\icin_i\icin_j}{T_\textsc{min}} \sum_{\z_{i}\in \suppZint_i} \sum_{\z_{j}\in \suppZint_j} \prob{\Zint_{i} = \z_{i}}\prob{\Zint_{j} = \z_{j}}\abs{y_i\paren{\z_{i}}}\abs{y_j\paren{\z_{j}}}
		\\
		&\qquad= \frac{2\icin_i\icin_j}{T_\textsc{min}} \E[\big]{\abs{y_i\paren{\Zint_{i}}}} \E[\big]{\abs{y_j\paren{\Zint_{j}}}} = \frac{2\icin_i\icin_j}{T_\textsc{min}} \E[\big]{\abs{Y_i}} \E[\big]{\abs{Y_j}} \leq \frac{2k^2\icin_i\icin_j}{T_\textsc{min}},
		\end{align}
		where Lemma \ref{lem:complete-external-bound} and Assumption \ref{ass:bounded-moments} were used in the last two inequalities.
		We can apply Lemma \ref{lem:complete-external-bound} since $(1-\di_{ij})h_{ij} =1$ implies $4\icin_i\icin_j \leq T_\textsc{min}$.

		Recall that $h_{ij} = \indicator{4\icin_i\icin_j \leq T_\textsc{min}}$, so we have $(1-h_{ij}) \leq (1-h_{ij})4T_\textsc{min}^{-1}\icin_i\icin_j$. With Lemma \ref{lem:bounded-covariance}, it follows that:
		\begin{align}
		(1-\di_{ij})(1-h_{ij})\Cov[\big]{\indicator{Z_i = z}y_i\paren[\big]{\Zint_i}, \indicator{Z_j = z}y_j\paren[\big]{\Zint_j}} &\leq (1-\di_{ij})(1-h_{ij})k^2
		\\
		&\leq (1-\di_{ij})(1-h_{ij})\frac{4k^2\icin_i\icin_j}{T_\textsc{min}}.
		\end{align}
		Taken together, we have:
		\begin{align}
		\Var{\hmu} &\leq  k^4n^{-1}\davg + \frac{4k^4}{n^2}\sum_{i=1}^{n} \sum_{j=1}^{n} \bracket[\big]{(1-\di_{ij})h_{ij} + (1-\di_{ij})(1-h_{ij})}\frac{\icin_i\icin_j}{T_\textsc{min}}
		\\
		&\leq  k^4n^{-1}\davg + \frac{4k^4}{n^2}\sum_{i=1}^{n} \sum_{j=1}^{n} \frac{\icin_i\icin_j}{T_\textsc{min}} = k^4n^{-1}\davg + \frac{4k^4}{T_\textsc{min}} \paren[\bigg]{\frac{1}{n}\sum_{i=1}^{n} \icin_i} \paren[\bigg]{\frac{1}{n}\sum_{j=1}^{n} \icin_j}
		\\
		&= k^4n^{-1}\davg + 4k^4T_\textsc{min}^{-1}\icoutavg^2 \leq k^4n^{-1}\davg + 4k^5n^{-1}\icoutavg^2,
		\end{align}
		where the second to last equality follows from Lemma \ref{lem:icoutavg-handshake} and the last inequality follows from Assumption \ref{ass:probabilistic-assignment}.

		Lemma \ref{lem:hmu-cmu-bias} provides the rate of convergence in the $L_2$-norm:
		\begin{equation}
			\sqrt{\E[\Big]{\paren[\big]{\hmu - \cmu}^2}} = \sqrt{\Var{\hmu}} = \bigO[\Big]{\bracket[\big]{n^{-1}\davg + n^{-1}\icoutavg^2}^{0.5}}.
		\end{equation}
		Concavity of the square root implies:
		\begin{equation}
		\bracket[\big]{n^{-1}\davg + n^{-1}\icoutavg^2}^{0.5} \leq n^{-0.5}\davg^{0.5} + n^{-0.5}\icoutavg,
		\end{equation}
		so, with Lemma \ref{lem:l2-to-probability}:
		\begin{equation}
		\hmu - \cmu = \bigOp[\big]{n^{-0.5}\davg^{0.5} + n^{-0.5}\icoutavg}. \tag*{\qedhere}
		\end{equation}
	\end{proof}

	\begin{applemma} \label{lem:complete-ht-ha-same}
		Under complete randomization, $\eHA = \eHT$.
	\end{applemma}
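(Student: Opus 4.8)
The plan is to exploit that complete randomization makes every marginal treatment probability equal to $m/n$ and fixes the sizes of the two treatment groups, so that the H\'ajek normalizers are identically equal to $n$ and the H\'ajek estimator collapses to the Horvitz--Thompson estimator.

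Concretely, I would start from Corollary~\ref{coro:est-mu}, which gives $\eHT = \hat{\mu}_1 - \hat{\mu}_0$ and $\eHA = (n / \hat{n}_1)\hat{\mu}_1 - (n / \hat{n}_0)\hat{\mu}_0$, so it suffices to show that $\hat{n}_1 = \hat{n}_0 = n$ with probability one under complete randomization. Recalling the definition of $\hat{n}_z$ from Definition~\ref{def:all-mu} together with the facts that $\prob{Z_i = 1} = m/n$ and $\prob{Z_i = 0} = (n-m)/n$ for every $i \in \bfU$, and that $\sum_{i=1}^{n} Z_i = m$ (hence $\sum_{i=1}^{n}(1 - Z_i) = n - m$) with probability one, I obtain
\[
	\hat{n}_1 = \sum_{i=1}^{n}\frac{Z_i}{m/n} = \frac{n}{m}\sum_{i=1}^{n} Z_i = n,
	\qquad
	\hat{n}_0 = \sum_{i=1}^{n}\frac{1 - Z_i}{(n-m)/n} = \frac{n}{n-m}\sum_{i=1}^{n}(1 - Z_i) = n.
\]
Substituting these into the expression for $\eHA$ yields $\eHA = \hat{\mu}_1 - \hat{\mu}_0 = \eHT$, which is the claim.

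There is essentially no obstacle here: the statement is an immediate consequence of the design constraints, and the only thing that must be checked is that the random normalizers $\hat{n}_1$ and $\hat{n}_0$ are in fact the deterministic constant $n$, which holds because complete randomization forces the treated and control group sizes to be fixed. (This lemma is what lets the rate in Proposition~\ref{prop:complete-rates} for $\eHA$ be read off directly from the corresponding bound for $\eHT$, rather than via the linearization argument used for the Bernoulli and paired designs.)
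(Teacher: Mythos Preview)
Your proposal is correct and follows essentially the same argument as the paper: show $\hat{n}_z = n$ under complete randomization using the fixed group sizes and constant marginal probabilities, then invoke Corollary~\ref{coro:est-mu}. One minor aside: your parenthetical remark about paired randomization is inaccurate---the paper proves the same identity $\eHA = \eHT$ for paired designs in Lemma~\ref{lem:paired-ht-ha-same}, so the linearization route is used only for the Bernoulli and arbitrary designs.
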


	\begin{proof}
		The number of treated units, $m = \sum_{i=1}^{n} Z_i$, is fixed under complete randomization. Furthermore, the marginal treatment probability is $\prob{Z_i = z} = zmn^{-1} + (1-z)(n-m)n^{-1}$ for all units. It follows that:
		\begin{equation}
		\hn = \sum_{i=1}^{n} \frac{\indicator{Z_i = z}}{\prob{Z_i = z}} = \frac{n}{zm + (1-z)(n-m)} \sum_{i=1}^{n} \indicator{Z_i = z} = n.
		\end{equation}
		Together with Corollary \ref{coro:est-mu}, we have:
		\begin{equation}
		\eHA = \frac{n}{\hat{n}_1}\hat{\mu}_1 - \frac{n}{\hat{n}_0}\hat{\mu}_0 = \hat{\mu}_1 - \hat{\mu}_0 = \eHT. \tag*{\qedhere}
		\end{equation}
	\end{proof}

	\begin{refprop}{\ref{prop:complete-rates}}
		With a complete randomization design under restricted interference (Assumption \ref{ass:restricted-interference}) and $\icoutavg=\littleO[\big]{n^{0.5}}$, the \HT\ and \HA\ estimators are consistent for \EATE\ and converge at the following rates:
		\begin{equation}
		\eHT - \eEATE = \bigOp[\big]{n^{-0.5}\davg^{0.5} + n^{-0.5}\icoutavg},
		\quad\text{and}\quad
		\eHA - \eEATE = \bigOp[\big]{n^{-0.5}\davg^{0.5} + n^{-0.5}\icoutavg}.
		\end{equation}
	\end{refprop}

	\begin{proof}
		Using Corollary \ref{coro:est-mu} and Lemma \ref{lem:complete-ht-ha-same}, decompose the errors of the estimators:
		\begin{align}
		\eHT - \eEATE = \eHA - \eEATE &= (\eHT - \eADSE) + (\eADSE - \eEATE)
		\\
		&= (\hat{\mu}_1 - \breve{\mu}_1) - (\hat{\mu}_0 - \breve{\mu}_0) + (\eADSE - \eEATE).
		\end{align}
		Lemmas \ref{lem:complete-adse-eate-diff} and \ref{lem:complete-hmu-rates} give the rates. Assumption \ref{ass:restricted-interference} and $\icoutavg=\littleO[\big]{n^{0.5}}$ give consistency.
	\end{proof}

	\section{Proof of Corollary \ref{coro:bernoulli-complete-root-n}}

	\begin{refcoro}{\ref{coro:bernoulli-complete-root-n}}
		With a Bernoulli or complete randomization design under bounded interference, $\davg=\bigO{1}$, the \HT\ and \HA\ estimators are root-$n$ consistent for \EATE.
	\end{refcoro}

	\begin{proof}
		Together with Lemma \ref{lem:interference-measure-inequalities}, $\davg=\bigO{1}$ implies that $\icoutavg=\bigO{1}$. The result then follows directly from Propositions \ref{prop:bernoulli-rates} and \ref{prop:complete-rates}.
	\end{proof}

	\section{Proof of Proposition \ref{prop:paired-rates}}

	\begin{applemma} \label{lem:paired-adse-eate-diff}
		Under paired randomization, $\abs{\eADSE - \eEATE} \leq 2k^2n^{-1}\Rsum$.
	\end{applemma}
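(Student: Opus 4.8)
The plan is to start from the identity $\eADSE - \eEATE = (\breve{\mu}_1 - \mu_1) - (\breve{\mu}_0 - \mu_0)$ supplied by Corollary~\ref{coro:est-mu}, so that by the triangle inequality it suffices to show $\abs{\breve{\mu}_z - \mu_z} \le k^2 n^{-1}\Rsum$ for each $z\in\braces{0,1}$. Expanding $\breve{\mu}_z$ and $\mu_z$ via Definition~\ref{def:all-mu} and using $y_i(z;\Z_{-i}) = y_i(z;\Zint_{-i})$, this reduces to bounding
\[
\abs[\bigg]{\frac{1}{n}\sum_{i=1}^{n}\paren[\big]{\E{y_i(z;\Zint_{-i}) \given Z_i = z} - \E{y_i(z;\Zint_{-i})}}}.
\]
Following the structure of the proof of Lemma~\ref{lem:complete-adse-eate-diff}, I would split the sum according to whether $I_{\rho(i)i} = 1$, i.e.\ whether a unit's pair partner interferes with it, and show that the terms with $I_{\rho(i)i} = 0$ vanish while each remaining term is at most $k^2$ in absolute value.

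For the vanishing, note that the nonzero coordinates of $\Zint_{-i}$ are the $Z_j$ with $I_{ji} = 1$ and $j\neq i$; when $I_{\rho(i)i} = 0$, none of those indices is $i$ or $\rho(i)$, and since $\rho$ is an involution each such $j$ lies in a pair $\braces{j,\rho(j)}$ disjoint from $\braces{i,\rho(i)}$. Under paired randomization the pair-level randomizations are mutually independent, so $\Zint_{-i}$ is independent of $Z_i$ and the conditioning on $Z_i = z$ is inconsequential. For a unit with $I_{\rho(i)i} = 1$, I would use that $\prob{Z_i = z} = 1/2$ under paired randomization, so the law of total expectation gives
\[
\E{y_i(z;\Zint_{-i}) \given Z_i = z} - \E{y_i(z;\Zint_{-i})} = \frac{1}{2}\paren[\big]{\E{y_i(z;\Zint_{-i}) \given Z_i = z} - \E{y_i(z;\Zint_{-i}) \given Z_i = 1 - z}},
\]
and each of the two conditional expectations on the right has absolute value at most $k^2$, by the law of total expectation together with $\prob{Z_i = z'} = 1/2 \ge k^{-1}$ and Assumptions~\ref{ass:bounded-moments} and~\ref{ass:bounded-expected-pos}, exactly as in the proof of Lemma~\ref{lem:bounded-uncond-cond-po-diff}. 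Hence each such term is bounded by $k^2$, and summing gives $\abs{\breve{\mu}_z - \mu_z} \le n^{-1}\sum_{i=1}^{n} I_{\rho(i)i}\, k^2 = k^2 n^{-1}\Rsum$; adding the $z=0$ and $z=1$ contributions yields $\abs{\eADSE - \eEATE} \le 2k^2 n^{-1}\Rsum$.

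The main obstacle is the independence step: one has to track the pair bookkeeping carefully (namely, $I_{\rho(i)i} = 0$ forces every interfering index $j\neq i$ to satisfy both $j\notin\braces{i,\rho(i)}$ and $\rho(j)\notin\braces{i,\rho(i)}$) and then invoke the product structure of the paired design to conclude that $\Zint_{-i}$ is a function only of treatments belonging to pairs disjoint from $\braces{i,\rho(i)}$, hence independent of $Z_i$. The factor $1/2$ coming from $\prob{Z_i = z} = 1/2$ is what sharpens the crude bound $2k^2$ of Lemma~\ref{lem:bounded-uncond-cond-po-diff} to the $k^2$ needed for the stated constant; the remaining manipulations are routine adaptations of the Bernoulli and complete-randomization arguments.
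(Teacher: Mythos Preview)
Your proposal is correct and follows essentially the same route as the paper: decompose via Corollary~\ref{coro:est-mu}, split the unit sum according to $I_{\rho(i)i}$, use the pair structure to kill the $I_{\rho(i)i}=0$ terms, and bound the remaining terms by a constant multiple of $k^2$. Your pair-bookkeeping for the independence step is spelled out more carefully than in the paper, but the substance is identical.

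One point worth flagging: the paper simply cites Lemma~\ref{lem:bounded-uncond-cond-po-diff} for the bound on the surviving terms, but that lemma gives $2k^2$ per $(i,z)$ pair, which after summing over $z\in\{0,1\}$ would yield $4k^2n^{-1}\Rsum$ rather than the stated $2k^2n^{-1}\Rsum$. Your refinement---writing the difference as $\tfrac12$ times a difference of two conditional expectations and bounding each by $k^2$ (using $\prob{Z_i=z'}=\tfrac12\ge k^{-1}$ together with Assumption~\ref{ass:bounded-expected-pos}, which forces $k\ge2$ so $2k\le k^2$)---is exactly what is needed to recover the constant $2k^2$ actually asserted in the lemma. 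So your version is not just equivalent but slightly more complete on this point.
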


	\begin{proof}
		Corollary \ref{coro:est-mu} gives:
		\begin{align}
		\abs{\eADSE - \eEATE} &= \abs[\big]{(\breve{\mu}_1 - \mu_1) - (\breve{\mu}_0 - \mu_0)}
		\\
		&= \abs[\bigg]{\frac{1}{n} \sum_{i=1}^{n} \sum_{z=0}^1 (2z -1)\paren[\Big]{\E{y_i(z; \Zint_{-i}) \given Z_i = z} - \E{y_i(z; \Zint_{-i})}}}
		\\
		&\leq \frac{1}{n} \sum_{i=1}^{n} \sum_{z=0}^1\abs[\Big]{\E{y_i(z; \Zint_{-i}) \given Z_i = z} - \E{y_i(z; \Zint_{-i})}}.
		\end{align}
		Recall $\Rsum$ from Definition \ref{def:within-pair-interference} and that $\rho(i)$ gives the pairing in the paired randomization design. Under this design, $Z_i = 1 - Z_j$ when $i= \rho(j)$ and $Z_i \indep Z_j$ when $i\neq \rho(j)$. As a consequence, $(1 - I_{\rho(i)i})\E{y_i(z; \Zint_{-i}) \given Z_i = z} = (1 - I_{\rho(i)i})\E{y_i(z; \Zint_{-i})}$, which gives:
		\begin{align}
		\abs{\eADSE - \eEATE} &\leq \frac{1}{n} \sum_{i=1}^{n} \bracket[\big]{I_{\rho(i)i} + (1 - I_{\rho(i)i})}\sum_{z=0}^1\abs[\Big]{\E{y_i(z; \Zint_{-i}) \given Z_i = z} - \E{y_i(z; \Zint_{-i})}}
		\\
		&= \frac{1}{n} \sum_{i=1}^{n} I_{\rho(i)i}\sum_{z=0}^1\abs[\Big]{\E{y_i(z; \Zint_{-i}) \given Z_i = z} - \E{y_i(z; \Zint_{-i})}}
		\\
		&\leq \frac{2k^2}{n} \sum_{i=1}^{n} I_{\rho(i)i} = 2k^2n^{-1}\Rsum.
		\end{align}
		where the second to last inequality follows from Lemma \ref{lem:bounded-uncond-cond-po-diff}.
	\end{proof}

	\begin{applemma} \label{lem:paired-hmu-rates}
		Under paired randomization, $\hmu - \cmu = \bigOp[\big]{n^{-0.5}\davg^{0.5} + n^{-0.5}\eavg^{0.5}}$.
	\end{applemma}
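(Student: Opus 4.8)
The plan is to mirror the proofs of Lemma~\ref{lem:bernoulli-hmu-rates} and Lemma~\ref{lem:complete-hmu-rates}: bound $\Var{\hmu}$ by a double sum of covariances and then split that sum according to how the pair $(i,j)$ relates to the interference and to the pairing. First I would write, using Assumption~\ref{ass:probabilistic-assignment},
\begin{equation}
  \Var{\hmu} = \frac{1}{n^2}\sum_{i=1}^{n}\sum_{j=1}^{n} \frac{\Cov[\big]{\indicator{Z_i = z}Y_i, \indicator{Z_j = z}Y_j}}{\prob{Z_i = z}\prob{Z_j = z}} \leq \frac{k^2}{n^2}\sum_{i=1}^{n}\sum_{j=1}^{n} \Cov[\big]{\indicator{Z_i = z}Y_i, \indicator{Z_j = z}Y_j},
\end{equation}
and insert the partition $1 = \di_{ij} + \ei_{ij} + (1-\di_{ij})(1-\ei_{ij})$, which is valid because $\ei_{ij} = 1$ forces $\di_{ij} = 0$ by construction of Definition~\ref{def:paired-interference-dependence}. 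For the $\di_{ij}$ and $\ei_{ij}$ terms I would bound each covariance by $k^4$ using Lemma~\ref{lem:bounded-covariance}, which contributes $k^4 n^{-1}\davg$ and $k^4 n^{-1}\eavg$, respectively.

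The key step is to show that the covariance vanishes on the remaining terms, i.e.\ $(1-\di_{ij})(1-\ei_{ij})\Cov[\big]{\indicator{Z_i = z}Y_i, \indicator{Z_j = z}Y_j} = 0$. Since $Y_i = y_i\paren[\big]{\Zint_i}$ and $I_{ii} = 1$, the variable $\indicator{Z_i = z}Y_i$ is a function of $\Zint_i$ alone, and likewise $\indicator{Z_j = z}Y_j$ is a function of $\Zint_j$; so it suffices to argue that $\Zint_i \indep \Zint_j$ whenever $\di_{ij} = 0$ and $\ei_{ij} = 0$. Under paired randomization, $Z_\ell$ and $Z_k$ are dependent only when $k = \rho(\ell)$, and treatments in distinct pairs are mutually independent. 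The possibly-nonzero entries of $\Zint_i$ are $\braces{Z_\ell : I_{\ell i} = 1}$ and those of $\Zint_j$ are $\braces{Z_k : I_{kj} = 1}$; the condition $\di_{ij} = 0$ makes these index sets disjoint, and $\ei_{ij} = 0$ then guarantees that no pair $\braces{\ell, \rho(\ell)}$ meets both index sets (otherwise $I_{\ell i} I_{\rho(\ell) j} = 1$ for some $\ell$ while $\di_{ij} = 0$, i.e.\ $\ei_{ij} = 1$). Hence these two families of treatments are independent, and so are $\Zint_i$ and $\Zint_j$.

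Putting the pieces together gives $\Var{\hmu} \leq k^4 n^{-1}\davg + k^4 n^{-1}\eavg$. Lemma~\ref{lem:hmu-cmu-bias} turns this into a rate in the $L_2$-norm, $\sqrt{\E[\big]{\paren[\big]{\hmu - \cmu}^2}} = \sqrt{\Var{\hmu}} = \bigO[\big]{\bracket[\big]{n^{-1}\davg + n^{-1}\eavg}^{0.5}}$; concavity of the square root gives $\bracket[\big]{n^{-1}\davg + n^{-1}\eavg}^{0.5} \leq n^{-0.5}\davg^{0.5} + n^{-0.5}\eavg^{0.5}$; and Lemma~\ref{lem:l2-to-probability} converts this into $\hmu - \cmu = \bigOp[\big]{n^{-0.5}\davg^{0.5} + n^{-0.5}\eavg^{0.5}}$. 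I expect the only genuinely non-routine point to be the independence argument of the middle paragraph — in particular, recognizing that $\di_{ij} = 0$ rules out a shared treatment between $\Zint_i$ and $\Zint_j$ while $\ei_{ij} = 0$ rules out a split pair, and that these two facts are exactly what is needed for $\Zint_i \indep \Zint_j$; the rest is bookkeeping that parallels the earlier lemmas.
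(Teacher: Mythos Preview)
Your proposal is correct and follows essentially the same route as the paper's proof: bound the variance by a double sum of covariances, split via $\di_{ij}$ and $\ei_{ij}$, use Lemma~\ref{lem:bounded-covariance} on the first two pieces, and argue the remaining covariances vanish because $\Zint_i \indep \Zint_j$ under paired randomization when $\di_{ij}=\ei_{ij}=0$. The only cosmetic differences are that the paper uses $\prob{Z_i=z}=1/2$ directly (yielding $4k^2$ rather than $k^4$), and that Lemma~\ref{lem:bounded-covariance} with $a=1$ actually gives $k^2$ (not $k^4$) per covariance---your stated contributions $k^4 n^{-1}\davg$ and $k^4 n^{-1}\eavg$ are nonetheless correct once the $k^2$ prefactor is included.
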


	\begin{proof}
		We have $\prob{Z_i = z}=0.5$ under paired randomization, so:
		\begin{align}
		\Var{\hmu} &= \frac{1}{n^2}\sum_{i=1}^{n} \sum_{j=1}^{n} \frac{\Cov[\big]{\indicator{Z_i = z}Y_i, \indicator{Z_j = z}Y_j}}{\prob{Z_i = z}\prob{Z_j = z}}
		\\
		&\leq \frac{4}{n^2}\sum_{i=1}^{n} \sum_{j=1}^{n} \Cov[\big]{\indicator{Z_i = z}Y_i, \indicator{Z_j = z}Y_j}.
		\end{align}
		Recall $\di_{ij}$ and $\davg$ from Definition \ref{def:interference-dependence} and $\ei_{ij}$ and $\eavg$ from Definition \ref{def:paired-interference-dependence}:
		\begin{align}
		\Var{\hmu} &\leq \frac{4}{n^2}\sum_{i=1}^{n} \sum_{j=1}^{n} \bracket[\big]{\di_{ij} + (1-\di_{ij})\ei_{ij} + (1-\di_{ij})(1-\ei_{ij})}
		\\
		&\qquad\qquad\times\Cov[\big]{\indicator{Z_i = z}Y_i, \indicator{Z_j = z}Y_j}
		\\
		&\leq  4k^2n^{-1}\davg + 4k^2n^{-1}\eavg
		\\
		&\qquad\qquad+ \frac{4}{n^2}\sum_{i=1}^{n} \sum_{j=1}^{n} (1-\di_{ij})(1-\ei_{ij}) \Cov[\big]{\indicator{Z_i = z}Y_i, \indicator{Z_j = z}Y_j}.
		\end{align}
		where the last inequality follows from Lemma \ref{lem:bounded-covariance}.

		$\Cov[\big]{\indicator{Z_i = z}Y_i, \indicator{Z_j = z}Y_j}$ can be non-zero in only three cases: when units $i$ and $j$ are paired (i.e., $\rho(i)=j$); when some unit $\ell$ is interfering with both $i$ and $j$ (i.e., $\di_{ij}=1$); or when some unit $\ell$ is interfering with $i$ and its paired unit $\rho(\ell)$ is interfering with $j$ (i.e., $\ei_{ij}=1$). Our definitions imply that $\di_{ij} + \ei_{ij}=1$ whenever $\rho(i)=j$. As a consequence, the covariance is zero when $\di_{ij}=\ei_{ij}=0$:
		\begin{equation}
		(1-\di_{ij})(1-\ei_{ij}) \Cov[\big]{\indicator{Z_i = z}Y_i, \indicator{Z_j = z}Y_j} = 0,
		\end{equation}
		and it follows that $\Var{\hmu} \leq 4k^2n^{-1}\davg + 4k^2n^{-1}\eavg$.

		Lemma \ref{lem:hmu-cmu-bias} and concavity of the square root provide the rate of convergence in the $L_2$-norm:
		\begin{equation}
			\sqrt{\E[\Big]{\paren[\big]{\hmu - \cmu}^2}} = \sqrt{\Var{\hmu}} = \bigO[\big]{n^{-0.5}\davg^{0.5} + n^{-0.5}\eavg^{0.5}},
		\end{equation}
		so Lemma \ref{lem:l2-to-probability} gives the rate of convergence in probability.
	\end{proof}

	\begin{applemma} \label{lem:paired-ht-ha-same}
		Under paired randomization, $\eHA = \eHT$.
	\end{applemma}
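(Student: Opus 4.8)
The plan is to mimic, essentially verbatim, the proof of Lemma~\ref{lem:complete-ht-ha-same}. The key structural fact is that paired randomization, just like complete randomization, fixes the number of treated units: each pair contributes exactly one treated unit, so (with $n$ even, as the paper assumes) $\sum_{i=1}^{n} Z_i = n/2$ with probability one. Moreover the design is symmetric across units, so $\prob{Z_i = z} = 1/2$ for every $i \in \bfU$ and every $z \in \braces{0,1}$.

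Given these two facts, I would first evaluate $\hn$ from Definition~\ref{def:all-mu}. For $z = 1$, since $\prob{Z_i = 1} = 1/2$,
\begin{equation}
	\hat{n}_1 = \sum_{i=1}^{n} \frac{\indicator{Z_i = 1}}{\prob{Z_i = 1}} = 2 \sum_{i=1}^{n} Z_i = n,
\end{equation}
and symmetrically $\hat{n}_0 = 2 \sum_{i=1}^{n} (1 - Z_i) = 2\paren{n - n/2} = n$. Plugging these into the expression for $\eHA$ in Corollary~\ref{coro:est-mu} then gives
\begin{equation}
	\eHA = \frac{n}{\hat{n}_1}\hat{\mu}_1 - \frac{n}{\hat{n}_0}\hat{\mu}_0 = \hat{\mu}_1 - \hat{\mu}_0 = \eHT,
\end{equation}
which is the claim.

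There is no substantive obstacle here; the statement is a direct consequence of the design having deterministic treatment-group sizes. The only points requiring mild care are the bookkeeping of the marginal probabilities (all equal to $1/2$) and the implicit assumption that the sample size is even so that the pairing, and hence the exact half-split, is well defined. The argument is otherwise word-for-word that of Lemma~\ref{lem:complete-ht-ha-same} with $m = n/2$.
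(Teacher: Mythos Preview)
Your proof is correct and matches the paper's own proof essentially verbatim: both compute $\hn = 2\sum_{i=1}^n \indicator{Z_i=z} = n$ from $\prob{Z_i=z}=1/2$ and $\sum_i \indicator{Z_i=z}=n/2$, then invoke Corollary~\ref{coro:est-mu} to conclude $\eHA=\hat{\mu}_1-\hat{\mu}_0=\eHT$.
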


	\begin{proof}
		Under paired randomization $\prob{Z_i = z}=0.5$ and $\sum_{i=1}^{n} \indicator{Z_i = z} = n/2$, so it follows that:
		\begin{equation}
		\hn = \sum_{i=1}^{n} \frac{\indicator{Z_i = z}}{\prob{Z_i = z}} = 2 \sum_{i=1}^{n} \indicator{Z_i = z} = n.
		\end{equation}
		Corollary \ref{coro:est-mu} completes the proof:
		\begin{equation}
		\eHA = \frac{n}{\hat{n}_1}\hat{\mu}_1 - \frac{n}{\hat{n}_0}\hat{\mu}_0 = \hat{\mu}_1 - \hat{\mu}_0 = \eHT. \tag*{\qedhere}
		\end{equation}
	\end{proof}

	\begin{refprop}{\ref{prop:paired-rates}}
		With a paired randomization design under restricted interference, restricted pair-induced interference and pair separation (Assumptions \ref{ass:restricted-interference}, \ref{ass:restricted-pair-interference} and \ref{ass:pair-separation}), the \HT\ and \HA\ estimators are consistent for \EATE\ and converge at the following rates:
		\begin{align}
		\eHT - \eEATE &= \bigOp[\big]{n^{-0.5}\davg^{0.5} + n^{-0.5}\eavg^{0.5} + n^{-1}\Rsum},
		\\
		\eHA - \eEATE &= \bigOp[\big]{n^{-0.5}\davg^{0.5} + n^{-0.5}\eavg^{0.5} + n^{-1}\Rsum}.
		\end{align}
	\end{refprop}

	\begin{proof}
		Using Corollary \ref{coro:est-mu} and Lemma \ref{lem:paired-ht-ha-same}, decompose the errors of the estimators:
		\begin{align}
		\eHT - \eEATE = \eHA - \eEATE &= (\eHT - \eADSE) + (\eADSE - \eEATE)
		\\
		&= (\hat{\mu}_1 - \breve{\mu}_1) - (\hat{\mu}_0 - \breve{\mu}_0) + (\eADSE - \eEATE).
		\end{align}
		Lemmas \ref{lem:paired-adse-eate-diff} and \ref{lem:paired-hmu-rates} give the rates. Assumptions \ref{ass:restricted-interference}, \ref{ass:restricted-pair-interference} and \ref{ass:pair-separation} give consistency.
	\end{proof}

	\section{Proof of Proposition \ref{prop:arbitrary-rates-eate}}

	\begin{applemma} \label{lem:arbitrary-adse-eate-diff}
		Under arbitrary experimental designs, $\abs{\eADSE - \eEATE} \leq 6k^2 n^{-1}\mxint$.
	\end{applemma}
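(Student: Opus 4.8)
The plan is to follow the template of the proofs of Lemmas~\ref{lem:complete-adse-eate-diff} and~\ref{lem:paired-adse-eate-diff}. Corollary~\ref{coro:est-mu} gives $\eADSE - \eEATE = (\breve{\mu}_1 - \mu_1) - (\breve{\mu}_0 - \mu_0)$, so by the triangle inequality $\abs{\eADSE - \eEATE} \leq \frac{1}{n}\sum_{i=1}^{n}\sum_{z=0}^{1}\abs[\big]{\E{y_i(z;\Zint_{-i}) \given Z_i = z} - \E{y_i(z;\Zint_{-i})}}$, where I rewrite $y_i(z;\Z_{-i})$ as $y_i(z;\Zint_{-i})$ since the potential outcome depends only on the treatments of the units interfering with $i$. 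Assumption~\ref{ass:probabilistic-assignment} ensures $\prob{Z_i = z} > 0$, so each summand equals $\prob{Z_i = z}^{-1}\abs{\Cov{\indicator{Z_i = z}, y_i(z;\Zint_{-i})}}$, and the same assumption bounds $\prob{Z_i = z}^{-1} \leq k$. Since $\sigma(\indicator{Z_i = z}) \subseteq \sigma(Z_i)$ and $\sigma(y_i(z;\Zint_{-i})) \subseteq \sigma(\Zint_{-i})$, the quantity $\alpha(Z_i,\Zint_{-i})$ is an upper bound for the mixing coefficient of the two arguments of this covariance.

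The core step is a covariance inequality: for any event $A \in \sigma(Z_i)$ and any measurable function $g$ of $\Zint_{-i}$ with $\E{\abs{g}^{s}} \leq k^{s}$, one has $\abs{\Cov{\indicator{A}, g(\Zint_{-i})}} \leq 3k\,[\alpha(Z_i,\Zint_{-i})]^{\frac{s-1}{s}}$, with $0^0$ read as $0$ as in Definition~\ref{def:mixing-coefficients}. I would prove this by a layer-cake argument: split $g$ into its positive and negative parts and write each as $\int_0^\infty \indicator{g_\pm > t}\,dt$; the contribution of level $t$ to the covariance is $\abs{\prob{A \cap \{g_\pm > t\}} - \prob{A}\prob{g_\pm > t}}$, which is at most $\alpha(Z_i,\Zint_{-i})$ by the definition of the mixing coefficient and at most $\prob{g_\pm > t} \leq k^{s} t^{-s}$ by Markov's inequality; integrating $\min(\alpha,\, k^{s}t^{-s})$ over $t \in (0,\infty)$ and bounding the $L_s$ norms of the two parts by a power-mean inequality produces the stated bound. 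For $s = 1$ the exponent vanishes, and the inequality collapses to the crude estimate $\abs{\Cov{\indicator{A}, g}} \leq 2\E{\abs{g}}$ when $Z_i$ and $\Zint_{-i}$ are dependent and to $0$ otherwise, which is the content of Lemma~\ref{lem:bounded-uncond-cond-po-diff}.

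To assemble the bound, Assumption~\ref{ass:bounded-expected-pos} gives $\E{\abs{y_i(z;\Zint_{-i})}^{s}} = \E{\abs{y_i(z;\Z_{-i})}^{s}} \leq k^{s}$, so applying the covariance inequality together with $\prob{Z_i = z}^{-1} \leq k$ shows each summand is at most $3k^{2}[\alpha(Z_i,\Zint_{-i})]^{\frac{s-1}{s}}$. The two values $z \in \{0,1\}$ share the same mixing coefficient, so the inner sum is at most $6k^{2}[\alpha(Z_i,\Zint_{-i})]^{\frac{s-1}{s}}$; averaging over $i$ and invoking Definition~\ref{def:mixing-coefficients} yields exactly $6k^{2}n^{-1}\mxint$.

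I expect the covariance inequality to be the main obstacle. Its functional form is a Davydov/Rio-type bound for $\alpha$-mixing random variables, but deriving it from scratch with a constant small enough to deliver the stated $6$, and uniformly over the full range $s \geq 1$ including the degenerate boundary $s = 1$, is where the effort goes; the variance bound in Lemma~\ref{lem:arbitrary-hmu-rates} can appeal to an off-the-shelf covariance inequality, whereas here the one-sided conditioning and the appearance of $\alpha(Z_i,\Zint_{-i})$ with the $L_s$-moment exponent call for a tailored argument.
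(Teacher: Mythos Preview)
Your plan is essentially the paper's own proof, repackaged. The paper also starts from the same triangle-inequality decomposition and then bounds each summand by a truncation argument that yields a Davydov-type inequality in $\alpha\paren[\big]{Z_i,\Zint_{-i}}$. Your reformulation of $\abs[\big]{\E{y_i(z;\Zint_{-i})\given Z_i=z} - \E{y_i(z;\Zint_{-i})}}$ as $\prob{Z_i=z}^{-1}\abs[\big]{\Cov{\indicator{Z_i=z},\,y_i(z;\Zint_{-i})}}$ is a clean device that the paper does not use explicitly; the paper instead works directly with the conditional expectations, exploits the symmetry $\abs{\prob{Z_i=1,\cdot}-\prob{Z_i=1}\prob{\cdot}}=\abs{\prob{Z_i=0,\cdot}-\prob{Z_i=0}\prob{\cdot}}$ to collapse the sum over $z$, and then truncates $y_i(z;\Zint_{-i})$ at a single level $\lambda_i$ before optimising over $\lambda_i$.

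One technical caveat: the layer-cake integral you sketch, $\int_0^\infty\min\paren[\big]{\alpha,\,k^s t^{-s}}\,dt$, evaluates to $k\alpha^{(s-1)/s}\cdot s/(s-1)$, so the resulting constant blows up as $s\downarrow 1$ and will not deliver the uniform factor $3$ you claim. The paper sidesteps this by using a \emph{single} truncation level $\lambda_i=\bracket[\big]{\alpha\paren[\big]{Z_i,\Zint_{-i}}}^{-1/s}\paren{y_i^{\textsc{max}}}^{1/s}$: the bounded part contributes $4k\lambda_i\alpha$ (summed over $z$), the tail contributes $k\lambda_i^{1-s}$ times the two $s$th moments, and the choice of $\lambda_i$ makes these add to exactly $6k^2\bracket{\alpha}^{(s-1)/s}$ for every $s\geq 1$, including the boundary $s=1$ where the truncation becomes vacuous and the bound reduces to the crude one you mention. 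In your covariance formulation the analogous fix is to truncate $g$ at $\lambda=k\alpha^{-1/s}$: the bounded piece gives $2\lambda\alpha$ via the total-variation argument, the tail gives $\lambda^{1-s}k^s$, and the sum is $3k\alpha^{(s-1)/s}$ on the nose.
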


	\begin{proof}
		Corollary \ref{coro:est-mu} gives:
		\begin{align}
		\abs{\eADSE - \eEATE} &= \abs[\big]{(\breve{\mu}_1 - \mu_1) - (\breve{\mu}_0 - \mu_0)}
		\\
		&= \abs[\bigg]{\frac{1}{n} \sum_{i=1}^{n} \sum_{z=0}^1 (2z -1)\paren[\Big]{\E{y_i(z; \Zint_{-i}) \given Z_i = z} - \E{y_i(z; \Zint_{-i})}}}
		\\
		&\leq \frac{1}{n} \sum_{i=1}^{n} \sum_{z=0}^1\abs[\Big]{\E{y_i(z; \Zint_{-i}) \given Z_i = z} - \E{y_i(z; \Zint_{-i})}}.
		\end{align}
		Let $L_i(z) = \indicator{\abs{y_i(z; \Zint_{-i})} \geq \lambda_i}$ for some $\lambda_i>0$, and consider the terms in the outer sum:
		\begin{align}
		&\sum_{z=0}^1\abs[\Big]{\E{y_i(z; \Zint_{-i}) \given Z_i = z} - \E{y_i(z; \Zint_{-i})}}
		\\
		&\quad= \sum_{z=0}^1\abs[\Big]{\E{L_i(z)y_i(z; \Zint_{-i}) + \paren{1-L_i(z)}y_i(z; \Zint_{-i}) \given Z_i = z}
		\\
		&\qquad\qquad\qquad - \E{L_i(z)y_i(z; \Zint_{-i}) + \paren{1-L_i(z)}y_i(z; \Zint_{-i})}}
		\\
		&\quad\leq \sum_{z=0}^1\abs[\Big]{\E{\paren{1-L_i(z)}y_i(z; \Zint_{-i}) \given Z_i = z} - \E{\paren{1-L_i(z)}y_i(z; \Zint_{-i})}} \label{eq:internal-mixing-unit-term}
		\\
		&\quad\qquad\qquad + \sum_{z=0}^1\abs[\Big]{\E{L_i(z)y_i(z; \Zint_{-i}) \given Z_i = z} - \E{L_i(z)y_i(z; \Zint_{-i})}}.
		\end{align}

		Consider the first term in expression \eqref{eq:internal-mixing-unit-term}. Let $\suppZint_{-i} = \braces[\big]{\z\in\braces{0,1}^{n-1} : \prob{\Zint_{-i} = \z} > 0}$ be the support of $\Zint_{-i}$:
		\begin{align}
		&\sum_{z=0}^1\abs[\Big]{\E{\paren{1-L_i(z)}y_i(z; \Zint_{-i}) \given Z_i = z} - \E{\paren{1-L_i(z)}y_i(z; \Zint_{-i})}}
		\\
		&\qquad= \sum_{z=0}^1 \abs[\Big]{\sum_{\z\in\suppZint_{-i}} \paren[\Big]{\prob{\Zint_{-i} = \z \given Z_i = z} - \prob{\Zint_{-i} = \z}}\indicator{\abs{y_i(z; \z)} < \lambda_i}y_i(z; \z)}
		\\
		&\qquad= \sum_{z=0}^1 \abs[\Big]{\sum_{\z\in\suppZint_{-i}} \frac{1}{\prob{Z_i = z}}\paren[\Big]{\prob{Z_i = z, \Zint_{-i} = \z} - \prob{Z_i = z}\prob{\Zint_{-i} = \z}}
		\\
		&\qquad\qquad\qquad\qquad\times\indicator{\abs{y_i(z; \z)} < \lambda_i}y_i(z; \z)}
		\\
		&\qquad\leq \sum_{z=0}^1 \sum_{\z\in\suppZint_{-i}} \frac{1}{\prob{Z_i = z}}\abs[\Big]{\prob{Z_i = z, \Zint_{-i} = \z} - \prob{Z_i = z}\prob{\Zint_{-i} = \z}}
		\\
		&\qquad\qquad\qquad\qquad\times\indicator{\abs{y_i(z; \z)} < \lambda_i}\abs{y_i(z; \z)}
		\\
		&\qquad\leq k\lambda_i \sum_{z=0}^1 \sum_{\z\in\suppZint_{-i}} \abs[\big]{\prob{Z_i = z, \Zint_{-i} = \z} - \prob{Z_i = z}\prob{\Zint_{-i} = \z}},
		\end{align}
		where the last equality follows from $\indicator{\abs{y_i(z; \z)} < \lambda_i}\abs{y_i(z; \z)} \leq \lambda_i$ and Assumption \ref{ass:probabilistic-assignment}.

		Note that, for any $z\in\braces{0,1}$ and $\z\in\suppZint_{-i}$:
		\begin{align}
		&\abs[\big]{\prob{Z_i = z, \Zint_{-i} = \z} - \prob{Z_i = z}\prob{\Zint_{-i} = \z}}
		\\
		&\qquad = \abs[\big]{\bracket[\big]{\prob{\Zint_{-i} = \z} - \prob{Z_i = 1 - z, \Zint_{-i} = \z}} - \bracket[\big]{1 - \prob{Z_i = 1 - z}}\prob{\Zint_{-i} = \z}}
		\\
		&\qquad = \abs[\big]{\prob{Z_i = 1 - z, \Zint_{-i} = \z} - \prob{Z_i = 1 - z}\prob{\Zint_{-i} = \z}},
		\end{align}
		so:
		\begin{align}
		&k\lambda_i \sum_{z=0}^1 \sum_{\z\in\suppZint_{-i}} \abs[\big]{\prob{Z_i = z, \Zint_{-i} = \z} - \prob{Z_i = z}\prob{\Zint_{-i} = \z}}
		\\
		&\qquad\qquad\qquad= 2k\lambda_i \sum_{\z\in\suppZint_{-i}} \abs[\big]{\prob{Z_i = 1, \Zint_{-i} = \z} - \prob{Z_i = 1}\prob{\Zint_{-i} = \z}}.
		\end{align}
		Consider the following sets:
		\begin{align}
		\mathcal{Z}_{-i}^+ &= \braces[\big]{\z\in \suppZint_{-i}: \prob{Z_i = 1, \Zint_{-i} = \z} \geq \prob{Z_i = 1}\prob{\Zint_{-i} = \z}},
		\\
		\mathcal{Z}_{-i}^- &= \braces[\big]{\z\in \suppZint_{-i}: \prob{Z_i = 1, \Zint_{-i} = \z} \leq \prob{Z_i = 1}\prob{\Zint_{-i} = \z}},
		\end{align}
		so we can write:
		\begin{align}
		& 2k\lambda_i \sum_{\z\in\suppZint_{-i}} \abs[\big]{\prob{Z_i = 1, \Zint_{-i} = \z} - \prob{Z_i = 1}\prob{\Zint_{-i} = \z}}
		\\
		&\qquad\qquad = 2k\lambda_i \sum_{\z\in\mathcal{Z}_{-i}^+} \paren[\big]{\prob{Z_i = 1, \Zint_{-i} = \z} - \prob{Z_i = 1}\prob{\Zint_{-i} = \z}}
		\\
		&\qquad\qquad\qquad\qquad+ 2k\lambda_i \sum_{\z\in\mathcal{Z}_{-i}^-} \paren[\big]{\prob{Z_i = 1}\prob{\Zint_{-i} = \z} - \prob{Z_i = 1, \Zint_{-i} = \z}}
		\\
		&\qquad\qquad = 2k\lambda_i\paren[\big]{\prob{Z_i = 1, A^+} - \prob{Z_i = 1}\prob{A^+}}
		\\
		&\qquad\qquad\qquad\qquad + 2k\lambda_i \paren[\big]{\prob{Z_i = 1}\prob{A^-} - \prob{Z_i = 1, A^-}}
		\\
		&\qquad\qquad = 2k\lambda_i\abs[\big]{\prob{Z_i = 1, A^+} - \prob{Z_i = 1}\prob{A^+}}
		\\
		&\qquad\qquad\qquad\qquad+ 2k\lambda_i \abs[\big]{\prob{Z_i = 1, A^-} - \prob{Z_i = 1}\prob{A^-}},
		\end{align}
		where $A^+$ is the event $\Zint_{-i}\in \mathcal{Z}_{-i}^+$ and $A^-$ is the event $\Zint_{-i}\in \mathcal{Z}_{-i}^-$. $A^+$ and $A^-$ are both in the sub-sigma-algebra generated by $\Zint_{-i}$ (and so is the event $Z_i=1$ with respect to the algebra generated by $Z_i$). It follows:
		\begin{align}
		& 2k\lambda_i\abs[\big]{\prob{Z_i = 1, A^+} - \prob{Z_i = 1}\prob{A^+}} + 2k\lambda_i \abs[\big]{\prob{Z_i = 1, A^-} - \prob{Z_i = 1}\prob{A^-}}
		\\
		&\qquad \leq 4k\lambda_i\max_{\substack{A\in\sigma\paren{Z_i} \\ B \in\sigma\paren{\Zint_{-i}}}} \abs[\big]{\,\prob{A \cap B} - \prob{A}\prob{B}\,}
		\\
		&\qquad = 4k\lambda_i \alpha\paren[\big]{Z_i, \Zint_{-i}},
		\end{align}
		where $\alpha\paren[\big]{Z_i, \Zint_{-i}}$ is the alpha-mixing coefficient as defined in the main text.

		Consider the second term in expression \eqref{eq:internal-mixing-unit-term}:
		\begin{align}
		&\sum_{z=0}^1\abs[\Big]{\E[\big]{L_i(z)y_i(z; \Zint_{-i}) \given Z_i = z} - \E[\big]{L_i(z)y_i(z; \Zint_{-i})}}
		\\
		&\qquad= \sum_{z=0}^1\abs[\Big]{\E[\big]{L_i(z)y_i(z; \Zint_{-i}) \given Z_i = z} - \prob{Z_i = z}\E[\big]{L_i(z)y_i(z; \Zint_{-i}) \given Z_i = z}
		\\
		&\qquad\qquad\qquad - \prob{Z_i = 1 - z}\E[\big]{L_i(z)y_i(z; \Zint_{-i}) \given Z_i = 1 - z}}
		\\
		&\qquad= \sum_{z=0}^1\abs[\Big]{\prob{Z_i = 1 - z}\E[\big]{L_i(z)y_i(z; \Zint_{-i}) \given Z_i = z}
		\\
		&\qquad\qquad\qquad - \prob{Z_i = 1 - z}\E[\big]{L_i(z)y_i(z; \Zint_{-i}) \given Z_i = 1 - z}}
		\\
		&\qquad\leq \sum_{z=0}^1\paren[\Big]{\prob{Z_i = 1 - z}\E[\big]{L_i(z)\abs{y_i(z; \Zint_{-i})} \given Z_i = z}
		\\
		&\qquad\qquad\qquad + \prob{Z_i = 1 - z}\E[\big]{L_i(z)\abs{y_i(z; \Zint_{-i})} \given Z_i = 1 - z}}
		\\
		&\qquad\leq k\sum_{z=0}^1\paren[\Big]{\prob{Z_i = z}\E[\big]{L_i(z)\abs{y_i(z; \Zint_{-i})} \given Z_i = z}
		\\
		&\qquad\qquad\qquad + \prob{Z_i = 1 - z}\E[\big]{L_i(z)\abs{y_i(z; \Zint_{-i})} \given Z_i = 1 - z}}
		\\
		&\qquad= k\sum_{z=0}^1\E[\big]{L_i(z)\abs{y_i(z; \Zint_{-i})}},
		\end{align}
		where Assumption \ref{ass:probabilistic-assignment} was used in the last inequality. Note that $\lambda_i^{s-1}L_i(z)\abs{y_i(z; \Zint_{-i})} \leq \abs{y_i(z; \Zint_{-i})}^s$ with probability one for both $z\in\{0,1\}$ whenever $s \geq 1$. It follows that:
		\begin{equation}
		\E[\big]{L_i(z)\abs{y_i(z; \Zint_{-i})}} \leq \lambda_i^{1-s}\E[\big]{\abs{y_i(z; \Zint_{-i})}^s},
		\end{equation}
		and:
		\begin{align}
		&k\E[\big]{L_i(1)\abs{y_i(1; \Zint_{-i})}} + k\E[\big]{L_i(0)\abs{y_i(0; \Zint_{-i})}}
		\\
		&\qquad\qquad\leq k\lambda_i^{1-s}\E[\big]{\abs{y_i(1; \Zint_{-i})}^s} + k\lambda_i^{1-s}\E[\big]{\abs{y_i(0; \Zint_{-i})}^s}.
		\end{align}

		The two terms in expression \eqref{eq:internal-mixing-unit-term} taken together yield:
		\begin{align}
		&\sum_{z=0}^1\abs[\Big]{\E{y_i(z; \Zint_{-i}) \given Z_i = z} - \E{y_i(z; \Zint_{-i})}}
		\\
		&\qquad\qquad\leq 4k\lambda_i \alpha\paren[\big]{Z_i, \Zint_{-i}} + k\lambda_i^{1-s}\E[\big]{\abs{y_i(1; \Zint_{-i})}^s} + k\lambda_i^{1-s}\E[\big]{\abs{y_i(0; \Zint_{-i})}^s}.
		\end{align}
		The expression is trivially zero when $\Zint_{-i}$ and $Z_i$ are independent, so we will consider $\alpha\paren[\big]{Z_i, \Zint_{-i}}>0$. Recall that the inequality holds for any $\lambda_i>0$ and $s\geq1$. Set $s$ to the maximum value such that Assumption \ref{ass:bounded-expected-pos} holds and set $\lambda_i = \bracket[\big]{\alpha\paren[\big]{Z_i, \Zint_{-i}}}^{-1/s} \paren[\big]{y_i^\textsc{max}}^{1/s}$ where:
		\begin{equation}
		y_i^\textsc{max} = \max\paren[\Big]{\E[\big]{\abs{y_i(1; \Zint_{-i})}^s}, \E[\big]{\abs{y_i(0; \Zint_{-i})}^s}}.
		\end{equation}
		It follows that:
		\begin{align}
		&4k\lambda_i \alpha\paren[\big]{Z_i, \Zint_{-i}} + k\lambda_i^{1-s}\E[\big]{\abs{y_i(1; \Zint_{-i})}^s} + k\lambda_i^{1-s}\E[\big]{\abs{y_i(0; \Zint_{-i})}^s}
		\\
		&\qquad\qquad= 6k\lambda_i\alpha\paren[\big]{Z_i, \Zint_{-i}}\bracket[\Bigg]{\frac{4}{6}  + \frac{\E[\big]{\abs{y_i(1; \Zint_{-i})}^s}}{6\lambda_i^{s}\alpha\paren[\big]{Z_i, \Zint_{-i}}} + \frac{\E[\big]{\abs{y_i(0; \Zint_{-i})}^s}}{6\lambda_i^{s}\alpha\paren[\big]{Z_i, \Zint_{-i}}}}
		\\
		&\qquad\qquad= 6k\bracket[\big]{\alpha\paren[\big]{Z_i, \Zint_{-i}}}^{\frac{s-1}{s}} \paren[\big]{y_i^\textsc{max}}^{1/s}\bracket[\Bigg]{\frac{4}{6}  + \frac{\E[\big]{\abs{y_i(1; \Zint_{-i})}^s}}{6y_i^\textsc{max}} + \frac{\E[\big]{\abs{y_i(0; \Zint_{-i})}^s}}{6y_i^\textsc{max}}}
		\\
		&\qquad\qquad\leq 6k\bracket[\big]{\alpha\paren[\big]{Z_i, \Zint_{-i}}}^{\frac{s-1}{s}} \paren[\big]{y_i^\textsc{max}}^{1/s} \leq 6k^2\bracket[\big]{\alpha\paren[\big]{Z_i, \Zint_{-i}}}^{\frac{s-1}{s}},
		\end{align}
		where the last inequality follows from $y_i^\textsc{max} \leq k^s$ which is implied by Assumption \ref{ass:bounded-expected-pos}.

		Returning to the main sum, Definition \ref{def:mixing-coefficients} finally gives us:
		\begin{equation}
		\abs{\eADSE - \eEATE} \leq \frac{1}{n} \sum_{i=1}^{n} 6k^2\bracket[\big]{\alpha\paren[\big]{Z_i, \Zint_{-i}}}^{\frac{s-1}{s}} = 6k^2 n^{-1}\mxint. \tag*{\qedhere}
		\end{equation}
	\end{proof}

	\begin{refprop}{\ref{prop:arbitrary-rates-eate}}
		Under restricted interference, design mixing and design separation (Assumptions \ref{ass:restricted-interference}, \ref{ass:design-mixing} and \ref{ass:design-separation}), the \HT\ and \HA\ estimators are consistent for \EATE\ and converge at the following rates:
		\begin{align}
		\eHT - \eEATE &= \bigOp[\big]{n^{-0.5}\davg^{0.5} + n^{-0.5}\mxext^{0.5} + n^{-1}\mxint},
		\\
		\eHA - \eEATE &= \bigOp[\big]{n^{-0.5}\davg^{0.5} + n^{-0.5}\mxext^{0.5} + n^{-1}\mxint}.
		\end{align}
	\end{refprop}

	\begin{proof}
		Decompose the errors of the estimators:
		\begin{align}
		\eHT - \eEATE &= (\eHT - \eADSE) + (\eADSE - \eEATE),
		\\
		\eHA - \eEATE &= (\eHA - \eADSE) + (\eADSE - \eEATE).
		\end{align}
		Proposition \ref{prop:arbitrary-rates-adse} and Lemma \ref{lem:arbitrary-adse-eate-diff} give the rates. Assumptions \ref{ass:restricted-interference}, \ref{ass:design-mixing} and \ref{ass:design-separation} give consistency.
	\end{proof}

	\section{Proof of Proposition \ref{prop:arbitrary-rates-adse}}

	\begin{applemma} \label{lem:davydov-lemma-7}
		Let $X$ and $Y$ be two arbitrary random variables defined on the same probability space such that $\E[\big]{\abs{X}^a} \leq c^a$ and $\E[\big]{\abs{Y}^b} \leq c^b$ for some constants $a$, $b$ and $c$ where $a^{-1} + b^{-1} \leq 1$, we then have:
		\begin{equation}
		\abs{\Cov{X, Y}} \leq 10 \bracket[\big]{\alpha\paren[\big]{X, Y}}^{1 - 1/a - 1/b} \E[\big]{\abs{X}^a}^{1/a} \E[\big]{\abs{Y}^b}^{1/b},
		\end{equation}
		where $\alpha\paren{X, Y}$ is the alpha-mixing coefficient for $X$ and $Y$ as defined in the main text.
	\end{applemma}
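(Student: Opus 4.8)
The plan is to establish this as a version of the classical Davydov covariance inequality, via a truncation argument built on top of the bounded case. Two degenerate situations can be dispatched immediately: if $\alpha(X,Y) = 0$ then $X$ and $Y$ are independent and $\Cov{X,Y} = 0$; and if $1/a + 1/b = 1$, so the exponent $1 - 1/a - 1/b$ vanishes, the bound reduces to $\abs{\Cov{X,Y}} \le \E{\abs{XY}} + \E{\abs{X}}\E{\abs{Y}} \le 2\,\E{\abs{X}^a}^{1/a}\E{\abs{Y}^b}^{1/b}$ by Hölder's inequality. So assume $\alpha := \alpha(X,Y) \in (0, 1/4]$ and $1/a + 1/b < 1$, and abbreviate $\norm{X}_a = \E{\abs{X}^a}^{1/a}$ and $\norm{Y}_b = \E{\abs{Y}^b}^{1/b}$.

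The first step is the bounded case: if $U$ is $\sigma(X)$-measurable, $V$ is $\sigma(Y)$-measurable, and both are essentially bounded, then $\abs{\Cov{U,V}} \le 4\alpha \norm{U}_\infty \norm{V}_\infty$. I would prove this with the layer-cake representation $W - \E{W} = \int (\indicator{W \ge t} - \prob{W \ge t})\,dt$, valid for a bounded variable $W$ with the integral taken over $[-\norm{W}_\infty, \norm{W}_\infty]$. Applying this to $U$ and to $V$ and using bilinearity of the covariance gives $\Cov{U,V} = \iint \Cov{\indicator{U \ge s}, \indicator{V \ge t}}\,ds\,dt$; each integrand equals $\prob{U \ge s,\, V \ge t} - \prob{U \ge s}\prob{V \ge t}$ and is therefore at most $\alpha$ in absolute value by the definition of the alpha-mixing coefficient, since $\braces{U \ge s} \in \sigma(X)$ and $\braces{V \ge t} \in \sigma(Y)$. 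Integrating over a region of area $4\norm{U}_\infty\norm{V}_\infty$ yields the claim.

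The second step is truncation. For thresholds $T, S > 0$ put $X_1 = X\indicator{\abs{X} \le T}$, $X_2 = X - X_1$, and define $Y_1, Y_2$ analogously with $S$. Expanding $\Cov{X,Y}$ into four terms by bilinearity, the bounded--bounded term obeys $\abs{\Cov{X_1, Y_1}} \le 4\alpha T S$ by the first step, since $\alpha(X_1, Y_1) \le \alpha$. Each of the other three terms involves a truncated tail and is controlled using $\abs{\Cov{A, B}} \le \E{\abs{AB}} + \E{\abs{A}}\E{\abs{B}}$, the elementary tail estimate $\abs{X}\indicator{\abs{X} > T} \le \abs{X}^a T^{1-a}$ (so $\E{\abs{X_2}} \le T^{1-a}\norm{X}_a^a$, and likewise for $Y_2$), and Hölder's inequality; the strict inequality $1/a + 1/b < 1$ is exactly what leaves room to pick admissible conjugate exponents when estimating $\E{\abs{X_2 Y_2}}$ favorably in both $T$ and $S$. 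Taking $T$ proportional to $\norm{X}_a \alpha^{-1/a}$ and $S$ proportional to $\norm{Y}_b \alpha^{-1/b}$ balances the four contributions and produces a bound of the form (constant) $\times \alpha^{1 - 1/a - 1/b}\norm{X}_a\norm{Y}_b$.

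The main obstacle is purely quantitative: pinning the absolute constant to $10$ requires careful bookkeeping of each of the four terms under the optimal choice of $T$ and $S$, since a careless split will overshoot. Minor extra care is needed for the cases where $a$ or $b$ is infinite, in which only the non-bounded variable is truncated, and for $\alpha$ near its maximum $1/4$, which is handled by monotonicity of the bound in $\alpha$. Since this is a classical result, an alternative to the argument above is simply to cite it from the literature on mixing processes; I would include the self-contained proof only if a reference with this exact constant is not readily available.
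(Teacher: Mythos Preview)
Your proposal is correct and in fact more detailed than the paper's own treatment: the paper simply cites the result as (a trivial reformulation of) Lemma~7 in Davydov (1970), noting only that the exponent condition there is misstated as $a^{-1}+b^{-1}=1$. Your final remark already anticipates this option; the self-contained truncation argument you sketch is precisely the standard proof of Davydov's inequality, so including it is a bonus rather than a deviation. The one practical caveat is the constant: securing exactly $10$ does require the careful bookkeeping you flag, and different presentations in the literature land on $8$, $10$, or $12$, so if you keep the full proof be prepared to track the four terms explicitly (or simply cite Davydov as the paper does).
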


	\begin{proof}
		The lemma is, apart from trivial changes, equivalent to Lemma 7 in \cite{Davydov1970}. Note that the condition on the exponents is erroneously written as $a^{-1} + b^{-1} = 1$ there.
	\end{proof}

	\begin{applemma} \label{lem:arbitrary-hmu-rates}
		Under arbitrary experimental designs, $\hmu - \cmu = \bigOp[\big]{n^{-0.5}\davg^{0.5} + n^{-0.5}\mxext^{0.5}}$.
	\end{applemma}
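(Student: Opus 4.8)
The plan is to follow the template of Lemmas~\ref{lem:bernoulli-hmu-rates}, \ref{lem:complete-hmu-rates} and~\ref{lem:paired-hmu-rates}: bound $\Var{\hmu}$ by splitting the covariance double-sum according to whether a pair of units is interference dependent, then convert the resulting $L_2$ bound into a rate in probability. First I would expand $\Var{\hmu}$ as $n^{-2}\sum_{i}\sum_{j}\Cov{\indicator{Z_i = z}Y_i, \indicator{Z_j = z}Y_j}$ divided termwise by $\prob{Z_i = z}\prob{Z_j = z}$, and bound the latter below by $k^{-2}$ using Assumption~\ref{ass:probabilistic-assignment}, so that $\Var{\hmu} \leq k^{2}n^{-2}\sum_{i}\sum_{j}\Cov{\indicator{Z_i = z}Y_i, \indicator{Z_j = z}Y_j}$. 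Splitting into pairs with $\di_{ij} = 1$ and pairs with $\di_{ij} = 0$, the former contribute at most $k^{4}n^{-1}\davg$ by Lemma~\ref{lem:bounded-covariance} (valid since $q \geq 2$) together with Definition~\ref{def:interference-dependence}.

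The one new ingredient relative to the Bernoulli case is the pairs with $\di_{ij} = 0$. Under an arbitrary design these need not have vanishing covariance: $\di_{ij} = 0$ makes $\Zint_i$ and $\Zint_j$ disjoint, but the design may still make them dependent. I would control this with Davydov's covariance inequality (Lemma~\ref{lem:davydov-lemma-7}). Since the $i$th component of $\Zint_i$ is $Z_i$ and $Y_i = y_i(\Zint_i)$, the variable $\indicator{Z_i = z}Y_i$ is $\sigma\paren{\Zint_i}$-measurable, hence $\alpha\paren{\indicator{Z_i = z}Y_i, \indicator{Z_j = z}Y_j} \leq \alpha\paren{\Zint_i, \Zint_j}$. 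Applying Lemma~\ref{lem:davydov-lemma-7} with $a = b = q$ (the maximal $q$ of Definition~\ref{def:mixing-coefficients}) and $c = k$ --- legitimate because $\E[\big]{\abs{\indicator{Z_i = z}Y_i}^{q}} \leq \E[\big]{\abs{Y_i}^{q}} \leq k^{q}$ by Assumption~\ref{ass:bounded-moments} and $q^{-1} + q^{-1} \leq 1$ --- gives $\Cov{\indicator{Z_i = z}Y_i, \indicator{Z_j = z}Y_j} \leq 10 k^{2}\bracket[\big]{\alpha\paren{\Zint_i, \Zint_j}}^{(q-2)/q}$. Summing over the $\di_{ij} = 0$ pairs and recognizing Definition~\ref{def:mixing-coefficients} bounds their contribution by $10 k^{4}n^{-1}\mxext$. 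The boundary case $q = 2$ is absorbed by the convention $0^{0} = 0$: a pair with $\alpha\paren{\Zint_i, \Zint_j} = 0$ has zero covariance, and for the remaining pairs the exponent is $0$ so the covariance is bounded by the constant $10k^{2}$, exactly the count recorded by $\mxext$. Combining the two parts gives $\Var{\hmu} \leq k^{4}n^{-1}\davg + 10 k^{4}n^{-1}\mxext$.

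To conclude, Lemma~\ref{lem:hmu-cmu-bias} gives $\E{\hmu} = \cmu$, so $\sqrt{\E[\big]{(\hmu - \cmu)^{2}}} = \sqrt{\Var{\hmu}} = \bigO{\bracket[\big]{n^{-1}\davg + n^{-1}\mxext}^{0.5}}$, and concavity of the square root bounds this by $n^{-0.5}\davg^{0.5} + n^{-0.5}\mxext^{0.5}$; Lemma~\ref{lem:l2-to-probability} then yields the claimed rate in probability. The only genuinely new step is the covariance bound for the non-interference-dependent pairs, and even there the analytic content is delegated to Davydov's inequality, so the real obstacle is limited to the measurability bookkeeping and the $q = 2$ edge case --- the rest is routine.
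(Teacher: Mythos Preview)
Your proposal is correct and essentially identical to the paper's proof: the same decomposition into $\di_{ij}=1$ and $\di_{ij}=0$ pairs, the same use of Lemma~\ref{lem:bounded-covariance} for the former and Davydov's inequality (Lemma~\ref{lem:davydov-lemma-7}) with $a=b=q$ for the latter via $\sigma\paren{\Zint_i}$-measurability, followed by Lemmas~\ref{lem:hmu-cmu-bias} and~\ref{lem:l2-to-probability}. Your explicit handling of the $q=2$ edge case is slightly more careful than the paper, which leaves that to the $0^0=0$ convention in Definition~\ref{def:mixing-coefficients}.
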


	\begin{proof}
		Assumption \ref{ass:probabilistic-assignment} gives:
		\begin{align}
		\Var{\hmu} &= \Var[\bigg]{\frac{1}{n} \sum_{i=1}^{n} \frac{\indicator{Z_i = z}Y_i}{\prob{Z_i = z}}} \leq \frac{k^2}{n^2}\sum_{i=1}^{n} \sum_{j=1}^{n} \Cov[\big]{\indicator{Z_i = z}Y_i, \indicator{Z_j = z}Y_j}
		\\
		&\leq \frac{k^2}{n^2}\sum_{i=1}^{n} \sum_{j=1}^{n} \bracket[\big]{\di_{ij} + (1-\di_{ij})}\Cov[\big]{\indicator{Z_i = z}Y_i, \indicator{Z_j = z}Y_j}
		\\
		&\leq k^4n^{-1}\davg + \frac{k^2}{n^2}\sum_{i=1}^{n} \sum_{j=1}^{n} (1-\di_{ij})\Cov[\big]{\indicator{Z_i = z}Y_i, \indicator{Z_j = z}Y_j},
		\end{align}
		where the last inequality follows from Definition \ref{def:interference-dependence} and Lemma \ref{lem:bounded-covariance}. Consider the terms in the double sum. We have $I_{ii}=1$ by definition, so the sub-sigma-algebra generated by $\Zint_i$ is finer than the algebra generated by $\indicator{Z_i = z}Y_i=\indicator{Z_i = z}y_i\paren[\big]{\Zint_i}$, from which it follows that $\alpha\paren[\big]{\indicator{Z_i = z}Y_i, \indicator{Z_j = z}Y_j}\leq \alpha\paren[\big]{\Zint_i, \Zint_j}$. Lemma \ref{lem:davydov-lemma-7} then gives:
		\begin{align}
		&\Cov[\big]{\indicator{Z_i = z}Y_i, \indicator{Z_j = z}Y_j}
		\\
		&\qquad\qquad \leq 10 \bracket[\big]{\alpha\paren[\big]{\Zint_i, \Zint_j}}^{\frac{q - 2}{q}} \E[\big]{\indicator{Z_i = z}\abs{Y_i}^q}^{1/q} \E[\big]{\indicator{Z_j = z}\abs{Y_j}^q}^{1/q}
		\\
		&\qquad\qquad \leq 10 \bracket[\big]{\alpha\paren[\big]{\Zint_i, \Zint_j}}^{\frac{q - 2}{q}} \E[\big]{\abs{Y_i}^q}^{1/q} \E[\big]{\abs{Y_j}^q}^{1/q}
		\\
		&\qquad\qquad \leq 10k^2 \bracket[\big]{\alpha\paren[\big]{\Zint_i, \Zint_j}}^{\frac{q - 2}{q}}, \tag*{Assumption \ref{ass:bounded-moments}}
		\end{align}
		where $q$ is the maximum value satisfying Assumption \ref{ass:bounded-moments}. Definition \ref{def:mixing-coefficients} yields:
		\begin{align}
		\Var{\hmu} & \leq k^4n^{-1}\davg + \frac{10k^4}{n^2}\sum_{i=1}^{n} \sum_{j\neq i} (1-\di_{ij}) \bracket[\big]{\alpha\paren[\big]{\Zint_i, \Zint_j}}^{\frac{q - 2}{q}} = k^4n^{-1}\davg + 10k^4n^{-1}\mxext.
		\end{align}

		Lemma \ref{lem:hmu-cmu-bias} and concavity of the square root provide the rate of convergence in the $L_2$-norm:
		\begin{equation}
			\sqrt{\E[\Big]{\paren[\big]{\hmu - \cmu}^2}} = \sqrt{\Var{\hmu}} = \bigO[\big]{n^{-0.5}\davg^{0.5} + n^{-0.5}\mxext^{0.5}},
		\end{equation}
		so Lemma \ref{lem:l2-to-probability} gives the rate of convergence in probability.
	\end{proof}

	\begin{applemma} \label{lem:arbitrary-hmu-hn-rates}
		Under arbitrary experimental designs satisfying Assumptions \ref{ass:restricted-interference} and \ref{ass:design-mixing}:
		\begin{equation}
		\frac{n}{\hn}\hmu - \cmu = \bigOp[\big]{n^{-0.5}\davg^{0.5} + n^{-0.5}\mxext^{0.5}}.
		\end{equation}
	\end{applemma}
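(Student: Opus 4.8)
The plan is to mirror the argument used for the Bernoulli design in Lemma~\ref{lem:bernoulli-hmu-hn-rates}, replacing the independence of the treatment indicators with a mixing bound. First I would control the fluctuations of $\hn / n$ about its mean. By Lemma~\ref{lem:hn-bias} we have $\E{\hn} = n$, so it suffices to bound $\Var{\hn / n} = n^{-2}\sum_{i}\sum_{j}\Cov[\big]{\indicator{Z_i = z}/\prob{Z_i = z}, \indicator{Z_j = z}/\prob{Z_j = z}}$. I would split the double sum according to whether $\di_{ij} = 1$ or $\di_{ij} = 0$. For the terms with $\di_{ij} = 1$, Assumption~\ref{ass:probabilistic-assignment} gives $\Var[\big]{\indicator{Z_i = z}/\prob{Z_i = z}} = \paren{1 - \prob{Z_i = z}}/\prob{Z_i = z} \leq k$, so Cauchy--Schwarz bounds each covariance by $k$, and these terms contribute $\bigO{n^{-1}\davg}$. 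For the terms with $\di_{ij} = 0$, I would use that the convention $I_{ii} = 1$ makes $Z_i$ a coordinate of $\Zint_i$, so $\braces{Z_i = z}\in\sigma\paren{\Zint_i}$ and $\braces{Z_j = z}\in\sigma\paren{\Zint_j}$; hence directly from the definition of the alpha-mixing coefficient, $\abs[\big]{\Cov[\big]{\indicator{Z_i = z}, \indicator{Z_j = z}}} \leq \alpha\paren[\big]{\Zint_i, \Zint_j}$, with no appeal to Lemma~\ref{lem:davydov-lemma-7} since the indicators are bounded. Because $\alpha\paren[\big]{\Zint_i, \Zint_j} \leq 1/4 \leq 1$ and $(q - 2)/q \leq 1$ (using the $0^0 = 0$ convention when $q = 2$), we have $\alpha\paren[\big]{\Zint_i, \Zint_j} \leq \bracket[\big]{\alpha\paren[\big]{\Zint_i, \Zint_j}}^{(q-2)/q}$, so after dividing by the marginal probabilities and invoking Definition~\ref{def:mixing-coefficients} these terms contribute $\bigO{n^{-1}\mxext}$. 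Thus $\Var{\hn / n} = \bigO{n^{-1}\davg + n^{-1}\mxext}$, and concavity of the square root together with Lemma~\ref{lem:l2-to-probability} gives $\hn / n - 1 = \bigOp[\big]{n^{-0.5}\davg^{0.5} + n^{-0.5}\mxext^{0.5}}$, which is $\littleOp{1}$ under Assumptions~\ref{ass:restricted-interference} and~\ref{ass:design-mixing}.

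Next I would invoke Lemma~\ref{lem:arbitrary-hmu-rates}, which already yields $\hmu - \cmu = \bigOp[\big]{n^{-0.5}\davg^{0.5} + n^{-0.5}\mxext^{0.5}}$, likewise $\littleOp{1}$ under the two assumptions. With both $\hmu - \cmu = \littleOp{1}$ and $\hn / n - 1 = \littleOp{1}$, Lemma~\ref{lem:hmu-hn-rates} applies and gives
\begin{equation}
\frac{n}{\hn}\hmu - \cmu = \bigOp[\big]{\paren{\hmu - \cmu} + \paren{\hn / n - 1}} = \bigOp[\big]{n^{-0.5}\davg^{0.5} + n^{-0.5}\mxext^{0.5} + n^{-0.5}}.
\end{equation}
Since every unit interferes with itself, $\di_{ii} = 1$ for all $i$, hence $\davg \geq 1$ and $n^{-0.5} \leq n^{-0.5}\davg^{0.5}$, so the trailing term is absorbed and the claimed rate follows.

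I expect the only nontrivial step to be the treatment of the $\di_{ij} = 0$ terms in $\Var{\hn / n}$: one must notice that the treatment indicators are measurable with respect to the relevant $\sigma\paren{\Zint_i}$ (which relies on the convention $I_{ii} = 1$) and that boundedness of indicators lets one bypass the moment-based covariance inequality of Lemma~\ref{lem:davydov-lemma-7}, at the cost of passing from $\alpha$ to $\alpha^{(q-2)/q}$ so the bound is expressed through the already-defined $\mxext$ rather than a separate quantity. Everything else is a routine repetition of the Bernoulli argument.
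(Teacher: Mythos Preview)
Your proposal is correct and follows essentially the same route as the paper's proof: bound $\Var{\hn/n}$ by splitting on $\di_{ij}$, use $\sigma(Z_i)\subset\sigma(\Zint_i)$ (from $I_{ii}=1$) together with $\alpha\leq\alpha^{(q-2)/q}$ to get the $\mxext$ term, then feed the resulting rate and Lemma~\ref{lem:arbitrary-hmu-rates} into Lemma~\ref{lem:hmu-hn-rates}. The only cosmetic difference is that the paper bounds the $\di_{ij}=1$ covariances via $\alpha(Z_i,Z_j)\leq 1/4$ rather than Cauchy--Schwarz, and your trailing $n^{-0.5}$ term is already absorbed since both $\hmu-\cmu$ and $\hn/n-1$ carry the $n^{-0.5}\davg^{0.5}$ rate.
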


	\begin{proof}
		First, consider the variance of $\hn / n$. Assumption \ref{ass:probabilistic-assignment} gives:
		\begin{align}
		\Var{\hn/n} &= \Var[\bigg]{\frac{1}{n} \sum_{i=1}^{n} \frac{\indicator{Z_i = z}}{\prob{Z_i = z}}} \leq \frac{k^2}{n^2}\sum_{i=1}^{n} \sum_{j=1}^{n} \Cov[\big]{\indicator{Z_i = z}, \indicator{Z_j = z}}
		\\
		&= \frac{k^2}{n^2}\sum_{i=1}^{n} \sum_{j=1}^{n} \bracket[\Big]{\prob{Z_i = z, Z_j = z} - \prob{Z_i = z}\prob{Z_j = z}}
		\\
		&\leq \frac{k^2}{n^2}\sum_{i=1}^{n} \sum_{j\neq i} \alpha\paren[\big]{Z_i, Z_j} \leq \frac{k^2\davg}{n} + \frac{k^2}{n^2}\sum_{i=1}^{n} \sum_{j=1}^{n} (1- \di_{ij})\alpha\paren[\big]{Z_i, Z_j},
		\end{align}
		where the second to last inequality follows from the definition of the alpha-mixing coefficient. The last inequality follows from $0 \leq \alpha\paren[\big]{Z_i, Z_j} \leq 0.25$.

		Note that $I_{ii}=1$ by definition. It then follows that the sub-sigma-algebra generated by $\Zint_i$ is finer than the algebra generated by $Z_i$, and we have $\alpha\paren[\big]{Z_i, Z_j} \leq \alpha\paren[\big]{\Zint_i, \Zint_j}$:
		\begin{align}
		\Var{\hn / n} &\leq \frac{k^2\davg}{n} + \frac{k^2}{n^2}\sum_{i=1}^{n} \sum_{j=1}^{n} (1- \di_{ij})\alpha\paren[\big]{\Zint_i, \Zint_j}
		\\
		&\leq \frac{k^2\davg}{n} + \frac{k^2}{n^2}\sum_{i=1}^{n} \sum_{j=1}^{n} (1- \di_{ij})\bracket[\big]{\alpha\paren[\big]{\Zint_i, \Zint_j}}^{\frac{q - 2}{q}} = k^2n^{-1}\davg + k^2n^{-1}\mxext,
		\end{align}
		where $q$ is the maximum value satisfying Assumption \ref{ass:bounded-moments}. The second to last inequality follows from $q\geq 2$ and $0 \leq \alpha\paren[\big]{\Zint_i, \Zint_j} \leq 0.25$.

		Lemma \ref{lem:hn-bias} and concavity of the square root give the rate of convergence in the $L_2$-norm of $\hn / n$ with respect to a sequence of ones:
		\begin{equation}
		\sqrt{\E[\Big]{\paren[\big]{\hn / n - 1}^2}} = \sqrt{\E[\Big]{\paren[\big]{\hn / n - \E{\hn / n}}^2}} = \sqrt{\Var{\hn / n}} = \bigO[\big]{n^{-0.5}\davg^{0.5} + n^{-0.5}\mxext^{0.5}}.
		\end{equation}
		Lemma \ref{lem:l2-to-probability} then gives $\hn / n - 1 = \bigOp[\big]{n^{-0.5}\davg^{0.5} + n^{-0.5}\mxext^{0.5}}$. Thus, under Assumptions \ref{ass:restricted-interference} and \ref{ass:design-mixing}, we have $\hn / n - 1 = \littleOp{1}$. We can apply Lemma \ref{lem:hmu-hn-rates} since $\hmu - \cmu = \littleOp{1}$ is implied by Lemma \ref{lem:arbitrary-hmu-rates} and Assumptions \ref{ass:restricted-interference} and \ref{ass:design-mixing}, which completes the proof.
	\end{proof}

	\begin{refprop}{\ref{prop:arbitrary-rates-adse}}
		Under restricted interference and design mixing (Assumptions \ref{ass:restricted-interference} and \ref{ass:design-mixing}), the \HT\ and \HA\ estimators are consistent for \ADSE\ and converge at the following rates:
		\begin{align}
		\eHT - \eADSE = \bigOp[\big]{n^{-0.5}\davg^{0.5} + n^{-0.5}\mxext^{0.5}},
		\\
		\eHA - \eADSE = \bigOp[\big]{n^{-0.5}\davg^{0.5} + n^{-0.5}\mxext^{0.5}}.
		\end{align}
	\end{refprop}

	\begin{proof}
		Using Corollary \ref{coro:est-mu}, decompose the errors of the estimators:
		\begin{align}
		\eHT - \eADSE &= (\hat{\mu}_1 - \breve{\mu}_1) - (\hat{\mu}_0 - \breve{\mu}_0),
		\\
		\eHA - \eADSE &= \paren[\bigg]{\frac{n}{\hat{n}_1}\hat{\mu}_1 - \breve{\mu}_1} - \paren[\bigg]{\frac{n}{\hat{n}_0}\hat{\mu}_0 - \breve{\mu}_0}.
		\end{align}
		Lemmas \ref{lem:arbitrary-hmu-rates} and \ref{lem:arbitrary-hmu-hn-rates} give the rates. Assumptions \ref{ass:restricted-interference} and \ref{ass:design-mixing} give consistency.
	\end{proof}

\section{Proof of Proposition~\ref{prop:var-est-bernoulli-limit}}

\begin{applemma}\label{lem:cov-bound-varsum}
	For two random variables, $X$ and $Y$, on the same probability space:
	\begin{equation}
		\Cov{X, Y} \leq \frac{\Var{X} + \Var{Y}}{2}.
	\end{equation}
\end{applemma}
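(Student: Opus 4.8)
The plan is to use the non-negativity of the variance of the difference $X - Y$. First I would note that $\Var{X - Y} \geq 0$ always holds (assuming the variances are finite, which is implicit in the statement). Then I would expand this variance using bilinearity of the covariance: $\Var{X - Y} = \Var{X} - 2\Cov{X, Y} + \Var{Y}$. Rearranging the resulting inequality $\Var{X} - 2\Cov{X, Y} + \Var{Y} \geq 0$ immediately gives $\Cov{X, Y} \leq \tfrac{1}{2}\paren{\Var{X} + \Var{Y}}$.

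Concretely, the proof is just these two lines:
\begin{equation}
	0 \leq \Var{X - Y} = \Var{X} - 2\Cov{X, Y} + \Var{Y},
\end{equation}
and solving for $\Cov{X, Y}$ yields the claim. (Equivalently, one could chain the Cauchy--Schwarz inequality $\Cov{X, Y} \leq \sqrt{\Var{X}\Var{Y}}$ with the AM--GM inequality $\sqrt{\Var{X}\Var{Y}} \leq \tfrac{1}{2}\paren{\Var{X} + \Var{Y}}$, but the variance-of-the-difference route is shorter and avoids invoking two separate classical inequalities.)

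There is essentially no obstacle here; the only point requiring a word of care is the finiteness of the second moments, which is guaranteed in the applications of this lemma since it will be applied to bounded or moment-controlled quantities via Assumption~\ref{ass:regularity-conditions}. If either variance were infinite the right-hand side would be $+\infty$ and the inequality would hold trivially, so no generality is lost.
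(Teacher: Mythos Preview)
Your proof is correct. Interestingly, the paper takes exactly the alternative route you mention parenthetically: it applies Cauchy--Schwarz to get $\Cov{X,Y}\leq\sqrt{\Var{X}\Var{Y}}$ and then Young's inequality for products (equivalently AM--GM) to bound the geometric mean by the arithmetic mean. Your primary argument via $\Var{X-Y}\geq 0$ is more self-contained, since it avoids invoking two named inequalities and gets the result in one line; the paper's route, on the other hand, makes the intermediate Cauchy--Schwarz bound explicit, which can be useful when one wants the sharper product bound elsewhere. Either way the content is the same trivial fact, and nothing is lost or gained mathematically.
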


\begin{proof}
Apply the Cauchy--Schwarz inequality followed by Young's inequality for products to get:
\begin{equation}
	\Cov{X, Y} \leq \sqrt{\Var{X}\Var{Y}} \leq \frac{\Var{X} + \Var{Y}}{2}. \tag*{\qedhere}
\end{equation}
\end{proof}

\begin{appcorollary}\label{coro:varsum}
	For two random variables, $X$ and $Y$, on the same probability space:
	\begin{equation}
		\Var{X + Y} \leq 2\Var{X} + 2\Var{Y}.
	\end{equation}
\end{appcorollary}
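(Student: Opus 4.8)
The plan is to obtain the bound directly from the variance-of-a-sum identity together with Lemma~\ref{lem:cov-bound-varsum}. If either $X$ or $Y$ fails to have a finite second moment, the right-hand side is infinite and the inequality is vacuous, so I would assume both variances are finite. First I would write the standard decomposition
\begin{equation}
	\Var{X + Y} = \Var{X} + \Var{Y} + 2\Cov{X, Y},
\end{equation}
which holds for any square-integrable $X$ and $Y$ on a common probability space.

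Next I would apply Lemma~\ref{lem:cov-bound-varsum} to the cross term, giving $2\Cov{X, Y} \leq \Var{X} + \Var{Y}$, and substitute this into the decomposition. Collecting the four resulting terms yields $\Var{X + Y} \leq 2\Var{X} + 2\Var{Y}$, which is the claim. There is no genuine obstacle here: the corollary is an immediate consequence of the preceding lemma, and the only point worth a brief remark is the reduction to the finite-variance case noted above.
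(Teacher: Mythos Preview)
Your proposal is correct and follows essentially the same approach as the paper: expand $\Var{X+Y}$ via the standard identity and bound the cross term using Lemma~\ref{lem:cov-bound-varsum}. The only addition is your remark on the infinite-variance case, which the paper omits.
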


\begin{proof}
Write:
\begin{equation}
	\Var{X + Y} = \Var{X} + \Var{Y} + 2 \Cov{X, Y},
\end{equation}
and apply Lemma~\ref{lem:cov-bound-varsum}.
\end{proof}

\begin{applemma}\label{lem:var-est-convergence}
	Under a Bernoulli design and Assumption~\ref{ass:regularity-conditions} with $q \geq 4$:
	\begin{equation}
		n\eVarBer - a_n = \bigOp[\big]{n^{-0.5}\davg^{0.5}},
	\end{equation}
	where:
	\begin{equation}
		a_n = \frac{1}{n} \sum_{i=1}^n \frac{\E{Y_i^2 \given Z_i = 1}}{p_i} + \frac{1}{n} \sum_{i=1}^n \frac{\E{Y_i^2 \given Z_i = 0}}{1 - p_i}.
	\end{equation}
\end{applemma}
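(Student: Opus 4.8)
The plan is to mimic the proof of Lemma~\ref{lem:bernoulli-hmu-rates}. Write $n \eVarBer = \frac{1}{n}\sum_{i=1}^n W_i$ with $W_i = \sum_{z \in \braces{0,1}} \indicator{Z_i = z} Y_i^2 / \prob{Z_i = z}^2$. Because $\E{\indicator{Z_i = z} Y_i^2} = \prob{Z_i = z}\,\E{Y_i^2 \given Z_i = z}$, we obtain $\E{W_i} = \sum_{z} \E{Y_i^2 \given Z_i = z}/\prob{Z_i = z}$ and hence $\E{n \eVarBer} = a_n$. So the estimator is exactly unbiased for $a_n$, and it suffices to bound $\Var{n \eVarBer}$ and then invoke Lemma~\ref{lem:l2-to-probability} with the non-random target $a_n$.

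I would expand $\Var{n \eVarBer} = n^{-2}\sum_{i=1}^n\sum_{j=1}^n \Cov{W_i, W_j}$ and split each summand using $\di_{ij} + (1 - \di_{ij})$, exactly as in Lemma~\ref{lem:bernoulli-hmu-rates}. For the $(1-\di_{ij})$ part, recall $Y_i = y_i\paren[\big]{\Zint_i}$ and note that, since $I_{ii} = 1$, the indicator $\indicator{Z_i = z}$ is also a function of $\Zint_i$; hence $W_i$ is a deterministic function of $\Zint_i$. Under the Bernoulli design $\di_{ij} = 0$ forces $\Zint_i$ and $\Zint_j$ to be built from disjoint, and therefore independent, coordinates of $\Z$, so $(1 - \di_{ij})\Cov{W_i, W_j} = 0$.

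For the $\di_{ij} = 1$ terms I would bound $\Cov{W_i, W_j}$ by a constant depending only on $k$. Assumption~\ref{ass:probabilistic-assignment} gives $W_i \leq k^2 Y_i^2$, and Assumption~\ref{ass:bounded-moments} with $q \geq 4$ gives $\E{Y_i^4} \leq k^4$ by Jensen's inequality, so $\E{W_i^2} \leq k^8$; Cauchy--Schwarz then yields $\Cov{W_i, W_j} \leq \bracket[\big]{\E{W_i^2}\E{W_j^2}}^{1/2} \leq k^8$. Summing gives $\Var{n \eVarBer} \leq k^8 n^{-2}\sum_{i,j}\di_{ij} = k^8 n^{-1}\davg$, i.e.\ an $L_2$ rate of $\bigO{n^{-0.5}\davg^{0.5}}$ about $a_n$, and Lemma~\ref{lem:l2-to-probability} converts this to the claimed $\bigOp{n^{-0.5}\davg^{0.5}}$.

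The argument is routine; the two points that deserve a sentence each are (i) that $W_i$, indicator included, is measurable with respect to the sigma-algebra generated by $\Zint_i$ --- which is exactly what makes disjointness under the Bernoulli design kill the cross-covariances --- and (ii) the use of the strengthened condition $q \geq 4$ rather than $q \geq 2$, which is needed so that the fourth moments of the outcomes, controlling $\Var{W_i}$, stay bounded.
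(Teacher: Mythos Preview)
Your proposal is correct and follows essentially the same route as the paper: show $\E{n\eVarBer}=a_n$, bound the variance by splitting covariances via $\di_{ij}+(1-\di_{ij})$, use Bernoulli independence of $\Zint_i,\Zint_j$ to kill the off-terms, bound the remaining terms with the fourth-moment condition, and finish with Lemma~\ref{lem:l2-to-probability}. The only cosmetic difference is that the paper first separates the two treatment arms (via Corollary~\ref{coro:varsum}) and applies Lemma~\ref{lem:bounded-covariance} with $a=2$, whereas you work directly with the combined $W_i$; your version is slightly tighter in the constant but otherwise identical.
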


\begin{proof}
Consider the expectation of the normalized estimator:
\begin{equation}
	\E[\Big]{n \eVarBer} = \frac{1}{n} \sum_{i=1}^n \frac{\E{Z_i Y_i^2}}{p_i^2} + \frac{1}{n} \sum_{i=1}^n \frac{\E{(1 - Z_i) Y_i^2}}{(1 - p_i)^2}.
\end{equation}
Assumption~\ref{ass:regularity-conditions} ensures that the expectations:
\begin{equation}
	\E{Z_i Y_i^2} = p_i \E{Y_i^2 \given Z_i = 1}
	\qquad\text{and}\qquad
	\E{(1 - Z_i) Y_i^2} = (1 - p_i) \E{Y_i^2 \given Z_i = 0}
\end{equation}
exist, so:
\begin{equation}
	\E[\Big]{n \eVarBer} = a_n.
\end{equation}
Next, write the estimator as:
\begin{equation}
	n\eVarBer = \frac{1}{n} \sum_{z = 0}^1 \sum_{i=1}^n \frac{\indicator{Z_i = z} Y_i^2}{\bracket{\prob{Z_i = z}}^2}.
\end{equation}
Using Corollary~\ref{coro:varsum}:
\begin{equation}
	\Var[\Big]{n\eVarBer} \leq \frac{2}{n^2} \sum_{z = 0}^1 \Var[\Bigg]{\sum_{i=1}^n \frac{\indicator{Z_i = z} Y_i^2}{\bracket{\prob{Z_i = z}}^2}}.
\end{equation}
Given Assumption~\ref{ass:probabilistic-assignment}, for $z \in \braces{0, 1}$:
\begin{equation}
	\Var[\Bigg]{\sum_{i=1}^n \frac{\indicator{Z_i = z} Y_i^2}{\bracket{\prob{Z_i = z}}^2}} \leq k^4 \sum_{i=1}^n \sum_{j=1}^n \Cov[\big]{\indicator{Z_i = z} Y_i^2, \indicator{Z_j = z} Y_j^2}.
\end{equation}
By the same argument as in the proof of Lemma~\ref{lem:bernoulli-hmu-rates}:
\begin{equation}
(1 - \di_{ij})\Cov[\big]{\indicator{Z_i = z} Y_i^2, \indicator{Z_j = z} Y_j^2} = 0,
\end{equation}
so:
\begin{equation}
k^4 \sum_{i=1}^n \sum_{j=1}^n \Cov[\big]{\indicator{Z_i = z} Y_i^2, \indicator{Z_j = z} Y_j^2}
= k^4 \sum_{i=1}^n \sum_{j=1}^n \di_{ij} \Cov[\big]{\indicator{Z_i = z} Y_i^2, \indicator{Z_j = z} Y_j^2}.
\end{equation}
Lemma~\ref{lem:bounded-covariance} can be applied with $a = 2$ because $2a = 4 \leq q$, which gives:
\begin{equation}
k^4 \sum_{i=1}^n \sum_{j=1}^n \di_{ij} \Cov[\big]{\indicator{Z_i = z} Y_i^2, \indicator{Z_j = z} Y_j^2} \leq k^8 n \davg,
\end{equation}
so:
\begin{equation}
	\frac{2}{n^2} \sum_{z = 0}^1 \Var[\Bigg]{\sum_{i=1}^n \frac{\indicator{Z_i = z} Y_i^2}{\bracket{\prob{Z_i = z}}^2}}
	\leq \frac{4 k^8 \davg}{n}.
\end{equation}
In conclusion:
\begin{equation}
	\sqrt{\E[\Big]{\paren[\Big]{n\eVarBer - a_n}^2}} = \sqrt{\Var[\Big]{n\eVarBer}} = \bigO[\big]{n^{-0.5}\davg^{0.5}},
\end{equation}
which gives the rate of convergence in probability using Lemma \ref{lem:l2-to-probability}.
\end{proof}

\begin{appcorollary}\label{coro:mod-var-est-convergence}
	Let $\hat{V} = b_n \eVarBer$ for some positive sequence $(b_n)$.
	Under a Bernoulli design given that Assumptions~\ref{ass:regularity-conditions} and~\ref{ass:restricted-interference} hold with $q \geq 4$:
	\begin{equation}
		n b_n^{-1} \hat{V} - a_n = \littleOp{1},
	\end{equation}
	where:
	\begin{equation}
		a_n = \frac{1}{n} \sum_{i=1}^n \frac{\E{Y_i^2 \given Z_i = 1}}{p_i} + \frac{1}{n} \sum_{i=1}^n \frac{\E{Y_i^2 \given Z_i = 0}}{1 - p_i}.
	\end{equation}
\end{appcorollary}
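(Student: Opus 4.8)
The plan is to notice that the inflation factor cancels against the normalization: since $(b_n)$ is a deterministic positive sequence and $\hat{V} = b_n \eVarBer$, the identity $n b_n^{-1} \hat{V} = n \eVarBer$ holds exactly, so $n b_n^{-1} \hat{V} - a_n = n \eVarBer - a_n$. The claim therefore reduces to showing $n \eVarBer - a_n = \littleOp{1}$, which is nearly immediate from Lemma~\ref{lem:var-est-convergence}. First I would invoke that lemma, whose hypotheses are precisely a Bernoulli design and Assumption~\ref{ass:regularity-conditions} with $q \geq 4$, to get $n \eVarBer - a_n = \bigOp[\big]{n^{-0.5}\davg^{0.5}}$. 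Then I would apply Assumption~\ref{ass:restricted-interference}, which states $\davg = \littleO{n}$, so that $n^{-0.5}\davg^{0.5} = \littleO{1}$ and hence the $\bigOpop$ term is $\littleOp{1}$.

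There is no real obstacle here; the result is a bookkeeping corollary of Lemma~\ref{lem:var-est-convergence}. The only points worth remarking on are that $b_n$ being deterministic and positive makes $n b_n^{-1} \hat{V} = n \eVarBer$ an exact equality rather than an asymptotic one, and that the rate in Lemma~\ref{lem:var-est-convergence} is uniform over all admissible $(b_n)$ because it does not involve $b_n$ at all. Note also that this does \emph{not} require $a_n$ to converge: only the difference $n b_n^{-1}\hat{V} - a_n$ is controlled, which is exactly the form in which the corollary will be used when combining it with separate analyses of $a_n$ and of $n\davg^{-1}\Var{\eHT}$ in the conservativeness propositions.
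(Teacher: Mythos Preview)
Your proposal is correct and matches the paper's proof essentially verbatim: the paper also observes $n b_n^{-1}\hat V = n\eVarBer$ by definition, then invokes Lemma~\ref{lem:var-est-convergence} together with Assumption~\ref{ass:restricted-interference}.
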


\begin{proof}
By definition $n b_n^{-1} \hat{V} = n \eVarBer$.
Apply Lemma~\ref{lem:var-est-convergence} and Assumption~\ref{ass:restricted-interference} to complete the proof.
\end{proof}

\begin{appcorollary}\label{coro:mod-var-est-fast-convergence}
	Let $\hat{V} = b_n \eVarBer$ for some positive sequence $(b_n)$.
	Under a Bernoulli design given that Assumption~\ref{ass:regularity-conditions} holds with $q \geq 4$ and $b_n = \littleO[\big]{n^{0.5} \davg^{0.5}}$:
	\begin{equation}
		n b_n^{-1} \hat{V} - a_n = \littleOp[\big]{b_n^{-1} \davg},
	\end{equation}
	where:
	\begin{equation}
		a_n = \frac{1}{n} \sum_{i=1}^n \frac{\E{Y_i^2 \given Z_i = 1}}{p_i} + \frac{1}{n} \sum_{i=1}^n \frac{\E{Y_i^2 \given Z_i = 0}}{1 - p_i}.
	\end{equation}
\end{appcorollary}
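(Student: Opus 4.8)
The plan is to reduce this immediately to Lemma~\ref{lem:var-est-convergence}. First I would observe that $\hat{V} = b_n \eVarBer$ gives the exact identity $n b_n^{-1} \hat{V} = n \eVarBer$, so the quantity of interest is literally $n\eVarBer - a_n$, for which Lemma~\ref{lem:var-est-convergence} already supplies the rate $\bigOp{n^{-0.5}\davg^{0.5}}$ under a Bernoulli design with $q \geq 4$. (That lemma in turn rests on Assumption~\ref{ass:probabilistic-assignment}, Corollary~\ref{coro:varsum}, Lemma~\ref{lem:bounded-covariance} applied with $a = 2$, and the fact that $\di_{ij} = 0$ annihilates the cross-covariances under the independence of the Bernoulli design, so nothing there needs to be redone.)

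It then remains only to upgrade the $\bigOp{n^{-0.5}\davg^{0.5}}$ bound to a $\littleOp{b_n^{-1}\davg}$ bound. I would invoke the elementary fact that if $W_n = \bigOp{x_n}$ and the deterministic ratio $x_n / y_n \to 0$, then $W_n = \littleOp{y_n}$: indeed $W_n / y_n = (W_n / x_n)(x_n / y_n)$, where the first factor is bounded in probability and the second is $o(1)$. Applying this with $x_n = n^{-0.5}\davg^{0.5}$ and $y_n = b_n^{-1}\davg$ gives $x_n / y_n = b_n / (n^{0.5}\davg^{0.5})$, which tends to zero precisely because the hypothesis $b_n = \littleO{n^{0.5}\davg^{0.5}}$ is in force. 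Combining the identity and this conversion finishes the proof.

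There is essentially no obstacle here; the only points to watch are the direction of the $\littleO$ condition on $b_n$ — it must be exactly $b_n = \littleO{n^{0.5}\davg^{0.5}}$ for the ratio above to vanish — and the observation that, unlike in Corollary~\ref{coro:mod-var-est-convergence}, no appeal to Assumption~\ref{ass:restricted-interference} is needed, since the sharper-than-$\littleOp{1}$ rate claimed here is purchased directly by the growth restriction on $b_n$.
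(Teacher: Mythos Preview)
Your proposal is correct and follows essentially the same approach as the paper: both note the identity $n b_n^{-1}\hat V = n\eVarBer$, invoke Lemma~\ref{lem:var-est-convergence} to obtain the $\bigOp{n^{-0.5}\davg^{0.5}}$ rate, and then rescale by $b_n/\davg$ using $b_n = \littleO{n^{0.5}\davg^{0.5}}$ to conclude $\littleOp{b_n^{-1}\davg}$. Your remark that Assumption~\ref{ass:restricted-interference} is not needed here is also consistent with the paper's proof.
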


\begin{proof}
By definition $n b_n^{-1} \hat{V} = n \eVarBer$.
Apply Lemma~\ref{lem:var-est-convergence} to get:
\begin{equation}
	n b_n^{-1} \hat{V} - a_n = \bigOp[\big]{n^{-0.5}\davg^{0.5}}.
\end{equation}
It follows that:
\begin{equation}
\frac{b_n}{\davg} \bracket[\big]{n b_n^{-1} \hat{V} - a_n} = \bigOp[\big]{n^{-0.5}\davg^{-0.5} b_n} = \littleOp{1},
\end{equation}
because $b_n = \littleO[\big]{n^{0.5} \davg^{0.5}}$.
\end{proof}

\begin{applemma}\label{lem:unit-term-variance}
	Under a Bernoulli design:
	\begin{equation}
	\Var[\Bigg]{\frac{Z_i Y_i}{p_i} - \frac{(1 - Z_i) Y_i}{1 - p_i}} = \frac{\E{Y_i^2 \given Z_i = 1}}{p_i} + \frac{\E{Y_i^2 \given Z_i = 0}}{1 - p_i} - \paren[\big]{\E{\tau_i(\Z_{-i})}}^2.
	\end{equation}
\end{applemma}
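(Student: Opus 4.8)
The plan is to set $W_i = Z_i Y_i / p_i - (1 - Z_i) Y_i / (1 - p_i)$, so that the quantity of interest is $\Var{W_i}$, and to evaluate it via $\Var{W_i} = \E{W_i^2} - \paren[\big]{\E{W_i}}^2$. Two structural features of the Bernoulli design do all the work: $Z_i$ is independent of $\Z_{-i}$, and, since $Z_i$ is binary and $Y_i = y_i(Z_i; \Z_{-i})$, we have the pointwise identities $Z_i Y_i = Z_i\, y_i(1; \Z_{-i})$ and $(1 - Z_i) Y_i = (1 - Z_i)\, y_i(0; \Z_{-i})$.

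First I would compute $\E{W_i}$. Using the identities above and the independence of $Z_i$ from $\Z_{-i}$, $\E{Z_i Y_i} = \E{Z_i}\,\E{y_i(1; \Z_{-i})} = p_i\, \E{y_i(1; \Z_{-i})}$, and likewise $\E{(1 - Z_i) Y_i} = (1 - p_i)\, \E{y_i(0; \Z_{-i})}$. Dividing by $p_i$ and $1 - p_i$ respectively (both positive by Assumption~\ref{ass:probabilistic-assignment}) and subtracting gives $\E{W_i} = \E{y_i(1; \Z_{-i}) - y_i(0; \Z_{-i})} = \E{\tau_i(\Z_{-i})}$ by the definition of the assignment-conditional unit-level effect used in Definition~\ref{def:acate}.

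Next I would compute $\E{W_i^2}$. The cross term in $W_i^2$ vanishes because $Z_i(1 - Z_i) = 0$, while $Z_i^2 = Z_i$ and $(1 - Z_i)^2 = 1 - Z_i$, so $W_i^2 = Z_i Y_i^2 / p_i^2 + (1 - Z_i) Y_i^2 / (1 - p_i)^2$. Taking expectations and applying the law of total expectation, $\E{Z_i Y_i^2} = p_i\, \E{Y_i^2 \given Z_i = 1}$ and $\E{(1 - Z_i) Y_i^2} = (1 - p_i)\, \E{Y_i^2 \given Z_i = 0}$, which yields $\E{W_i^2} = \E{Y_i^2 \given Z_i = 1} / p_i + \E{Y_i^2 \given Z_i = 0} / (1 - p_i)$. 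Subtracting $\paren[\big]{\E{\tau_i(\Z_{-i})}}^2$ produces exactly the claimed identity.

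There is no genuine obstacle in this argument; it is a direct computation. The only points that warrant a word are the appeal to Bernoulli independence when factoring $\E{Z_i\, y_i(1; \Z_{-i})}$, and the finiteness of the second moments appearing throughout, which is guaranteed by Assumption~\ref{ass:regularity-conditions} (the moment bound of Assumption~\ref{ass:bounded-moments}, with $q \geq 4$ already in force for Proposition~\ref{prop:var-est-bernoulli-limit}, though $q \geq 2$ suffices for the variance to be well-defined) together with the positivity of the assignment probabilities.
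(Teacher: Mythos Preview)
Your proof is correct and follows essentially the same approach as the paper: a direct computation exploiting $Z_i(1-Z_i)=0$ and the independence of $Z_i$ from $\Z_{-i}$ under the Bernoulli design. The only cosmetic difference is that the paper expands the variance as $\Var{A}+\Var{B}-2\Cov{A,B}$ and simplifies each piece, whereas you go straight through $\E{W_i^2}-(\E{W_i})^2$; your route is slightly more economical but lands in the same place.
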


\begin{proof}
Consider:
\begin{equation}
\Var[\Bigg]{\frac{Z_i Y_i}{p_i} - \frac{(1 - Z_i) Y_i}{1 - p_i}}
= \frac{\Var{Z_i Y_i}}{p_i^2}  + \frac{\Var{(1 - Z_i) Y_i}}{(1 - p_i)^2} - \frac{2 \Cov{Z_i Y_i, (1 - Z_i) Y_i}}{p_i (1 - p_i)}.
\end{equation}
Continue with each term separately:
\begin{align}
\Var{Z_i Y_i} &= \E{Z_i Y_i^2} - \paren[\big]{\E{Z_i Y_i}}^2
\\
				  &= p_i \E{Y_i^2 \given Z_i = 1} - p_i^2 \paren[\big]{\E{Y_i \given Z_i = 1}}^2,
				  \\[1em]
\Var{(1 - Z_i) Y_i} & = (1 - p_i) \E{Y_i^2 \given Z_i = 0} - (1 - p_i)^2 \paren[\big]{\E{Y_i \given Z_i = 0}}^2,
\\[1em]
\Cov{Z_i Y_i, (1 - Z_i) Y_i} &= \E{Z_i (1 - Z_i) Y_i^2} - \E{Z_i Y_i} \E{(1 - Z_i) Y_i}
\\
									  &= - p_i (1 - p_i) \E{Y_i \given Z_i = 1} \E{Y_i \given Z_i = 0},
\end{align}
where the last equality follows from that $Z_i (1 - Z_i) = 0$ with probability one.
Taken together:
\begin{align}
&\frac{\Var{Z_i Y_i}}{p_i^2}  + \frac{\Var{(1 - Z_i) Y_i}}{(1 - p_i)^2} - \frac{2 \Cov{Z_i Y_i, (1 - Z_i) Y_i}}{p_i (1 - p_i)}
\\[1em]
&\qquad \qquad = \frac{\E{Y_i^2 \given Z_i = 1}}{p_i} - \paren[\big]{\E{Y_i \given Z_i = 1}}^2
+ \frac{\E{Y_i^2 \given Z_i = 0}}{1 - p_i} - \paren[\big]{\E{Y_i \given Z_i = 0}}^2
\\
&\qquad \qquad \qquad \qquad + 2 \E{Y_i \given Z_i = 1} \E{Y_i \given Z_i = 0}
\\[1em]
&\qquad \qquad = \frac{\E{Y_i^2 \given Z_i = 1}}{p_i} + \frac{\E{Y_i^2 \given Z_i = 0}}{1 - p_i}
- \paren[\big]{\E{Y_i \given Z_i = 1} - \E{Y_i \given Z_i = 0}}^2.
\end{align}
Because of the Bernoulli design:
\begin{equation}
	\E{Y_i \given Z_i = z} = \E{y_i(z; \Z_{-i}) \given Z_i = z} = \E{y_i(z; \Z_{-i})},
\end{equation}
so:
\begin{equation}
	\paren[\big]{\E{Y_i \given Z_i = 1} - \E{Y_i \given Z_i = 0}}^2 = \paren[\big]{\E{\tau_i(\Z_{-i})}}^2. \tag*{\qedhere}
\end{equation}
\end{proof}

\begin{refprop}{\ref{prop:var-est-bernoulli-limit}}
	Under a Bernoulli design and Assumption~\ref{ass:regularity-conditions} with $q \geq 4$:
	\begin{equation}
		n\davg^{-1} \bracket[\Big]{\eVarBer - \Var{\eHT}}
		\parrow
		\frac{\eMSTE}{\davg} - B_1 - B_2,
	\end{equation}
	where:
	\begin{align}
		B_1 &= \frac{1}{n\davg} \sum_{i=1}^{n} \sum_{j\neq i} \paren[\Big]{\breve{\sote}_{ij}\breve{\sote}_{ji} + 2\breve{Y}_j \bracket[\big]{\sote_{ij}(1) - \sote_{ij}(0)}},
		\\
		B_2 &= \frac{1}{n\davg} \sum_{i=1}^{n} \sum_{j\neq i} \sum_{a = 0}^1 \sum_{b = 0}^1 \paren{-1}^{a + b} \Cov[\big]{Y_i , Y_j \given Z_i = a, Z_j = b}.
	\end{align}
\end{refprop}

\begin{proof}
Write the variance as:
\begin{equation}
	n\Var{\eHT} = \frac{1}{n} \sum_{i = 1}^n \Var[\big]{h_i^{-1}Y_i} + \frac{1}{n} \sum_{i = 1}^n \sum_{j\neq i} \Cov[\big]{h_i^{-1}Y_i, h_j^{-1}Y_j},
\end{equation}
where $h_i = Z_i + p_i - 1$, so that:
\begin{equation}
	\Var[\big]{h_i^{-1}Y_i} = \Var[\Bigg]{\frac{Z_i Y_i}{p_i} - \frac{(1 - Z_i) Y_i}{1 - p_i}}.
\end{equation}
By Lemma~\ref{lem:unit-term-variance}:
\begin{equation}
	\frac{1}{n} \sum_{i = 1}^n \Var[\big]{h_i^{-1}Y_i} = a_n - \eMSTE,
\end{equation}
where:
\begin{equation}
	a_n = \frac{1}{n} \sum_{i=1}^n \frac{\E{Y_i^2 \given Z_i = 1}}{p_i} + \frac{1}{n} \sum_{i=1}^n \frac{\E{Y_i^2 \given Z_i = 0}}{1 - p_i}
	\qquad\text{and}\qquad
	\eMSTE = \frac{1}{n} \sum_{i=1}^{n} \paren[\big]{\E{\tau_i(\Z_{-i})}}^2.
\end{equation}
Next, for $i \neq j$:
\begin{multline}
\Cov[\big]{h_i^{-1}Y_i, h_j^{-1}Y_j} = \Cov[\bigg]{\frac{Z_i Y_i}{p_i} - \frac{(1 - Z_i) Y_i}{1 - p_i}, \frac{Z_j Y_j}{p_j} - \frac{(1 - Z_j) Y_j}{1 - p_j}}
\\
= \sum_{a = 0}^1 \sum_{b = 0}^1 \frac{\paren{-1}^{a + b} \Cov[\big]{\indicator{Z_i = a} Y_i, \indicator{Z_j = b} Y_j}}{\prob{Z_i = a} \prob{Z_j = b}}.
\end{multline}
For any $a, b \in \braces{0, 1}$:
\begin{align}
	&\frac{\Cov[\big]{\indicator{Z_i = a} Y_i, \indicator{Z_j = b} Y_j}}{\prob{Z_i = a} \prob{Z_j = b}}
	\\[1em]
	&\qquad\qquad=
	\frac{\E{\indicator{Z_i = a} \indicator{Z_j = b} Y_i Y_j} - \E{\indicator{Z_i = a} Y_i} \E{\indicator{Z_j = b} Y_j}}{\prob{Z_i = a} \prob{Z_j = b}}
	\\[1em]
	&\qquad\qquad=
	\frac{\prob{Z_i = a, Z_j = b}}{\prob{Z_i = a} \prob{Z_j = b}} \E{Y_i Y_j \given Z_i = a, Z_j = b} - \E{Y_i \given Z_i = a} \E{Y_j \given Z_j = b}
	\\[1em]
	&\qquad\qquad=
	\E{Y_i Y_j \given Z_i = a, Z_j = b} - \E{Y_i \given Z_i = a} \E{Y_j \given Z_j = b},
\end{align}
where the final equality follows from the Bernoulli design.
Add and substract:
\begin{equation}
	\E{Y_i \given Z_i = a, Z_j = b}\E{Y_j \given Z_i = a, Z_j = b},
\end{equation}
to get:
\begin{align}
	&\E{Y_i Y_j \given Z_i = a, Z_j = b} - \E{Y_i \given Z_i = a} \E{Y_j \given Z_j = b}
	\\
	&\qquad\qquad=
	\Cov{Y_i, Y_j \given Z_i = a, Z_j = b}
	+ \E{Y_i \given Z_i = a, Z_j = b}\E{Y_j \given Z_i = a, Z_j = b}
	\\
	&\qquad\qquad\qquad\qquad - \E{Y_i \given Z_i = a} \E{Y_j \given Z_j = b}.
\end{align}
Add and substract:
\begin{equation}
	\E{Y_i \given Z_i = a} \E{Y_j \given Z_i = a, Z_j = b},
\end{equation}
to get:
\begin{align}
	&\E{Y_i \given Z_i = a, Z_j = b}\E{Y_j \given Z_i = a, Z_j = b} - \E{Y_i \given Z_i = a} \E{Y_j \given Z_j = b}
	\\
	&\qquad\qquad\qquad\qquad= \E{Y_j \given Z_i = a, Z_j = b} \paren[\Big]{\E{Y_i \given Z_i = a, Z_j = b} - \E{Y_i \given Z_i = a}}
	\\
	&\qquad\qquad\qquad\qquad\qquad\qquad+ \E{Y_i \given Z_i = a} \paren[\Big]{\E{Y_j \given Z_i = a, Z_j = b} - \E{Y_j \given Z_j = b}}
\end{align}
Note that:
\begin{equation}
	\E{Y_i \given Z_i = a} = p_j \E{Y_i \given Z_i = a, Z_j = 1} + (1 - p_j) \E{Y_i \given Z_i = a, Z_j = 0},
\end{equation}
where $\prob{Z_j = b \given Z_i = a} = \prob{Z_j = b}$ follows from the Bernoulli design.
It follows that:
\begin{multline}
	\E{Y_i \given Z_i = a, Z_j = b} - \E{Y_i \given Z_i = a}
	\\
	=
	(b - p_j) \paren[\Big]{\E{Y_i \given Z_i = a, Z_j = 1} - \E{Y_i \given Z_i = a, Z_j = 0}}.
\end{multline}
Because of the Bernoulli design:
\begin{equation}
	\E{Y_i \given Z_i = a, Z_j = b} = \E{y_{ij}(a, b; \Z_{-ij}) \given Z_i = a, Z_j = b}
	= \E{y_{ij}(a, b; \Z_{-ij})},
\end{equation}
so:
\begin{equation}
	(b - p_j) \paren[\Big]{\E{Y_i \given Z_i = a, Z_j = 1} - \E{Y_i \given Z_i = a, Z_j = 0}}
	=
	(b - p_j) \sote_{ij}(a).
\end{equation}
Similarly:
\begin{equation}
	\E{Y_j \given Z_i = a, Z_j = b} - \E{Y_j \given Z_j = b} = (a - p_i) \sote_{ji}(b),
\end{equation}
hence:
\begin{align}
	&\E{Y_j \given Z_i = a, Z_j = b} \paren[\Big]{\E{Y_i \given Z_i = a, Z_j = b} - \E{Y_i \given Z_i = a}}
	\\
	&\qquad\qquad\qquad\qquad+ \E{Y_i \given Z_i = a} \paren[\Big]{\E{Y_j \given Z_i = a, Z_j = b} - \E{Y_j \given Z_j = b}}
	\\
	&\qquad\qquad= (b - p_j) \E{Y_j \given Z_i = a, Z_j = b} \sote_{ij}(a)
	+ (a - p_i) \E{Y_i \given Z_i = a} \sote_{ji}(b).
\end{align}
Add and substract:
\begin{equation}
	(b - p_j) \E{Y_j \given Z_j = b} \sote_{ij}(a),
\end{equation}
to get:
\begin{align}
	&(b - p_j) \E{Y_j \given Z_i = a, Z_j = b} \sote_{ij}(a)
	+ (a - p_i) \E{Y_i \given Z_i = a} \sote_{ji}(b)
	\\
	&\qquad\qquad \qquad\qquad = (a - p_i) (b - p_j) \sote_{ij}(a) \sote_{ji}(b)
	+ (b - p_j) \E{Y_j \given Z_j = b} \sote_{ij}(a)
	\\
	&\qquad\qquad\qquad \qquad\qquad\qquad + (a - p_i) \E{Y_i \given Z_i = a} \sote_{ji}(b).
\end{align}
Note that:
\begin{equation}
	\sum_{a = 0}^1 \paren{-1}^{a} (a - p_i) \sote_{ij}(a) = - (1 - p_i) \sote_{ij}(1) - p_i \sote_{ij}(0) = - \breve{\sote}_{ij},
\end{equation}
where $\breve{\sote}_{ij}$ was defined in Section~\ref{sec:conventional-var-est}, so:
\begin{equation}
	\sum_{a = 0}^1 \sum_{b = 0}^1 \paren{-1}^{a + b} (a - p_i) (b - p_j) \sote_{ij}(a) \sote_{ji}(b)
	= \breve{\sote}_{ij}\breve{\sote}_{ji}.
\end{equation}
Similarly:
\begin{multline}
\sum_{z = 0}^1 \paren{-1}^{z} \sote_{ij}(z)
= - \bracket[\big]{\sote_{ij}(1) - \sote_{ij}(0)},
\qquad\text{and}
\\
\sum_{z = 0}^1 \paren{-1}^{z} (z - p_i) \E{Y_i \given Z_i = z}
= - (1 - p_i) \E{Y_i \given Z_i = 1} - p_i \E{Y_i \given Z_i = 0}
= - \breve{Y}_i,
\end{multline}
so:
\begin{align}
	\sum_{a = 0}^1 \sum_{b = 0}^1 \paren{-1}^{a + b} (b - p_j) \E{Y_j \given Z_j = b} \sote_{ij}(a)
	&= \breve{Y}_j \bracket[\big]{\sote_{ij}(1) - \sote_{ij}(0)},
	\\
	\sum_{a = 0}^1 \sum_{b = 0}^1 \paren{-1}^{a + b} (a - p_i) \E{Y_i \given Z_i = a} \sote_{ji}(b)
	&= \breve{Y}_i \bracket[\big]{\sote_{ji}(1) - \sote_{ji}(0)}
\end{align}
Because the summations over $i$ and $j$ are symmetric, summing over all off-diagonals:
\begin{equation}
	\frac{1}{n} \sum_{i = 1}^n \sum_{j\neq i} \breve{Y}_j \bracket[\big]{\sote_{ij}(1) - \sote_{ij}(0)}
	= \frac{1}{n} \sum_{i = 1}^n \sum_{j\neq i} \breve{Y}_i \bracket[\big]{\sote_{ji}(1) - \sote_{ji}(0)}.
\end{equation}

Taken together:
\begin{align}
	\frac{1}{n} \sum_{i = 1}^n \sum_{j\neq i} \Cov[\big]{h_i^{-1}Y_i, h_j^{-1}Y_j}
	&=
	\frac{1}{n} \sum_{i = 1}^n \sum_{j\neq i} \breve{\sote}_{ij}\breve{\sote}_{ji}
	 +
	\frac{2}{n} \sum_{i = 1}^n \sum_{j\neq i} \breve{Y}_j \bracket[\big]{\sote_{ij}(1) - \sote_{ij}(0)}
	\\
	&\qquad \qquad + \frac{1}{n} \sum_{i = 1}^n \sum_{j\neq i} \sum_{a = 0}^1 \sum_{b = 0}^1 \paren{-1}^{a + b} \Cov{Y_i, Y_j \given Z_i = a, Z_j = b}.
\end{align}
The normalized variance can therefore be written:
\begin{equation}
	n\davg^{-1} \Var{\eHT} = \frac{a_n}{\davg} - \frac{\eMSTE}{\davg} + B_1 + B_2,
\end{equation}
where $B_1$ and $B_2$ are defined in the proposition.
Together with the variance estimator:
\begin{equation}
	n\davg^{-1} \bracket[\Big]{\eVarBer - \Var{\eHT}} = \frac{n\eVarBer - a_n}{\davg} + \frac{\eMSTE}{\davg} - B_1 - B_2.
\end{equation}
By Lemma~\ref{lem:var-est-convergence}, $n\eVarBer - a_n = \bigOp[\big]{n^{-0.5}\davg^{0.5}}$, so:
\begin{equation}
	\frac{n\eVarBer - a_n}{\davg} = \bigOp[\big]{n^{-0.5}\davg^{-0.5}} = \littleOp{1},
\end{equation}
for any sequence $(\davg)$.
\end{proof}

\begin{appcorollary}\label{lem:var-est-no-interference-limit}
	Under a Bernoulli design, no interference and Assumption~\ref{ass:regularity-conditions} with $q \geq 4$
	\begin{equation}
		n \bracket[\Big]{\eVarBer - \Var{\eHT}} \parrow \eMSTE.
	\end{equation}
\end{appcorollary}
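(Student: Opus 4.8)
The plan is to obtain the corollary as a direct specialization of Proposition~\ref{prop:var-est-bernoulli-limit}. The key observation is that under no interference the interference structure collapses completely: $I_{ij} = 1$ if and only if $i = j$, so $\di_{ij} = \indicator{i = j}$ by Definition~\ref{def:interference-dependence}, and hence $\di_i = 1$ for every $i$ and $\davg = 1$. Substituting $\davg = 1$ into Proposition~\ref{prop:var-est-bernoulli-limit} (which is available because $q \geq 4$ is assumed) gives
\begin{equation}
	n \bracket[\Big]{\eVarBer - \Var{\eHT}} \parrow \eMSTE - B_1 - B_2,
\end{equation}
so it remains only to show that both correction terms vanish in this setting.

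\textbf{Step 1: $B_1 = 0$.} No interference means there are no spillover effects. Concretely, for $i \neq j$ the potential outcome $y_i(\z)$ does not depend on $z_j$, so $y_{ij}(z, 1; \z_{-ij}) = y_{ij}(z, 0; \z_{-ij})$ for all $\z_{-ij}$, whence $\sote_{ij}(z) = 0$ for each $z \in \braces{0,1}$. Consequently $\breve{\sote}_{ij} = (1 - p_i)\sote_{ij}(1) + p_i \sote_{ij}(0) = 0$ and $\sote_{ij}(1) - \sote_{ij}(0) = 0$, so every summand in $B_1$ is zero.

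\textbf{Step 2: $B_2 = 0$.} Under no interference $Y_i = y_i(Z_i)$ is a function of $Z_i$ alone, and under the Bernoulli design $Z_i$ and $Z_j$ are independent. Hence, for $i \neq j$ and any $a, b \in \braces{0,1}$, conditioning on $Z_i = a, Z_j = b$ leaves $Y_i$ (a degenerate function of $a$) and $Y_j$ (a degenerate function of $b$) with zero covariance, so $\Cov{Y_i, Y_j \given Z_i = a, Z_j = b} = 0$ and every term of $B_2$ vanishes. Combining Steps 1 and 2 with the displayed convergence yields $n[\eVarBer - \Var{\eHT}] \parrow \eMSTE$, as claimed; non-negativity of $\eMSTE$ is immediate since it is an average of squares.

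\textbf{Main obstacle.} There is essentially no deep difficulty here—the result is a clean corollary—so the only point requiring care is the bookkeeping in Step 2: one must notice that under no interference the \emph{conditional} covariances in $B_2$ (not merely the unconditional ones) vanish, which relies on the fact that $Y_i$ becomes deterministic once $Z_i$ is fixed, together with the independence of $Z_i$ and $Z_j$ under the Bernoulli design. Once this is observed, the proof is a one-line substitution into Proposition~\ref{prop:var-est-bernoulli-limit}.
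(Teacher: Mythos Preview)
Your proposal is correct and follows essentially the same route as the paper: both derive the corollary directly from Proposition~\ref{prop:var-est-bernoulli-limit} by noting that no interference forces $\davg = 1$, $\sote_{ij}(z) = 0$ for $i \neq j$ (hence $B_1 = 0$), and $\Cov{Y_i, Y_j \given Z_i = a, Z_j = b} = 0$ for $i \neq j$ (hence $B_2 = 0$). Your justification in Step~2 is slightly more explicit than the paper's (you spell out that $Y_i$ is deterministic once $Z_i$ is fixed), but the argument is the same.
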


\begin{proof}
	From Proposition~\ref{prop:var-est-bernoulli-limit}:
	\begin{equation}
		n\davg^{-1} \bracket[\Big]{\eVarBer - \Var{\eHT}}
		\parrow
		\frac{\eMSTE}{\davg} - B_1 - B_2.
	\end{equation}
	Under no interference:
	\begin{equation}
		\Cov[\big]{Y_i , Y_j \given Z_i = a, Z_j = b} = 0
		\qquad\text{and}\qquad
		\sote_{ij}(a) = 0,
	\end{equation}
	for any $i \neq j$ and $a, b \in \{0, 1\}$, so $B_1 = B_2 = 0$.
	Furthermore, $\davg = 1$.
\end{proof}

\section{Proofs of Propositions~\ref{prop:var-est-avg-conservative},~\ref{prop:var-est-max-conservative} and~\ref{prop:var-est-eig-conservative}}

\begin{applemma}\label{lem:fast-convergence-conservative-route}
	For a variance estimator $\hat{V}$ and two positive sequences $(a_n)$ and $(b_n)$, if:
	\begin{equation}
		\liminf_{n \to \infty} b_n \davg^{-1} \bracket[\big]{a_n - n b_n^{-1} \Var{\eHT}} \geq 0
		\qquad \text{and} \qquad
		n b_n^{-1} \hat{V} - a_n = \littleOp[\big]{b_n^{-1} \davg},
	\end{equation}
	then $\hat{V}$ is asymptotically conservative.
\end{applemma}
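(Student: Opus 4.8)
The plan is to unwind Definition~\ref{def:asymptotically-conservative} and express the relevant random quantity as a non-random sequence perturbed by a term that vanishes in probability, after which the conclusion is immediate. The starting point is the algebraic identity
\begin{equation}
	n\davg^{-1}\bracket[\big]{\hat{V} - \Var{\eHT}}
	= \bracket[\big]{n\davg^{-1}\hat{V} - b_n\davg^{-1} a_n}
	+ b_n\davg^{-1}\bracket[\big]{a_n - n b_n^{-1}\Var{\eHT}},
\end{equation}
which holds because $b_n\davg^{-1}\cdot n b_n^{-1} = n\davg^{-1}$.

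Next I would treat the two bracketed terms separately. For the first, I would multiply the hypothesis $n b_n^{-1}\hat{V} - a_n = \littleOp{b_n^{-1}\davg}$ by the positive deterministic factor $b_n\davg^{-1}$; since scaling an $\littleOp{b_n^{-1}\davg}$ sequence by $b_n\davg^{-1}$ yields an $\littleOp{1}$ sequence, this gives $n\davg^{-1}\hat{V} - b_n\davg^{-1}a_n = \littleOp{1}$, and I write $R_n$ for this remainder. The second bracketed term, $D_n = b_n\davg^{-1}\bracket[\big]{a_n - n b_n^{-1}\Var{\eHT}}$, is a deterministic sequence with $\liminf_{n\to\infty} D_n \geq 0$ by the first hypothesis. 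So $n\davg^{-1}\bracket[\big]{\hat{V} - \Var{\eHT}} = D_n + R_n$ with $R_n = \littleOp{1}$ and $\liminf_n D_n \geq 0$.

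To conclude, I would fix $\varepsilon > 0$. Because $\liminf_n D_n \geq 0$, there is an $N$ with $D_n > -\varepsilon/2$ for all $n \geq N$, hence for such $n$,
\begin{equation}
	\prob[\big]{D_n + R_n \leq -\varepsilon}
	\leq \prob[\big]{R_n \leq -\tfrac{\varepsilon}{2}}
	\leq \prob[\big]{\abs{R_n} \geq \tfrac{\varepsilon}{2}}
	\longrightarrow 0
\end{equation}
as $n\to\infty$, since $R_n = \littleOp{1}$. Thus $\lim_{n\to\infty}\prob[\big]{n\davg^{-1}\bracket{\hat{V} - \Var{\eHT}} \leq -\varepsilon} = 0$ for every fixed $\varepsilon > 0$, and letting $\varepsilon \to 0^+$ the iterated limit in Definition~\ref{def:asymptotically-conservative} is trivially zero, which is exactly asymptotic conservativeness. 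I expect no genuine obstacle: the lemma is a bookkeeping step, and the only points needing care are keeping the order of the two limits straight and confirming that the deterministic lower bound on $D_n$ absorbs the stochastic fluctuation $R_n$ at the scale $n\davg^{-1}$.
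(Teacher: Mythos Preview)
Your proposal is correct and follows essentially the same approach as the paper: both add and subtract $b_n\davg^{-1}a_n$ to split $n\davg^{-1}\bracket[\big]{\hat{V}-\Var{\eHT}}$ into the deterministic piece $D_n=b_n\davg^{-1}\bracket[\big]{a_n-nb_n^{-1}\Var{\eHT}}$ and the stochastic piece $R_n=b_n\davg^{-1}\paren[\big]{nb_n^{-1}\hat{V}-a_n}=\littleOp{1}$, then use $\liminf D_n\geq 0$ together with $R_n\parrow 0$ to kill the lower tail. Your explicit $\varepsilon/2$ split is in fact a bit cleaner than the paper's version, which passes from the threshold $D_n+\varepsilon$ to $\varepsilon$ in one step.
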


\begin{proof}
Starting with the expression for asymptotically conservative in Definition~\ref{def:asymptotically-conservative}, for some $\varepsilon > 0$, write:
\begin{align}
	&\prob[\Big]{n \davg^{-1} \bracket[\big]{\hat{V} - \Var{\eHT}} \leq - \varepsilon}
	\\
	&\qquad \qquad \qquad =
	\prob[\Big]{n \davg^{-1} \Var{\eHT} - n \davg^{-1} \hat{V} \geq \varepsilon}
	\\
	&\qquad \qquad \qquad =
	\prob[\Big]{- n \davg^{-1} \hat{V} \geq - n \davg^{-1} \Var{\eHT} + \varepsilon}
	\\
	&\qquad \qquad \qquad =
	\prob[\Big]{a_n b_n \davg^{-1} - n \davg^{-1} \hat{V} \geq a_n b_n \davg^{-1} - n \davg^{-1} \Var{\eHT} + \varepsilon}
	\\
	&\qquad \qquad \qquad =
	\prob[\Big]{b_n \davg^{-1} \bracket[\big]{a_n - n b_n^{-1} \hat{V}} \geq b_n \davg^{-1} \bracket[\big]{a_n - n b_n^{-1} \Var{\eHT}} + \varepsilon},
\end{align}
where $b_n > 0$ was used.
Next:
\begin{multline}
	\prob[\Big]{b_n \davg^{-1} \bracket[\big]{a_n - n b_n^{-1} \hat{V}} \geq b_n \davg^{-1} \bracket[\big]{a_n - n b_n^{-1} \Var{\eHT}} + \varepsilon}
	\\
	\leq
	\prob[\Big]{b_n \davg^{-1} \abs[\big]{n b_n^{-1} \hat{V} - a_n} \geq b_n \davg^{-1} \bracket[\big]{a_n - n b_n^{-1} \Var{\eHT}} + \varepsilon},
\end{multline}
because $b_n \davg^{-1} \geq 0$, and:
\begin{multline}
	\lim_{n \to \infty} \prob[\Big]{b_n \davg^{-1} \abs[\big]{n b_n^{-1} \hat{V} - a_n} \geq b_n \davg^{-1} \bracket[\big]{a_n - n b_n^{-1} \Var{\eHT}} + \varepsilon}
	\\
	\leq
	\lim_{n \to \infty} \prob[\Big]{b_n \davg^{-1} \abs[\big]{n b_n^{-1} \hat{V} - a_n} \geq \varepsilon},
\end{multline}
because $\liminf_{n \to \infty} b_n \davg^{-1} \bracket[\big]{a_n - n b_n^{-1} \Var{\eHT}} \geq 0$.
Finally, by convergence in probability of $n b_n^{-1} \hat{V}$ to $a_n$ at a rate of $b_n^{-1} \davg$:
\begin{equation}
	\lim_{\varepsilon \to 0^+} \lim_{n \to \infty} \prob[\Big]{b_n \davg^{-1} \abs[\big]{n b_n^{-1} \hat{V} - a_n} \geq \varepsilon} = 0. \tag*{\qedhere}
\end{equation}
\end{proof}

\begin{applemma}\label{lem:positive-variance-conservative-route}
	For a variance estimator $\hat{V}$ and two positive sequences $(a_n)$ and $(b_n)$, if:
	\begin{equation}
		\liminf_{n \to \infty} \bracket[\big]{a_n - n b_n^{-1} \Var{\eHT}} \geq c > 0
		\qquad \text{and} \qquad
		n b_n^{-1} \hat{V} - a_n = \littleOp{1},
	\end{equation}
	then $\hat{V}$ is asymptotically conservative.
\end{applemma}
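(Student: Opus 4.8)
The plan is to treat this lemma as the ``bounded-below variance'' counterpart of Lemma~\ref{lem:fast-convergence-conservative-route}. In that lemma the gap $a_n - n b_n^{-1}\Var{\eHT}$ is only asymptotically non-negative (after the $b_n\davg^{-1}$ scaling), which forces the sharp stochastic control $n b_n^{-1}\hat{V} - a_n = \littleOp[\big]{b_n^{-1}\davg}$. Here, by contrast, the gap is a fixed strictly positive constant $c$, so a fixed positive number can absorb any vanishing fluctuation and the cruder hypothesis $n b_n^{-1}\hat{V} - a_n = \littleOp{1}$ suffices. Concretely, I would first unpack Definition~\ref{def:asymptotically-conservative}: it is enough to prove $\lim_{n\to\infty}\prob[\big]{n\davg^{-1}\bracket[\big]{\hat{V} - \Var{\eHT}} \le -\varepsilon} = 0$ for each fixed $\varepsilon>0$, since the outer limit as $\varepsilon\to 0^+$ is then automatically zero.

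Second, I would reduce to showing $\prob[\big]{\hat{V} - \Var{\eHT} > 0} \to 1$. Note $\davg \ge 1$ (every unit interferes with itself, so $\di_{ii}=1$) and $b_n>0$, so multiplication by $n\davg^{-1}>0$ preserves strict positivity; hence $\{\hat{V} - \Var{\eHT} > 0\} \subseteq \{n\davg^{-1}\bracket[\big]{\hat{V} - \Var{\eHT}} > -\varepsilon\}$ for every $\varepsilon>0$. Importantly, this step needs no relation between $b_n$ and $\davg$, only positivity of both.

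Third — the core step — I would split, for an arbitrary realization, $n b_n^{-1}\hat{V} - n b_n^{-1}\Var{\eHT} = \paren[\big]{n b_n^{-1}\hat{V} - a_n} + \paren[\big]{a_n - n b_n^{-1}\Var{\eHT}}$. The $\liminf$ hypothesis yields an $N$ with $a_n - n b_n^{-1}\Var{\eHT} \ge c/2 > 0$ for all $n\ge N$, while $n b_n^{-1}\hat{V} - a_n = \littleOp{1}$ gives $\prob[\big]{n b_n^{-1}\hat{V} - a_n \le -c/2} \to 0$. On the complementary event, for $n\ge N$, the displayed sum is strictly positive, so dividing by $n b_n^{-1}>0$ gives $\hat{V} - \Var{\eHT} > 0$. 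Combined with the reduction, $\prob[\big]{n\davg^{-1}\bracket[\big]{\hat{V} - \Var{\eHT}} \le -\varepsilon} \le \prob[\big]{n b_n^{-1}\hat{V} - a_n \le -c/2} \to 0$, which closes the argument.

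I do not anticipate a genuine obstacle: the argument is short and in fact simpler than the proof of Lemma~\ref{lem:fast-convergence-conservative-route}, precisely because the positive gap $c$ removes the need to match the stochastic order to the normalization. The only thing requiring care is bookkeeping of which scaling ($n\davg^{-1}$ versus $n b_n^{-1}$) is in force at each line, and verifying that the sign-preservation invoked in the reduction rests on nothing beyond positivity of $b_n$ and $\davg$.
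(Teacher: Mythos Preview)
Your proposal is correct and follows essentially the same route as the paper: both reduce the conservativeness statement to showing $\prob\bigl[n b_n^{-1}\hat{V} - n b_n^{-1}\Var{\eHT} \leq 0\bigr] \to 0$ via the positivity of $n\davg^{-1}$ and $n b_n^{-1}$, then split off $a_n$ and use the strictly positive gap $c$ (you use $c/2$, the paper writes $c$) to absorb the $\littleOp{1}$ fluctuation of $n b_n^{-1}\hat{V} - a_n$. Your write-up is if anything slightly cleaner in handling the $\liminf$ by explicitly passing to $c/2$.
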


\begin{proof}
Starting with the expression in Definition~\ref{def:asymptotically-conservative}, note that:
\begin{equation}
	\prob[\Big]{n \davg^{-1} \bracket[\big]{\hat{V} - \Var{\eHT}} \leq - \varepsilon} \leq \prob[\Big]{n \davg^{-1} \bracket[\big]{\hat{V} - \Var{\eHT}} \leq 0},
\end{equation}
for any $\varepsilon \geq 0$.
Similar to the proof of Lemma~\ref{lem:fast-convergence-conservative-route}, write:
\begin{align}
	\prob[\Big]{n \davg^{-1} \bracket[\big]{\hat{V} - \Var{\eHT}} \leq 0}
	&=
	\prob[\Big]{- n \davg^{-1} \hat{V} \geq - n \davg^{-1} \Var{\eHT}}
	\\
	&=
	\prob[\Big]{- n b_n^{-1} \hat{V} \geq - n b_n^{-1} \Var{\eHT}}
	\\
	&=
	\prob[\Big]{a_n - n b_n^{-1} \hat{V} \geq a_n - n b_n^{-1} \Var{\eHT}}
	\\
	&\leq
	\prob[\Big]{\abs[\big]{n b_n^{-1} \hat{V} - a_n} \geq a_n - n b_n^{-1} \Var{\eHT}},
\end{align}
where $b_n^{-1} \davg > 0$ was used.
Next:
\begin{equation}
	\lim_{n \to \infty} \prob[\Big]{\abs[\big]{n b_n^{-1} \hat{V} - a_n} \geq a_n - n b_n^{-1} \Var{\eHT}}
	\leq
	\lim_{n \to \infty} \prob[\Big]{\abs[\big]{n b_n^{-1} \hat{V} - a_n} \geq c},
\end{equation}
because $\liminf_{n \to \infty} \bracket[\big]{a_n - n b_n^{-1} \Var{\eHT}} \geq c$.
Finally:
\begin{equation}
	\lim_{n \to \infty} \prob[\Big]{\abs[\big]{n b_n^{-1} \hat{V} - a_n} \geq c}
	\leq
	\lim_{\varepsilon \to 0^+} \lim_{n \to \infty} \prob[\Big]{\abs[\big]{n b_n^{-1} \hat{V} - a_n} \geq \varepsilon} = 0,
\end{equation}
which follows from $c > 0$ and convergence in probability of $n b_n^{-1} \hat{V}$ to $a_n$.
\end{proof}

\begin{applemma}\label{lem:bound-normalized-variance}
	Under a Bernoulli design:
	\begin{equation}
	n\Var{\eHT} \leq \frac{1}{n} \sum_{i=1}^{n} \di_{i} \sigma_i^2,
	\qquad\text{where}\qquad \sigma_i^2 = \Var[\bigg]{\frac{Z_iY_i}{p_i} - \frac{(1- Z_i)Y_i}{1 - p_i}}
	\end{equation}
	and $\di_{i} = \sum_{j = 1}^n \di_{ij}$.
\end{applemma}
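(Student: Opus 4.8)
The plan is to start from the variance decomposition derived in the proof of Proposition~\ref{prop:var-est-bernoulli-limit}. Writing $h_i = Z_i + p_i - 1$ as there, that computation gives
\begin{equation}
n\Var{\eHT} = \frac{1}{n} \sum_{i = 1}^n \sigma_i^2 + \frac{1}{n} \sum_{i = 1}^n \sum_{j\neq i} \Cov[\big]{h_i^{-1}Y_i, h_j^{-1}Y_j},
\end{equation}
since $\Var{h_i^{-1}Y_i} = \sigma_i^2$. The first step is to observe that $h_i^{-1}Y_i = \bracket[\big]{Z_i/p_i - (1-Z_i)/(1-p_i)}\, y_i(\Zint_i)$ is a measurable function of $\Zint_i$ alone, because $I_{ii} = 1$ makes $Z_i$ a coordinate of $\Zint_i$. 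Hence, exactly as in the proof of Lemma~\ref{lem:bernoulli-hmu-rates}, under the Bernoulli design $\Zint_i$ and $\Zint_j$ are disjoint and therefore independent whenever $\di_{ij} = 0$, so the corresponding covariance vanishes and only the $\di_{ij} = 1$ terms survive:
\begin{equation}
n\Var{\eHT} = \frac{1}{n} \sum_{i = 1}^n \sigma_i^2 + \frac{1}{n} \sum_{i = 1}^n \sum_{j\neq i} \di_{ij}\Cov[\big]{h_i^{-1}Y_i, h_j^{-1}Y_j}.
\end{equation}

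Next I would bound each surviving covariance with Lemma~\ref{lem:cov-bound-varsum}, which gives $\Cov[\big]{h_i^{-1}Y_i, h_j^{-1}Y_j} \leq (\sigma_i^2 + \sigma_j^2)/2$, so
\begin{equation}
n\Var{\eHT} \leq \frac{1}{n} \sum_{i = 1}^n \sigma_i^2 + \frac{1}{n} \sum_{i = 1}^n \sum_{j\neq i} \di_{ij}\,\frac{\sigma_i^2 + \sigma_j^2}{2}.
\end{equation}
Since $\di_{ij}$ is symmetric in $i$ and $j$ by Definition~\ref{def:interference-dependence}, relabelling indices gives $\sum_i \sum_{j \neq i} \di_{ij}\sigma_j^2 = \sum_i \sum_{j\neq i} \di_{ij}\sigma_i^2$, so the double sum collapses to $\frac{1}{n}\sum_i \sum_{j\neq i} \di_{ij}\sigma_i^2$. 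Finally, $\di_{ii} = 1$ lets me absorb the diagonal term, and $\sum_{j=1}^n \di_{ij} = \di_i$, yielding
\begin{equation}
n\Var{\eHT} \leq \frac{1}{n} \sum_{i = 1}^n \di_{ii}\sigma_i^2 + \frac{1}{n} \sum_{i = 1}^n \sum_{j\neq i} \di_{ij}\sigma_i^2 = \frac{1}{n} \sum_{i = 1}^n \sigma_i^2 \sum_{j = 1}^n \di_{ij} = \frac{1}{n} \sum_{i=1}^{n} \di_{i}\sigma_i^2,
\end{equation}
which is the claimed bound.

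There is no substantive obstacle here: once the variance decomposition and the independence structure of the Bernoulli design are in hand, the argument is short bookkeeping. The only points that need a little care are the measurability observation that kills the $\di_{ij} = 0$ covariances (which is precisely why $I_{ii} = 1$ is built into the definition of interference) and the symmetry of $\di_{ij}$, which is what allows $\sigma_i^2 + \sigma_j^2$ to be replaced by $2\sigma_i^2$. If one prefers not to invoke Assumption~\ref{ass:regularity-conditions} to guarantee the $\sigma_i^2$ are finite, one can simply note that the inequality is trivial when some $\sigma_i^2 = \infty$, since $\di_i \geq \di_{ii} = 1$ makes the right-hand side infinite as well.
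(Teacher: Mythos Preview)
Your proof is correct and essentially identical to the paper's own argument: both start from the variance decomposition $n\Var{\eHT} = \frac{1}{n}\sum_i \sigma_i^2 + \frac{1}{n}\sum_i\sum_{j\neq i}\Cov{h_i^{-1}Y_i, h_j^{-1}Y_j}$, kill the $\di_{ij}=0$ covariances via the Bernoulli independence argument of Lemma~\ref{lem:bernoulli-hmu-rates}, bound the surviving covariances with Lemma~\ref{lem:cov-bound-varsum}, and then use the symmetry of $\di_{ij}$ together with $\di_{ii}=1$ to collapse the sum to $\frac{1}{n}\sum_i \di_i\sigma_i^2$. Your write-up is slightly more explicit about the measurability step and the symmetry relabelling, but the logic is the same.
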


\begin{proof}
As in the proof of Proposition~\ref{prop:var-est-bernoulli-limit}, write the variance as:
\begin{equation}
	n\Var{\eHT} = \frac{1}{n} \sum_{i = 1}^n \Var[\big]{h_i^{-1}Y_i} + \frac{1}{n} \sum_{i = 1}^n \sum_{i \neq j} \Cov[\big]{h_i^{-1}Y_i, h_j^{-1}Y_j},
\end{equation}
where $h_i = Z_i + p_i - 1$.
Note that $\di_{ii} = 1$ for all $i \in \bfU$, so:
\begin{equation}
	\frac{1}{n} \sum_{i = 1}^n \Var[\big]{h_i^{-1}Y_i} = \frac{1}{n} \sum_{i = 1}^n \di_{ii} \Var[\big]{h_i^{-1}Y_i}.
\end{equation}
Using the same argument as in the proofs of Lemmas~\ref{lem:bernoulli-hmu-rates} and~\ref{lem:var-est-convergence}:
\begin{equation}
	(1 - \di_{ij})\Cov{h_i^{-1}Y_i, h_j^{-1}Y_j} = 0,
\end{equation}
so:
\begin{equation}
	\frac{1}{n} \sum_{i = 1}^n \sum_{i \neq j} \Cov[\big]{h_i^{-1}Y_i, h_j^{-1}Y_j}
	=
	\frac{1}{n} \sum_{i = 1}^n \sum_{i \neq j} \di_{ij} \Cov[\big]{h_i^{-1}Y_i, h_j^{-1}Y_j}.
\end{equation}
Using Lemma~\ref{lem:cov-bound-varsum}:
\begin{align}
	&\frac{1}{n} \sum_{i = 1}^n \di_{ii} \Var[\big]{h_i^{-1}Y_i} + \frac{1}{n} \sum_{i = 1}^n \sum_{i \neq j} \di_{ij} \Cov[\big]{h_i^{-1}Y_i, h_j^{-1}Y_j}
	\\
	&\qquad \leq \frac{1}{n} \sum_{i=1}^{n} \di_{ii} \Var{h_i^{-1}Y_i} + \frac{1}{2n} \sum_{i=1}^{n} \sum_{j\neq i} \di_{ij}\Var{h_i^{-1}Y_i} + \frac{1}{2n} \sum_{i=1}^{n} \sum_{j\neq i} \di_{ij}\Var{h_j^{-1}Y_j}
	\\
	&\qquad = \frac{1}{n} \sum_{i=1}^{n} \di_{i} \Var{h_i^{-1}Y_i}.
\end{align}
The proof is completed by noting that $\Var[\big]{h_i^{-1}Y_i} = \sigma_i^2$.
\end{proof}

\begin{refprop}{\ref{prop:var-est-avg-conservative}}
	The variance estimator $\eVarAvg$ is asymptotically conservative under a Bernoulli design if Assumptions~\ref{ass:regularity-conditions} and~\ref{ass:restricted-interference} hold with $q \geq 4$ and $\textsc{sd}_{\sigma^2} = \littleO{\drms^{-1} \davg}$ where:
	\begin{align}
	&\textsc{sd}_{\sigma^2} = \sqrt{\frac{1}{n}\sum_{i=1}^{n}\bracket[\bigg]{\sigma_i^2 - \frac{1}{n} \sum_{j=1}^{n} \sigma_j^2}^2},
	\qquad
	\sigma_i^2 = \Var[\bigg]{\frac{Z_iY_i}{p_i} - \frac{(1- Z_i)Y_i}{1 - p_i}},
	\\
	&\qquad\qquad\qquad\qquad\qquad\text{and}\qquad
	\drms = \sqrt{\frac{1}{n} \sum_{i=1}^n \di_i^2}.
	\end{align}
\end{refprop}

\begin{proof}
The proof uses Lemma~\ref{lem:fast-convergence-conservative-route} with the following sequences:
\begin{equation}
a_n = \frac{1}{n} \sum_{i=1}^{n} \frac{\E{Y_i^2 \given Z_i = 1}}{p_i} + \frac{1}{n} \sum_{i=1}^{n} \frac{\E{Y_i^2 \given Z_i = 0}}{1 - p_i}
\qquad\text{and}\qquad
b_n = \davg.
\end{equation}

Starting with the variance, use Lemma~\ref{lem:bound-normalized-variance} to write:
\begin{equation}
n\Var{\eHT} \leq \frac{1}{n} \sum_{i=1}^{n} \di_{i} \sigma_i^2.
\end{equation}
Let $\bar{\sigma}^2 = n^{-1} \sum_{i=1}^{n} \sigma_i^2$ and write:
\begin{equation}
	\frac{1}{n} \sum_{i=1}^{n} \di_{i} \sigma_i^2
	= \frac{1}{n} \sum_{i=1}^{n} \di_{i} \bracket[\Big]{\sigma_i^2 - \bar{\sigma}^2 + \bar{\sigma}^2}
	= \davg \bar{\sigma}^2 + \frac{1}{n} \sum_{i=1}^{n} \di_{i} \bracket[\Big]{\sigma_i^2 - \bar{\sigma}^2}.
\end{equation}
Using Lemma~\ref{lem:unit-term-variance}:
\begin{equation}
\bar{\sigma}^2 = \frac{1}{n} \sum_{i=1}^{n} \sigma_i^2
\leq \frac{1}{n} \sum_{i=1}^{n} \frac{\E{Y_i^2 \given Z_i = 1}}{p_i} + \frac{1}{n} \sum_{i=1}^{n} \frac{\E{Y_i^2 \given Z_i = 0}}{1 - p_i} = a_n,
\end{equation}
hence $\davg \bar{\sigma}^2 \leq a_n \davg$.
Use the Cauchy--Schwarz inequality for inner products:
\begin{multline}
\frac{1}{n} \sum_{i=1}^{n} \di_{i} \bracket[\Big]{\sigma_i^2 - \bar{\sigma}^2}
\leq
\sqrt{\frac{1}{n^2} \paren[\Bigg]{\sum_{i=1}^{n} \di_{i} \bracket[\Big]{\sigma_i^2 - \bar{\sigma}^2}}^2}
\\
\leq
\sqrt{\paren[\Bigg]{\frac{1}{n}\sum_{i=1}^{n} \di_{i}^2} \paren[\Bigg]{\frac{1}{n}\sum_{i=1}^{n}\bracket[\Big]{\sigma_i^2 - \bar{\sigma}^2}^2}} = \drms \textsc{sd}_{\sigma^2}.
\end{multline}
Taken together:
\begin{equation}
	n b_n^{-1} \Var{\eHT} = n \davg^{-1} \Var{\eHT} \leq a_n + \frac{\drms \textsc{sd}_{\sigma^2}}{\davg},
\end{equation}
and the first expression in Lemma~\ref{lem:fast-convergence-conservative-route} is bounded as:
\begin{equation}
	b_n \davg^{-1} \bracket[\big]{a_n - n b_n^{-1} \Var{\eHT}} \geq - \frac{\drms \textsc{sd}_{\sigma^2}}{\davg}.
\end{equation}
By assumption:
\begin{equation}
	\lim_{n \to \infty} \frac{\drms \textsc{sd}_{\sigma^2}}{\davg} = 0,
\end{equation}
so:
\begin{equation}
	\liminf_{n \to \infty} b_n \davg^{-1} \bracket[\big]{a_n - n b_n^{-1} \Var{\eHT}} \geq 0,
\end{equation}
and the first premise of Lemma~\ref{lem:fast-convergence-conservative-route} is satisfied.
The second premise is satisfied by Corollary~\ref{coro:mod-var-est-convergence} because $\eVarAvg = b_n \eVarBer$ and $b_n^{-1} \davg = 1$.
\end{proof}

\begin{refprop}{\ref{prop:var-est-max-conservative}}
	The variance estimator $\eVarMax$ is asymptotically conservative under a Bernoulli design if either:
	\begin{enumerate}
		\item Assumption~\ref{ass:regularity-conditions} holds with $q \geq 4$ and $\dmax = \littleO{n^{0.5}\davg^{0.5}}$, or
		\item Assumptions~\ref{ass:regularity-conditions} and~\ref{ass:restricted-interference} hold with $q \geq 4$ and $\eMSTE = \bigOmega{1}$.
	\end{enumerate}
\end{refprop}

\begin{proof}
The proof uses Lemmas~\ref{lem:fast-convergence-conservative-route} and~\ref{lem:positive-variance-conservative-route} with the following sequences:
\begin{equation}
a_n = \frac{1}{n} \sum_{i=1}^{n} \frac{\E{Y_i^2 \given Z_i = 1}}{p_i} + \frac{1}{n} \sum_{i=1}^{n} \frac{\E{Y_i^2 \given Z_i = 0}}{1 - p_i}
\qquad\text{and}\qquad
b_n = \dmax.
\end{equation}

Starting with the variance, use Lemma~\ref{lem:bound-normalized-variance} to write:
\begin{equation}
n\Var{\eHT} \leq \frac{1}{n} \sum_{i=1}^{n} \di_{i} \sigma_i^2.
\end{equation}
By construction, $\sigma_i^2 \geq 0$, so:
\begin{equation}
\frac{1}{n} \sum_{i=1}^{n} \di_{i} \sigma_i^2 \leq \frac{\dmax}{n} \sum_{i=1}^{n} \sigma_i^2.
\end{equation}
Using Lemma~\ref{lem:unit-term-variance}:
\begin{multline}
\frac{\dmax}{n} \sum_{i=1}^{n} \sigma_i^2
= \frac{\dmax}{n} \sum_{i=1}^{n} \bracket[\bigg]{\frac{\E{Y_i^2 \given Z_i = 1}}{p_i} + \frac{\E{Y_i^2 \given Z_i = 0}}{1 - p_i} - \paren[\big]{\E{\tau_i(\Z_{-i})}}^2}
\\
= \dmax \paren{a_n - \eMSTE},
\end{multline}
so:
\begin{equation}
	n b_n^{-1} \Var{\eHT} = n \dmax^{-1} \Var{\eHT} \leq a_n - \eMSTE,
\end{equation}
and:
\begin{equation}
	a_n - n b_n^{-1} \Var{\eHT} \geq \eMSTE.
\end{equation}

Consider the first case in the proposition using Lemma~\ref{lem:fast-convergence-conservative-route}.
By construction, $\eMSTE \geq 0$ and $b_n \davg^{-1} = \davg^{-1} \dmax \geq 1$, so:
\begin{equation}
	\liminf_{n \to \infty} b_n \davg^{-1} \bracket[\big]{a_n - n b_n^{-1} \Var{\eHT}} \geq \liminf_{n \to \infty} \eMSTE \geq 0,
\end{equation}
and the first premise of the lemma is satisfied without invoking additional assumptions.
The second premise is satisfied by Corollary~\ref{coro:mod-var-est-fast-convergence} because $\eVarAvg = b_n \eVarBer$ and $b_n = \dmax = \littleO{n^{0.5}\davg^{0.5}}$ by assumption.

Consider the second case in the proposition using Lemma~\ref{lem:positive-variance-conservative-route}.
By assumption:
\begin{equation}
	\liminf_{n \to \infty} \eMSTE \geq c > 0,
\end{equation}
so:
\begin{equation}
	\liminf_{n \to \infty} \bracket[\big]{a_n - n b_n^{-1} \Var{\eHT}} \geq c > 0,
\end{equation}
and the first premise of the lemma is satisfied.
Corollary~\ref{coro:mod-var-est-convergence} gives the second premise.
\end{proof}

\begin{refprop}{\ref{prop:var-est-eig-conservative}}
	The variance estimator $\eVarSR$ is asymptotically conservative under a Bernoulli design if either:
	\begin{enumerate}
		\item Assumption~\ref{ass:regularity-conditions} holds with $q \geq 4$ and $\eigmax = \littleO{n^{0.5}\davg^{0.5}}$, or
		\item Assumptions~\ref{ass:regularity-conditions} and~\ref{ass:restricted-interference} hold with $q \geq 4$ and $\eMSTE = \bigOmega{1}$.
	\end{enumerate}
\end{refprop}

\begin{proof}
The proof uses Lemmas~\ref{lem:fast-convergence-conservative-route} and~\ref{lem:positive-variance-conservative-route} with the following sequences:
\begin{equation}
a_n = \frac{1}{n} \sum_{i=1}^{n} \frac{\E{Y_i^2 \given Z_i = 1}}{p_i} + \frac{1}{n} \sum_{i=1}^{n} \frac{\E{Y_i^2 \given Z_i = 0}}{1 - p_i}
\qquad\text{and}\qquad
b_n = \eigmax.
\end{equation}

Similar to the proof of Lemma~\ref{lem:bound-normalized-variance}, write the variance as:
\begin{equation}
	n\Var{\eHT} = \frac{1}{n} \sum_{i = 1}^n \sum_{j = 1}^n \di_{ij} \Cov[\big]{h_i^{-1}Y_i, h_j^{-1}Y_j}.
\end{equation}
Using the Cauchy--Schwarz inequality:
\begin{multline}
	\frac{1}{n} \sum_{i = 1}^n \sum_{j = 1}^n \di_{ij} \Cov[\big]{h_i^{-1}Y_i, h_j^{-1}Y_j}
	\leq
	\sum_{i = 1}^n \sum_{j = 1}^n \di_{ij} \sqrt{n^{-1}\Var[\big]{h_i^{-1}Y_i}} \sqrt{n^{-1}\Var[\big]{h_j^{-1}Y_j}}
	\\
	=
	\sum_{i = 1}^n \sum_{j = 1}^n \di_{ij} \sqrt{n^{-1}\sigma_i^2} \sqrt{n^{-1}\sigma_j^2}
\end{multline}
Let $\mathbf{v}$ be a column vector of length $n$ whose typical element is $\sqrt{n^{-1}\sigma_i^2}$, and let $\mathbf{D}$ be a $n \times n$ matrix whose typical element is $\di_{ij}$.
Then:
\begin{equation}
\sum_{i = 1}^n \sum_{j = 1}^n \di_{ij} \sqrt{n^{-1}\sigma_i^2} \sqrt{n^{-1}\sigma_j^2}
= \mathbf{v}^\transpose \mathbf{D} \mathbf{v}
= \mathbf{v}^\transpose \mathbf{v} \bracket[\bigg]{\frac{\mathbf{v}^\transpose \mathbf{D} \mathbf{v}}{\mathbf{v}^\transpose \mathbf{v}}}.
\end{equation}
The first factor is:
\begin{equation}
\mathbf{v}^\transpose \mathbf{v}
=
\sum_{i = 1}^n \bracket[\bigg]{\sqrt{n^{-1}\sigma_i^2}}^2
=
\frac{1}{n} \sum_{i = 1}^n \sigma_i^2.
\end{equation}
The second factor is the Rayleigh quotient, which is bounded by the largest eigenvalue of $\mathbf{D}$ for all $\mathbf{v} \in \Reals^n$.
It follows that:
\begin{equation}
\mathbf{v}^\transpose \mathbf{v} \bracket[\bigg]{\frac{\mathbf{v}^\transpose \mathbf{D} \mathbf{v}}{\mathbf{v}^\transpose \mathbf{v}}}
\leq
\frac{\eigmax}{n} \sum_{i = 1}^n \sigma_i^2.
\end{equation}
Similar to the proof of Proposition~\ref{prop:var-est-max-conservative}, use Lemma~\ref{lem:unit-term-variance} to write:
\begin{equation}
\frac{\eigmax}{n} \sum_{i = 1}^n \sigma_i^2
= \eigmax \paren{a_n - \eMSTE},
\end{equation}
so:
\begin{equation}
	n b_n^{-1} \Var{\eHT} = n \eigmax^{-1} \Var{\eHT} \leq a_n - \eMSTE,
\end{equation}
and:
\begin{equation}
	a_n - n b_n^{-1} \Var{\eHT} \geq \eMSTE.
\end{equation}

Consider the first case in the proposition using Lemma~\ref{lem:fast-convergence-conservative-route}.
By construction, $\eMSTE \geq 0$ and $b_n \davg^{-1} = \davg^{-1} \eigmax \geq 1$, so:
\begin{equation}
	\liminf_{n \to \infty} b_n \davg^{-1} \bracket[\big]{a_n - n b_n^{-1} \Var{\eHT}} \geq \liminf_{n \to \infty} \eMSTE \geq 0,
\end{equation}
and the first premise of the lemma is satisfied without invoking additional assumptions.
The second premise is satisfied by Corollary~\ref{coro:mod-var-est-fast-convergence} because $\eVarAvg = b_n \eVarBer$ and $b_n = \eigmax = \littleO{n^{0.5}\davg^{0.5}}$ by assumption.

Consider the second case in the proposition using Lemma~\ref{lem:positive-variance-conservative-route}.
By assumption:
\begin{equation}
	\liminf_{n \to \infty} \eMSTE \geq c > 0,
\end{equation}
so:
\begin{equation}
	\liminf_{n \to \infty} \bracket[\big]{a_n - n b_n^{-1} \Var{\eHT}} \geq c > 0,
\end{equation}
and the first premise of the lemma is satisfied.
Corollary~\ref{coro:mod-var-est-convergence} gives the second premise.
\end{proof}

\section{Proofs of Proposition~\ref{prop:Chebyshev-sharp}}

\begin{appdefinition}\label{def:Chebyshev-sharp-RV}
	Let $X(\alpha)$ be a random variable such that:
	\begin{equation}
		\prob[\big]{X(\alpha) = x}
		=
		\begin{cases}
			\alpha / 2 & \text{if } x = -1,
			\\
			1 - \alpha & \text{if } x = 0,
			\\
			\alpha / 2 & \text{if } x = 1.
		\end{cases}
	\end{equation}
\end{appdefinition}

\begin{applemma}
	The $\alpha$-tail bounds provided by Chebyshev's inequality is sharp for $X(\alpha)$.
\end{applemma}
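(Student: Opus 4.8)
The plan is to verify by direct computation that the symmetric three-atom law of $X(\alpha)$ is the extremal distribution for Chebyshev's inequality. First I would record the first two moments. By symmetry of the distribution about the origin, $\E{X(\alpha)} = 0$, and $\Var{X(\alpha)} = \E{X(\alpha)^2} = (\alpha/2)(-1)^2 + (1 - \alpha)\cdot 0 + (\alpha/2)(1)^2 = \alpha$, so the standard deviation equals $\sqrt{\alpha}$.

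Next I would apply Chebyshev's inequality at a deviation of $k = \alpha^{-1/2}$ standard deviations, that is, at the threshold $k\sqrt{\alpha} = 1$. This gives the tail bound $\prob{\abs{X(\alpha)} \geq 1} \leq \Var{X(\alpha)} = \alpha$, which is the relevant $\alpha$-tail statement (it is the smallest threshold at which the Chebyshev bound on the deviation probability drops to $\alpha$).

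Finally, I would observe that this bound is attained with equality. The only mass of $X(\alpha)$ lying outside the open interval $(-1, 1)$ sits on the two points $-1$ and $1$, so $\prob{\abs{X(\alpha)} \geq 1} = \prob{X(\alpha) = -1} + \prob{X(\alpha) = 1} = \alpha/2 + \alpha/2 = \alpha$, matching the Chebyshev bound exactly. Hence the $\alpha$-tail bound is sharp for $X(\alpha)$, and in the subsequent lemmas this extremal variable can be grafted onto the potential outcomes to transfer the sharpness to the sampling distribution of $\eHT$.

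There is no real obstacle in this argument; it is the classical construction showing that Chebyshev's inequality cannot be improved without further assumptions. The only point needing care is fixing the precise meaning of the ``$\alpha$-tail bound'': it is the assertion that a deviation of at least $\alpha^{-1/2}$ standard deviations from the mean occurs with probability at most $\alpha$, and $X(\alpha)$ turns exactly this inequality into an equality.
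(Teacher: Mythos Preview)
Your proposal is correct and matches the paper's own proof essentially line for line: compute $\E{X(\alpha)}=0$ and $\Var{X(\alpha)}=\alpha$, apply Chebyshev at the threshold $\sqrt{\Var{X(\alpha)}/\alpha}=1$, and observe that $\prob{\abs{X(\alpha)}\geq 1}=\alpha$ so the bound is attained with equality. Your added commentary clarifying what ``$\alpha$-tail bound'' means and foreshadowing how $X(\alpha)$ will be used in the subsequent proposition is accurate and helpful, but the mathematical content is identical to the paper's argument.
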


\begin{proof}
If Chebyshev's inequality is sharp, then:
\begin{equation}
\prob[\Bigg]{\abs[\Big]{X(\alpha) - \E[\big]{X(\alpha)}} \geq \sqrt{\frac{\Var{X(\alpha)}}{\alpha}} } = \alpha
\end{equation}

Consider the expectation and variance:
\begin{equation}
\E[\big]{X(\alpha)} = 0
\qquad\text{and}\qquad
\Var{X(\alpha)} = \alpha.
\end{equation}
Hence:
\begin{equation}
\prob[\Bigg]{\abs[\Big]{X(\alpha) - \E[\big]{X(\alpha)}} \geq \sqrt{\frac{\Var{X(\alpha)}}{\alpha}} }
=
\prob[\Big]{\abs[\big]{X(\alpha)} \geq 1} = \alpha. \tag*{\qedhere}
\end{equation}
\end{proof}

\begin{refprop}{\ref{prop:Chebyshev-sharp}}
	Chebyshev's inequality is asymptotically sharp with respect to the sampling distribution of the \HT\ estimator for every sequence of Bernoulli designs under Assumptions~\ref{ass:regularity-conditions} and~\ref{ass:restricted-interference}.
	The inequality remains sharp when Assumption~\ref{ass:restricted-interference} is strengthened to $\davg = \bigO{1}$ and $\dmax = \bigO{n^{0.5}}$.
\end{refprop}

\begin{proof}
The proposition states that under Assumption~\ref{ass:regularity-conditions}, for every sequence of Bernoulli designs, for every $\alpha \in (0,1)$, and for $\davg = \bigO{n^x}$ for every $0 \leq x < 1$, there exists a sequence of potential outcomes such that the normalized sampling distribution of the \HT\ estimator convergence in distribution to the random variable in Definition~\ref{def:Chebyshev-sharp-RV}:
\begin{equation}
	n^{0.5} \davg^{-0.5} \paren[\big]{\eHT - \E{\eHT}} \darrow X(\alpha).
\end{equation}

Let $\z_{\textsc{sub},n} = \paren{z_1, z_2, \dotsc, z_{b_n}}$ be a vector containing the treatment assignments of first $b_n$ units in the sample.
Let $b_n \to \infty$ at a rate such that $b_n = \littleO[\big]{n^{(x + 1) / 2}}$ where $x$ is such that $\davg = \bigO{n^x}$ for the desired rate.
Let $\mathcal{F}_n$ be all functions mapping from the support of $\z_{\textsc{sub},n}$ to $\braces{-1, 0, 1}$.
That is all functions $f : \braces{0, 1}^{b_n} \to \braces{-1, 0, 1}$.
Let:
\begin{equation}
	f^*_n = \argmin_{f \in \mathcal{F}_n} \bracket[\Big]{ \abs[\big]{\prob[\big]{f(\z_{\textsc{sub},n}) = 1} - \alpha / 2} + \abs[\big]{\prob[\big]{f(\z_{\textsc{sub},n}) = -1} - \alpha / 2} }.
\end{equation}
Ties are broken arbitrarily.

The Bernoulli design and Assumption~\ref{ass:regularity-conditions} imply that the support of $\z'$ consists of $2^{b_n}$ points each with a probability of at least $k^{-b_n}$.
It follows that:
\begin{equation}
	\lim_{n \to \infty} \prob[\big]{f^*_n(\z_{\textsc{sub},n}) = 1} = \alpha / 2
	\qquad\text{and}\qquad
	\lim_{n \to \infty} \prob[\big]{f^*_n(\z_{\textsc{sub},n}) = -1} = \alpha / 2.
\end{equation}

Let $h_n = \ceil[\big]{n^{(x + 1) / 2}}$, where $\ceil{\cdot}$ is the ceiling function.
Consider potential outcomes:
\begin{equation}
	y_i(\z) = \left\{ \begin{array}{ll}
	h_n^{-1} n^{0.5} \davg^{0.5} \bracket[\big]{z_i\prob{Z_i = 1} - (1-z_i)\prob{Z_i = 0}} f^*_n(\z_{\textsc{sub},n}) & \text{if } i \leq h_n,
	\\[0.5em]
	0 & \text{otherwise}.
	\end{array}\right.
\end{equation}
These potential outcomes imply:
\begin{equation}
	\di_{ij} = \left\{ \begin{array}{ll}
	1 & \text{if } i = j \text{ or } i,j\leq h_n,
	\\[0.5em]
	0 & \text{otherwise},
	\end{array}\right.
\end{equation}
so with a similar argument as in the proof of Proposition~\ref{prop:restricted-interference-necessary}:
\begin{equation}
	\davg = \frac{h_n^2}{n} + \frac{n - h_n}{n} = \frac{\ceil[\big]{n^{(x + 1) / 2}}^2}{n} + \frac{n - \ceil[\big]{n^{(x + 1) / 2}}}{n} \sim n^x + 1,
\end{equation}
as desired, namely $\davg = \bigO{n^x}$.
Furthermore:
\begin{equation}
	\dmax = h_n = \ceil[\big]{n^{(x + 1) / 2}} \sim n^{(x + 1) / 2}.
\end{equation}
Note that $x = 0$ implies $\dmax = \bigO{n^{0.5}}$.

Next, make sure the potential outcomes satisfy Assumption~\ref{ass:regularity-conditions}.
First, note that:
\begin{equation}
	-1 \leq \bracket[\big]{z_i\prob{Z_i = 1} - (1-z_i)\prob{Z_i = 0}} f^*_n(\z_{\textsc{sub},n}) \leq 1.
\end{equation}
Next:
\begin{align}
h_n^{-1} n^{0.5} \davg^{0.5} &= h_n^{-1} n^{0.5} \bracket[\bigg]{\frac{h_n^2}{n} + \frac{n - h_n}{n}}^{0.5}
\leq h_n^{-1} n^{0.5} \bracket[\bigg]{\frac{h_n}{n^{0.5}} + 1}
= 1 + h_n^{-1} n^{0.5} \leq 2
\end{align}
where the last inequality follows from $h_n = \ceil[\big]{n^{(x + 1) / 2}} \geq n^{0.5}$ for all $x \geq 0$.
It follows that $-2 \leq y_i(\z) \leq 2$ for all units and assignments, so Assumption~\ref{ass:regularity-conditions} holds.

Consider the \HT\ estimator:
\begin{align}
	\eHT &= \frac{1}{n} \sum_{i=1}^{n}\frac{Z_i Y_i}{\prob{Z_i = 1}} - \frac{1}{n} \sum_{i=1}^{n} \frac{(1-Z_i) Y_i}{\prob{Z_i = 0}}
	\\
	&= \frac{1}{n} \sum_{i=1}^{n}\frac{Z_i y_i(\Z)}{\prob{Z_i = 1}} - \frac{1}{n} \sum_{i=1}^{n} \frac{(1-Z_i) y_i(\Z)}{\prob{Z_i = 0}}
	\\
	&= \frac{1}{n} \sum_{i=1}^{h_n}\frac{Z_i y_i(\Z)}{\prob{Z_i = 1}} - \frac{1}{n} \sum_{i=1}^{h_n} \frac{(1-Z_i) y_i(\Z)}{\prob{Z_i = 0}}
	\\
	&= \frac{1}{n} \sum_{i=1}^{h_n} Z_i h_n^{-1} n^{0.5} \davg^{0.5} f^*_n(\z_{\textsc{sub},n})
	 + \frac{1}{n} \sum_{i=1}^{h_n} (1-Z_i) h_n^{-1} n^{0.5} \davg^{0.5} f^*_n(\z_{\textsc{sub},n})
	\\
	&= \frac{n^{0.5} \davg^{0.5}}{n} f^*_n(\z_{\textsc{sub},n}) = n^{-0.5} \davg^{0.5} f^*_n(\z_{\textsc{sub},n})
\end{align}
It follows that:
\begin{equation}
	\E{\eHT} = n^{-0.5} \davg^{0.5} \E[\big]{f^*_n(\z_{\textsc{sub},n})} = n^{-0.5} \davg^{0.5} \bracket[\Big]{ \prob[\big]{f^*_n(\z_{\textsc{sub},n}) = 1} - \prob[\big]{f^*_n(\z_{\textsc{sub},n}) = -1} }.
\end{equation}
Taken together:
\begin{equation}
	n^{0.5} \davg^{-0.5} \paren[\big]{\eHT - \E{\eHT}} = f^*_n(\z_{\textsc{sub},n}) - \bracket[\Big]{ \prob[\big]{f^*_n(\z_{\textsc{sub},n}) = 1} - \prob[\big]{f^*_n(\z_{\textsc{sub},n}) = -1} }
\end{equation}

It remains to show that:
\begin{equation}
\lim_{n \to \infty} \prob[\Big]{n^{0.5} \davg^{-0.5} \paren[\big]{\eHT - \E{\eHT}} \leq s} = \prob[\big]{X(\alpha) \leq s},
\end{equation}
for all $s \in \Reals \setminus \braces{-1, 0, 1}$.
Note that the construction of $f^*_n$ is such that:
\begin{equation}
	\lim_{n \to \infty} \bracket[\Big]{ \prob[\big]{f^*_n(\z_{\textsc{sub},n}) = 1} - \prob[\big]{f^*_n(\z_{\textsc{sub},n}) = -1} } = 0
\end{equation}
Hence, for any fixed $s \in \Reals \setminus \braces{-1, 0, 1}$, there exists an $n'$ such that the smallest distance from $s$ to any point in $\braces{-1, 0, 1}$ for all $n \geq n'$ is less than:
\begin{equation}
	\prob[\big]{f^*_n(\z_{\textsc{sub},n}) = 1} - \prob[\big]{f^*_n(\z_{\textsc{sub},n}) = -1}.
\end{equation}
For the $s \in \Reals \setminus \braces{-1, 0, 1}$ under consideration and all such $n \geq n'$:
\begin{equation}
\prob[\Big]{n^{0.5} \davg^{-0.5} \paren[\big]{\eHT - \E{\eHT}} \leq s} = \prob[\big]{f^*_n(\z_{\textsc{sub},n}) \leq s}
\end{equation}
As noted above, by construction of $f^*_n$:
\begin{equation}
\lim_{n \to \infty} \prob[\big]{f^*_n(\z_{\textsc{sub},n}) \leq s}
= \prob[\big]{X(\alpha) \leq s}
= \begin{cases}
	0 & \text{if } x < -1,
	\\
	\alpha / 2 & \text{if } x < 0,
	\\
	1 - \alpha / 2 & \text{if } x < 1,
	\\
	1 & \text{if } x \geq 1.
\end{cases} \tag*{\qedhere}
\end{equation}
\end{proof}

\begin{appremark}
The key ingredient of the proof of Proposition~\ref{prop:Chebyshev-sharp} is that the support of $\z_{\textsc{sub},n}$ is sufficient large so there exists $f^*_n$ such that:
\begin{equation}
	\lim_{n \to \infty} \prob[\big]{f^*_n(\z_{\textsc{sub},n}) = 1} = \alpha / 2
	\qquad\text{and}\qquad
	\lim_{n \to \infty} \prob[\big]{f^*_n(\z_{\textsc{sub},n}) = -1} = \alpha / 2.
\end{equation}
This is ensured by $b_n \to \infty$ as long as the treatment assignments are sufficiently independent, which holds for complete and paired randomization.
The complication with complete randomization is that $b_n$ must grow sufficiently slowly so that $\icoutavg=\littleO[\big]{n^{0.5}}$.
This may make the convergence in distribution to $X(\alpha)$ slower, but should not pose a concern otherwise.
For paired randomization, one needs to make sure the content of $\z_{\textsc{sub},n}$ is such that Assumption~\ref{ass:pair-separation} holds.
This can be done by having at most one unit in each pair in $\z_{\textsc{sub},n}$.
The proof for arbitray designs requires similar considerations with respect to Assumptions~\ref{ass:design-mixing} and~\ref{ass:design-separation}.
\end{appremark}

\section{Proofs of Lemma~\ref{lem:ate-function-Lipschitz} and Propositions~\ref{prop:total-variation-distance-bound} and~\ref{prop:Wasserstein-bound}}

\begin{refprop}{\ref{prop:total-variation-distance-bound}}
	Given Assumption~\ref{ass:bounded-unit-effects}:
	\begin{equation}
		\abs{\eEATEp - \eEATEq} \leq 2 \teconst \delta(P, Q).
	\end{equation}
\end{refprop}

\begin{proof}
Because the sample space is discrete:
\begin{equation}
\eEATEp = \Ep{\eATE(\Z)} = \sum_{\z \in \braces{0,1}^n} \probp{\Z = \z} \eATE(\z)
\end{equation}
and similarly for $\eEATEq$.
Hence:
\begin{align}
\abs{\eEATEp - \eEATEq} &= \abs[\Bigg]{\sum_{\z \in \braces{0,1}^n} \bracket[\big]{\probp{\Z = \z} - \probq{\Z = \z}} \eATE(\z)}
\\
								&\leq \sum_{\z \in \braces{0,1}^n} \abs{\eATE(\z)} \times \abs[\big]{\probp{\Z = \z} - \probq{\Z = \z}}
\end{align}

The average treatment effect function, $\eATE(\z)$, is bounded as:
\begin{equation}
	\abs{\eATE(\z)} \leq \frac{1}{n} \sum_{i=1}^{n} \abs{\tau_i(\z_{-i})} \leq \frac{1}{n} \sum_{i=1}^{n} \teconst = \teconst,
\end{equation}
where Assumption~\ref{ass:bounded-unit-effects} gives $\abs{\tau_i(\z_{-i})} \leq \teconst$.
It follows that:
\begin{equation}
	\abs{\eEATEp - \eEATEq} \leq \teconst \sum_{\z \in \braces{0,1}^n} \abs[\big]{\probp{\Z = \z} - \probq{\Z = \z}}
\end{equation}

Let $A = \braces{\z \in \braces{0,1}^n: \probp{\Z = \z} \geq \probq{\Z = \z}}$, and $A^\complement = \braces{0,1}^n \setminus A$, so that:
\begin{equation}
	\sum_{\z \in \braces{0,1}^n} \abs[\big]{\probp{\Z = \z} - \probq{\Z = \z}}
	=
	\bracket[\big]{P(A) - Q(A)} + \bracket[\big]{Q(A^\complement) - P(A^\complement)}
\end{equation}
The two terms are equal:
\begin{equation}
	P(A) - Q(A) = Q(A^\complement) - P(A^\complement),
\end{equation}
because:
\begin{align}
	P(A) + P(A^\complement) = \sum_{\z \in A} \probp{\Z = \z} + \sum_{\z \in A^\complement} \probp{\Z = \z} = \sum_{\z \in \braces{0,1}^n} \probp{\Z = \z} = 1
\end{align}
and similarly with $Q$.
Hence:
\begin{equation}
	\abs{\eEATEp - \eEATEq} \leq 2\teconst \bracket[\big]{P(A) - Q(A)}
\end{equation}
The event under consideration is in the event space, $A \in \mathcal{F}$, so:
\begin{equation}
	\bracket[\big]{P(A) - Q(A)} \leq \sup_{x \in \mathcal{F}} \, \abs{P(x) - Q(x)} = \delta(P, Q). \tag*{\qedhere}
\end{equation}
\end{proof}

\begin{reflem}{\ref{lem:ate-function-Lipschitz}}
	Given Assumption~\ref{ass:bounded-unit-effects}, $\eATE(\z)$ is $2 \teconst n^{-1/r} \icoutmom{r / (r - 1)}$-Lipschitz continuous with respect to the $L_r$ distance over $\braces{0, 1}^n$ for any $r \geq 1$.
\end{reflem}

\begin{proof}
The lemma states that:
\begin{equation}
\abs[\big]{\eATE(\z') - \eATE(\z'')} \leq 2 \teconst n^{-1/r} \icoutmom{r / (r - 1)} \norm{\z' - \z''}_r
\end{equation}
for any $\z', \z'' \in \braces{0,1}^n$ and $r \geq 1$.

Consider a set of $n + 1$ vectors in $\braces{0,1}^n$ indexed by $v \in \braces{0, 1, 2, \dotsc, n}$, denoted $\z^{v}$.
The typical element of these vectors is:
\begin{equation}
	z^{v}_i = \begin{cases}
		z'_i & \text{if } v < i,
		\\
		z''_i & \text{if } v \geq i.
	\end{cases}
\end{equation}
In other words, as the index goes from $0$ to $n$, the vector gradually goes from $\z'$ to $\z''$, changing at most one element at each step.
For example, $\z^{2} = \paren{z''_1, z''_2, z'_3, z'_4, \dotsc, z'_n}$.

Note that $\z^{0} = \z'$ and $\z^{n} = \z''$, so:
\begin{equation}
\abs[\big]{\eATE(\z') - \eATE(\z'')} = \abs[\big]{\eATE(\z^{0}) - \eATE(\z^{n})}
\end{equation}
Add and subtract $\eATE(\z^{v})$ for $v \in \braces{1, 2, \dotsc, n - 1}$ to get:
\begin{multline}
\abs[\big]{\eATE(\z^{0}) - \eATE(\z^{n})}
\\
= \abs[\big]{\eATE(\z^{0}) + \eATE(\z^{1}) - \eATE(\z^{1}) + \dotsb + \eATE(\z^{n - 1}) - \eATE(\z^{n - 1}) - \eATE(\z^{n})},
\end{multline}
which yields the following bound:
\begin{equation}
\abs[\big]{\eATE(\z^{0}) - \eATE(\z^{n})} \leq \sum_{v = 1}^n \abs[\big]{\eATE(\z^{v}) - \eATE(\z^{v - 1})}.
\end{equation}

Recall that $\z^{v}$ and $\z^{v - 1}$ differ at most in one element.
In particular:
\begin{equation}
	\z^{v} - \z^{v - 1} = \paren{0, \dotsc, 0, z_v'' - z_v', 0, \dotsc, 0}.
\end{equation}
Hence, if $z''_v = z'_v$, then $\eATE(\z^{v}) = \eATE(\z^{v - 1})$, so:
\begin{equation}
	\abs[\big]{\eATE(\z^{v}) - \eATE(\z^{v - 1})} = \abs{z_v' - z_v''} \times \abs[\big]{\eATE(\z^{v}) - \eATE(\z^{v - 1})}.
\end{equation}

Recall Definition~\ref{def:acate}, and note:
\begin{equation}
	\abs[\big]{\eATE(\z^{v}) - \eATE(\z^{v - 1})} \leq \frac{1}{n} \sum_{i=1}^{n} \abs[\big]{\tau_i(\z^{v}_{-i}) - \tau_i(\z^{v - 1}_{-i})}
\end{equation}
where $\z^{v}_{-i}$ is $\z^{v}$ with the $i$th element deleted.
Similar to Section~\ref{sec:arbitrary-designs}, let:
\begin{equation}
	\tilde{\z}^{v}_{-i} = (I_{1i}z^{v}_1, \dotsc, I_{i - 1,i}z^{v}_{i - 1}, I_{i + 1,i}z^{v}_{i + 1}, \dotsc, I_{ni}z^{v}_n).
\end{equation}
Because of the definition of $I_{vi}$, we have $\tau_i(\z^{v}_{-i}) = \tau_i(\tilde{\z}^{v}_{-i})$.
As before, $\tilde{\z}^{v}_{-i}$ and $\tilde{\z}^{v - 1}_{-i}$ differ at most in one element:
\begin{equation}
	\tilde{\z}^{v}_{-i} - \tilde{\z}^{v - 1}_{-i} = \paren{0, \dotsc, 0, I_{vi}z_v'' - I_{vi}z_v', 0, \dotsc, 0}.
\end{equation}
except when $v = i$, in which case $\tilde{\z}^{v}_{-i} - \tilde{\z}^{v - 1}_{-i} = \mathbf{0}$.
It follows that $\abs{\tau_i(\z^{v}_{-i}) - \tau_i(\z^{v - 1}_{-i})} = 0$ when either $v = i$ or $I_{vi} = 0$, hence:
\begin{equation}
	\abs{\tau_i(\z^{v}_{-i}) - \tau_i(\z^{v - 1}_{-i})} = \indicator{v \neq i \wedge I_{vi} = 1} \times \abs{\tau_i(\z^{v}_{-i}) - \tau_i(\z^{v - 1}_{-i})}.
\end{equation}
[An alternative route is to use $\indicator{v \neq i \wedge z_v' \neq z_v'' \wedge I_{vi} = 1}$ here, which would make the lemma sharper but necessitate other changes.]

Finally, by Assumption~\ref{ass:bounded-unit-effects}, $\abs{\tau_i(\z^{v}_{-i}) - \tau_i(\z^{v - 1}_{-i})} \leq 2 \teconst$, so:
\begin{equation}
	\abs{\tau_i(\z^{v}_{-i}) - \tau_i(\z^{v - 1}_{-i})} \leq 2 \teconst\indicator{v \neq i \wedge I_{vi} = 1} \leq 2 \teconst I_{vi},
\end{equation}
where the last inequality follows from $I_{vi} \in \braces{0,1}$.

Returning to the assignment-conditional average treatment effect functions:
\begin{equation}
	\abs[\big]{\eATE(\z^{v}) - \eATE(\z^{v - 1})} \leq \frac{1}{n} \sum_{i=1}^{n} \abs[\big]{\tau_i(\z^{v}_{-i}) - \tau_i(\z^{v - 1}_{-i})} \leq \frac{2 \teconst}{n} \sum_{i=1}^{n} I_{vi} = \frac{2 \teconst \icout_v}{n},
\end{equation}
where the last equality follows from the definition $\icout_i = \sum_{j=1}^n I_{ij}$ in Section~\ref{sec:quantifying-interference}.
Taken together:
\begin{equation}
\abs[\big]{\eATE(\z') - \eATE(\z'')}
\leq \sum_{v = 1}^n \abs{z_v' - z_v''} \times \abs[\big]{\eATE(\z^{v}) - \eATE(\z^{v - 1})}
\leq \frac{2 \teconst}{n} \sum_{v = 1}^n \abs{z_v' - z_v''} \icout_v
\end{equation}
Use Hölder's inequality with conjugates $r$ and $r / (r - 1)$ to separate the factors in the sum:
\begin{equation}
\frac{2 \teconst}{n} \sum_{i = 1}^n \abs{z_i' - z_i''} \icout_i \leq 2 \teconst n^{-1/r} \bracket[\bigg]{\sum_{i = 1}^n \abs{z_i' - z_i''}^r}^{\frac{1}{r}} \bracket[\bigg]{\frac{1}{n} \sum_{i = 1}^n \icout_i^{r / (r - 1)}}^{\frac{r - 1}{r}},
\end{equation}
which is equal to $2 \teconst n^{-1/r} \icoutmom{r / (r - 1)} \norm{\z' - \z''}_r$.
\end{proof}

\begin{refprop}{\ref{prop:Wasserstein-bound}}
	Given Assumption~\ref{ass:bounded-unit-effects}:
	\begin{equation}
		\abs{\eEATEp - \eEATEq} \leq 2 \teconst n^{-1/r} \icoutmom{r / (r - 1)} W_r(P, Q).
	\end{equation}
\end{refprop}

\newcommand{\normeATE}{\tilde{\tau}_{\normalfont\ATE}}

\begin{proof}
Let $\normeATE(\z, r)$ be $\eATE(\z)$ normalized by its Lipschitz constant for the $L_r$ distance:
\begin{equation}
	\normeATE(\z, r) = \frac{\eATE(\z)}{2 \teconst n^{-1/r} \icoutmom{r / (r - 1)}}.
\end{equation}
We can now write:
\begin{multline}
	\abs{\eEATEp - \eEATEq} = \abs[\big]{\Ep{\eATE(\Z)} - \Eq{\eATE(\Z)}}
	\\
	= 2 \teconst n^{-1/r} \icoutmom{r / (r - 1)} \abs[\big]{\Ep{\normeATE(\z, r)} - \Eq{\normeATE(\z, r)}},
\end{multline}
because $2 \teconst n^{-1/r} \icoutmom{r / (r - 1)}$ is a positive constant given $n$ and $r$.

Under Assumption~\ref{ass:bounded-unit-effects}, $\normeATE(\z, r)$ is 1-Lipschitz continuous.
Let $\mathcal{L}_{1,r}$ contain all 1-Lipschitz continuous functions on $\braces{0, 1}^n$ with respect to the $L_r$ distance.
Bound the difference between the normalized estimands as:
\begin{equation}
	\abs[\big]{\Ep{\normeATE(\z, r)} - \Eq{\normeATE(\z, r)}}
	\leq
	\sup\braces[\Big]{ \abs[\big]{\Ep{f(\Z)} - \Eq{f(\Z)}} : f \in \mathcal{L}_{1,r} }.
\end{equation}
By the Kantorovich-Rubinstein duality theorem \citep[see, e.g.,][]{Edwards2011}:
\begin{equation}
	\sup\braces[\Big]{ \abs[\big]{\Ep{f(\Z)} - \Eq{f(\Z)}} : f \in \mathcal{L}_{1,r} } = W_r(P, Q). \tag*{\qedhere}
\end{equation}
\end{proof}

\clearpage

\setcounter{section}{0}%
\setcounter{table}{0}%
\setcounter{figure}{0}%
\setcounter{equation}{0}%

\renewcommand\thesection{B\arabic{section}}%
\renewcommand\thetable{B\arabic{table}}%
\renewcommand\thefigure{B\arabic{figure}}%
\renewcommand\theequation{B\arabic{equation}}%

\begin{center}
	\Huge \textbf{Supplement B: Simulation study}
\end{center}
\bigskip

	\section{Simulation study}

	We will depart slightly from the setting investigated in the main paper for practical reasons. The theoretical results apply to any sequence of samples satisfying the stated conditions. There is no need to consider sampling or any other form of random data generation apart from treatment assignment. It may, however, be hard to pick a specific sequence of samples in a representative and objective way. To avoid the issue, we specify data generating processes and resample from them in the simulations. Since the theoretical results apply to each individual draw from these processes, we know that they apply to the average of the draws as well. A consequence of introducing sampling is that the expectation operators in this supplement are taken over both the randomization and sampling distributions.

	The study includes three data generating processes which illustrate the performance under different types of interference structures. The potential outcomes in all three settings are generated as:
	\begin{equation}
	y_i(\z) = \left\{ \begin{array}{ll}
	2z_i + X_i + \varepsilon_i & \text{if } \bal{G_i} > 0,
	\\[0.5em]
	z_i + X_i + \varepsilon_i & \text{if } \bal{G_i} = 0,
	\\[0.5em]
	X_i + \varepsilon_i & \text{if } \bal{G_i} < 0,
	\end{array}\right.
	\end{equation}
	where $X_i$ represents an observed covariate that is uniformly distributed on the interval $[0, 3]$ and $\varepsilon_i$ represents an error term that is uniformly distributed on the interval $[0, 7]$. The variables are independent both within and between units. The covariate $X_i$ is only used to construct pairs when treatment is assigned with paired randomization. Interference is introduced through $\bal{G_i}$. For each unit, an \textit{interference set} $G_i$ is generated as a subset of $\bfU \setminus i$. The function $\bal{G_i}$ is then defined as the balance of treatment assignments of the units whose indices are in the interference set:
	\begin{equation}
	\bal{G_i} = \sum_{j\in G_i} \paren{2 Z_j - 1}.
	\end{equation}
	The function returns a positive value if a majority of the units in $G_i$ are treated; a negative value if a majority is in control; and zero if the same number of units are treated and in control. We set $\bal{G_i} = 0$ when $G_i$ is empty. The construction of $G_i$ decides the interference structure. The potential outcomes are such that $I_{ij}=1$ if and only if $i\in G_j$, and, consequently, $\di_{ij}=1$ if and only if a unit $\ell$ exists such that $\ell\in G_i \cap G_j$.

	The three data generating processes differ in how they generate $G_i$. In particular, we investigate when interference is contained within groups of units, when the interference structure is randomly generated and when only one unit is interfering with other units. The three types of structures are illustrated in Figure \ref{fig:simulation-settings}. For each structure, we investigate different amounts of interference dependence, ranging from when the no-interference assumption holds to when the interference is so excessive that Assumption \ref{ass:restricted-interference} does not hold.

	\begin{figure}
		\centering
		\includegraphics[width=\textwidth]{\figurepath{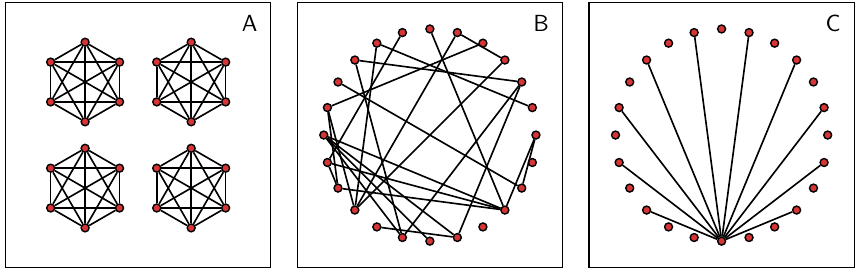}}
		\caption{The three types of interference structures investigated in the simulation study. The vertices represent units and the edges denote $I_{ij}=1$ or $I_{ji}=1$ for the connected units. The structures differ in the denseness of edges relative to the amount of interference dependence. In Panel A, the interference is restricted to groups of units so that all units in a group are interfering with each other as investigated in Section \ref{sec:interference-groups}. This is the densest possible interference structure for a given value of $\davg$. Panel B presents a randomly generated structure as investigated in Section \ref{sec:interference-random}. Panel C presents the setting where a single unit interferes with other units as investigated in Section \ref{sec:interference-oneunit}. This produces the sparsest interference structure for a given value of $\davg$.} \label{fig:simulation-settings}
	\end{figure}

	The potential outcomes are generated so that $\E[\big]{\tau_i(\Z_{-i}) \given G_i, X_i, \varepsilon_i}=1$ for all $i$ under any design such that $\bal{G_i}$ is symmetrically distributed around $0$ (which holds for all designs we consider in these simulations). We, thus, know that $\eEATE=1$ throughout the simulations. We have $\Var{Y_i \given G_i, \Z} \approx 4.8$ for all $i$, so the treatment effect is approximately half of a conditional standard deviation of the outcome variable.

	We focus on the experimental designs discussed in the main paper, namely the Bernoulli, complete and paired randomization designs. The pairs in the last design are constructed using the observed covariate. The units are ranked by $X_i$, and pairs are formed as adjacent units. That is, the two units with the lowest values of $X_i$ form one pair, the next two units form another pair, and so on. This mirrors the typical use of paired randomization to improve precision. We investigate samples ranging from 100 to 100,000 units with constant ratio of $10^{1/8}$ between steps: $n\in\braces{10^{2 + x/8}: x = 0, 1, \cdots, 24}$. The sample sizes are rounded to an even number to accommodate paired randomization.

	The primary focus in the simulations is the root mean square error (\RMSE) of the H\'ajek estimator. The results for other statistics and for the Horvitz-Thompson estimator are presented in the tables at the end of this supplement. To summarize these results, the \HT\ estimator performs worse than the \HA\ estimator, especially for the Bernoulli design, but the qualitative conclusions are the same. The bias is negligible relative to the variance for the settings and sample sizes we consider here.

	\subsection{Interference within groups} \label{sec:interference-groups}

	The first type of interference structure we consider is inspired by the commonly used partial interference assumption. Let $G_i = \braces{j : j \neq i \text{ and } \ceil{j / a_n} = \ceil{i / a_n}}$ be the interference set for $i$, where $a_n$ determines the size of $G_i$. The sets are symmetrical in the sense that $i\in G_j$ if and only if $j\in G_i$. One can interpret this setting as if the units are assigned to groups indexed by $\ceil{i / a_n}$, and all units in a group are interfering with each other.

	The size of the groups decides the amount of interference, and since the sequence $(a_n)$ controls the group size, we can use it to control $\davg$. In particular, we have $I_{ij}=\di_{ij}$, so $\icoutavg = \davg \approx a_n$ subject only to rounding error. Lemma \ref{lem:interference-measure-inequalities} gives $\icoutavg \leq \davg$, so $\icoutavg = \davg$ is the maximum value of $\icoutavg$. The data generating process thus produces the densest possible interference structure in the sense of the largest $\icoutavg$ for a given $\davg$.

	The results are presented in Figure \ref{fig:groupsum-interference}. The figure presents five different sequences of $(a_n)$. In the first case, $a_n$ is constant at one. All units are here assigned to a group of their own, and there is no interference. We observe the expected root-$n$ rate decrease in the \RMSE. The performances of Bernoulli and complete randomization are close to identical, but paired randomization provides a notable efficiency gain. The four subsequent settings introduce interference at different levels. All settings start with $a_{100}\approx 25$, after which the groups grow at different rates as described in the legend of the figure.

	In the first case with interference, we have $a_{n}=25$, so the groups are of a constant size of $25$ (disregarding rounding). We see that the \RMSE\ is considerably higher than under no interference for the smaller sample sizes, especially for the Bernoulli design. As the sample grows, however, the \RMSE\ approaches the performance under no interference. We know from Corollary \ref{coro:bernoulli-complete-root-n} that the H\'ajek estimator is root-$n$ consistent when the average interference dependence is bounded. We have $\davg=25$ when $a_{n}=25$, so we, indeed, expect the \RMSE\ to be of the same order as without interference in this case. The theoretical results are mirrored in the simulations.

	\begin{figure}
		\centering
		\includegraphics[width=\textwidth]{\simulationpath{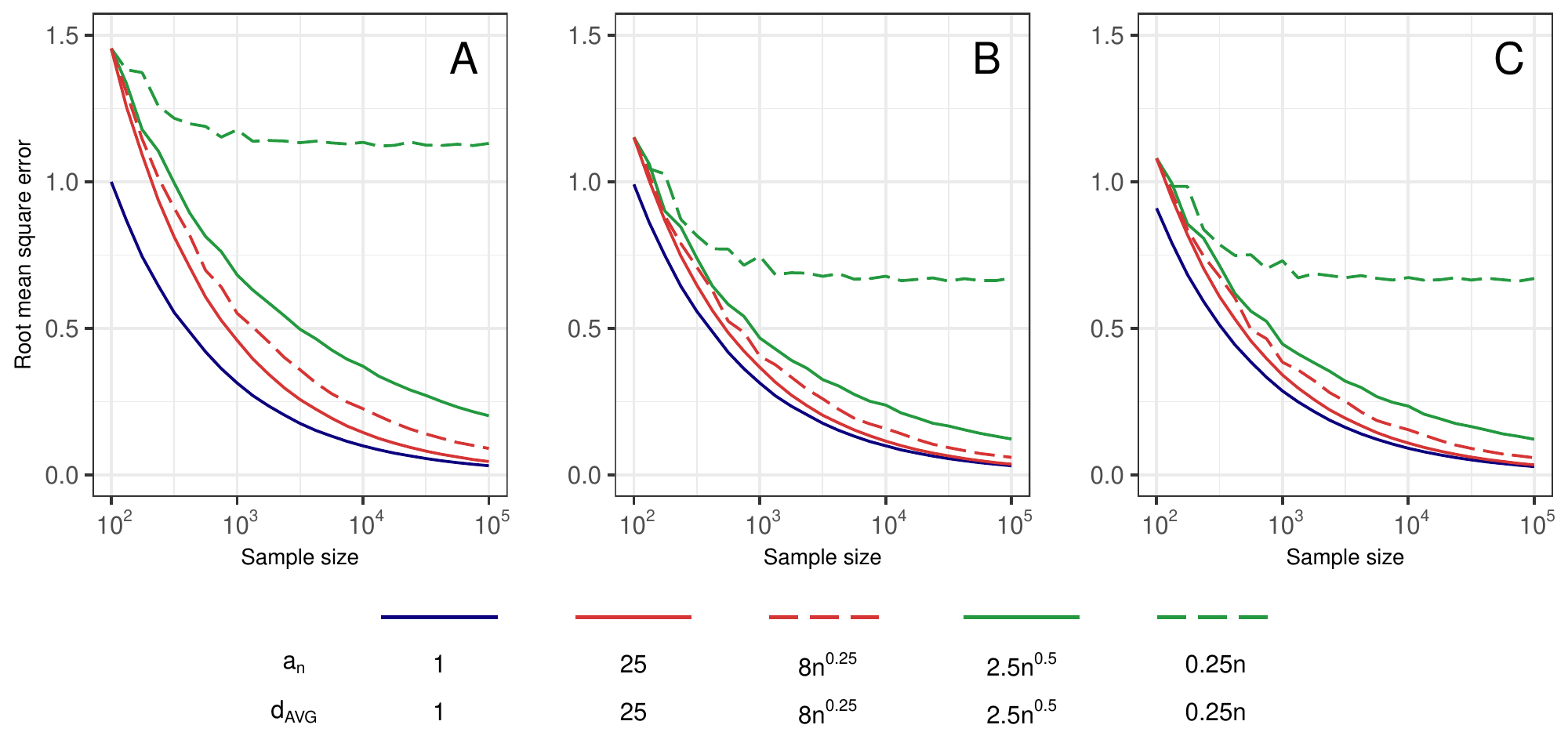}}
		\caption{Estimator performance when the interference is contained within groups of units. The figure presents the root mean square error of the H\'ajek estimator with respect to \EATE\ for the interference structure described in Section \ref{sec:interference-groups}. The panels present the results from the Bernoulli (A), complete (B) and paired (C) randomization designs. The lines correspond to different sequences $(a_n)$, as described in the legend, which govern the amount of interference dependence. Each line connects 25 data points with sample sizes evenly spaced on a logarithmic scale between 100 and 100,000. Each data point is the \RMSE\ averaged over 50,000 draws from the corresponding data generating process normalized by the average \RMSE\ from the Bernoulli design with sample size 100 under no interference. The simulation error is negligible; the non-smoothness of the \RMSE\ curve for the smaller sample sizes is due to rounding of $a_n$ when constructing the groups.} \label{fig:groupsum-interference}
	\end{figure}

	The two subsequent cases, $a_{n}=8n^{0.25}$ and $a_{n}=2.5n^{0.5}$, investigate the performance when $\davg$ grows with the sample size but at a rate so that restricted interference holds. We know from the theoretical results that the estimator is consistent, but the rate of convergence might be slower than root-$n$. This is reflected in the simulations. The \RMSE\ decreases as the sample grows, but it does so at a slower rate than under no interference. Proportionally, the \RMSE\ is considerably larger than under no interference for the larger sample sizes. This illustrates that large samples may be needed to draw inferences about \EATE\ in settings with considerable interference. Note that complete randomization performs better than the Bernoulli design despite having a worse upper bound on the rate of convergence in this case.

	The last sequence investigates the performance when restricted interference does not hold. Here, $a_{n}=0.25n$, so the data generating process partitions the units into four groups of equal size. The number of units in the groups grows proportionally to the sample size. In this case, $\davg$ is not dominated by $n$, and the theoretical results do not apply; the estimator may not be consistent. Indeed, while the \RMSE\ decreases initially, it levels out, and for samples larger than about a thousand units, there is no noticeable decrease in the \RMSE.

	\subsection{Random interference} \label{sec:interference-random}

	The second type of interference structure randomly generates $G_i$. We know of no way to draw interference structures uniformly from all structures with a specific value of $\davg$. The task is not completely unlike the problem of uniformly generating random regular graphs \citep{Janson2000Random}. To make the data generation tractable, we use a simple procedure that controls the expected value of $\davg$. We generate $G_i$ so that, independently, $j\in G_i$ with probability $(a_n - 1) / (n - 1)$. This corresponds to Gilbert's version of the Erdős–Rényi model of random graphs \citep{Gilbert1959Random}. The marginal distribution of $\icout_i  - 1$ is, thus, binomial with $n-1$ trials of probability $(a_n - 1) / (n - 1)$, so $\E{\icout_i} = \E{\icoutavg} = a_n$. In Section~\ref{sec:exp-of-davg-in-sim} of this supplement, we show that:
	\begin{equation}
	\E{\davg} = n - (n-1)\paren[\bigg]{1 - \frac{a_n - 1}{n - 1}}^{n}\paren[\bigg]{1 + \frac{a_n - 1}{n - 1}}^{n-2}.
	\end{equation}
	The expectation is of the same order as the square of $a_n$ when the sequence is dominated by $n^{0.5}$, and it is of order $n$ when $a_n$ dominates $n^{0.5}$:
	\begin{equation}
	\E{\davg} \sim \left\{ \begin{array}{ll}
	a_n^2 & \text{if } a_n = \littleO{n^{0.5}},
	\\[0.5em]
	n & \text{if } n^{0.5} = \littleO{a_n}.
	\end{array}\right.
	\end{equation}
	In other words, we can control the expected value of $\davg$ with the sequence $(a_n)$. We investigate five sequences $(a_n)$ that yield growth rates of $\davg$ roughly similar to the five settings investigated in the previous section.

	\begin{figure}
		\centering
		\includegraphics[width=\textwidth]{\simulationpath{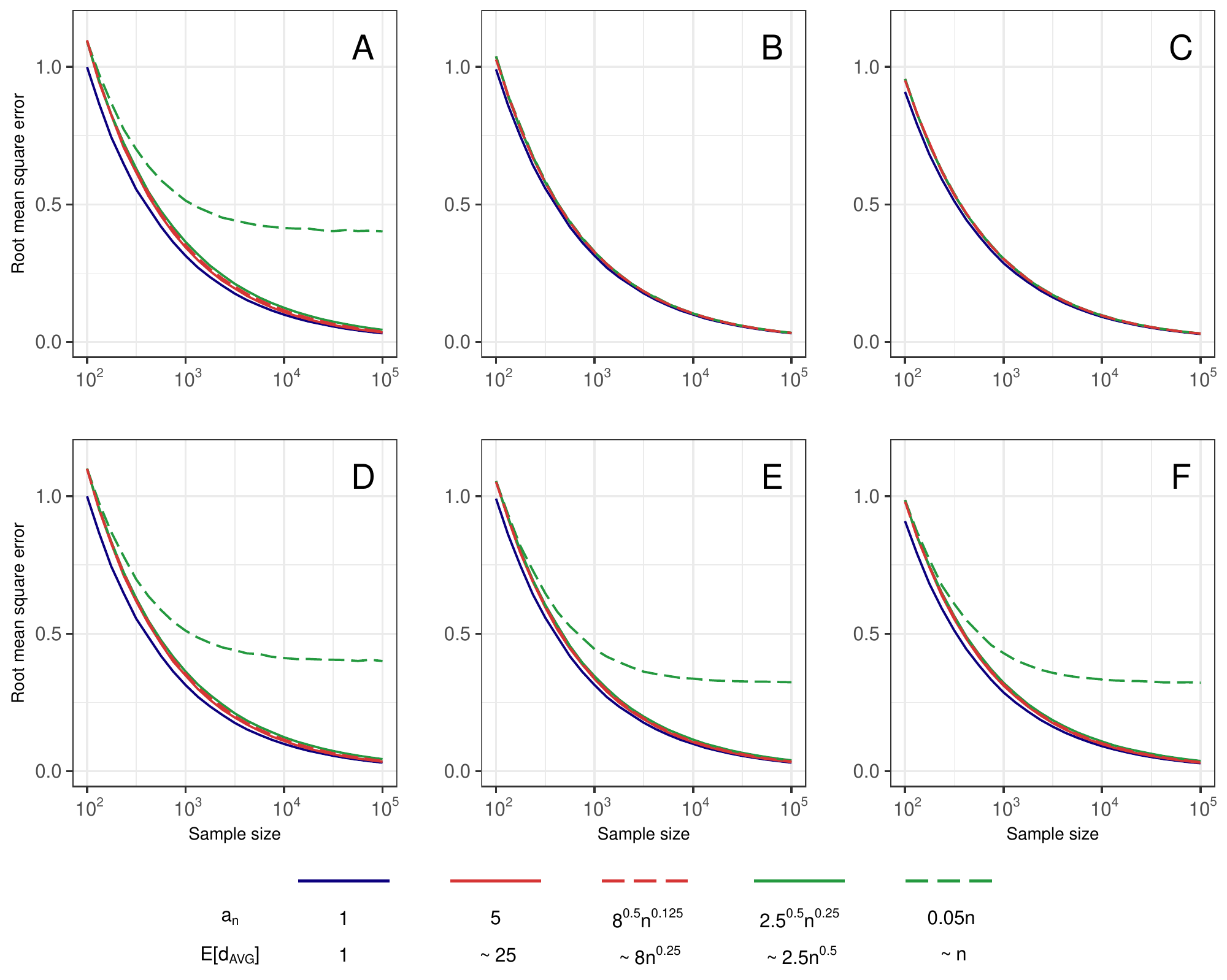}}
		\caption{Estimator performance when the interference structure is randomly generated. The figure presents the root mean square error of the H\'ajek estimator with respect to \EATE\ for the interference structure described in Section \ref{sec:interference-random}. The panels present the results from the Bernoulli (A, D), complete (B, E) and paired (C, F) randomization designs for the unweighted (A, B, C) and weighted (D, E, F) version of the balance function as described in the text. See the note of Figure \ref{fig:groupsum-interference} for additional details.} \label{fig:random-interference}
		%These sequences govern the amount of interference dependence as also described in the legend. The tilde operator denotes asymptotic equality. Each line connects 25 data points with sample sizes evenly spaced on a logarithmic scale between 100 and 100,000. Each data point is the average \RMSE\ over 50,000 draws from the corresponding data generating process normalized by the \RMSE\ from the Bernoulli design under no interference.
	\end{figure}

	The results are presented in the first row of panels in Figure \ref{fig:random-interference}. As in the previous setting, when the interference dependence is bounded (i.e., $a_n = 5$), the performance is similar to the case without interference, $a_n = 1$. However, unlike the previous setting, all sequences of $a_n$ exhibit this behavior. The one exception is when $a_n=0.05n$ under the Bernoulli design in Panel A. The behavior is somewhat artificial due to the stylized data generating process, but it does illustrate that the theoretical results describe the worst case; the estimators may converge at a rapid rate even when the interference is extensive.

	To understand this behavior, consider the consequences of a small perturbation of the assignments. In particular, consider changing the assignment of a single unit $i$ assigned to control under the Bernoulli design. Let $\mathcal{G}_i = \braces{j: i\in G_j}$ denote the set of units for which unit $i$ is in their respective interference sets. Of the units in $\mathcal{G}_i$, some fraction will have $\bal{G_j} = 0$. When we change $i$'s assignment, those units will change to $\bal{G_j} = 1$, and thus change the values of their outcomes. Similarly, some other fraction of $\mathcal{G}_i$ will have $\bal{G_j} = -1$, and their values will change to $\bal{G_j} = 0$. If unit $i$ is in sufficiently many units' interference sets, which tends to happen when $a_n$ is large, the perturbations will have large effects on the estimator. This explains why the \RMSE\ does not converge to zero when $a_n=0.05n$ under the Bernoulli design.

	Now consider the same situation under complete and paired randomization. With these designs, we cannot change unit $i$'s assignment in isolation. The number of treated units is fixed, so to change $i$'s assignment, we must change another unit's assignment in the opposite direction. This other unit, say unit $\ell$, will, just as $i$, be in some number of other units' interference sets, and some fraction of these units will have $\bal{G_j} \in\braces{0,1}$. The consequence is that when we change $i$'s assignment from control to treatment and add one to $\bal{G_j}$ for the units in $\mathcal{G}_i$, a counteracting effect happens because we simultaneously change $\ell$'s assignment from treatment to control and remove one from $\bal{G_j}$ for units in $\mathcal{G}_\ell$. The number of units that get one added to $\bal{G_j}$ will, with high probability and relative to the sample size, be close to the number of units that get one removed, so the overall effect is small even if units $i$ and $\ell$ interfere many other units.

	The extreme case is when $a_n=n$, so that interference is complete. The Bernoulli design would here exhibit considerable variation just as when $a_n=0.05n$. However, under complete and paired randomization, exactly half of the sample has $\bal{G_j} =-1$, and the other half has $\bal{G_j}=1$. No perturbation of the assignments can change this. In other words, the designs perfectly balance the spillover effects of any change of the assignments so the outcomes are unchanged, and the standard error does not increase relative to no interference (see Table \ref{tab:random-sd} below).

	The data generating process can be modified slightly to introduce sensitivity to assignment perturbations also under complete and paired randomization. Consider generating a random variable $\lambda_i$ for each unit that follows a heavy-tailed distribution. In our case, $\lambda_i$ is standard log-normal distributed. We can then change the definition of $\bal{G_i}$ to a weighted balance:
	\begin{equation}
	\bal{G_i} = \sum_{j\in G_i} \lambda_j\paren{2 Z_j - 1}.
	\end{equation}
	If we perturb the assignment of a unit with a large $\lambda_i$, the effect will outweigh the effect of the counteracting change of another unit's assignment, since the other unit tends to have a lower value on $\lambda_i$. If the unit with the large $\lambda_i$ is interfering with sufficiently many units (i.e., when $a_n$ is large), the change will noticeably affect the behavior of the estimator. The second row of panels in Figure \ref{fig:random-interference} presents the results when we use the weighted version of $\bal{G_i}$. We see that, indeed, the \RMSE\ for $a_n=0.05n$ does not converge under any of the designs.

	\subsection{Interference from one unit} \label{sec:interference-oneunit}

	The final type of interference structure to be considered is when a single unit interferes with other units. Specifically, let $G_i = \braces{1}$ when $2\leq i \leq a_n$ and $G_i = \emptyset$ otherwise. A simpler, but equivalent, way to generate the potential outcomes in this case is:
	\begin{equation}
	y_i(\z) = \left\{ \begin{array}{ll}
	2z_1z_i + X_i + \varepsilon_i & \text{if } 2 \leq i \leq a_n,
	\\[0.5em]
	z_i + X_i + \varepsilon_i & \text{otherwise}.
	\end{array}\right.
	\end{equation}

	We have $I_{ij}=1$ if and only if $i=1$ and $j \leq a_n$, and thus $\di_{ij}=1$ whenever $i,j \leq a_n$. It follows that $\icoutavg = 1 + \paren[\big]{\floor{a_n} - 1}  / n$ and $\davg = 1 + \floor{a_n}\paren[\big]{\floor{a_n} - 1} / n$. This provides the maximum amplification of the variation in unit 1's assignment. In all cases we investigate here, $\icoutavg< 2$ no matter the value of $\davg$. The setting is thus a candidate for the worst case since a very sparse interference structure leads to many interference dependencies (i.e., a small $\icoutavg$ but large $\davg$).

	\begin{figure}
		\centering
		\includegraphics[width=\textwidth]{\simulationpath{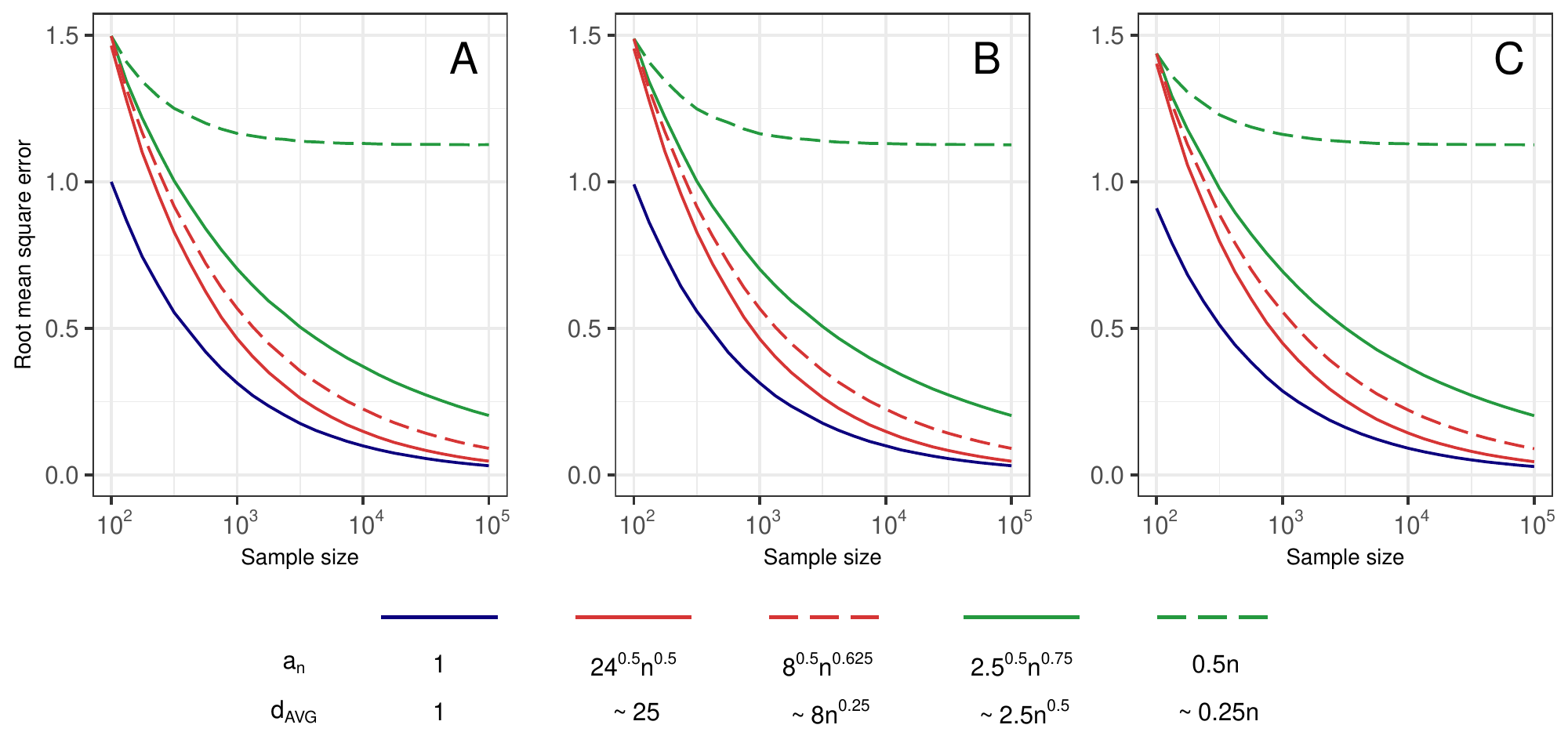}}
		\caption{Estimator performance when only one unit is interfering with other units. The figure presents the root mean square error of the H\'ajek estimator with respect to \EATE\ for the interference structure described in Section \ref{sec:interference-oneunit}. The panels present the results from the Bernoulli (A), complete (B) and paired (C) randomization designs. See the note of Figure \ref{fig:groupsum-interference} for additional details.} \label{fig:oneunit-interference}
		%The lines correspond to different growth rates for $a_n$ as described in the legend. These sequences govern the amount of interference dependence as also described in the legend. The tilde operator denotes asymptotic equality. Each line connects 25 data points with sample sizes evenly spaced on a logarithmic scale between 100 and 100,000. Each data point is the average \RMSE\ over 50,000 draws from the corresponding data generating process normalized by the \RMSE\ from the Bernoulli design under no interference.} \label{fig:oneunit-interference}
	\end{figure}

	Figure \ref{fig:oneunit-interference} presents the results. The performance is similar across the three designs. For bounded interference (i.e., $a_n = 24^{0.5}n^{0.5}$), the \RMSE\ quickly approaches the no-interference case. However, compared to the two other types of interference structures, there is a noticeable difference also for the largest sample sizes. With this data generating process, a sample containing one hundred thousand units is not large enough to remove the performance penalty introduced by the interference. The two sequences with restricted but not bounded interference ($a_n = 8^{0.5}n^{0.625}$ and $a_n = 2.5^{0.5}n^{0.75}$) follow the same pattern as above: the \RMSE\ decreases but at a slower rate than under no interference. When the interference dependence is proportional to the sample size ($a_n = 0.5n$), the \RMSE\ levels out, and we see no notable decrease for samples larger than about one thousand units. These results confirm the theoretical finding: we must use $\davg$ rather than $\icoutavg$ to restrict the interference.

\section{The average interference dependence in Section \ref{sec:interference-random}}\label{sec:exp-of-davg-in-sim}

	\begin{nonum-proposition}
		With the data generating process in Section \ref{sec:interference-random}:
		\begin{equation}
		\E{\davg} = n - (n-1)\paren[\bigg]{1 - \frac{a_n - 1}{n - 1}}^{n}\paren[\bigg]{1 + \frac{a_n - 1}{n - 1}}^{n-2}.
		\end{equation}
	\end{nonum-proposition}

	\begin{proof}
		Recall $\davg$ from Definition \ref{def:interference-dependence}, and note:
		\begin{equation}
		\E{\davg} = \frac{1}{n}\sum_{i=1}^{n}\sum_{j=1}^n\E{\di_{ij}} = \frac{1}{n}\sum_{i=1}^{n}\sum_{j=1}^n\prob{\di_{ij} = 1}.
		\end{equation}
		Recall that $\di_{ij}=1$ if and only if $\ell \in \bfU$ exists such that $I_{\ell i}I_{\ell j} = 1$. Consider the probability:
		\begin{multline}
		\prob{\di_{ij} = 1} = \prob[\Bigg]{\sum_{\ell = 1}^n I_{\ell i}I_{\ell j} > 0} = 1 - \prob[\Bigg]{\sum_{\ell = 1}^n I_{\ell i}I_{\ell j} = 0}
		\\
		= 1 - \prod_{\ell = 1}^{n}\prob[\big]{I_{\ell i}I_{\ell j} = 0},
		\end{multline}
		where the last equality follows from that all the terms in the sum are independent under the current data generating process.

		Consider the probability that $I_{\ell i}I_{\ell j} = 0$:
		\begin{equation}
		\prob[\big]{I_{\ell i}I_{\ell j} = 0} = 1 - \prob[\big]{I_{\ell i} = 1, I_{\ell j} = 1} = 1 - \prob[\big]{I_{\ell i} = 1\given I_{\ell j} = 1}\prob[\big]{I_{\ell j} = 1}.
		\end{equation}
		The distribution of $I_{i j}$ is specified by the data generating process. Specifically, recall that $I_{ij} = 1$ with probability one when $i=j$, and $I_{ij}$ is Bernoulli distributed with probability $(a_n - 1) / (n - 1)$ when $i\neq j$. We thereby have:
		\begin{align}
		\prob[\big]{I_{\ell j} = 1} &= \left\{\begin{array}{ll}
		1 & \text{if } \ell = j,
		\\[0.2em]
		(a_n - 1) / (n - 1) & \text{otherwise},
		\end{array}\right.
		\\[0.8em]
		\prob[\big]{I_{\ell i} = 1\given I_{\ell j} = 1} &= \left\{\begin{array}{ll}
		1 & \text{if } i = j,
		\\[0.2em]
		\prob[\big]{I_{\ell i} = 1} & \text{otherwise},
		\end{array}\right.
		\\[0.8em]
		\prob[\big]{I_{\ell i} = 1\given I_{\ell j} = 1}\prob[\big]{I_{\ell j} = 1} &= \left\{\begin{array}{ll}
		1 & \text{if } i=j=\ell,
		\\[0.2em]
		(a_n - 1) / (n - 1) & \text{if } i=j \text{ or } i=\ell \text{ or } j=\ell,
		\\[0.2em]
		\bracket[\big]{(a_n - 1) / (n - 1)}^2 & \text{otherwise},
		\end{array}\right.
		\end{align}

		Note that:
		\begin{equation}
		1 - \paren[\bigg]{\frac{a_n - 1}{n - 1}}^{2} = \paren[\bigg]{1 - \frac{a_n - 1}{n - 1}}\paren[\bigg]{1 + \frac{a_n - 1}{n - 1}},
		\end{equation}
		so:
		\begin{align}
		\frac{\prob[\big]{I_{\ell i}I_{\ell j} = 0}}{1 - (a_n - 1) / (n - 1)}
		&= \left\{\begin{array}{ll}
		0 & \text{if } i=j=\ell,
		\\[0.2em]
		1 & \text{if } i=j \text{ or } i=\ell \text{ or } j=\ell,
		\\[0.2em]
		\bracket[\big]{1 + (a_n - 1) / (n - 1)} & \text{otherwise},
		\end{array}\right.
		\\[0.8em]
		\frac{\prod_{\ell = 1}^{n}\prob[\big]{I_{\ell i}I_{\ell j} = 0}}{\bracket[\big]{1 - (a_n - 1) / (n - 1)}^n}
		&= \left\{\begin{array}{ll}
		0 & \text{if } i=j,
		\\[0.2em]
		\bracket[\big]{1 + (a_n - 1) / (n - 1)}^{n-2} & \text{otherwise}.
		\end{array}\right.
		\end{align}
		Thus, when $i=j$, we have $\prob{\di_{ij} = 1} = 1$, and when $i\neq j$:
		\begin{multline}
		\prob{\di_{ij} = 1} = 1 - \prod_{\ell = 1}^{n}\prob[\big]{I_{\ell i}I_{\ell j} = 0}
		\\
		= 1 - \paren[\bigg]{1 - \frac{a_n - 1}{n - 1}}^{n}\paren[\bigg]{1 + \frac{a_n - 1}{n - 1}}^{n-2}.
		\end{multline}
		We return to the expectation of interest to complete the proof:
		\begin{align}
		\E{\davg} &= \frac{1}{n}\sum_{i=1}^{n}\sum_{j=1}^n\prob{\di_{ij} = 1}
		\\
		&= \frac{1}{n}\sum_{i=1}^{n}\prob{\di_{ii} = 1} +  \frac{1}{n}\sum_{i=1}^{n}\sum_{j\neq i}\prob{\di_{ij} = 1}
		\\
		&= \frac{1}{n}\sum_{i=1}^{n}1 +   \frac{1}{n}\sum_{i=1}^{n}\sum_{j\neq i}\bracket[\Bigg]{1 - \paren[\bigg]{1 - \frac{a_n - 1}{n - 1}}^{n}\paren[\bigg]{1 + \frac{a_n - 1}{n - 1}}^{n-2}}
		\\
		&= 1 +  (n-1)\bracket[\Bigg]{1 - \paren[\bigg]{1 - \frac{a_n - 1}{n - 1}}^{n}\paren[\bigg]{1 + \frac{a_n - 1}{n - 1}}^{n-2}}
		\\
		&= n - (n-1)\paren[\bigg]{1 - \frac{a_n - 1}{n - 1}}^{n}\paren[\bigg]{1 + \frac{a_n - 1}{n - 1}}^{n-2}. \qedhere
		\end{align}
	\end{proof}

\providecommand{
	\begin{table}
		\caption{} \label{}
		\centering
		\renewcommand{\arraystretch}{1.2}
		\vspace{0.08in}
		\resizebox{\textwidth}{!}{ %
			\setlength{\tabcolsep}{12pt}
			\input{}
		}
	\end{table}
}[3]{
	\begin{table}
		\caption{#1} \label{#2}
		\centering
		\renewcommand{\arraystretch}{1.2}
		\vspace{0.08in}
		\resizebox{\textwidth}{!}{ %
			\setlength{\tabcolsep}{12pt}
			\input{#3}
		}
	\end{table}
}

\providecommand{\panelsep}{0.6em}

\newcommand{\biastext}[1]{Bias of the \HT\ and \HA\ estimators with respect to \EATE\ #1}

\newcommand{\sdtext}[1]{Standard error of the \HT\ and \HA\ estimators #1}

\newcommand{\twotables}[2]{
	
	\begin{table}
		\caption{\biastext{#1}} \label{tab:#2-bias}
		\centering
		\renewcommand{\arraystretch}{1.2}
		\vspace{0.08in}
		\resizebox{\textwidth}{!}{ %
			\setlength{\tabcolsep}{12pt}
			\input{\simulationpath{tab-#2-bias.tex}}
		}
	\end{table}

	\begin{table}
		\caption{\sdtext{#1}} \label{tab:#2-sd}
		\centering
		\renewcommand{\arraystretch}{1.2}
		\vspace{0.08in}
		\resizebox{\textwidth}{!}{ %
			\setlength{\tabcolsep}{12pt}
			\input{\simulationpath{tab-#2-sd.tex}}
		}
	\end{table}

}

\twotables{in the simulation study when the interference is contained within groups of units as described in Section \ref{sec:interference-groups}.}{groupsum}

\twotables{when the interference structure is randomly generated using the unweighted balance function as described in Section \ref{sec:interference-random}.}{random}

\twotables{when the interference structure is randomly generated using the weighted balance function as described in the end of Section \ref{sec:interference-random}.}{randomweights}

\twotables{when only one unit is interfering with other units as described in Section \ref{sec:interference-oneunit}.}{oneunit}

\end{document}